%% file: mainfile.tex
\documentclass[12pt]{article}
\input{preamble}

\begin{document}

\maketitle

\begin{abstract}
    Classical persistent homology provides a summary of the multiscale topology of a filtration of a space. These summaries are not naturally valued in a vector space, limiting their direct use in standard machine learning pipelines. To address this issue, Bubenik introduced persistence landscapes, vector-valued invariants of persistence modules, which operate by conveniently encoding the behavior of rank functions of modules along the parameter axis. When passing to multiparameter persistent homology, changes in the rank functions vary by different amounts in different directions. In this paper, we introduce the landscape interval, a subset of the parameter space of a multiparameter persistence module which records changes in the rank function in all directions simultaneously. This geometric perspective leads to two novel invariants which serve as notions of persistence landscapes for multiparameter modules.  We show that our invariants generalize existing landscape-type invariants: in particular, they recover Bubenik's landscapes in the one-parameter setting and allow us to obtain the multiparameter landscapes introduced by Vipond. For our main theoretical contributions, we establish two exact identities connecting landscapes with erosion distances,  prove that our invariants are stable under perturbations, and refine the interleaving-stability bounds for the existing landscape-type invariants. We also show that on a certain class of multiparameter persistence modules, the full invariants can be recovered from finite information about the module. On the practical side, we introduce a robust numerical framework for extracting our invariants from real data and for incorporating them into standard machine learning pipelines. The framework is illustrated on surface mesh and medical image data. 
\end{abstract}

\input{sections/introduction}

\input{sections/preliminaries_and_related_work}
\input{sections/generalized_persistence_landscapes}

\input{sections/distances_and_stability}

\input{sections/finite_characterization_of_landscape_intervals}

\input{sections/finite_direction_approximation_and_curve_structure}

\input{sections/applications_and_experiments}

\section*{Code availability}

Code for reproducing the experiments is available on
\href{https://github.com/pfanghaha/landscape-interval}{GitHub}. 

\clearpage
\appendix
\input{sections/appendix}

\clearpage

\bibliographystyle{plain}
\bibliography{references}

\end{document}

%% file: preamble.tex
\usepackage{amsmath,amssymb,amsthm,mathtools}
\usepackage[margin=1in]{geometry}
\usepackage{enumitem}
\usepackage{dsfont}
\usepackage{tikz-cd}
\usepackage{booktabs}
\usepackage{adjustbox}
\usepackage{caption}
\usepackage{graphicx}
\usepackage{xcolor}
\usepackage{needspace}
\usepackage{comment}
\usepackage[normalem]{ulem}

\usepackage{hyperref}
\hypersetup{
    colorlinks=true,
    linkcolor=black,
    citecolor=black,
    urlcolor=black,
    pdfborder={0 0 0}
}

\title{Landscape Intervals: A Geometric Framework for Multiparameter Persistence Landscapes}

\author{
Pan Fang\thanks{Email: \texttt{pfang@fsu.edu}}
\quad
Washington Mio
\quad
Tom Needham
\\[0.6em]
{\small Department of Mathematics}\\
{\small Florida State University}
}

\date{}

\theoremstyle{plain}
\newtheorem{theorem}{Theorem}[section]
\newtheorem{lemma}[theorem]{Lemma}
\newtheorem{corollary}[theorem]{Corollary}
\newtheorem{proposition}[theorem]{Proposition}

\theoremstyle{definition}
\newtheorem{definition}[theorem]{Definition}
\newtheorem{example}[theorem]{Example}

\theoremstyle{remark}

\newcommand{\R}{\mathbb{R}}
\newcommand{\N}{\mathbb{N}}
\newcommand{\Z}{\mathbb{Z}}
\newcommand{\F}{\mathbb{F}}

\newcommand{\Vect}{\mathbf{Vect}}
\newcommand{\rk}{\operatorname{rk}}

\newcommand{\I}{\mathcal{I}}

\newcommand{\LI}{\mathrm{LI}}
\newcommand{\UL}{\mathrm{UL}}
\newcommand{\DL}{\mathrm{DL}}
\newcommand{\IL}{\mathrm{IL}}
\newcommand{\WL}{\mathrm{WL}}

\usepackage{microtype}

%% file: sections/introduction.tex
\section{Introduction}
\label{sec:introduction}

Classical persistent homology studies how topological features, such as connected components and loops, evolve in an increasing family of spaces as a scale or threshold varies. This paper considers the multiparameter setting, where one is interested in feature evolution along multiple simultaneously changing scales. Our primary interest is in the extraction of vector-valued features from multiparameter persistent homology modules, which can then be incorporated into machine learning pipelines for downstream data analysis tasks. Such vectorizations have been studied extensively in both the single parameter~\cite{JMLR:v16:bubenik15a,adams2017persistence,ali2023survey} and multiparameter~\cite{JMLR:v21:Vipond,carriere2020multiparameter} settings, but the latter is more challenging due to the lack of the barcode representations which are present in the classical theory~\cite{CrawleyBoevey2015,botnan2023introductionmultiparameterpersistence}. As we describe below, our approach is inspired by the persistence landscape framework of Bubenik. Throughout this paper, we assume the reader's familiarity with standard concepts of persistent homology theory, e.g., persistence modules and interleaving distances. For introductions to these ideas, we refer to \cite{EdelsbrunnerLetscherZomorodian2002,ZomorodianCarlsson2005,OtterEtAl2017,botnan2023introductionmultiparameterpersistence}.

Pointwise finite-dimensional persistence modules indexed by $\R$ are determined up to isomorphism by their barcodes. Since barcodes do not naturally form a vector space, statistical and machine-learning methods based on linear operations and norms are not directly applicable to them. Representing barcodes by functions in a normed vector space provides access to these methods. Bubenik's persistence landscape provides such a functional representation \cite{JMLR:v16:bubenik15a}. For a one-parameter persistence module $M$ and $a\le b$, write $\rk_M(a,b)$ for the rank of the structure map $M(a)\to M(b)$. For $t\in\R$ and a positive integer $k$, the landscape value is defined by
\[
    \lambda_M(k,t)
    \coloneqq
    \sup\left\{
        h\ge 0:
        \rk_M(t-h,t+h)\ge k
    \right\},
\]
where the supremum of the empty set is taken to be $0$. The rank condition requires at least $k$ independent classes to persist across the interval $[t-h,t+h]$, whose half-width is $h$. Thus, $\lambda_M(k,t)$ records how far this persistence extends symmetrically about $t$.

Multiparameter persistence modules arise when data are studied simultaneously with respect to several scales, measurements, or filtering functions \cite{CarlssonZomorodian2009DCG,botnan2023introductionmultiparameterpersistence}. The parameter is then a vector in $\R^n$, which we endow with the product poset structure $\preceq$, that is, $a\preceq b$ if and only if $a_i\le b_i$ for every $i$. For $n\ge2$, these modules do not in general admit a complete description by barcodes \cite{CarlssonZomorodian2009DCG}. For an $n$-parameter persistence module $M$, a basic summary is the rank invariant, which assigns to each pair $a\preceq b$ the rank $\rk_M(a,b)$ of the structure map $M(a)\to M(b)$. This rank records how many independent homological classes persist from $a$ to $b$.

Vipond generalized Bubenik's construction to multiparameter persistence \cite[Definition~19]{JMLR:v21:Vipond}. We call this construction the \emph{uniform multiparameter persistence landscape}. For an $n$-parameter persistence module $M$, it can be written as
\[
    \UL_M(k,t)
    =
    \sup\left\{
        \varepsilon\ge 0:
        \rk_M(t-\varepsilon\mathbf{1}_n,t+\varepsilon\mathbf{1}_n)\ge k
    \right\},
\]
where $\mathbf{1}_n=(1,\ldots,1)$. Thus, the uniform landscape measures symmetric persistence when every parameter changes by the same amount.

To allow different relative changes in the parameters, Vipond introduced weighted landscapes \cite[Definition~25]{JMLR:v21:Vipond}. For a weight vector $w\in\R_{>0}^n$ with $\lVert w\rVert_\infty=1$, write $w^{-1}=(w_1^{-1},\ldots,w_n^{-1})$. The weighted landscape can be written as
\[
    \WL_M(w,k,t)
    =
    \sup\left\{
        \varepsilon\ge 0:
        \rk_M(t-\varepsilon w^{-1},t+\varepsilon w^{-1})\ge k
    \right\}.
\]
Here, $\varepsilon/w_i$ is the half-width in the $i$th parameter, so the weights specify the relative amounts by which the parameters vary. Taking $w=\mathbf{1}_n$ gives the uniform landscape. These formulas are equivalent to the original definitions recalled in Section~\ref{sec:preliminaries}.

Our starting point is to ask for the entire set of symmetric displacements over which a given number of classes persist. That is, we wish to define an object which encodes the answer to the question: for fixed $t\in\R^n$ and $k\in\N_{>0}$, which vectors $h\in\R_{\ge 0}^n$ allow at least $k$ independent homological classes to persist from $t-h$ to $t+h$?

Vipond's rescaled multiparameter rank function $r_M(t,h)=\rk_M(t-h,t+h)$ records the rank between these symmetric endpoints \cite[Definition~18]{JMLR:v21:Vipond}. We organize these same rank values into the set
\[
    \I_M^k(t)
    \coloneqq
    \left\{
        h\in\R_{\ge 0}^n:
        r_M(t,h)\ge k
    \right\},
\]
which we call the \emph{landscape interval} of $M$ at $t$ (Definition~\ref{def:landscape-interval}). Taking this set as the primary object allows us to study the geometry of local persistence, including how far it extends in different directions. Let us record some immediate observations about the landscape interval:
\begin{itemize}
    \item When $n=1$, $\I_M^k(t)$ is either empty or an interval beginning at $0$, and its supremum is the one-parameter persistence landscape value $\lambda_M(k,t)$. Thus landscape intervals can be viewed as a generalization of one parameter persistence landscapes.
    \item In every dimension, the complete family of landscape intervals determines the rank invariant. Indeed, for any $a\preceq b$, taking $t=(a+b)/2$ and $h=(b-a)/2$ gives $\rk_M(a,b)\ge k$ if and only if $h\in\I_M^k(t)$. This mirrors a result of Vipond: \cite[Proposition~36]{JMLR:v21:Vipond} shows that the complete family obtained by varying the weight vector  determines the rank invariant for finitely presented modules.
\end{itemize} 
These properties suggest that the collection of all landscape intervals is a powerful invariant of a multiparameter persistence module. However, it is rather unwieldy to deal with such a collection in practical applications. As such, we introduce two invariants of the landscape interval, which can themselves be viewed as multiparameter persistence landscapes.

The \emph{directional multiparameter persistence landscape} describes the geometry of a landscape interval by recording how far it extends in each direction (Definition~\ref{def:directional-multiparameter-persistence-landscape}). For a nonnegative direction vector $v\in\R_{\ge 0}^n$ normalized by $\lVert v\rVert_\infty=1$, define
\[
    \DL_M(v,k,t)
    \coloneqq
    \sup\left\{
        \varepsilon\ge 0:
        \varepsilon v\in\I_M^k(t)
    \right\}.
\]
When $n=1$, this gives $\DL_M(1,k,t)=\lambda_M(k,t)$, while in every dimension the diagonal direction gives $\DL_M(\mathbf{1}_n,k,t)=\UL_M(k,t)$. For strictly positive directions, these values agree, up to rescaling, with Vipond's weighted landscapes (Proposition~\ref{prop:directional-weighted-relation}).

The \emph{integrated multiparameter persistence landscape} generalizes the interpretation of a one-parameter landscape value as the length of its landscape interval (Definition~\ref{def:integrated-multiparameter-persistence-landscape}). Using $n$-dimensional Lebesgue measure $\mathcal{L}^n$, we define, for $r>0$,
\[
    \IL_M(r,k,t)
    \coloneqq
    \frac{
        \mathcal{L}^n\bigl([0,r]^n\cap\I_M^k(t)\bigr)
    }{
        r^{n-1}
    },
\]
and set $\IL_M(0,k,t)=0$. The numerator measures the portion of the landscape interval contained in $[0,r]^n$, and division by $r^{n-1}$ gives a quantity with units of length. When $n=1$, we have $\IL_M(r,k,t)=\min\{r,\lambda_M(k,t)\}$, so the one-parameter landscape is recovered as $r\to\infty$. In every dimension, the values across all scales also determine the uniform landscape value $\UL_M(k,t)$, as shown in Proposition~\ref{prop:uniform-recovered-from-integrated}.

\paragraph{Main contributions.}
We treat landscape intervals as the underlying geometric objects from which these landscape representations are obtained. The results of this paper are concerned with properties of landscape intervals, comparisons between our multiparameter landscapes and other multiparameter invariants in the literature, stability properties of our constructions, and numerical implementation. Our main contributions are summarized as follows.

\begin{enumerate}
    \item \emph{Landscape intervals and two geometric generalizations.} We introduce landscape intervals and define the directional and integrated multiparameter persistence landscapes using their radial extent and Lebesgue measure, respectively (Definitions~\ref{def:landscape-interval}, \ref{def:directional-multiparameter-persistence-landscape}, and~\ref{def:integrated-multiparameter-persistence-landscape}). We relate these constructions to Bubenik's one-parameter landscape and Vipond's uniform and weighted landscapes, and prove that the integrated landscape determines the uniform landscape pointwise (Propositions~\ref{prop:directional-weighted-relation} and~\ref{prop:uniform-recovered-from-integrated}). We also give an integral formula expressing the integrated landscape in terms of directional values (Lemma~\ref{lem:integrated-radial-representation}).

    \item \emph{Sharper metric and stability results.}
    We establish quantitative comparisons between landscape distances (Corollary~\ref{cor:directional-weighted-comparison}, Theorem~\ref{thm:integrated-directional-comparison}, and Corollary~\ref{cor:integrated-weighted-comparison}). We define a distance between landscape-interval families and prove that it equals the rectangle-restricted erosion distance (Theorem~\ref{thm:landscape-interval-rectangle-erosion}). We also prove that the supremum distance between uniform landscapes equals the cube-restricted erosion distance in every parameter dimension (Theorem~\ref{thm:uniform-erosion-distance}). The supremum distance between weighted-landscape families lies between the cube- and rectangle-restricted erosion distances. We obtain corresponding lower and upper bounds for directional landscapes when every direction coordinate is bounded below by a fixed positive constant, and an upper erosion bound for the integrated landscape (Theorems~\ref{thm:directional-erosion-bound}--\ref{thm:integrated-erosion-bound}). Because these erosion distances are no greater than the interleaving distance, our bounds recover Vipond's interleaving-stability results for the uniform and weighted landscapes \cite{JMLR:v21:Vipond} and yield sharper estimates (Corollary~\ref{cor:landscape-interleaving-stability}). Finally, we show that no uniform Lipschitz bound with respect to the interleaving distance holds in general for the unrestricted directional landscape (Example~\ref{ex:directional-interleaving-instability}).

    \item \emph{Finite reconstruction and curve structure.} For bounded-below, rectangular-constructible persistence modules, we prove that the closure of each nonempty landscape interval is determined by finitely many directional values (Theorem~\ref{thm:finite-maximal-direction-reconstruction}). Our finite-reconstruction question was partly inspired by finite-direction reconstruction results for the persistent homology transform \cite{CurryMukherjeeTurner2022}. The finite representation of landscape intervals gives an explicit piecewise description of the associated integrated landscape curves (Theorem~\ref{thm:piecewise-form-integrated-curves}). In two parameters, every nonzero such curve is nondecreasing until it reaches its maximum, may remain constant there over an interval, and is nonincreasing thereafter (Theorem~\ref{thm:two-parameter-curve-shape}).

    \item \emph{Computation and applications.} We develop a computational framework based on finitely many directional values. For nonempty bounded landscape intervals, it provides inner and outer approximations of their closures and lower and upper bounds for the associated integrated landscape curves (Proposition~\ref{prop:finite-direction-bounds} and Corollary~\ref{cor:finite-direction-curve-bounds}). When the sampled direction sets are nested and their union is dense, these curve bounds converge at every scale (Subsection~\ref{subsec:finite-direction-approximations}). The diagonal sample gives Vipond's uniform landscape, while strictly positive directional samples give the corresponding weighted values after rescaling. We illustrate the framework through synthetic point-cloud examples and evaluate uniform and integrated landscape representations in shape and medical-image classification (Section~\ref{sec:applications-and-experiments}).
    
\end{enumerate}

\paragraph{Organization of the paper.}
Section~\ref{sec:preliminaries} reviews persistence modules, rank invariants, and existing persistence landscapes. Section~\ref{sec:Generalized-landscapes} introduces landscape intervals and the directional and integrated landscapes. Section~\ref{sec:stability} establishes relationships among these landscapes and studies their metric and stability properties. Section~\ref{sec:finite-characterization} develops finite reconstruction under rectangular constructibility, and Section~\ref{sec:finite-directional-approximation} studies finite-direction approximation and the structure of integrated landscape curves. Section~\ref{sec:applications-and-experiments} presents synthetic examples and classification experiments.

\paragraph{Use of AI tools.}
The project began in August 2024. Following completion of the initial manuscript draft, ChatGPT was used to improve the language, clarity, and organization of the draft. ChatGPT was also used to refine existing research code. For the classification experiments, the first author provided the original code draft and detailed instructions, based on which ChatGPT generated the implementation. The authors take full responsibility for the final manuscript, code, and reported results.

%% file: sections/preliminaries_and_related_work.tex
\section{Preliminaries and Related Work}
\label{sec:preliminaries}

This section collects the background and related work needed for the developments that follow. We first recall intervals in posets, persistence modules indexed by subposets of $\R^n$, and their construction from filtered simplicial complexes. After briefly recalling the role of barcodes in $1$-parameter persistence, we review the rank invariant, its generalization to intervals, and representative rank-based summaries for multiparameter persistence modules. We conclude by introducing persistence landscapes and their uniform and weighted multiparameter versions.

\begin{definition}
    For a poset $(P, \preceq)$, a nonempty subset $I \subseteq P$ is an \emph{interval} in $P$ if it satisfies the following conditions:
    \begin{enumerate}
        \item If $s, t \in I$ and $m \in P$ satisfy
        $s \preceq m \preceq t$, then $m \in I$.
        
        \item If $s, t \in I$, then there exist points
        $s = m_0, \ldots, m_\ell = t$ in $I$ such that, for every
        $0 \le i < \ell$, either $m_i \preceq m_{i+1}$ or
        $m_i \succeq m_{i+1}$.
    \end{enumerate}
\end{definition}

A poset $(P, \preceq)$ can be viewed as a category whose objects are the elements of $P$ and in which there is a unique morphism $s \to t$ if and only if $s \preceq t$. Let $\Vect$ denote the category of vector spaces over a field $\F$ and $\F$-linear maps. We write $\Vect^P$ for the category whose objects are functors from $P$ to $\Vect$ and whose morphisms are natural transformations.

We equip $\R^n$ with the product order $\preceq$. Thus, for
$x, y \in \R^n$,
\[
    x \preceq y
    \quad\Longleftrightarrow\quad
    x_i \le y_i
    \text{ for every } i = 1, \ldots, n.
\]
Every subset $P \subseteq \R^n$ is regarded as a subposet of $(\R^n, \preceq)$ with the induced order.

The algebraic theory of multiparameter persistence modules was introduced in \cite{CarlssonZomorodian2009DCG}. We use the standard
functorial formulation reviewed in \cite{botnan2023introductionmultiparameterpersistence}.

\begin{definition}[Persistence module]
    Let $P$ be a nonempty poset. A \emph{persistence module indexed by $P$} is a functor
    \[
        M \colon P \longrightarrow \Vect.
    \]
    Equivalently, $M$ is an object of $\Vect^P$. When $P = P_1 \times \cdots \times P_n$, where each $P_i \subseteq \R$ is nonempty and $P$ is equipped with the product order, we call $M$ an \emph{$n$-parameter persistence module}. The module $M$ is \emph{pointwise finite-dimensional} if $\dim_{\F} M(p) < \infty$ for every $p \in P$.
\end{definition}

For $a, b \in P$ with $a \preceq b$, we write $M_a^b \colon M(a) \longrightarrow M(b)$ for the structure map assigned by $M$ to the unique morphism $a \to b$ in $P$. For an interval $I \subseteq P$, the \emph{interval module} $\mathds{1}_I \in \Vect^P$ is defined by
\[
    \mathds{1}_I(p)
    =
    \begin{cases}
        \F, & p \in I,\\
        0,  & p \notin I,
    \end{cases}
\]
with identity structure maps whenever both indices lie in $I$ and zero structure maps otherwise.

In applications, persistence modules indexed by subposets of $\R^n$ are often constructed from finite point clouds using the following standard constructions.

\begin{example}
\label{ex:multiparameter-module}
    Let $(X, d_X)$ be a finite metric space. For $r \ge 0$, the \emph{Vietoris--Rips complex} of $X$ at scale $r$, denoted by $\mathrm{VR}(X)_r$, is the simplicial complex whose simplices are the subsets $\sigma \subseteq X$ satisfying
    \[
        d_X(x, y) \le r
        \quad
        \text{for all } x, y \in \sigma.
    \]
    The inclusions $\mathrm{VR}(X)_r \hookrightarrow \mathrm{VR}(X)_{r'}$ for $r \le r'$ define a functor
    \[
        \mathrm{VR}(X)_{\bullet}
        \colon
        \R_{\ge 0}
        \longrightarrow
        \mathbf{Simp},
    \]
    where $\mathbf{Simp}$ denotes the category of simplicial complexes and simplicial maps. For $p \ge 0$, applying the $p$-th simplicial homology functor with coefficients in $\F$ gives a $1$-parameter persistence module
    \[
        H_p\bigl(\mathrm{VR}(X)_{\bullet}; \F\bigr)
        \colon
        \R_{\ge 0}
        \longrightarrow
        \Vect.
    \]

    Now let $\varphi \colon X \to \R$ be a function, and for each $a \in \R$, let $X_a = \varphi^{-1}((-\infty, a])$. The assignment
    \[
        (a, r)
        \longmapsto
        \mathrm{VR}(X_a)_r
    \]
    defines a functor
    \[
        \mathrm{VR}(X)^{\varphi}_{\bullet,\bullet}
        \colon
        \R \times \R_{\ge 0}
        \longrightarrow
        \mathbf{Simp}.
    \]
    Applying the $p$-th simplicial homology functor gives a $2$-parameter persistence module
    \[
        H_p\bigl(
            \mathrm{VR}(X)^{\varphi}_{\bullet,\bullet};
            \F
        \bigr)
        \colon
        \R \times \R_{\ge 0}
        \longrightarrow
        \Vect.
    \]
\end{example}

Applying homology to a filtration, as in Example~\ref{ex:multiparameter-module}, produces its \emph{persistent homology module}. When several filtration parameters are present, this construction is called \emph{multiparameter persistent homology}.

By the interval-decomposition theorem, every pointwise finite-dimensional persistence module indexed by a subposet of $\R$ decomposes as a direct sum of interval modules, uniquely up to isomorphism and reordering. The resulting multiset of intervals is called its \emph{barcode} and records the parameter ranges over which homological features persist \cite{CrawleyBoevey2015}. In contrast, for $n \ge 2$, pointwise finite-dimensional persistence modules need not be interval decomposable and therefore cannot in general be classified by barcodes \cite{CarlssonZomorodian2009DCG}. One commonly used invariant in this setting is the rank invariant.

\begin{definition}[Rank invariant]
    Let $M \in \Vect^P$, and let $\overline{\Z}_{\ge 0} \coloneqq \Z_{\ge 0} \cup \{\infty\}$. Here and throughout, the rank of a linear map with infinite-dimensional image is understood to be $\infty$. The \emph{rank invariant} of $M$ is the function
    \[
        \rk_M
        \colon
        \{(a, b) \in P \times P : a \preceq b\}
        \longrightarrow
        \overline{\Z}_{\ge 0}
    \]
    defined by
    \[
        \rk_M(a, b)
        \coloneqq
        \operatorname{rank}(M_a^b)
        =
        \dim_{\F}\operatorname{Im}(M_a^b).
    \]
\end{definition}

The rank invariant records the ranks of all structure maps between comparable indices. The generalized rank extends this construction from comparable pairs to intervals of the indexing poset \cite{KimMemoli2021JACT}. The categorical definitions of limits and colimits, together with the construction of the canonical map used below, are recalled in Appendix~\ref{app:categorical-background}.

For an interval $I \subseteq P$, let
\[
    \psi_{M|_I}
    \colon
    \varprojlim_{p \in I} M(p)
    \longrightarrow
    \varinjlim_{p \in I} M(p)
\]
denote the canonical limit-to-colimit map associated with the restriction $M|_I$. The \emph{generalized rank} of $M$ over $I$ is
\[
    \rk_M^{\mathrm{gen}}(I)
    \coloneqq
    \operatorname{rank}\bigl(\psi_{M|_I}\bigr).
\]
If $I$ has a minimum $a$ and a maximum $b$, then Lemma~\ref{lem:generalized-rank-extrema} gives
\[
    \rk_M^{\mathrm{gen}}(I)
    =
    \rk_M(a, b).
\]

For finite intervals in $\Z^2$, generalized ranks can be computed from suitable boundary-tracing zigzag restrictions \cite{DeyKimMemoli2024}. The connection between ranks and barcodes has motivated several summaries of multiparameter persistence modules. Representative examples include fibered barcodes and matching-distance methods based on one-dimensional affine restrictions \cite{CerriEtAl2013,LesnickWright2015}; generalized persistence diagrams obtained through M\"obius inversion \cite{Patel2018GPD,KimMemoli2021JACT}; signed barcodes arising from rank decompositions \cite{Botnan2024SignedBarcodes,BotnanOppermannOudotScoccola2022Bottleneck}; and GRIL, a vectorization for $2$-parameter persistence modules based on generalized ranks over families of worm-shaped intervals \cite{XinMukherjeeSamagaDey2023GRIL}. A differentiable extension of GRIL for end-to-end learning was subsequently developed in \cite{MukherjeeEtAl2026DGRIL}. Related data-analytic representations include multiparameter persistence kernels and images, stable representations based on module decompositions, and vectorizations of signed barcodes as measures \cite{CorbetEtAl2019Kernel,CarriereBlumberg2020,LoiseauxCarriereBlumberg2023,LoiseauxEtAl2023SignedMeasures}. Our construction instead retains the full set of nonnegative symmetric displacements satisfying a fixed rank threshold, which Section~\ref{sec:Generalized-landscapes} defines as a landscape interval.

For pointwise finite-dimensional $1$-parameter persistence modules indexed by $\R$, the rank invariant and the barcode determine each other. The persistence landscape was introduced in \cite{JMLR:v16:bubenik15a} as a functional summary constructed from the rank invariant, or equivalently from the barcode. A stable extension to multiparameter persistence modules was introduced in \cite{JMLR:v21:Vipond}, yielding a functional summary derived from the multiparameter rank invariant.

Relations between rank information and one-parameter persistence landscapes were further studied in \cite{BetthauserBubenikEdwards2022}. The multiparameter persistence landscape has been used in spatial biological data analysis \cite{VipondEtAl2021Tumors} and studied statistically through a functional central limit theorem and bootstrap confidence bands \cite{GarciaRedondoMonodWang2026}. Landscape constructions have also been developed for labeled persistent homology and for extended zigzag modules arising from spatiotemporal data \cite{FuEtAl2025GeneralizedLandscapes,FlammerHuper2026}. These works develop landscapes for labeled and spatiotemporal data, while our focus is the geometry of landscape intervals in $\R^n$.

Although persistence landscapes are commonly defined for pointwise finite-dimensional modules, their rank-based definitions extend to arbitrary persistence modules under the extended-rank convention adopted above. In the following definitions, we take the supremum of the empty set to be $0$.

\begin{definition}[One-parameter persistence landscape~\cite{JMLR:v16:bubenik15a}]
     Under the extended-rank convention adopted above, the \emph{persistence landscape} of a persistence module $M \in \Vect^{\R}$ is the function
    \[
        \lambda_M
        \colon
        \N_{>0} \times \R
        \longrightarrow
        [0,\infty]
    \]
    defined by
    \[
        \lambda_M(k, t)
        \coloneqq
        \sup
        \left\{
            h \ge 0 :
            \rk_M(t - h, t + h) \ge k
        \right\}.
    \]
\end{definition}

For persistence modules arising from finite filtrations, the function $t\mapsto\lambda_M(k,t)$ is piecewise linear for every fixed $k\in\N_{>0}$.

Vipond introduced the multiparameter persistence landscape in \cite{JMLR:v21:Vipond}. To distinguish this construction from the generalizations developed later in this paper, we call it the \emph{uniform multiparameter persistence landscape}.

\begin{definition}[Uniform multiparameter persistence landscape]
\label{def:uniform-multiparameter-persistence-landscape}
    Let $M \in \Vect^{\R^n}$ be a persistence module. The \emph{uniform multiparameter persistence landscape}, or simply \emph{uniform landscape}, of $M$ is the function
    \[
        \UL_M
        \colon
        \N_{>0} \times \R^n
        \longrightarrow
        [0,\infty]
    \]
    defined by
    \[
        \UL_M(k, t)
        \coloneqq
        \sup
        \left\{
            \epsilon \ge 0 :
            \rk_M(t - h, t + h) \ge k
            \text{ for every } h \in \R_{\ge 0}^n
            \text{ with } \lVert h \rVert_\infty \le \epsilon
        \right\}.
    \]
\end{definition}

When $n = 1$, $\UL_M$ coincides with the one-parameter persistence landscape. For $n \ge 2$, $\UL_M(k, t)$ records the supremal uniform scale over which the rank remains at least $k$ in every nonnegative displacement direction around $t$.

To account for differences in the relative scales of the parameters, Vipond introduced weighted multiparameter persistence landscapes \cite[Definition~25]{JMLR:v21:Vipond}. Let
\[
    \mathcal{W}_n
    \coloneqq
    \left\{
        w \in \R_{>0}^n :
        \lVert w \rVert_\infty = 1
    \right\}
\]
denote the space of admissible weight vectors. For $w \in \mathcal{W}_n$, define the $w$-weighted infinity norm by
\[
    \lVert h \rVert_{\infty,w}
    \coloneqq
    \left\lVert
        (w_i h_i)_{i=1}^n
    \right\rVert_\infty.
\]

Following Vipond's weighted landscape space \cite[Definition~35]{JMLR:v21:Vipond}, we regard the complete family over all admissible weights as a single function whose first argument is the weight vector.

\begin{definition}[Weighted multiparameter persistence landscape]
\label{def:weighted-multiparameter-persistence-landscape}
    Let $M \in \Vect^{\R^n}$ be a persistence module. The \emph{weighted multiparameter persistence landscape}, or simply \emph{weighted landscape}, of $M$ is the function
    \[
        \WL_M
        \colon
        \mathcal{W}_n \times \N_{>0} \times \R^n
        \longrightarrow
        [0,\infty]
    \]
    defined by
    \[
        \WL_M(w, k, t)
        \coloneqq
        \sup
        \left\{
            \epsilon \ge 0 :
            \rk_M(t - h, t + h) \ge k
            \text{ for every } h \in \R_{\ge 0}^n
            \text{ with }
            \lVert h \rVert_{\infty,w} \le \epsilon
        \right\}.
    \]
\end{definition}

Taking $w = \mathbf{1}_n \coloneqq (1,\ldots,1)$ recovers the uniform landscape:
\[
    \UL_M(k, t)
    =
    \WL_M(\mathbf{1}_n, k, t).
\]

Vipond proved that the uniform landscape and each fixed admissible weighted landscape are $1$-Lipschitz with respect to the interleaving distance \cite[Theorem~30 and Corollary~33]{JMLR:v21:Vipond}. He also proved that, for finitely presented modules, the complete family of weighted landscapes determines the rank invariant \cite[Proposition~36]{JMLR:v21:Vipond}.

%% file: sections/generalized_persistence_landscapes.tex
\section{Landscape Intervals and Multiparameter Persistence Landscapes}
\label{sec:Generalized-landscapes}

We define landscape intervals and establish their basic structure. We then use their geometry to define the directional and integrated multiparameter persistence landscapes. We conclude with basic analytic properties of the integrated landscape.

\subsection{Landscape intervals}
\label{subsec:landscape-intervals}

\begin{definition}[Landscape interval]
\label{def:landscape-interval}
    Let $M \in \Vect^{\R^n}$. For $t \in \R^n$ and $k \in \N_{>0}$, define
    \[
        \I_M^k(t)
        \coloneqq
        \left\{
            h \in \R_{\ge 0}^n :
            \rk_M(t - h, t + h) \ge k
        \right\}.
    \]
    We call $\I_M^k(t)$ the \emph{$k$th landscape interval of $M$ at $t$}, allowing it to be empty.
\end{definition}

The following proposition justifies the term \emph{landscape interval} by showing that every nonempty such set is an interval in the sense of Section~\ref{sec:preliminaries}.

\begin{proposition}
\label{prop:landscape-interval-structure}
    For any $M \in \Vect^{\R^n}$, $t \in \R^n$, and $k \in \N_{>0}$, the set $\I_M^k(t)$ is either empty or an interval in $\R^n$. Moreover,
    \[
        \I_M^k(t) \neq \varnothing
        \quad\Longleftrightarrow\quad
        \dim_{\F} M(t) \ge k.
    \]
\end{proposition}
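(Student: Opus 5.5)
The plan rests on one monotonicity property of the rank invariant. If $a' \preceq a \preceq b \preceq b'$, then $M_{a'}^{b'} = M_b^{b'}\circ M_a^b \circ M_{a'}^a$ by functoriality. The rank of a composite is at most the rank of any factor, so $\rk_M(a',b') \le \rk_M(a,b)$; this also holds under the extended-rank convention, since the image of the composite is a quotient of a subspace of $\operatorname{Im}(M_a^b)$. Applied to symmetric displacements: if $0 \preceq h \preceq h'$ in $\R^n_{\ge 0}$, then $t-h' \preceq t-h \preceq t+h \preceq t+h'$, so $h' \in \I_M^k(t)$ implies $h \in \I_M^k(t)$. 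In other words, $\I_M^k(t)$ is a down-set of $\R^n_{\ge 0}$ under $\preceq$, where $\R^n_{\ge0}$ carries the induced order.

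For the equivalence, take $h=0$. Then $\rk_M(t,t) = \operatorname{rank}(\mathrm{id}_{M(t)}) = \dim_\F M(t)$, with infinite dimension read as $\infty$. So $0 \in \I_M^k(t)$ if and only if $\dim_\F M(t)\ge k$. Conversely, if $\I_M^k(t)$ contains any $h$, the down-set property applied to $0 \preceq h$ gives $0 \in \I_M^k(t)$. Hence $\I_M^k(t)\neq\varnothing$ if and only if $0\in\I_M^k(t)$ if and only if $\dim_\F M(t)\ge k$.

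For the interval structure, suppose $\I_M^k(t)$ is nonempty; it then contains $0$. Convexity (condition 1) is immediate from the down-set property: if $s \preceq m \preceq u$ with $s,u \in \I_M^k(t)$ and $m \in \R^n$, then $0 \preceq s \preceq m$ puts $m$ in $\R^n_{\ge0}$, and $m \preceq u$ gives $m\in\I_M^k(t)$. For connectivity (condition 2), given $s,u\in\I_M^k(t)$, use the zigzag $s \succeq 0 \preceq u$. It lies in $\I_M^k(t)$ and has $\ell=2$. One point needs care: the definition of interval is stated for a poset $P$, and here $P$ should be $\R^n_{\ge 0}$ (or $\R^n$). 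The convexity argument works with $m\in\R^n$, since any $m \succeq s \succeq 0$ is automatically nonnegative, so either reading is fine.

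None of these steps is a real obstacle. The only care needed is to justify the rank inequality for composites when the dimensions may be infinite, and to observe that the empty set is excluded by definition, which is why the statement says ``either empty or an interval.''
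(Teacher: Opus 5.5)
Your proof is correct and follows essentially the same route as the paper. Both arguments show that $\I_M^k(t)$ is closed under decreasing coordinates via the functorial factorization of $M_{t-h}^{t+h}$, reduce nonemptiness to $\mathbf{0} \in \I_M^k(t)$ using $\rk_M(t,t)=\dim_{\F}M(t)$, and use the zigzag $s \succeq \mathbf{0} \preceq u$ for connectivity. Your remark that any $m \succeq s \succeq \mathbf{0}$ is automatically nonnegative makes explicit a step the paper leaves implicit in the convexity check.
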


\begin{proof}
    Let $\mathbf{0} = (0, \ldots, 0) \in \R_{\ge 0}^n$. Suppose that $h \in \I_M^k(t)$ and $\mathbf{0} \preceq h' \preceq h$. By functoriality,
    \[
        M_{t - h}^{t + h}
        =
        M_{t + h'}^{t + h}
        \circ
        M_{t - h'}^{t + h'}
        \circ
        M_{t - h}^{t - h'}.
    \]
    Hence, $\rk_M(t - h', t + h') \ge \rk_M(t - h, t + h) \ge k$, so $h' \in \I_M^k(t)$. Thus, $\I_M^k(t)$ is closed under decreasing coordinates. In particular, it is nonempty if and only if it contains $\mathbf{0}$. Since $\rk_M(t, t) = \dim_{\F} M(t)$, this is equivalent to $\dim_{\F} M(t) \ge k$.

    Now suppose that $\I_M^k(t)$ is nonempty. If $h_1, h_2 \in \I_M^k(t)$ and $h_1 \preceq h' \preceq h_2$, then $h' \in \I_M^k(t)$ by the preceding observation. This proves the first condition in the definition of an interval. Moreover, $\mathbf{0} \in \I_M^k(t)$, and any $h_1, h_2 \in \I_M^k(t)$ are connected by the sequence $h_1 \succeq \mathbf{0} \preceq h_2$. Thus, $\I_M^k(t)$ also satisfies the second condition and is therefore an interval.
\end{proof}

The proof also shows that landscape intervals are closed under decreasing coordinates. When $n=1$, the supremum of the landscape interval recovers the one-parameter persistence landscape:
\[
    \lambda_M(k,t)
    =
    \sup \I_M^k(t).
\]

\subsection{Directional multiparameter persistence landscapes}
\label{subsec:directional-landscapes}

The landscape interval records how far the rank condition persists in every nonnegative direction. We summarize these radial extents in the following landscape.

Let
\[
    S_{\infty,+}^{n-1}
    \coloneqq
    \left\{
        v \in \R_{\ge 0}^n :
        \lVert v \rVert_\infty = 1
    \right\}
\]
denote the nonnegative part of the $\ell^\infty$-unit sphere.

\begin{definition}[Directional multiparameter persistence landscape]
\label{def:directional-multiparameter-persistence-landscape}
    Let $M \in \Vect^{\R^n}$. The \emph{directional multiparameter persistence landscape}, or simply \emph{directional landscape}, of $M$ is the function
    \[
        \DL_M
        \colon
        S_{\infty,+}^{n-1}
        \times
        \N_{>0}
        \times
        \R^n
        \longrightarrow
        [0,\infty]
    \]
    defined by
    \[
        \DL_M(v, k, t)
        \coloneqq
        \sup
        \left\{
            \epsilon \ge 0 :
            \rk_M(t - \epsilon v, t + \epsilon v) \ge k
        \right\}.
    \]
\end{definition}

Equivalently,
\[
    \DL_M(v, k, t)
    =
    \sup
    \left\{
        \epsilon \ge 0 :
        \epsilon v \in \I_M^k(t)
    \right\}.
\]
Thus, $\DL_M(v,k,t)$ records the radial extent of the landscape interval $\I_M^k(t)$ in the direction $v$. For fixed $v$ and $t$, the function $\DL_M(v,\bullet,t)$ is nonincreasing, since $\I_M^{k+1}(t)\subseteq\I_M^k(t)$. When $n=1$, we have $S_{\infty,+}^{0}=\{1\}$ and $\DL_M(1,k,t)=\lambda_M(k,t)$. Relationships with Vipond's uniform and weighted landscapes, together with metric and stability properties, are established in Section~\ref{sec:stability}. Further geometric properties and reconstruction from directional values are developed in Section~\ref{sec:finite-characterization}.

\subsection{Integrated multiparameter persistence landscapes}
\label{subsec:integrated-landscapes}

We next summarize the collective extent of the landscape interval rather than its extent along a single direction. Let $\mathcal{L}^m$ denote the $m$-dimensional Lebesgue measure. When $n = 1$, the landscape interval is an initial interval whose length is the one-parameter persistence landscape value. Hence,
\[
    \mathcal{L}^1\bigl(\I_M^k(t)\bigr)
    =
    \lambda_M(k, t).
\]
This motivates using higher-dimensional Lebesgue measure to summarize the geometry of $\I_M^k(t)$. The following measurability result was established in \cite[Proposition~8]{JMLR:v21:Vipond}. For completeness, Appendix~\ref{app:measurability-of-intervals} gives an alternative proof based on the geometry of lower and upper sets.

\begin{proposition}
\label{prop:interval-measurability}
    Every interval in $\R^n$ is Lebesgue-measurable.
\end{proposition}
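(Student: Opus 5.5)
The plan is to write an arbitrary interval $I\subseteq\R^n$ as the intersection of a lower set and an upper set. Each such set differs from an open set by a Lebesgue-null set, so $I$ is Lebesgue-measurable. Let
\[
    D \coloneqq \{x\in\R^n : x\preceq s \text{ for some } s\in I\},
    \qquad
    U \coloneqq \{x\in\R^n : s\preceq x \text{ for some } s\in I\}
\]
be the down-closure and up-closure of $I$. The first step is to check $I=D\cap U$. Clearly $I\subseteq D\cap U$. Conversely, if $s\preceq x\preceq s'$ with $s,s'\in I$, then convexity (condition 1 in the definition of an interval) gives $x\in I$. Connectivity (condition 2) is not needed. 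Since intersections of measurable sets are measurable, it suffices to show that every lower set $D$ (a set closed under $\preceq$-decreasing) is measurable. The upper-set case then follows by applying the reflection $x\mapsto -x$, which preserves Lebesgue measure.

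For a lower set $D$, I will compare $D$ with its interior and its closure. Write $\mathbf{1}_n=(1,\ldots,1)$. The key geometric observation is the following. If $x\in\overline{D}$ and $\delta>0$, then $x-\delta\mathbf{1}_n\in\operatorname{int}D$. To see this, choose $y\in D$ with $\lVert y-x\rVert_\infty<\delta/2$. Then the open box $\{z : z_i<x_i-\delta/2 \text{ for all } i\}$ lies in $\{z: z\preceq y\}\subseteq D$, and this box contains $x-\delta\mathbf{1}_n$. Consequently, $\partial D=\overline{D}\setminus\operatorname{int}D$ meets each diagonal line $\{x_0+s\mathbf{1}_n : s\in\R\}$ in at most one point. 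Indeed, if $x$ and $x+\delta\mathbf{1}_n$ both lay in $\overline{D}$ with $\delta>0$, then $x=(x+\delta\mathbf{1}_n)-\delta\mathbf{1}_n$ would lie in $\operatorname{int}D$.

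The main step, and the only real obstacle, is to conclude that $\mathcal{L}^n(\partial D)=0$. I will use the linear change of coordinates $x\mapsto(P x, \langle x,\mathbf{1}_n\rangle)$. Here $P$ is the orthogonal projection onto $\mathbf{1}_n^{\perp}$, and the map is a scaled isometry, so null sets are preserved. The set $\partial D$ is closed, hence Borel, so Fubini--Tonelli applies to its indicator function. Each fiber over a point of $\mathbf{1}_n^{\perp}$ is a diagonal line, which meets $\partial D$ in at most one point and so has $\mathcal{L}^1$-measure zero. Integrating over $\mathbf{1}_n^\perp$ therefore gives $\mathcal{L}^n(\partial D)=0$.

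Finally, $\operatorname{int}D\subseteq D\subseteq\overline{D}$, and the difference $\overline{D}\setminus\operatorname{int}D$ is null. Since Lebesgue measure is complete, $D$ is measurable. Applying this to $D$ and to the reflected $U$ shows that $I=D\cap U$ is Lebesgue-measurable.
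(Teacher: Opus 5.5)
Your proposal is correct and follows essentially the same route as the paper's appendix proof: write $I$ as the intersection of its down-closure and up-closure using only the convexity condition, show that $\partial D$ meets each diagonal line in at most one point, deduce $\mathcal{L}^n(\partial D)=0$ by Tonelli, and conclude by completeness of Lebesgue measure and reflection. One small inaccuracy: the map $x\mapsto(Px,\langle x,\mathbf{1}_n\rangle)$ is not a scaled isometry, since it stretches the diagonal direction by $\sqrt{n}$ but fixes $\mathbf{1}_n^{\perp}$. It is, however, a linear isomorphism, which is all you need to transport null sets; the paper sidesteps this by parametrizing with the unit vector $\mathbf{1}_n/\sqrt{n}$.
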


By Proposition~\ref{prop:landscape-interval-structure}, every landscape interval is either empty or an interval in $\R^n$. Hence, every landscape interval is Lebesgue-measurable.

For $r \ge 0$, let $Q_r \coloneqq [0, r]^n$. Rather than reduce a landscape interval to its total volume, we measure the portion contained in $Q_r$. As $r$ varies, these truncated volumes yield a measure-based profile of the landscape interval's scale-dependent geometry by recording how its volume accumulates across scales. Since $\mathcal{L}^n(Q_r) = r^n$, dividing by $r^{n-1}$ normalizes the truncated volume to a quantity with units of length and ensures that it is bounded above by $r$. This leads to the following definition.

\begin{definition}[Integrated multiparameter persistence landscape]
\label{def:integrated-multiparameter-persistence-landscape}
    Let $M \in \Vect^{\R^n}$. The \emph{integrated multiparameter persistence landscape}, or simply \emph{integrated landscape}, of $M$ is the function
    \[
        \IL_M
        \colon
        \R_{\ge 0}
        \times
        \N_{>0}
        \times
        \R^n
        \longrightarrow
        [0,\infty)
    \]
    defined, for $r > 0$, by
    \[
        \IL_M(r, k, t)
        \coloneqq
        \frac{
            \mathcal{L}^n\bigl(Q_r \cap \I_M^k(t)\bigr)
        }{
            r^{n-1}
        },
    \]
    and by $\IL_M(0, k, t) \coloneqq 0$.
\end{definition}

For $r > 0$, the inequality $\mathcal{L}^n\bigl(Q_r \cap \I_M^k(t)\bigr) \le r^n$ gives $0 \le \IL_M(r, k, t) \le r$. Thus, $\IL_M(0, k, t) = 0$ is the natural extension at $r = 0$. When $n = 1$,
\[
    \IL_M(r, k, t)
    =
    \min\left\{
        r,
        \lambda_M(k, t)
    \right\}.
\]
Consequently, the one-parameter persistence landscape can be recovered from the integrated landscape by
\[
    \lambda_M(k, t)
    =
    \lim_{r \to \infty}
    \IL_M(r, k, t).
\]
In every dimension, Proposition~\ref{prop:uniform-recovered-from-integrated} shows that the uniform landscape can be recovered pointwise from the integrated landscape:
\[
    \UL_M(k, t)
    =
    \sup
    \left\{
        r \ge 0 :
        \IL_M(r, k, t) = r
    \right\}.
\]
Stability bounds for the integrated landscape with respect to erosion and interleaving distances are established in Section~\ref{sec:stability}.

The following example illustrates how the integrated landscape reflects the changing geometry of $Q_r \cap \I_M^k(t)$ as the scale $r$ varies.

\begin{example}
\label{ex:triangular-landscape-interval}
    Let $a > 0$ and consider the interval
    \[
        J_a
        \coloneqq
        \left\{
            x = (x_1, x_2) \in \R^2 :
            x_1 + x_2 \le a
        \right\}.
    \]
    Let $M = \mathds{1}_{J_a} \in \Vect^{\R^2}$ be the interval module supported on $J_a$. For $h \in \R_{\ge 0}^2$, we have $-h \in J_a$, while $h \in J_a$ if and only if $h_1 + h_2 \le a$. Therefore, the structure map $M_{-h}^{h}$ has rank one if and only if $h_1 + h_2 \le a$, and hence
    \[
        \I_M^1(\mathbf{0})
        =
        \left\{
            h = (h_1, h_2) \in \R_{\ge 0}^2 :
            h_1 + h_2 \le a
        \right\}.
    \]
    Thus, $\I_M^1(\mathbf{0})$ is the triangle with vertices $(0, 0)$, $(a, 0)$, and $(0, a)$.

    The shape of $Q_r \cap \I_M^1(\mathbf{0})$ depends on $r$. If $0 < r \le a/2$, the intersection is the entire square $Q_r$. If $a/2 < r < a$, it is $Q_r$ with a triangle of side length $2r - a$ removed. If $r \ge a$, it is the entire triangle $\I_M^1(\mathbf{0})$. Consequently,
    \[
        \IL_M(r, 1, \mathbf{0})
        =
        \begin{cases}
            r,
            & 0 < r \le \dfrac{a}{2},\\[2mm]
            r - \dfrac{(2r - a)^2}{2r},
            & \dfrac{a}{2} < r < a,\\[3mm]
            \dfrac{a^2}{2r},
            & r \ge a.
        \end{cases}
    \]
    By definition, $\IL_M(0, 1, \mathbf{0}) = 0$. For $k \ge 2$, the landscape interval is empty, and hence $\IL_M(r, k, \mathbf{0}) = 0$ for every $r \ge 0$.
\end{example}

\subsection{Analytic properties of the integrated landscape}
\label{subsec:integrated-landscape-properties}

We conclude this section by establishing bounds, monotonicity, continuity, and Lipschitz properties of the integrated landscape.

\begin{theorem}
\label{thm:integrated-landscape-properties}
    Let $M \in \Vect^{\R^n}$. For every $r \in \R_{\ge 0}$, $k \in \N_{>0}$, and $t \in \R^n$, the integrated landscape satisfies:
    \begin{enumerate}
        \item $0 \le \IL_M(r, k, t) \le r$;
        \item $\IL_M(\bullet, k, t)$ is continuous;
        \item $\IL_M(r, \bullet, t)$ is nonincreasing;
        \item $\IL_M(r, k, \bullet)$ is $1$-Lipschitz with respect to the $\ell^1$-norm and hence $n$-Lipschitz with respect to the $\ell^\infty$-norm.
    \end{enumerate}
\end{theorem}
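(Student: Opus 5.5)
Items (1) and (3) are immediate, so I would dispose of them first. For (1), the case $r=0$ holds by definition. For $r>0$, monotonicity of measure gives $0\le \mathcal{L}^n(Q_r\cap\I_M^k(t))\le \mathcal{L}^n(Q_r)=r^n$; dividing by $r^{n-1}$ yields the bound, as already noted after Definition~\ref{def:integrated-multiparameter-persistence-landscape}. For (3), I use $\I_M^{k+1}(t)\subseteq\I_M^k(t)$, since a rank at least $k+1$ is at least $k$, together with monotonicity of Lebesgue measure. Measurability throughout comes from Proposition~\ref{prop:landscape-interval-structure} and Proposition~\ref{prop:interval-measurability}.

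For (2), I write $f(r)=\mathcal{L}^n(Q_r\cap\I)$ with $\I=\I_M^k(t)$. For $0\le r<r'$ we have $Q_r\subseteq Q_{r'}$, so
\[
0\le f(r')-f(r)\le \mathcal{L}^n(Q_{r'}\setminus Q_r)=r'^n-r^n .
\]
Hence $f$ is continuous on $[0,\infty)$, and therefore $\IL_M(r,k,t)=f(r)/r^{n-1}$ is continuous on $(0,\infty)$. Continuity at $0$ follows from (1), since $0\le \IL_M(r,k,t)\le r\to 0$.

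Item (4) is the main step. By the triangle inequality it suffices to show that moving $t$ by $\delta e_i$ changes $\IL_M$ by at most $|\delta|$, and I can take $r>0$. The key observation is a shift inclusion: if $h\in\I_M^k(t)$ and $s=t+\delta e_i$, then
\[
h':=\bigl(h-|\delta|e_i\bigr)\ \text{(when $h_i\ge|\delta|$)} \quad\text{satisfies}\quad [s-h',s+h']\subseteq[t-h,t+h]
\]
coordinatewise. Indeed, in coordinate $i$ the endpoints become $t_i+\delta\mp(h_i-|\delta|)$, which lie in $[t_i-h_i,t_i+h_i]$. Functoriality, as in the proof of Proposition~\ref{prop:landscape-interval-structure}, then gives $\rk_M(s-h',s+h')\ge\rk_M(t-h,t+h)\ge k$, so $h'\in\I_M^k(s)$.

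Consequently $\I_M^k(s)\supseteq T\bigl(\I_M^k(t)\cap\{h_i\ge|\delta|\}\bigr)$, where $T$ is translation by $-|\delta|e_i$. Intersecting with $Q_r$, I would bound the measure lost: the points of $Q_r\cap\I_M^k(t)$ whose translate lands in $Q_r$ are those with $h_i\ge|\delta|$. The only other loss is the slab $\{h\in Q_r: h_i<|\delta|\}$, which has measure at most $|\delta| r^{n-1}$. Since translation preserves measure,
\[
\mathcal{L}^n(Q_r\cap\I_M^k(s))\ge \mathcal{L}^n(Q_r\cap\I_M^k(t))-|\delta|\,r^{n-1}.
\]
Dividing by $r^{n-1}$ and exchanging the roles of $s$ and $t$ gives $|\IL_M(r,k,s)-\IL_M(r,k,t)|\le|\delta|$. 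Summing over coordinates yields the $\ell^1$ bound, and $\|x\|_1\le n\|x\|_\infty$ gives the $\ell^\infty$ statement.

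The delicate point is the bookkeeping of the slab and of the translate staying inside $Q_r$. Translating toward $0$ keeps points inside $Q_r$, so the only discarded region is the slab, and the estimate closes.
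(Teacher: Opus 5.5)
Your proposal is correct and follows essentially the same route as the paper: items (1)--(3) are argued identically, and for (4) your inclusion $[s-h',s+h']\subseteq[t-h,t+h]$ is exactly the paper's translation lemma (Lemma~\ref{lem:landscape-interval-translation}), followed by the same slab estimate of measure at most $|\delta|\,r^{n-1}$ and the same coordinate-by-coordinate telescoping. The only cosmetic difference is that you handle both signs of $\delta$ at once via $|\delta|$, whereas the paper first orders the two centers so that $a\preceq b$ and states the translation in both directions.
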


\begin{proof}
    For part~(1), if $r > 0$, then $0 \le \mathcal{L}^n\bigl(Q_r \cap \I_M^k(t)\bigr) \le r^n$. Dividing by $r^{n-1}$ gives
    $0 \le \IL_M(r, k, t) \le r$. The case $r = 0$ follows from the definition.

    For part~(2), set $V(r) \coloneqq
    \mathcal{L}^n\bigl(Q_r \cap \I_M^k(t)\bigr)$.
    If $0 \le r \le s$, then
    \[
        0
        \le
        V(s) - V(r)
        \le
        \mathcal{L}^n(Q_s \setminus Q_r)
        =
        s^n - r^n.
    \]
    Hence, $V$ is continuous. It follows that
    $\IL_M(r, k, t) = V(r)/r^{n-1}$ is continuous for $r > 0$. Continuity at $r = 0$ follows from
    $0 \le \IL_M(r, k, t) \le r$.

    Part~(3) follows from $\I_M^{k+1}(t) \subseteq \I_M^k(t)$.

    The proof of part~(4), which uses two translation lemmas for landscape intervals, is given in
    Appendix~\ref{app:integrated-landscape-lipschitz}.
\end{proof}

%% file: sections/distances_and_stability.tex
\section{Distances and Stability}
\label{sec:stability}

Throughout this section, all persistence modules are indexed by $\R^n$. We first recall the interleaving distance and erosion distances for restricted generalized rank invariants. We then establish relationships among the landscapes introduced above, define an erosion distance for families of landscape intervals, and compare the resulting landscape distances with erosion distances. Finally, we derive stability bounds with respect to the interleaving distance.

\subsection{Interleaving and erosion distances}
\label{subsec:interleaving-erosion}

Interleaving and erosion distances provide two complementary frameworks for formulating stability in persistence theory. Interleaving distances have become central tools for comparing persistence modules and related constructions \cite{BauerLesnick2015,BotnanLesnick2018,Bjerkevik2021Stability,BlumbergLesnick2024Stability,BlumbergLesnick2023Universality,mcfaddin2026interleaving}. Erosion distances and their variants likewise provide stability frameworks for generalized persistence diagrams, rank invariants, and other persistence constructions \cite{KimMemoli2021Spatiotemporal,MemoliStefanouZhou2024CupProducts,XinMukherjeeSamagaDey2023GRIL,ClauseDeyMemoliWang2023MetaDiagrams}. Comparing these two frameworks therefore provides a natural setting for studying the stability of the landscape constructions introduced above.

We begin with the interleaving distance. Interleavings were introduced for $1$-parameter persistence modules in \cite{ChazalEtAl2009Proximity}. Their categorical formulation was developed in \cite{BubenikScott2014}, their multidimensional theory in \cite{lesnick2015interleaving}, and interleaving metrics for persistence modules indexed by arbitrary preordered sets in \cite{BubenikDeSilvaScott2015}.

For $v \in \R_{\ge 0}^n$, let $\tau_v \colon \R^n \longrightarrow \R^n$ be the translation $\tau_v(a) = a + v$.

\begin{definition}[Shift and transition morphism]
    Let $v \in \R_{\ge 0}^n$. The \emph{$v$-shift} is the endofunctor
    \[
        S_v
        \colon
        \Vect^{\R^n}
        \longrightarrow
        \Vect^{\R^n},
        \qquad
        S_v M
        \coloneqq
        M \circ \tau_v.
    \]
    For a natural transformation $\alpha \colon M \Rightarrow N$, its $v$-shift is defined by $\bigl(S_v\alpha\bigr)_a \coloneqq \alpha_{a + v}$. The \emph{$v$-transition morphism} of $M$ is the natural transformation
    \[
        \varphi_M^v
        \colon
        M
        \Rightarrow
        S_v M,
        \qquad
        \bigl(\varphi_M^v\bigr)_a
        \coloneqq
        M_a^{a + v}.
    \]
\end{definition}

\begin{definition}[$v$-interleaving]
    Let $M, N \in \Vect^{\R^n}$ and $v \in \R_{\ge 0}^n$. The persistence modules $M$ and $N$ are \emph{$v$-interleaved} if there exist natural transformations
    \[
        f \colon M \Rightarrow S_v N
        \qquad\text{and}\qquad
        g \colon N \Rightarrow S_v M
    \]
    satisfying
    \[
        S_v(g) \circ f
        =
        \varphi_M^{2v}
        \qquad\text{and}\qquad
        S_v(f) \circ g
        =
        \varphi_N^{2v}.
    \]
    Equivalently, for every $a \in \R^n$,
    \[
        g_{a + v} \circ f_a
        =
        M_a^{a + 2v}
        \qquad\text{and}\qquad
        f_{a + v} \circ g_a
        =
        N_a^{a + 2v}.
    \]
\end{definition}


\begin{definition}[Interleaving distance]
    Let $M, N \in \Vect^{\R^n}$. The \emph{interleaving distance} between $M$ and $N$ is
    \[
        d_I(M, N)
        \coloneqq
        \inf
        \left\{
            \varepsilon \ge 0 :
            M \text{ and } N
            \text{ are }
            \varepsilon \mathbf{1}_n\text{-interleaved}
        \right\}.
    \]
    We set $d_I(M, N) = \infty$ if no such interleaving exists.
\end{definition}

We next recall the erosion distance for restricted generalized rank invariants. An earlier related stable pseudodistance for multidimensional persistent homology with torsion was introduced in \cite{Frosini2013StableComparison}. The erosion distance was introduced for generalized persistence diagrams in \cite{Patel2018GPD} and extended to rank invariants of generalized persistence modules in \cite{Puuska2020Erosion}; the latter work also shows that the earlier pseudodistance is a special case of the erosion framework. For integer-valued rank functions, the erosion distance can also be formulated as an interleaving distance between poset maps \cite{KimMemoliStefanou2024}. The formulation below for restrictions of the generalized rank invariant to thickening-closed families of intervals follows \cite{ClauseKimMemoli2024GRI}.

Let $\operatorname{Int}(\R^n)$ denote the collection of intervals in $\R^n$. For $I \in \operatorname{Int}(\R^n)$ and $\varepsilon \ge 0$, define the \emph{$\varepsilon$-thickening} of $I$ by
\[
    I^\varepsilon
    \coloneqq
    \left\{
        p \in \R^n :
        \text{there exists } q \in I
        \text{ such that }
        \lVert p - q\rVert_\infty \le \varepsilon
    \right\}.
\]
A family $\mathcal{A} \subseteq \operatorname{Int}(\R^n)$ is \emph{closed under thickenings} if $I^\varepsilon \in \mathcal{A}$ for every $I \in \mathcal{A}$ and $\varepsilon \ge 0$.

For $M \in \Vect^{\R^n}$ and a family $\mathcal{A} \subseteq \operatorname{Int}(\R^n)$, let
\[
    \rk_M^{\mathcal{A}}
    \colon
    \mathcal{A}
    \longrightarrow
    \overline{\Z}_{\ge 0},
    \qquad
    \rk_M^{\mathcal{A}}(I)
    \coloneqq
    \rk_M^{\mathrm{gen}}(I),
\]
denote the restriction of the generalized rank invariant to $\mathcal{A}$.

\begin{definition}[Erosion distance]
    Let $M, N \in \Vect^{\R^n}$, and let $\mathcal{A} \subseteq \operatorname{Int}(\R^n)$ be closed under thickenings. We say that there is an \emph{$\varepsilon$-erosion} between $\rk_M^{\mathcal{A}}$ and $\rk_N^{\mathcal{A}}$ if
    \[
        \rk_M^{\mathcal{A}}(I^\varepsilon)
        \le
        \rk_N^{\mathcal{A}}(I)
        \qquad\text{and}\qquad
        \rk_N^{\mathcal{A}}(I^\varepsilon)
        \le
        \rk_M^{\mathcal{A}}(I)
    \]
    for every $I \in \mathcal{A}$. The \emph{erosion distance} between $\rk_M^{\mathcal{A}}$ and $\rk_N^{\mathcal{A}}$ is
    \[
        d_E\left(
            \rk_M^{\mathcal{A}},
            \rk_N^{\mathcal{A}}
        \right)
        \coloneqq
        \inf
        \left\{
            \varepsilon \ge 0 :
            \text{there is an $\varepsilon$-erosion between }
            \rk_M^{\mathcal{A}}
            \text{ and }
            \rk_N^{\mathcal{A}}
        \right\}.
    \]
    We set $d_E\left(\rk_M^{\mathcal{A}}, \rk_N^{\mathcal{A}}\right) = \infty$ if no such erosion exists.
\end{definition}

For brevity, we write
\[
    d_E^{\mathcal{A}}(M, N)
    \coloneqq
    d_E\left(
        \rk_M^{\mathcal{A}},
        \rk_N^{\mathcal{A}}
    \right).
\]

Stability of the erosion distance with respect to the interleaving distance was established for rank invariants in \cite[Theorem~3.11]{Puuska2020Erosion}. For restricted generalized rank invariants, the stability theorem \cite[Theorem~H]{ClauseKimMemoli2024GRI} gives the following result. Although that theorem is stated for pointwise finite-dimensional modules, its proof factors one canonical limit-to-colimit map through another and therefore also applies under the extended-rank convention adopted here.

\begin{theorem}[Erosion--interleaving stability]
\label{thm:erosion-interleaving-stability}
    Let $M, N \in \Vect^{\R^n}$, and let $\mathcal{A} \subseteq \operatorname{Int}(\R^n)$ be closed under thickenings. Then
    \[
        d_E^{\mathcal{A}}(M, N)
        \le
        d_I(M, N).
    \]
\end{theorem}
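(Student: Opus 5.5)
The plan is to follow the structure of the cited stability theorem \cite[Theorem~H]{ClauseKimMemoli2024GRI}. Fix $\varepsilon$ with $M$ and $N$ $\varepsilon\mathbf{1}_n$-interleaved via $f\colon M\Rightarrow S_{\varepsilon\mathbf{1}_n}N$ and $g\colon N\Rightarrow S_{\varepsilon\mathbf{1}_n}M$. We will show there is an $\varepsilon$-erosion between $\rk_M^{\mathcal{A}}$ and $\rk_N^{\mathcal{A}}$. Taking the infimum over such $\varepsilon$ then gives the claim; if no interleaving exists, the right-hand side is $\infty$ and there is nothing to prove. By symmetry it suffices to prove $\rk_M^{\mathrm{gen}}(I^\varepsilon)\le \rk_N^{\mathrm{gen}}(I)$ for every $I\in\mathcal{A}$. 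Closure under thickenings guarantees $I^\varepsilon\in\mathcal{A}$.

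The key step is to factor $\psi_{M|_{I^\varepsilon}}$ through $\psi_{N|_I}$. For $p\in I$, the points $p\pm\varepsilon\mathbf{1}_n$ lie in $I^\varepsilon$.
\begin{itemize}
\item Define a map $\alpha\colon\varprojlim_{I^\varepsilon}M\to\varprojlim_{I}N$ by sending a compatible family $(x_q)_{q\in I^\varepsilon}$ to $\bigl(f_{p-\varepsilon\mathbf{1}_n}(x_{p-\varepsilon\mathbf{1}_n})\bigr)_{p\in I}$. Compatibility follows from naturality of $f$ and compatibility of $(x_q)$.
\item Define a map $\beta\colon\varinjlim_{I}N\to\varinjlim_{I^\varepsilon}M$ induced by the maps $N(p)\xrightarrow{g_p}M(p+\varepsilon\mathbf{1}_n)\to\varinjlim_{I^\varepsilon}M$. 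These are compatible by naturality of $g$.
\end{itemize}
We then check that $\beta\circ\psi_{N|_I}\circ\alpha=\psi_{M|_{I^\varepsilon}}$. On a family $(x_q)$, choose any $p\in I$. The composite sends $(x_q)$ to the image of $g_p f_{p-\varepsilon\mathbf{1}_n}(x_{p-\varepsilon\mathbf{1}_n})=M_{p-\varepsilon\mathbf{1}_n}^{p+\varepsilon\mathbf{1}_n}(x_{p-\varepsilon\mathbf{1}_n})$ in the colimit, using the interleaving identity. By compatibility this equals $x_{p+\varepsilon\mathbf{1}_n}$. Its class in $\varinjlim_{I^\varepsilon}M$ is exactly $\psi_{M|_{I^\varepsilon}}((x_q))$, because the canonical map sends a family to the class of any of its components. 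The rank inequality then follows, since the rank of a composite is at most the rank of any factor. This argument is valid with infinite ranks under the extended-rank convention.

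The main care point is the precise description of the canonical limit-to-colimit map for intervals, recalled in Appendix~\ref{app:categorical-background}. Its independence of the chosen index uses connectivity of the interval (condition 2), and we need this independence for both $I$ and $I^\varepsilon$. We also need $I^\varepsilon$ to again be an interval, which is given by membership in $\mathcal{A}$. Everything else is routine naturality bookkeeping, and no pointwise finite-dimensionality is used anywhere.
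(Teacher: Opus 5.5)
Your proposal is correct and follows essentially the same route as the paper, which invokes \cite[Theorem~H]{ClauseKimMemoli2024GRI} and notes that its proof factors one canonical limit-to-colimit map through another. This is precisely your factorization $\psi_{M|_{I^\varepsilon}}=\beta\circ\psi_{N|_I}\circ\alpha$, and your explicit check (connectivity of $I$ and $I^\varepsilon$, together with $g_{p}\circ f_{p-\varepsilon\mathbf{1}_n}=M_{p-\varepsilon\mathbf{1}_n}^{p+\varepsilon\mathbf{1}_n}$) confirms that the argument needs no pointwise finite-dimensionality under the extended-rank convention.
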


We apply the erosion distance to the families of closed axis-aligned rectangles and cubes. For $t \in \R^n$ and $h \in \R_{\ge 0}^n$, define
\[
    R(t, h)
    \coloneqq
    \prod_{i = 1}^n
    [t_i - h_i, t_i + h_i],
\]
and let
\[
    \mathcal{R}_n
    \coloneqq
    \left\{
        R(t, h) :
        t \in \R^n,\ 
        h \in \R_{\ge 0}^n
    \right\}.
\]
We allow zero coordinates in $h$, so $\mathcal{R}_n$ contains degenerate rectangles. For $t \in \R^n$ and $r \ge 0$, define
\[
    C(t, r)
    \coloneqq
    R(t, r \mathbf{1}_n)
    =
    \left\{
        x \in \R^n :
        \lVert x - t \rVert_\infty \le r
    \right\},
\]
and let
\[
    \mathcal{C}_n
    \coloneqq
    \left\{
        C(t, r) :
        t \in \R^n,\ 
        r \ge 0
    \right\}.
\]
Thus, $\mathcal{C}_n \subseteq \mathcal{R}_n$. Both families are closed under thickenings because
\[
    R(t, h)^\varepsilon
    =
    R\bigl(t, h + \varepsilon \mathbf{1}_n\bigr),
    \qquad
    C(t, r)^\varepsilon
    =
    C(t, r + \varepsilon).
\]

\subsection{Relations among the landscapes}
\label{subsec:relations-among-landscapes}

We first establish the precise relationship between the weighted and directional landscapes. For $w \in \mathcal{W}_n$, write $w^{-1} \coloneqq (w_1^{-1}, \ldots, w_n^{-1})$.

\begin{proposition}[Directional--weighted correspondence]
\label{prop:directional-weighted-relation}
    For every $M \in \Vect^{\R^n}$, $w \in \mathcal{W}_n$, $k \in \N_{>0}$, and $t \in \R^n$,
    \[
        \DL_M
        \left(
            \frac{w^{-1}}
            {\lVert w^{-1} \rVert_\infty},
            k,
            t
        \right)
        =
        \lVert w^{-1} \rVert_\infty
        \WL_M(w, k, t).
    \]
\end{proposition}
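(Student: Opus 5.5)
The plan is to reduce both sides to the single quantity $\sup\{\epsilon \ge 0 : \epsilon w^{-1} \in \I_M^k(t)\}$. The key tool is the downward-closedness of landscape intervals established in the proof of Proposition~\ref{prop:landscape-interval-structure}.

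\emph{Step 1: rewrite the weighted landscape.} For $h \in \R_{\ge 0}^n$ we have $\lVert h\rVert_{\infty,w} \le \epsilon$ if and only if $w_i h_i \le \epsilon$ for all $i$, that is, $h \preceq \epsilon w^{-1}$. So the set $\{h \in \R_{\ge 0}^n : \lVert h\rVert_{\infty,w}\le\epsilon\}$ is exactly the box $\{h : \mathbf{0}\preceq h\preceq \epsilon w^{-1}\}$. The condition in Definition~\ref{def:weighted-multiparameter-persistence-landscape} then reads: every $h$ in this box lies in $\I_M^k(t)$. Since landscape intervals are closed under decreasing coordinates, this holds if and only if $\epsilon w^{-1}\in \I_M^k(t)$. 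One inclusion uses that $\epsilon w^{-1}$ is itself in the box; the other uses downward closure. Hence
\[
    \WL_M(w,k,t)=\sup\{\epsilon\ge 0:\epsilon w^{-1}\in\I_M^k(t)\}.
\]
This also justifies the reformulation quoted in the introduction.

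\emph{Step 2: rescale the directional landscape.} Set $c \coloneqq \lVert w^{-1}\rVert_\infty$, which is positive and finite, and $v \coloneqq w^{-1}/c \in S_{\infty,+}^{n-1}$. By the equivalent form of Definition~\ref{def:directional-multiparameter-persistence-landscape},
\[
    \DL_M(v,k,t)=\sup\{\delta\ge 0:\delta v\in \I_M^k(t)\}.
\]
Substituting $\delta = c\epsilon$ is a bijection of $[0,\infty)$, and $\delta v = \epsilon w^{-1}$. Therefore this supremum equals $c\cdot\sup\{\epsilon\ge0:\epsilon w^{-1}\in\I_M^k(t)\}$, which by Step 1 is $c\,\WL_M(w,k,t)$.

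\emph{Step 3: check the edge conventions.} If the set in question is empty, which happens exactly when $\dim_{\F} M(t) < k$ by Proposition~\ref{prop:landscape-interval-structure}, both sides are $0$ under the convention $\sup\varnothing = 0$. If the supremum is $\infty$, the identity still holds in $[0,\infty]$ because $c$ is a positive finite constant.

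The only step with any content is the box identification in Step 1 combined with downward closure, and I do not expect a genuine obstacle there. The rest is a routine change of variables.
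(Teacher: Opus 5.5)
Your proof is correct and follows the paper's argument. The paper also identifies the weighted-norm ball with the box $\{h \in \R_{\ge 0}^n : h \preceq \varepsilon w^{-1}\}$, uses downward closure of $\I_M^k(t)$ to get $\WL_M(w,k,t)=\sup\{\varepsilon\ge 0:\varepsilon w^{-1}\in\I_M^k(t)\}$, and then rescales by $\lVert w^{-1}\rVert_\infty$. Your explicit check of the empty and infinite cases is a harmless addition.
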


\begin{proof}
    For $h \in \R_{\ge 0}^n$, recall that
    $\lVert h \rVert_{\infty,w} = \max_i w_i h_i$. Hence, for every
    $\varepsilon \ge 0$,
    \[
        \left\{
            h \in \R_{\ge 0}^n :
            \lVert h \rVert_{\infty,w}
            \le
            \varepsilon
        \right\}
        =
        \left\{
            h \in \R_{\ge 0}^n :
            h \preceq \varepsilon w^{-1}
        \right\}.
    \]
    Since $\I_M^k(t)$ is closed under decreasing coordinates, the rank condition in the definition of $\WL_M(w, k, t)$ holds for every such $h$ if and only if $\varepsilon w^{-1} \in \I_M^k(t)$. Therefore,
    \[
        \WL_M(w, k, t)
        =
        \sup
        \left\{
            \varepsilon \ge 0 :
            \varepsilon w^{-1}
            \in
            \I_M^k(t)
        \right\}.
    \]
    Moreover,
    \[
    \begin{aligned}
        \DL_M
        \left(
            \frac{w^{-1}}
            {\lVert w^{-1} \rVert_\infty},
            k,
            t
        \right)
        &=
        \sup
        \left\{
            a \ge 0 :
            \frac{a}{\lVert w^{-1} \rVert_\infty}
            w^{-1}
            \in
            \I_M^k(t)
        \right\}\\
        &=
        \lVert w^{-1} \rVert_\infty
        \WL_M(w, k, t).
    \end{aligned}
    \]
\end{proof}

\begin{proposition}[Recovery of the uniform landscape]
\label{prop:uniform-recovered-from-integrated}
    For every $M \in \Vect^{\R^n}$, $k \in \N_{>0}$, and $t \in \R^n$,
    \[
        \UL_M(k, t)
        =
        \WL_M(\mathbf{1}_n, k, t)
        =
        \DL_M(\mathbf{1}_n, k, t)
        =
        \sup
        \left\{
            r \ge 0 :
            \IL_M(r, k, t) = r
        \right\}.
    \]
\end{proposition}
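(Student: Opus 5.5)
The plan is to prove the chain of equalities from left to right, with almost all of the work in the last one.

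The first equality, $\UL_M(k,t)=\WL_M(\mathbf{1}_n,k,t)$, was already recorded after Definition~\ref{def:weighted-multiparameter-persistence-landscape}. For $w=\mathbf{1}_n$ the weighted norm is just $\lVert\cdot\rVert_\infty$, so this is immediate.

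For the second equality, apply Proposition~\ref{prop:directional-weighted-relation} with $w=\mathbf{1}_n$. Here $w^{-1}=\mathbf{1}_n$ and $\lVert w^{-1}\rVert_\infty=1$, which gives $\DL_M(\mathbf{1}_n,k,t)=\WL_M(\mathbf{1}_n,k,t)$.

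For the third equality, write $D\coloneqq\DL_M(\mathbf{1}_n,k,t)$ and $S\coloneqq\sup\{r\ge0:\IL_M(r,k,t)=r\}$. The set defining $S$ always contains $r=0$, so $S\ge 0$. The key observation is about the cubes $Q_r=[0,r]^n$. Since $\I_M^k(t)$ is closed under decreasing coordinates (proof of Proposition~\ref{prop:landscape-interval-structure}), we have $Q_r\subseteq\I_M^k(t)$ if and only if $r\mathbf{1}_n\in\I_M^k(t)$, because $r\mathbf{1}_n$ is the top corner of $Q_r$.

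To show $D\le S$, take any $\varepsilon$ with $\varepsilon\mathbf{1}_n\in\I_M^k(t)$. Then $Q_\varepsilon\subseteq\I_M^k(t)$. If $\varepsilon>0$, this gives $\IL_M(\varepsilon,k,t)=\varepsilon^n/\varepsilon^{n-1}=\varepsilon$. Hence $\varepsilon\le S$, and taking the supremum over $\varepsilon$ gives $D\le S$.

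To show $S\le D$, take $r>0$ with $\IL_M(r,k,t)=r$. Then $\mathcal{L}^n(Q_r\cap\I_M^k(t))=r^n$, so $Q_r\setminus\I_M^k(t)$ is a null set. This is where some care is needed, because a null set can still contain points, possibly the corner $r\mathbf{1}_n$ itself. I would argue as follows.

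Fix $0<s<r$. Suppose $s\mathbf{1}_n\notin\I_M^k(t)$. Because $\I_M^k(t)$ is closed under decreasing coordinates, its complement in $\R_{\ge0}^n$ is closed under increasing coordinates. So every $h$ with $s\mathbf{1}_n\preceq h\preceq r\mathbf{1}_n$ also lies outside $\I_M^k(t)$. That box has positive measure $(r-s)^n$ and sits inside $Q_r$, which contradicts the null-set conclusion. Therefore $s\mathbf{1}_n\in\I_M^k(t)$, so $D\ge s$ for every $s<r$, and hence $D\ge r$. The case $r=0$ is trivial. Taking the supremum over $r$ gives $S\le D$.

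The infinite cases are handled by the same arguments, since the inequalities are proved for each finite $r$ or $\varepsilon$ separately and the suprema may then be $\infty$.

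The step I expect to be the main obstacle is passing from a measure statement to a pointwise one in the proof of $S\le D$. The upper-set box argument above is the tool I would use to overcome it.
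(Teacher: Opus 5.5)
Your proof is correct and follows the paper's argument: the first two equalities come from the definitions and Proposition~\ref{prop:directional-weighted-relation} with $w=\mathbf{1}_n$. For the last equality, downward closure of $\I_M^k(t)$ gives $Q_\varepsilon\subseteq\I_M^k(t)$ for one inequality, and for the other, a point of $Q_r$ strictly below $r\mathbf{1}_n$ outside $\I_M^k(t)$ would force the positive-measure box up to $r\mathbf{1}_n$ to miss $\I_M^k(t)$. The only cosmetic difference is that you apply this box argument directly to the diagonal point $s\mathbf{1}_n$, whereas the paper applies it to an arbitrary $h\in Q_s$ to first obtain $Q_s\subseteq\I_M^k(t)$.
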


\begin{proof}
    The equality $\UL_M(k, t) = \WL_M(\mathbf{1}_n, k, t)$ follows directly from the definitions. Since $\mathbf{1}_n^{-1} = \mathbf{1}_n$ and $\lVert \mathbf{1}_n \rVert_\infty = 1$, Proposition~\ref{prop:directional-weighted-relation} gives
    \[
        \WL_M(\mathbf{1}_n, k, t)
        =
        \DL_M(\mathbf{1}_n, k, t).
    \]

    It remains to prove the final equality. Suppose first that $0 \le r < \UL_M(k, t)$. Then there exists $a > r$ such that $a \mathbf{1}_n \in \I_M^k(t)$. Since $\I_M^k(t)$ is closed under decreasing coordinates, $Q_r \subseteq \I_M^k(t)$, and hence $\IL_M(r, k, t) = r$.

    Conversely, suppose that $r > 0$ and $\IL_M(r, k, t) = r$. We claim that $Q_s \subseteq \I_M^k(t)$ for every $0 \le s < r$. Otherwise, there would exist $h \in Q_s \setminus \I_M^k(t)$. Since $\I_M^k(t)$ is closed under decreasing coordinates, the box $[h, r \mathbf{1}_n]$ would be disjoint from $\I_M^k(t)$. Because $h_i \le s < r$ for every $i$, this box has positive Lebesgue measure, contradicting
    \[
        \mathcal{L}^n
        \bigl(
            Q_r \cap \I_M^k(t)
        \bigr)
        =
        r^n.
    \]
    Therefore, $s \mathbf{1}_n \in \I_M^k(t)$ for every $s < r$, so $r \le \UL_M(k, t)$. Since $\IL_M(0, k, t) = 0$, these two implications prove the final equality.
\end{proof}

Because some landscape values may be infinite, we use the following extended-distance convention. For $a, b \in [0,\infty]$, define
\[
    d_{\mathrm{ext}}(a, b)
    \coloneqq
    \begin{cases}
        |a - b|,
        & a, b < \infty,\\
        0,
        & a = b = \infty,\\
        \infty,
        & \text{otherwise}.
    \end{cases}
\]
For functions $f, g \colon X \longrightarrow [0,\infty]$, define
\[
    \lVert f - g \rVert_\infty
    \coloneqq
    \sup_{x \in X}
    d_{\mathrm{ext}}\bigl(f(x), g(x)\bigr).
\]
Accordingly, every supremum distance between landscapes below is understood in this extended sense and may take the value $\infty$.

The preceding proposition shows that the integrated landscape determines the uniform landscape. The following estimate quantifies this recovery when the uniform landscape values have a common finite upper bound over all centers $t$ and rank thresholds $k$. This assumption holds, for example, when both modules vanish outside a fixed bounded rectangle.

\begin{proposition}[Uniform--integrated distance comparison]
\label{prop:uniform-integrated-comparison}
    Let $M, N \in \Vect^{\R^n}$. Suppose that there exists a finite constant $R > 0$ such that, for every $k \in \N_{>0}$ and $t \in \R^n$,
    \[
        \max\{\UL_M(k,t),\UL_N(k,t)\}
        \le R.
    \]
    Then
    \[
        \lVert \UL_M-\UL_N \rVert_\infty
        \le
        R^{(n-1)/n}
        \lVert \IL_M-\IL_N \rVert_\infty^{1/n}.
    \]
\end{proposition}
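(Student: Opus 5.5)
Fix $k$ and $t$, and write $u_M \coloneqq \UL_M(k,t)$, $u_N \coloneqq \UL_N(k,t)$ and $\delta \coloneqq \lVert \IL_M - \IL_N \rVert_\infty$. Both $u_M$ and $u_N$ are finite, being at most $R$. If $\delta = \infty$ there is nothing to prove, so assume $\delta < \infty$. By symmetry it suffices to show that if $u_M > u_N$, then $u_M - u_N \le R^{(n-1)/n}\delta^{1/n}$. Taking the supremum over $(k,t)$ then gives the claim. The whole argument will take place at one scale $r = u_M$, comparing the integrated landscapes there.

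\textbf{Lower bound for $M$.} For every $r < u_M$, the proof of Proposition~\ref{prop:uniform-recovered-from-integrated} gives $Q_r \subseteq \I_M^k(t)$, and hence $\IL_M(r,k,t) = r$. By continuity in $r$ (Theorem~\ref{thm:integrated-landscape-properties}(2)), $\IL_M(u_M,k,t) = u_M$. So at $r = u_M$ the set $Q_{u_M} \cap \I_M^k(t)$ has full measure $u_M^n$.

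\textbf{Upper bound for $N$.} I will show that $Q_{u_M}$ contains a subcube of side $u_M - u_N$ that misses $\I_N^k(t)$. For any $s > u_N$, the point $s\mathbf{1}_n$ is not in $\I_N^k(t)$. Since $\I_N^k(t)$ is closed under decreasing coordinates, no point $h \succeq s\mathbf{1}_n$ lies in $\I_N^k(t)$. Hence the box $[s\mathbf{1}_n, u_M\mathbf{1}_n]$, of volume $(u_M - s)^n$, is disjoint from $\I_N^k(t)$. Letting $s \downarrow u_N$ gives
\[
\mathcal{L}^n\bigl(Q_{u_M} \cap \I_N^k(t)\bigr) \le u_M^n - (u_M - u_N)^n .
\]

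\textbf{Combining.} Dividing by $u_M^{n-1}$ and comparing with the lower bound,
\[
\delta \;\ge\; \IL_M(u_M,k,t) - \IL_N(u_M,k,t) \;\ge\; \frac{(u_M - u_N)^n}{u_M^{n-1}} \;\ge\; \frac{(u_M - u_N)^n}{R^{n-1}},
\]
where the last step uses $u_M \le R$. Rearranging gives $u_M - u_N \le R^{(n-1)/n}\delta^{1/n}$, as required.

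\textbf{Details to watch.}
\begin{itemize}
\item The case $u_M = 0$ cannot occur under the assumption $u_M > u_N \ge 0$, so the division by $u_M^{n-1}$ is legitimate. When $n = 1$ that factor is simply $1$.
\item The extended-distance convention matters only when some landscape value is $\infty$. This is ruled out because every $\UL$ value is at most $R$, and every $\IL$ value is finite.
\item The step needing the most care is the boundary behaviour at $r = u_M$. There the inclusion $Q_r \subseteq \I_M^k(t)$ may hold only for $r < u_M$, which is why the argument invokes continuity of $\IL$ rather than the inclusion itself. Alternatively, one can work at $r = u_M - \eta$ and let $\eta \to 0$.
\end{itemize}
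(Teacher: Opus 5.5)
Your proof is correct and is essentially the paper's argument. Both evaluate at the single scale $r=\UL_M(k,t)$, show that $\I_M^k(t)$ fills $Q_r$ up to measure zero, and use downward closure to find a corner box of side $\UL_M(k,t)-\UL_N(k,t)$ that misses $\I_N^k(t)$, giving $\delta \ge (u_M-u_N)^n/u_M^{n-1} \ge (u_M-u_N)^n/R^{n-1}$. The differences are cosmetic: you get $\IL_M(u_M,k,t)=u_M$ from continuity in $r$ and the missing box from the limit $s\downarrow u_N$, whereas the paper observes directly that $[0,a)^n\subseteq\I_M^k(t)$ and that the half-open box $(b,a]^n$ is disjoint from $\I_N^k(t)$.
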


\begin{proof}
    Set $\Delta \coloneqq \lVert \IL_M-\IL_N \rVert_\infty$. The result is immediate if $\Delta = \infty$. Otherwise, fix $k \in \N_{>0}$ and $t \in \R^n$. After interchanging $M$ and $N$ if necessary, write $a \coloneqq \UL_M(k,t)$ and $b \coloneqq \UL_N(k,t)$ with $a \ge b$. If $a=b$, there is nothing to prove, so suppose that $a>b$.

    For every $0 \le s<a$, the diagonal displacement $s\mathbf{1}_n$ belongs to $\I_M^k(t)$. Since $\I_M^k(t)$ is closed under decreasing coordinates, $[0,a)^n \subseteq \I_M^k(t)$. This set has the same Lebesgue measure as $Q_a$, so $\IL_M(a,k,t)=a$.

    On the other hand, $(b,a]^n$ is disjoint from $\I_N^k(t)$. Indeed, if $h$ belonged to both sets, set $q \coloneqq \min_i h_i$. Then $q>b$ and $q\mathbf{1}_n \preceq h$. Since $\I_N^k(t)$ is closed under decreasing coordinates, this would imply $q\mathbf{1}_n \in \I_N^k(t)$, contradicting $b=\UL_N(k,t)$. The missing box $(b,a]^n$ has Lebesgue measure $(a-b)^n$. Therefore,
    \[
        \Delta
        \ge
        \IL_M(a,k,t)-\IL_N(a,k,t)
        \ge
        \frac{(a-b)^n}{a^{n-1}}
        \ge
        \frac{(a-b)^n}{R^{n-1}}.
    \]
    Rearranging and taking the supremum over $k$ and $t$ proves the result.
\end{proof}

Proposition~\ref{prop:uniform-recovered-from-integrated} identifies the common weight and direction $\mathbf{1}_n$. We now compare the weighted and directional landscape families after restricting to weights and directions whose coordinates are uniformly bounded below. For $0 < \alpha \le 1$, define
\[
\begin{aligned}
    \mathcal{W}_n(\alpha)
    &\coloneqq
    \left\{
        w \in \mathcal{W}_n :
        \min_i w_i \ge \alpha
    \right\},\\
    S_{\infty,+}^{n-1}(\alpha)
    &\coloneqq
    \left\{
        v \in S_{\infty,+}^{n-1} :
        \min_i v_i \ge \alpha
    \right\}.
\end{aligned}
\]
For $M, N \in \Vect^{\R^n}$, define the corresponding restricted distances by
\[
\begin{aligned}
    \lVert \WL_M - \WL_N \rVert_{\infty,\alpha}
    &\coloneqq
    \sup_{\substack{
        w \in \mathcal{W}_n(\alpha)\\
        k \in \N_{>0},\, t \in \R^n
    }}
    d_{\mathrm{ext}}\bigl(
        \WL_M(w, k, t),
        \WL_N(w, k, t)
    \bigr),\\
    \lVert \DL_M - \DL_N \rVert_{\infty,\alpha}
    &\coloneqq
    \sup_{\substack{
        v \in S_{\infty,+}^{n-1}(\alpha)\\
        k \in \N_{>0},\, t \in \R^n
    }}
    d_{\mathrm{ext}}\bigl(
        \DL_M(v, k, t),
        \DL_N(v, k, t)
    \bigr).
\end{aligned}
\]
When $\alpha = 1$, both restricted index sets reduce to $\{\mathbf{1}_n\}$. Proposition~\ref{prop:uniform-recovered-from-integrated} therefore gives
\[
    \lVert \WL_M - \WL_N \rVert_{\infty,1}
    =
    \lVert \DL_M - \DL_N \rVert_{\infty,1}
    =
    \lVert \UL_M - \UL_N \rVert_\infty.
\]
The following result extends this equality to a chain of comparisons for $0 < \alpha \le 1$.

\begin{corollary}[Uniform, weighted, and directional comparison]
\label{cor:directional-weighted-comparison}
    For $M, N \in \Vect^{\R^n}$ and $0 < \alpha \le 1$,
    \[
        \lVert \UL_M - \UL_N \rVert_\infty
        \le
        \lVert \WL_M - \WL_N \rVert_{\infty,\alpha}
        \le
        \lVert \DL_M - \DL_N \rVert_{\infty,\alpha}
        \le
        \frac{1}{\alpha}
        \lVert \WL_M - \WL_N \rVert_{\infty,\alpha}.
    \]
\end{corollary}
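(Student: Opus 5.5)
The plan is to prove the three inequalities separately. Throughout, the correspondence $v = w^{-1}/\lVert w^{-1}\rVert_\infty$ from Proposition~\ref{prop:directional-weighted-relation} is used to pass between weights and directions.

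\emph{First inequality.} The vector $\mathbf{1}_n$ lies in $\mathcal{W}_n(\alpha)$ for every $0<\alpha\le 1$. By Proposition~\ref{prop:uniform-recovered-from-integrated}, $\UL_M(k,t)=\WL_M(\mathbf{1}_n,k,t)$, so the supremum defining $\lVert \UL_M-\UL_N\rVert_\infty$ is a sub-supremum of the one defining $\lVert \WL_M-\WL_N\rVert_{\infty,\alpha}$.

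\emph{Bijection between the index sets.} The map $w\mapsto w^{-1}/\lVert w^{-1}\rVert_\infty$ should be a bijection $\mathcal{W}_n(\alpha)\to S^{n-1}_{\infty,+}(\alpha)$. Given $w\in\mathcal{W}_n(\alpha)$, we have $\lVert w^{-1}\rVert_\infty = 1/\min_i w_i \in[1,1/\alpha]$. The coordinates of $v$ are $\min_j w_j/w_i$, and $\max_i v_i=1$. Since $w_i\le 1$, each coordinate satisfies $v_i \ge \min_j w_j\ge\alpha$. The inverse is $v\mapsto v^{-1}/\lVert v^{-1}\rVert_\infty$, by the same computation. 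Write $c_w\coloneqq\lVert w^{-1}\rVert_\infty\in[1,1/\alpha]$. Proposition~\ref{prop:directional-weighted-relation} then gives
\[
\DL_M(v,k,t)=c_w\,\WL_M(w,k,t),\qquad \DL_N(v,k,t)=c_w\,\WL_N(w,k,t).
\]

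\emph{Second and third inequalities.} Under the extended convention, multiplying both arguments by a finite positive constant $c$ scales the distance: $d_{\mathrm{ext}}(ca,cb)=c\,d_{\mathrm{ext}}(a,b)$, with $c\cdot\infty=\infty$ and the $\infty$/$0$ cases preserved. Hence
\[
d_{\mathrm{ext}}\bigl(\DL_M(v,k,t),\DL_N(v,k,t)\bigr)=c_w\, d_{\mathrm{ext}}\bigl(\WL_M(w,k,t),\WL_N(w,k,t)\bigr).
\]
Since $1\le c_w\le 1/\alpha$, each directional term lies between the corresponding weighted term and $1/\alpha$ times it. Taking suprema over the bijectively corresponding index sets, together with $k$ and $t$, yields both remaining inequalities.

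The only delicate step is confirming that $\min_i v_i\ge\alpha$, so that the bijection really lands in $S^{n-1}_{\infty,+}(\alpha)$, and its converse. Everything else is bookkeeping with the extended distance convention.
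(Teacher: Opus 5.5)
Your proposal is correct and follows essentially the same route as the paper. Both arguments get the first inequality from $\mathbf{1}_n \in \mathcal{W}_n(\alpha)$, then use the bijection $w \mapsto w^{-1}/\lVert w^{-1}\rVert_\infty$ between $\mathcal{W}_n(\alpha)$ and $S_{\infty,+}^{n-1}(\alpha)$, with scaling factor $\lVert w^{-1}\rVert_\infty \in [1,1/\alpha]$. Combining this with the homogeneity $d_{\mathrm{ext}}(ca,cb)=c\,d_{\mathrm{ext}}(a,b)$ and the directional--weighted correspondence gives the remaining two inequalities after taking suprema.
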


\begin{proof}
    The first inequality follows because
    $\mathbf{1}_n \in \mathcal{W}_n(\alpha)$ and
    $\WL_M(\mathbf{1}_n, k, t) = \UL_M(k, t)$.
    The map
    $w \longmapsto w^{-1}/\lVert w^{-1} \rVert_\infty$
    is a bijection from $\mathcal{W}_n(\alpha)$ to
    $S_{\infty,+}^{n-1}(\alpha)$. Indeed, if
    $m = \min_i w_i$, then $ w^{-1} / {\lVert w^{-1} \rVert_\infty} = m w^{-1}$. Since $\lVert w \rVert_\infty = 1$, the minimum coordinate of $m w^{-1}$ is $m$, and applying the same map twice returns $w$.
    Moreover,
    \[
        1
        \le
        \lVert w^{-1} \rVert_\infty
        \le
        \frac{1}{\alpha}
    \]
    for every $w \in \mathcal{W}_n(\alpha)$.

    For corresponding elements $w \in \mathcal{W}_n(\alpha)$ and $v = w^{-1}/\lVert w^{-1} \rVert_\infty$, the identity $d_{\mathrm{ext}}(ca, cb) = c d_{\mathrm{ext}}(a, b)$ for $c > 0$, together with Proposition~\ref{prop:directional-weighted-relation}, gives
    \[
        d_{\mathrm{ext}}
        \bigl(
            \DL_M(v, k, t),
            \DL_N(v, k, t)
        \bigr)
        =
        \lVert w^{-1} \rVert_\infty
        d_{\mathrm{ext}}
        \bigl(
            \WL_M(w, k, t),
            \WL_N(w, k, t)
        \bigr).
    \]
    Taking the supremum over $w$, $k$, and $t$ proves both inequalities.
\end{proof}

We next express the integrated landscape as a radial aggregate of the directional landscape. Let $\sigma$ denote the finite Borel measure on $S_{\infty,+}^{n-1}$ for which the $\ell^\infty$-polar-coordinate formula
\[
    \int_{\R_{\ge 0}^n}
        f(h)\,d\mathcal{L}^n(h)
    =
    \int_{S_{\infty,+}^{n-1}}
    \int_0^\infty
        f(\eta v)\eta^{n-1}
        \,d\eta\,d\sigma(v)
\]
holds for every nonnegative measurable function $f \colon \R_{\ge 0}^n \longrightarrow [0,\infty]$. Applying this formula to the indicator function of $Q_1$ gives
\[
    \sigma\bigl(S_{\infty,+}^{n-1}\bigr)
    =
    n.
\]

\begin{lemma}[Radial representation of the integrated landscape]
\label{lem:integrated-radial-representation}
    Let $M \in \Vect^{\R^n}$. For every $r > 0$, $k \in \N_{>0}$, and $t \in \R^n$,
    \[
        \IL_M(r, k, t)
        =
        \frac{1}{n r^{n-1}}
        \int_{S_{\infty,+}^{n-1}}
        \min
        \left\{
            \DL_M(v, k, t),
            r
        \right\}^{n}
        \,d\sigma(v).
    \]
\end{lemma}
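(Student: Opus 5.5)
The plan is to apply the $\ell^\infty$-polar-coordinate formula to the indicator function $f \coloneqq \mathbf{1}_{Q_r \cap \I_M^k(t)}$, which is measurable by Proposition~\ref{prop:interval-measurability}. This gives
\[
    \mathcal{L}^n\bigl(Q_r \cap \I_M^k(t)\bigr)
    =
    \int_{S_{\infty,+}^{n-1}}
    \int_0^\infty
        \mathbf{1}_{Q_r \cap \I_M^k(t)}(\eta v)\,\eta^{n-1}
        \,d\eta\,d\sigma(v).
\]
If $\I_M^k(t)=\varnothing$, both sides of the claimed identity vanish, because $\DL_M(v,k,t)=0$ by the convention $\sup\varnothing=0$. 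So I will assume the landscape interval is nonempty.

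The key step is to identify the inner radial integral for each fixed direction $v \in S_{\infty,+}^{n-1}$. First, since $\lVert v\rVert_\infty=1$, we have $\eta v\in Q_r$ if and only if $0\le\eta\le r$. Second, the ray set $\{\eta \ge 0 : \eta v\in\I_M^k(t)\}$ is an initial segment of $[0,\infty)$. This follows from the proof of Proposition~\ref{prop:landscape-interval-structure}: the landscape interval is closed under decreasing coordinates, and $\eta' v\preceq \eta v$ whenever $\eta'\le\eta$. By the definition of $\DL_M(v,k,t)$ as the supremum of this segment, the segment is either $[0,D)$ or $[0,D]$ with $D=\DL_M(v,k,t)\in[0,\infty]$; when $D=\infty$ it is all of $[0,\infty)$. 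Hence the inner integrand is the indicator of an interval from $0$ to $\min\{D,r\}$, possibly missing its endpoint. Since an endpoint has measure zero,
\[
    \int_0^\infty \mathbf{1}_{Q_r \cap \I_M^k(t)}(\eta v)\,\eta^{n-1}\,d\eta
    =
    \int_0^{\min\{D,r\}}\eta^{n-1}\,d\eta
    =
    \frac{\min\{\DL_M(v,k,t),r\}^n}{n}.
\]

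Substituting this into the outer integral and dividing by $r^{n-1}$ gives the stated formula for $\IL_M(r,k,t)$. The polar formula is assumed to hold for every nonnegative measurable $f$, so Tonelli-type issues are absorbed into it. The one point needing care is measurability of $v\mapsto \min\{\DL_M(v,k,t),r\}^n$. This is implicit because the inner integral is produced by the polar formula itself; if it were to be checked directly, $\DL_M(\bullet,k,t)$ could be written as a monotone limit of measurable functions. I expect the main obstacle to be this bookkeeping, together with the open/closed endpoint issue on each ray. Both are handled by the initial-segment structure and by the fact that single points are null sets.
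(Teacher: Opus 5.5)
Your proposal is correct and follows essentially the same route as the paper: apply the $\ell^\infty$-polar-coordinate formula to the indicator of $Q_r \cap \I_M^k(t)$, and use downward closure to see that each ray meets the set in an initial segment of length $\min\{\DL_M(v,k,t), r\}$, with the endpoint irrelevant. Your explicit handling of the empty case and your remark on measurability are slightly more careful than the paper's version, but the argument is the same.
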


\begin{proof}
    Because $\I_M^k(t)$ is closed under decreasing coordinates, its intersection with the ray $\{\eta v : \eta \ge 0\}$ is either empty or an interval beginning at the origin, and its supremum is $\DL_M(v, k, t)$. Whether the endpoint is included does not affect its one-dimensional measure. Its radial extent in $Q_r \cap \I_M^k(t)$ is therefore
    \[
        \rho_M(v)
        \coloneqq
        \min
        \left\{
            \DL_M(v, k, t),
            r
        \right\}.
    \]
    Applying the $\ell^\infty$-polar-coordinate formula to the indicator function of $Q_r \cap \I_M^k(t)$ gives
    \[
    \begin{aligned}
        \mathcal{L}^n
        \bigl(
            Q_r \cap \I_M^k(t)
        \bigr)
        &=
        \int_{S_{\infty,+}^{n-1}}
        \int_0^{\rho_M(v)}
            \eta^{n-1}
            \,d\eta\,d\sigma(v)\\
        &=
        \frac{1}{n}
        \int_{S_{\infty,+}^{n-1}}
            \rho_M(v)^n
            \,d\sigma(v).
    \end{aligned}
    \]
    Dividing by $r^{n-1}$ proves the result.
\end{proof}

\begin{theorem}[Integrated--directional comparison]
\label{thm:integrated-directional-comparison}
    For $M, N \in \Vect^{\R^n}$,
    \[
        \lVert \IL_M - \IL_N \rVert_\infty
        \le
        n
        \lVert \DL_M - \DL_N \rVert_\infty.
    \]
\end{theorem}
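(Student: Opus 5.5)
The plan is to use the radial representation of Lemma~\ref{lem:integrated-radial-representation}. Fix $k$ and $t$, and set $\Delta \coloneqq \lVert \DL_M - \DL_N \rVert_\infty$. If $\Delta = \infty$ there is nothing to prove, so assume $\Delta < \infty$. The case $r = 0$ is trivial because both integrated landscapes vanish there. For $r > 0$, the lemma gives
\[
    \bigl|\IL_M(r,k,t) - \IL_N(r,k,t)\bigr|
    \le
    \frac{1}{n r^{n-1}}
    \int_{S_{\infty,+}^{n-1}}
    \bigl| \rho_M(v)^n - \rho_N(v)^n \bigr|
    \,d\sigma(v),
\]
where $\rho_M(v) \coloneqq \min\{\DL_M(v,k,t), r\}$ and $\rho_N(v) \coloneqq \min\{\DL_N(v,k,t), r\}$. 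Both quantities lie in $[0,r]$ and are finite, even when the directional values are infinite.

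The key pointwise step bounds the integrand. Since $x \mapsto \min\{x,r\}$ is $1$-Lipschitz, we have $|\rho_M(v) - \rho_N(v)| \le \Delta$. Here the case where both directional values equal $\infty$ gives $\rho_M = \rho_N = r$. The case where exactly one is infinite is excluded by $\Delta < \infty$, because the extended distance would then be $\infty$. By the mean value theorem on $[0,r]$ applied to $x \mapsto x^n$,
\[
    |\rho_M(v)^n - \rho_N(v)^n|
    \le
    n r^{n-1} |\rho_M(v) - \rho_N(v)|
    \le
    n r^{n-1} \Delta.
\]

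Integrating this bound against $\sigma$ and using $\sigma(S_{\infty,+}^{n-1}) = n$ gives
\[
    |\IL_M - \IL_N|
    \le
    \frac{1}{n r^{n-1}} \cdot n r^{n-1} \Delta \cdot n
    =
    n \Delta.
\]
Since $\IL$ is always finite, the extended distance here is the ordinary absolute difference. Taking the supremum over $r$, $k$, and $t$ then proves the theorem.

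The only delicate point is the handling of infinite directional values together with the measurability of $v \mapsto \rho(v)$. Measurability is implicitly granted by the lemma, since the polar formula is applied to a measurable indicator and Tonelli's theorem is used. So the main care lies in the extended-distance bookkeeping above; the rest is the routine power-difference estimate.
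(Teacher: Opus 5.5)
Your proposal is correct and follows essentially the same route as the paper's proof: the radial representation of Lemma~\ref{lem:integrated-radial-representation}, the pointwise bound $|\rho_M(v)^n-\rho_N(v)^n|\le n r^{n-1}\Delta$ via the mean value theorem, and integration using $\sigma(S_{\infty,+}^{n-1})=n$. Your explicit bookkeeping for infinite directional values and your remark on measurability go slightly beyond what the paper writes out, but they do not change the argument.
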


\begin{proof}
    Let $\Delta \coloneqq \lVert \DL_M - \DL_N \rVert_\infty$. The result is immediate if $\Delta = \infty$. Otherwise, fix $r > 0$, $k \in \N_{>0}$, and $t \in \R^n$, and set
    \[
        \rho_M(v)
        \coloneqq
        \min
        \left\{
            \DL_M(v, k, t),
            r
        \right\},
        \qquad
        \rho_N(v)
        \coloneqq
        \min
        \left\{
            \DL_N(v, k, t),
            r
        \right\}.
    \]
    We have $0 \le \rho_M(v), \rho_N(v) \le r$ and $|\rho_M(v) - \rho_N(v)| \le \Delta$. By the mean value theorem,
    \[
        \left|
            \rho_M(v)^n
            -
            \rho_N(v)^n
        \right|
        \le
        n r^{n-1}\Delta.
    \]
    Lemma~\ref{lem:integrated-radial-representation} therefore gives
    \[
    \begin{aligned}
        \left|
            \IL_M(r, k, t)
            -
            \IL_N(r, k, t)
        \right|
        &\le
        \frac{1}{n r^{n-1}}
        \int_{S_{\infty,+}^{n-1}}
        \left|
            \rho_M(v)^n
            -
            \rho_N(v)^n
        \right|
        \,d\sigma(v)\\
        &\le
        \Delta\,
        \sigma\bigl(S_{\infty,+}^{n-1}\bigr)\\
        &=
        n\Delta.
    \end{aligned}
    \]
    At $r = 0$, both integrated landscapes are zero. Taking the supremum over $r$, $k$, and $t$ proves the result.
\end{proof}

The directional--weighted correspondence gives the following consequence for the integrated landscape.

\begin{corollary}[Integrated--weighted comparison]
\label{cor:integrated-weighted-comparison}
    For $M, N \in \Vect^{\R^n}$,
    \[
        \lVert \IL_M - \IL_N \rVert_\infty
        \le
        n
        \sup_{\substack{
            w \in \mathcal{W}_n\\
            k \in \N_{>0},\, t \in \R^n
        }}
        \lVert w^{-1} \rVert_\infty
        d_{\mathrm{ext}}
        \bigl(
            \WL_M(w, k, t),
            \WL_N(w, k, t)
        \bigr).
    \]
\end{corollary}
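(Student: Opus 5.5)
This corollary follows by combining Theorem~\ref{thm:integrated-directional-comparison} with the directional--weighted correspondence of Proposition~\ref{prop:directional-weighted-relation}. The one subtlety is that the directional supremum in Theorem~\ref{thm:integrated-directional-comparison} runs over all of $S_{\infty,+}^{n-1}$. This sphere includes directions with zero coordinates, which do not arise from any weight $w\in\mathcal{W}_n$. So the plan is not simply to take the supremum over the image of the bijection. Instead, I would rerun the radial argument with the supremum restricted to strictly positive directions.

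Concretely, let $\Delta_W$ denote the right-hand supremum without the factor $n$, and assume $\Delta_W<\infty$. As in the proof of Corollary~\ref{cor:directional-weighted-comparison}, the map $w\mapsto w^{-1}/\lVert w^{-1}\rVert_\infty$ is a bijection from $\mathcal{W}_n$ onto the strictly positive part
\[
S^{n-1}_{\infty,>0}\coloneqq S_{\infty,+}^{n-1}\cap\R_{>0}^n,
\]
with inverse of the same form. Scaling $d_{\mathrm{ext}}$ by $\lVert w^{-1}\rVert_\infty$ and applying Proposition~\ref{prop:directional-weighted-relation}, we get
\[
d_{\mathrm{ext}}\bigl(\DL_M(v,k,t),\DL_N(v,k,t)\bigr)\le\Delta_W
\]
for every $v\in S^{n-1}_{\infty,>0}$, every $k$, and every $t$.

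Next, fix $r>0$, $k$, and $t$, and use Lemma~\ref{lem:integrated-radial-representation}. The complement $S_{\infty,+}^{n-1}\setminus S^{n-1}_{\infty,>0}$ consists of directions with some zero coordinate. The cone over this set lies in the union of the coordinate hyperplanes, so it is $\mathcal{L}^n$-null. Applying the polar-coordinate formula to its indicator function then shows that this set has $\sigma$-measure zero. The integral in Lemma~\ref{lem:integrated-radial-representation} can therefore be taken over $S^{n-1}_{\infty,>0}$ alone.

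On that set, the truncations $\rho_M(v)=\min\{\DL_M(v,k,t),r\}$ and $\rho_N(v)$ satisfy $|\rho_M-\rho_N|\le\Delta_W$. If both landscape values are infinite, both truncations equal $r$; otherwise both values are finite and truncation is $1$-Lipschitz. The mean value estimate $|\rho_M^n-\rho_N^n|\le nr^{n-1}\Delta_W$ and $\sigma(S^{n-1}_{\infty,+})=n$ then give
\[
|\IL_M(r,k,t)-\IL_N(r,k,t)|\le n\Delta_W,
\]
exactly as in the proof of Theorem~\ref{thm:integrated-directional-comparison}. The case $r=0$ is trivial, and taking the supremum over $r$, $k$, and $t$ finishes the argument.

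The main obstacle is the measure-zero claim for the boundary directions, which is what lets the integral avoid directions with no weighted counterpart. I would justify it directly: the set $\{\eta v:\eta\ge0,\ v\in S_{\infty,+}^{n-1},\ \min_i v_i=0\}$ is contained in $\bigcup_i\{h_i=0\}$. Inserting its indicator function into the polar formula gives $\tfrac1n\,\sigma(\text{boundary})=0$.
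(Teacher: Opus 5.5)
Your proposal is correct and follows essentially the same route as the paper. The paper also discards the $\sigma$-null set of directions with a zero coordinate, transfers the supremum over strictly positive directions to the weighted supremum via Proposition~\ref{prop:directional-weighted-relation} and the bijection $w\mapsto w^{-1}/\lVert w^{-1}\rVert_\infty$, and then reruns the mean-value estimate from Theorem~\ref{thm:integrated-directional-comparison}. Your explicit justification of the null-set claim, which the paper only asserts, is valid, with one small slip: the factor $\tfrac1n$ appears only if you first truncate the cone to $Q_1$; for the unbounded cone the polar formula gives $0=\sigma(\text{boundary})\cdot\infty$, which forces the same conclusion.
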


\begin{proof}
    The directions in $S_{\infty,+}^{n-1}$ having at least one zero coordinate form a set of $\sigma$-measure zero. Therefore, the proof of Theorem~\ref{thm:integrated-directional-comparison} requires only the supremum over strictly positive directions. Proposition~\ref{prop:directional-weighted-relation} and the bijection
    $w \longmapsto w^{-1}/{\lVert w^{-1} \rVert_\infty}$ from $\mathcal{W}_n$ to $S_{\infty,+}^{n-1} \cap \R_{>0}^n$ give
    \[
    \begin{aligned}
        &\sup_{\substack{
            v \in S_{\infty,+}^{n-1} \cap \R_{>0}^n\\
            k \in \N_{>0},\, t \in \R^n
        }}
        d_{\mathrm{ext}}
        \bigl(
            \DL_M(v, k, t),
            \DL_N(v, k, t)
        \bigr)\\
        &\qquad=
        \sup_{\substack{
            w \in \mathcal{W}_n\\
            k \in \N_{>0},\, t \in \R^n
        }}
        \lVert w^{-1} \rVert_\infty
        d_{\mathrm{ext}}
        \bigl(
            \WL_M(w, k, t),
            \WL_N(w, k, t)
        \bigr).
    \end{aligned}
    \]
    Substituting this equality into the proof of Theorem~\ref{thm:integrated-directional-comparison} proves the result.
\end{proof}

\subsection{Erosion distances and landscape distances}
\label{subsec:erosion-landscape-distances}

This subsection relates distances between landscape representations to erosion distances. We first define an erosion distance directly between complete families of landscape intervals and prove that it equals the rectangle-restricted erosion distance. We then identify
the supremum distance between uniform landscapes with the cube-restricted erosion distance and derive erosion bounds for the weighted, restricted directional, and integrated landscapes.

For $A \subseteq \R_{\ge 0}^n$ and $\varepsilon \ge 0$, define the \emph{diagonal $\varepsilon$-erosion} of $A$ by
\[
    \mathsf{E}_\varepsilon(A)
    \coloneqq
    \left\{
        h \in \R_{\ge 0}^n :
        h + \varepsilon \mathbf{1}_n
        \in
        A
    \right\}.
\]
Thus, $\mathsf{E}_\varepsilon(A)$ consists of the vectors whose diagonal translate by $\varepsilon\mathbf{1}_n$ lies in $A$. Diagonal erosion preserves downward-closed subsets of $\R_{\ge 0}^n$ and satisfies
\[
    \mathsf{E}_\delta\bigl(\mathsf{E}_\varepsilon(A)\bigr)
    =
    \mathsf{E}_{\delta+\varepsilon}(A).
\]

\begin{definition}[Landscape-interval erosion distance]
\label{def:landscape-interval-erosion-distance}
    Let $M, N \in \Vect^{\R^n}$ and $\varepsilon \ge 0$. We say that their landscape-interval families admit an \emph{$\varepsilon$-erosion} if, for every $t \in \R^n$ and $k \in \N_{>0}$,
    \[
        \mathsf{E}_\varepsilon\bigl(\I_M^k(t)\bigr)
        \subseteq
        \I_N^k(t)
        \qquad\text{and}\qquad
        \mathsf{E}_\varepsilon\bigl(\I_N^k(t)\bigr)
        \subseteq
        \I_M^k(t).
    \]
    The \emph{landscape-interval erosion distance} between $M$ and $N$ is
    \[
        d_{\LI}(M, N)
        \coloneqq
        \inf
        \left\{
            \varepsilon \ge 0 :
            \text{$M$ and $N$ admit an $\varepsilon$-erosion}
        \right\}.
    \]
    If no such $\varepsilon$ exists, we set $d_{\LI}(M, N) = \infty$.
\end{definition}

\begin{proposition}[Extended pseudometric]
\label{prop:landscape-interval-pseudometric}
    The landscape-interval erosion distance $d_{\LI}$ is an extended pseudometric. Explicitly, for $M, N, L \in \Vect^{\R^n}$, one has $d_{\LI}(M,M)=0$, $d_{\LI}(M,N)=d_{\LI}(N,M)$, and
    \[
        d_{\LI}(M,L)
        \le
        d_{\LI}(M,N)+d_{\LI}(N,L).
    \]
\end{proposition}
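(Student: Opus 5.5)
The plan is to verify the three axioms directly from Definition~\ref{def:landscape-interval-erosion-distance}, using two elementary properties of diagonal erosion. The first is the semigroup identity $\mathsf{E}_\delta\circ\mathsf{E}_\varepsilon=\mathsf{E}_{\delta+\varepsilon}$ recorded just before the definition. The second is monotonicity: if $A\subseteq B$, then $\mathsf{E}_\varepsilon(A)\subseteq\mathsf{E}_\varepsilon(B)$, which is immediate from the definition. Nonnegativity is built in, since $d_{\LI}$ is an infimum over $\varepsilon\ge 0$, and the value $\infty$ is allowed by convention.

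For $d_{\LI}(M,M)=0$, observe that $\mathsf{E}_0(A)=A\cap\R_{\ge 0}^n$. Every landscape interval is a subset of $\R_{\ge 0}^n$, so $\mathsf{E}_0(\I_M^k(t))=\I_M^k(t)$, and $M$ admits a $0$-erosion with itself. Symmetry holds because the two inclusions in the definition of an $\varepsilon$-erosion are interchanged when $M$ and $N$ are swapped. The set of admissible $\varepsilon$ is therefore the same for $(M,N)$ and for $(N,M)$, and so are the infima.

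For the triangle inequality, assume both terms on the right are finite; otherwise there is nothing to prove. The main technical point is that the infimum need not be attained. I would therefore first show that admissible $\varepsilon$ form an up-set: an $\varepsilon$-erosion is also an $\varepsilon'$-erosion for every $\varepsilon'\ge\varepsilon$. To see this, use $\mathsf{E}_{\varepsilon'}(A)=\mathsf{E}_{\varepsilon'-\varepsilon}(\mathsf{E}_\varepsilon(A))$. Since $\mathsf{E}_\varepsilon(A)\subseteq B$, monotonicity gives $\mathsf{E}_{\varepsilon'-\varepsilon}(\mathsf{E}_\varepsilon(A))\subseteq\mathsf{E}_{\varepsilon'-\varepsilon}(B)$. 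Because $B=\I^k(t)$ is closed under decreasing coordinates (Proposition~\ref{prop:landscape-interval-structure}), $\mathsf{E}_\delta(B)\subseteq B$ for every $\delta\ge 0$, so the last set lies in $B$.

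Next, fix $\eta>0$ and choose $\varepsilon_1<d_{\LI}(M,N)+\eta$ such that $M,N$ admit an $\varepsilon_1$-erosion, and $\varepsilon_2<d_{\LI}(N,L)+\eta$ such that $N,L$ admit an $\varepsilon_2$-erosion. Then, for every $t$ and $k$,
\[
\mathsf{E}_{\varepsilon_1+\varepsilon_2}\bigl(\I_M^k(t)\bigr)=\mathsf{E}_{\varepsilon_2}\bigl(\mathsf{E}_{\varepsilon_1}(\I_M^k(t))\bigr)\subseteq\mathsf{E}_{\varepsilon_2}\bigl(\I_N^k(t)\bigr)\subseteq\I_L^k(t).
\]
The reverse inclusion $\mathsf{E}_{\varepsilon_1+\varepsilon_2}(\I_L^k(t))\subseteq\I_M^k(t)$ follows in the same way, applying the two erosions in the opposite order. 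Hence $M$ and $L$ admit an $(\varepsilon_1+\varepsilon_2)$-erosion, so $d_{\LI}(M,L)\le d_{\LI}(M,N)+d_{\LI}(N,L)+2\eta$. Letting $\eta\to 0$ gives the triangle inequality.

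The only nonformal ingredients are the semigroup identity and monotonicity of $\mathsf{E}$. The up-set property is included only for completeness; the argument above does not actually need it.
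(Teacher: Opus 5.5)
Your proof is correct and follows the paper's argument: $\mathsf{E}_0$ is the identity on subsets of $\R_{\ge 0}^n$, symmetry is built into the definition, and the triangle inequality comes from the composition identity $\mathsf{E}_{\delta}\circ\mathsf{E}_{\varepsilon}=\mathsf{E}_{\delta+\varepsilon}$ and monotonicity of erosion, followed by passing to infima. Your $\eta$-argument is just an explicit version of the paper's ``taking infima'' step, and, as you note yourself, the up-set property is not needed.
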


\begin{proof}
    Nonnegativity and symmetry follow directly from the definition. Since $\mathsf{E}_0(A)=A$ for every $A \subseteq \R_{\ge 0}^n$, each landscape-interval family admits a $0$-erosion with itself, so $d_{\LI}(M,M)=0$.

    For the triangle inequality, the result is immediate if either $d_{\LI}(M,N)$ or $d_{\LI}(N,L)$ is infinite. Otherwise, let $\varepsilon$ and $\delta$ be admissible erosion parameters for the pairs $(M,N)$ and $(N,L)$, respectively. Diagonal erosion preserves inclusions and satisfies the composition identity given above. Hence, for every $t \in \R^n$ and $k \in \N_{>0}$,
    \[
    \begin{aligned}
        \mathsf{E}_{\varepsilon+\delta}
        \bigl(\I_M^k(t)\bigr)
        &=
        \mathsf{E}_{\delta}
        \bigl(\mathsf{E}_{\varepsilon}(\I_M^k(t))\bigr)\\
        &\subseteq
        \mathsf{E}_{\delta}\bigl(\I_N^k(t)\bigr)
        \subseteq
        \I_L^k(t).
    \end{aligned}
    \]
    Similarly,
    \[
        \mathsf{E}_{\varepsilon+\delta}
        \bigl(\I_L^k(t)\bigr)
        \subseteq
        \mathsf{E}_{\varepsilon}\bigl(\I_N^k(t)\bigr)
        \subseteq
        \I_M^k(t).
    \]
    Thus, the landscape-interval families of $M$ and $L$ admit an $(\varepsilon+\delta)$-erosion, and therefore $d_{\LI}(M,L)\le\varepsilon+\delta$. Taking infima over the admissible parameters $\varepsilon$ and $\delta$ proves the triangle inequality.
\end{proof}

The distance may vanish even when the complete landscape-interval families are different.

\begin{example}[Zero distance between distinct landscape-interval families]
\label{ex:landscape-interval-zero-distance}
    Let $M \coloneqq \mathds{1}_{\{0\}} \in \Vect^\R$ be the interval module supported at the single point $0$, and let $N_0$ denote the zero persistence module. The only nonempty landscape interval of $M$ is $\I_M^1(0)=\{0\}$, whereas every landscape interval of $N_0$ is empty. For every $\varepsilon>0$, we have $\mathsf{E}_\varepsilon(\{0\})=\varnothing$. Hence, the two families admit an $\varepsilon$-erosion for every $\varepsilon>0$, and consequently $d_{\LI}(M,N_0)=0$. However, $\I_M^1(0)\neq\I_{N_0}^1(0)$, so their landscape-interval families are different. In this example, the infimum defining the distance is zero, but no $0$-erosion exists.
\end{example}

\begin{theorem}[Equivalence of landscape-interval and rectangle erosion]
\label{thm:landscape-interval-rectangle-erosion}
    Let $M, N \in \Vect^{\R^n}$ and $\varepsilon \ge 0$. The landscape-interval families of $M$ and $N$ admit an $\varepsilon$-erosion if and only if there is an $\varepsilon$-erosion between $\rk_M^{\mathcal{R}_n}$ and $\rk_N^{\mathcal{R}_n}$. Consequently,
    \[
        d_{\LI}(M, N)
        =
        d_E^{\mathcal{R}_n}(M, N).
    \]
\end{theorem}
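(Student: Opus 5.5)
The plan is to translate both erosion conditions into statements about the rank function $r(t,h)=\rk(t-h,t+h)$. For a closed rectangle $R(t,h)$ with $h\in\R_{\ge0}^n$, the interval has minimum $t-h$ and maximum $t+h$. Lemma~\ref{lem:generalized-rank-extrema} then gives
\[
    \rk_M^{\mathcal{R}_n}(R(t,h))=\rk_M(t-h,t+h).
\]
By Definition~\ref{def:landscape-interval}, for every $k\in\N_{>0}$ this means $\rk_M^{\mathcal{R}_n}(R(t,h))\ge k$ if and only if $h\in\I_M^k(t)$. The thickening satisfies $R(t,h)^\varepsilon=R(t,h+\varepsilon\mathbf{1}_n)$. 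Hence the rectangle erosion inequality $\rk_M(R(t,h)^\varepsilon)\le\rk_N(R(t,h))$ becomes
\[
    \rk_M(t-h-\varepsilon\mathbf{1}_n,\,t+h+\varepsilon\mathbf{1}_n)\le\rk_N(t-h,t+h)\qquad\text{for all }t\in\R^n,\ h\in\R_{\ge 0}^n.
\]
Since every element of $\mathcal{R}_n$ is of the form $R(t,h)$, this accounts for all of $\mathcal{R}_n$.

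Next I would use the elementary fact that for $a,b\in\overline{\Z}_{\ge0}$ we have $a\le b$ if and only if, for every $k\in\N_{>0}$, $a\ge k$ implies $b\ge k$. This holds including the value $\infty$, since $a=\infty$ forces $b\ge k$ for all $k$ and hence $b=\infty$. Applying it to the displayed inequality, the rectangle condition is equivalent to the following: for all $t,k$, and $h\in\R_{\ge0}^n$, if $h+\varepsilon\mathbf{1}_n\in\I_M^k(t)$ then $h\in\I_N^k(t)$. By the definition of $\mathsf{E}_\varepsilon$, this says exactly $\mathsf{E}_\varepsilon(\I_M^k(t))\subseteq\I_N^k(t)$. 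The symmetric inequality gives the other inclusion in the same way. Both directions of the ``if and only if'' come out of this single chain of equivalences, so nothing needs to be argued separately.

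Finally, since the sets of admissible $\varepsilon$ coincide, their infima coincide, including the case where both sets are empty and both distances are $\infty$. This gives $d_{\LI}(M,N)=d_E^{\mathcal{R}_n}(M,N)$.

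The main point needing care is the first step, identifying the generalized rank of a closed rectangle with the ordinary rank between its corners. This includes degenerate rectangles where some $h_i=0$, which still have a minimum and a maximum, so Lemma~\ref{lem:generalized-rank-extrema} applies. The infinite-rank convention also needs checking, but the threshold characterization above handles it. The remaining steps are bookkeeping.
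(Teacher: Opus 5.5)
Your proposal is correct and follows essentially the same route as the paper. Both arguments identify $\rk_X^{\mathcal{R}_n}(R(t,h))$ with $\rk_X(t-h,t+h)$ via Lemma~\ref{lem:generalized-rank-extrema}, use $R(t,h)^\varepsilon=R(t,h+\varepsilon\mathbf{1}_n)$, and convert rank inequalities into threshold memberships $h\in\I_X^k(t)$; the only difference is packaging, since the paper proves the two implications separately (the forward one by contradiction with $k=\rk_N^{\mathcal{R}_n}(R(t,h))+1$), whereas your threshold characterization of $\le$ on $\overline{\Z}_{\ge 0}$ handles both directions, infinite ranks included, in a single chain of equivalences.
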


\begin{proof}
    By the definition of a landscape interval and Lemma~\ref{lem:generalized-rank-extrema}, for every $X \in \{M, N\}$, $t \in \R^n$, $h \in \R_{\ge 0}^n$, and $k \in \N_{>0}$,
    \[
        h \in \I_X^k(t)
        \quad\Longleftrightarrow\quad
        \rk_X^{\mathcal{R}_n}\bigl(R(t, h)\bigr) \ge k.
    \]

    Suppose first that the landscape-interval families of $M$ and $N$ admit an $\varepsilon$-erosion. Fix $t \in \R^n$ and $h \in \R_{\ge 0}^n$. We claim that
    \[
        \rk_M^{\mathcal{R}_n}\bigl(R(t, h)^\varepsilon\bigr)
        \le
        \rk_N^{\mathcal{R}_n}\bigl(R(t, h)\bigr).
    \]
    Otherwise, the latter rank is finite, and we may take $k \coloneqq \rk_N^{\mathcal{R}_n}\bigl(R(t, h)\bigr)+1$. Since $R(t, h)^\varepsilon = R(t, h+\varepsilon\mathbf{1}_n)$, the strict inequality gives $h+\varepsilon\mathbf{1}_n \in \I_M^k(t)$, and hence $h \in \mathsf{E}_\varepsilon\bigl(\I_M^k(t)\bigr)$. The assumed erosion then gives $h \in \I_N^k(t)$, which implies
    \[
        \rk_N^{\mathcal{R}_n}\bigl(R(t, h)\bigr)
        \ge
        k
        =
        \rk_N^{\mathcal{R}_n}\bigl(R(t, h)\bigr)+1,
    \]
    a contradiction. Interchanging $M$ and $N$ gives
    \[
        \rk_N^{\mathcal{R}_n}\bigl(R(t, h)^\varepsilon\bigr)
        \le
        \rk_M^{\mathcal{R}_n}\bigl(R(t, h)\bigr).
    \]
    Thus, there is an $\varepsilon$-erosion between $\rk_M^{\mathcal{R}_n}$ and $\rk_N^{\mathcal{R}_n}$.

    Conversely, suppose that there is an $\varepsilon$-erosion between $\rk_M^{\mathcal{R}_n}$ and $\rk_N^{\mathcal{R}_n}$. Fix $t \in \R^n$, $k \in \N_{>0}$, and $h \in \mathsf{E}_\varepsilon\bigl(\I_M^k(t)\bigr)$. Then $h+\varepsilon\mathbf{1}_n \in \I_M^k(t)$, so
    \[
        \rk_M^{\mathcal{R}_n}\bigl(R(t, h)^\varepsilon\bigr)
        =
        \rk_M^{\mathcal{R}_n}\bigl(R(t, h+\varepsilon\mathbf{1}_n)\bigr)
        \ge
        k.
    \]
    The rectangle-erosion inequality gives
    \[
        \rk_N^{\mathcal{R}_n}\bigl(R(t, h)\bigr)
        \ge
        \rk_M^{\mathcal{R}_n}\bigl(R(t, h)^\varepsilon\bigr)
        \ge
        k.
    \]
    Hence, $h \in \I_N^k(t)$. Therefore,
    \[
        \mathsf{E}_\varepsilon\bigl(\I_M^k(t)\bigr)
        \subseteq
        \I_N^k(t).
    \]
    Interchanging $M$ and $N$ gives the reverse inclusion required in the definition of an $\varepsilon$-erosion between the landscape-interval families.

    The two types of $\varepsilon$-erosion therefore exist for exactly the same values of $\varepsilon$. Taking infima proves
    $d_{\LI}(M, N)=d_E^{\mathcal{R}_n}(M, N)$.
\end{proof}

We next compare the distance between uniform landscapes with the erosion distance over the cube family $\mathcal{C}_n$. By Proposition~\ref{prop:uniform-recovered-from-integrated}, $\UL_M(k, t) = \DL_M(\mathbf{1}_n, k, t)$.
Hence, by the definition of the directional landscape and Lemma~\ref{lem:generalized-rank-extrema},
\[
    \UL_M(k, t)
    =
    \sup
    \left\{
        r \ge 0 :
        \rk_M^{\mathcal{C}_n}\bigl(C(t, r)\bigr) \ge k
    \right\}.
\]

For $2$-parameter persistence modules, the equality between the uniform-landscape supremum distance and the cube-restricted erosion distance can be obtained from the generalized rank invariant landscape (GRIL) framework \cite{XinMukherjeeSamagaDey2023GRIL}. In that construction, a $1$-worm centered at $t \in \R^2$ with scale $r \ge 0$ is the closed square
\[
    \left\{
        x \in \R^2 :
        \lVert x-t\rVert_\infty \le r
    \right\}
    =
    C(t, r).
\]
Therefore, the GRIL value associated with $1$-worms is
\[
    \operatorname{GRIL}_M(t, k, 1)
    =
    \sup
    \left\{
        r \ge 0 :
        \rk_M^{\mathrm{gen}}\bigl(C(t, r)\bigr) \ge k
    \right\}
    =
    \UL_M(k, t).
\]
Restricting the worm family to $1$-worms likewise reduces the corresponding worm-erosion distance to $d_E^{\mathcal{C}_2}$. Thus, the proof of \cite[Proposition~B.9]{XinMukherjeeSamagaDey2023GRIL}, applied to this subfamily, gives this equality when $n = 2$. The following theorem establishes it directly for persistence modules indexed by $\R^n$ in every dimension $n \ge 1$.

\begin{theorem}[Uniform landscape and cube erosion]
\label{thm:uniform-erosion-distance}
    For $M, N \in \Vect^{\R^n}$,
    \[
        \lVert \UL_M-\UL_N\rVert_\infty
        =
        d_E^{\mathcal{C}_n}(M, N).
    \]
\end{theorem}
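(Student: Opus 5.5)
We prove the two inequalities separately. Throughout we use the formula established just before the statement,
\[
    \UL_X(k,t)=\sup\{r\ge 0: \rk_X^{\mathcal{C}_n}(C(t,r))\ge k\},
\]
together with the observation that $r\mapsto \rk_X^{\mathcal{C}_n}(C(t,r))=\rk_X(t-r\mathbf{1}_n,t+r\mathbf{1}_n)$ is nonincreasing. This monotonicity holds because $\I_X^k(t)$ is closed under decreasing coordinates (Proposition~\ref{prop:landscape-interval-structure}). Consequently $\{r\ge0: r\mathbf{1}_n\in\I_X^k(t)\}$ is an initial segment of $[0,\infty)$ with supremum $\UL_X(k,t)$.

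\emph{Step 1: $d_E^{\mathcal{C}_n}\le\lVert\UL_M-\UL_N\rVert_\infty$.} Set $\Delta=\lVert \UL_M-\UL_N\rVert_\infty$. We may assume $\Delta<\infty$, and we fix any $\varepsilon>\Delta$. We aim to show that there is an $\varepsilon$-erosion. Take a cube $C(t,r)$ and suppose, for contradiction, that $\rk_M(C(t,r+\varepsilon))>\rk_N(C(t,r))$. Then the right-hand side is finite, and we set $k=\rk_N(C(t,r))+1$. From the left-hand side we get $\UL_M(k,t)\ge r+\varepsilon$. Since $\varepsilon>\Delta$, it follows that $\UL_N(k,t)>r$; this includes the case where $\UL_M$ is infinite, since $d_{\mathrm{ext}}\le\Delta<\infty$ then forces $\UL_N=\infty$ as well. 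By the initial-segment property, $r\mathbf{1}_n\in\I_N^k(t)$, so $\rk_N(C(t,r))\ge k$, a contradiction. The symmetric argument gives the other inequality. Taking the infimum over $\varepsilon>\Delta$ proves this direction.

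\emph{Step 2: $\lVert\UL_M-\UL_N\rVert_\infty\le d_E^{\mathcal{C}_n}$.} Suppose there is an $\varepsilon$-erosion between $\rk_M^{\mathcal{C}_n}$ and $\rk_N^{\mathcal{C}_n}$, and fix $k$ and $t$. We show that $\UL_N(k,t)\ge \UL_M(k,t)-\varepsilon$, with the convention $\infty-\varepsilon=\infty$. For any $s<\UL_M(k,t)$ with $s\ge\varepsilon$, we have $\rk_M(C(t,s))\ge k$, and the erosion gives $\rk_N(C(t,s-\varepsilon))\ge\rk_M(C(t,s))\ge k$. Hence $\UL_N(k,t)\ge s-\varepsilon$. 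Letting $s\uparrow\UL_M(k,t)$ yields the claim; when $\UL_M(k,t)\le\varepsilon$ the claim is trivial because $\UL_N\ge0$. By symmetry, $d_{\mathrm{ext}}(\UL_M(k,t),\UL_N(k,t))\le\varepsilon$. The extended convention needs a separate check: if $\UL_M=\infty$, the bound forces $\UL_N=\infty$, so $d_{\mathrm{ext}}=0$. Taking the supremum over $k,t$ and then the infimum over admissible $\varepsilon$ proves this direction.

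The only delicate point is Step 1. There we must extract a strict inequality on $\UL_N$ from a non-strict bound, and this is why we take $\varepsilon>\Delta$ strictly. We also need the initial-segment property so that $\UL_N(k,t)>r$ implies $r\mathbf{1}_n\in\I_N^k(t)$, even though the supremum defining $\UL_N$ need not be attained. The infinite-value bookkeeping in both steps is routine under the $d_{\mathrm{ext}}$ convention.
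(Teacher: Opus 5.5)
Your proof is correct and follows essentially the same route as the paper. For $d_E^{\mathcal{C}_n}\le\Delta$ you build an erosion at every parameter exceeding $\Delta$ by testing the threshold $k=\rk_N(C(t,r))+1$. For the reverse bound you push cube ranks through the erosion inequality. Both steps use monotonicity of cube ranks in the radius, exactly as the paper does. The differences are cosmetic: you argue the second bound directly rather than by contradiction, and you get radius monotonicity from downward closure of landscape intervals instead of from the general monotonicity of generalized rank.
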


\begin{proof}
    For every $X \in \{M, N\}$, $t \in \R^n$, and $0 \le r \le s$, Proposition~\ref{prop:generalized-rank-monotonicity} gives
    \[
        \rk_X^{\mathcal{C}_n}\bigl(C(t, s)\bigr)
        \le
        \rk_X^{\mathcal{C}_n}\bigl(C(t, r)\bigr).
    \]
    Thus, if the cube rank is at least $k$ at radius $s$, then it is at least $k$ at every smaller radius.

    Suppose first that there is an $\varepsilon$-erosion between $\rk_M^{\mathcal{C}_n}$ and $\rk_N^{\mathcal{C}_n}$. Fix $t \in \R^n$ and $k \in \N_{>0}$, and set $\alpha \coloneqq \UL_M(k, t)$ and $\beta \coloneqq \UL_N(k, t)$. By symmetry, assume that $\alpha \le \beta$. If $\alpha = \beta = \infty$, then $d_{\mathrm{ext}}(\alpha,\beta)=0$. Otherwise, suppose for a contradiction that $d_{\mathrm{ext}}(\alpha,\beta)>\varepsilon$. We may then choose $\eta>0$ such that $\alpha+\varepsilon+\eta<\beta$. Since $\alpha+\eta>\alpha$, while $\alpha+\varepsilon+\eta<\beta$, we have
    \[
        \rk_M^{\mathcal{C}_n}\bigl(C(t,\alpha+\eta)\bigr)
        <
        k
        \qquad\text{and}\qquad
        \rk_N^{\mathcal{C}_n}\bigl(C(t,\alpha+\varepsilon+\eta)\bigr)
        \ge
        k.
    \]
    However, $C(t,\alpha+\eta)^\varepsilon=C(t,\alpha+\varepsilon+\eta)$, so the erosion inequality gives
    \[
        \rk_N^{\mathcal{C}_n}\bigl(C(t,\alpha+\varepsilon+\eta)\bigr)
        \le
        \rk_M^{\mathcal{C}_n}\bigl(C(t,\alpha+\eta)\bigr),
    \]
    a contradiction. Therefore, $d_{\mathrm{ext}}(\alpha,\beta)\le\varepsilon$. Taking the supremum over $k$ and $t$, and then the infimum over all erosion parameters, gives
    \[
        \lVert\UL_M-\UL_N\rVert_\infty
        \le
        d_E^{\mathcal{C}_n}(M,N).
    \]

    Conversely, let $\Delta \coloneqq \lVert\UL_M-\UL_N\rVert_\infty$. If $\Delta=\infty$, there is nothing to prove. Suppose that $\Delta<\infty$, and fix $\eta>0$. We show that there is a $(\Delta+\eta)$-erosion. Fix $t \in \R^n$ and $r \ge 0$, and suppose for a contradiction that
    \[
        \rk_N^{\mathcal{C}_n}\bigl(C(t,r+\Delta+\eta)\bigr)
        >
        \rk_M^{\mathcal{C}_n}\bigl(C(t,r)\bigr).
    \]
    Set $q \coloneqq \rk_M^{\mathcal{C}_n}\bigl(C(t,r)\bigr)$. The strict inequality forces $q<\infty$, so we may set $k \coloneqq q+1$. It follows that
    \[
        \UL_N(k,t)
        \ge
        r+\Delta+\eta,
        \qquad
        \UL_M(k,t)
        \le
        r.
    \]
    Consequently,
    \[
        d_{\mathrm{ext}}\bigl(\UL_M(k,t),\UL_N(k,t)\bigr)
        \ge
        \Delta+\eta,
    \]
    contradicting the definition of $\Delta$. Hence,
    \[
        \rk_N^{\mathcal{C}_n}\bigl(C(t,r+\Delta+\eta)\bigr)
        \le
        \rk_M^{\mathcal{C}_n}\bigl(C(t,r)\bigr).
    \]
    Interchanging $M$ and $N$ gives the other erosion inequality. Since $C(t,r)^{\Delta+\eta}=C(t,r+\Delta+\eta)$, there is a $(\Delta+\eta)$-erosion. Therefore, $d_E^{\mathcal{C}_n}(M,N)\le\Delta+\eta$. Letting $\eta\to0$ gives $d_E^{\mathcal{C}_n}(M,N)\le\Delta$, proving the result.
\end{proof}

When $n = 1$, the family $\mathcal{C}_1$ consists precisely of the nonempty closed bounded intervals in $\R$, and the uniform landscape coincides with the one-parameter persistence landscape. For finite persistence diagrams whose birth and death coordinates are finite, the equality shown in Theorem \ref{thm:uniform-erosion-distance} was previously established in \cite[Theorem~3.19]{ayhan2027equivalence}. Equivalently, this setting corresponds to finite barcodes consisting entirely of bounded intervals. Under the barcode--module correspondence, the erosion distance used there agrees with $d_E^{\mathcal{C}_1}$ for the associated persistence modules. The following corollary records the identity directly in the module-level framework used here, under the extended-rank and extended-distance conventions, without requiring the modules to admit finite persistence diagrams.

\begin{corollary}[One-parameter erosion identity]
\label{cor:one-parameter-erosion-distance}
    For $M, N \in \Vect^\R$,
    \[
        \lVert \lambda_M-\lambda_N\rVert_\infty
        =
        d_E^{\mathcal{C}_1}(M, N),
    \]
    where $\mathcal{C}_1=\{[a,b]\subseteq\R:a\le b\}$.
\end{corollary}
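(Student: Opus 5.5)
This is the specialization $n=1$ of Theorem~\ref{thm:uniform-erosion-distance}, so the plan is to check that every object in that theorem reduces to the corresponding one-parameter object and then invoke it.

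\emph{The landscapes.} For $n=1$ the condition ``for every $h\in\R_{\ge0}$ with $|h|\le\epsilon$, $\rk_M(t-h,t+h)\ge k$'' in Definition~\ref{def:uniform-multiparameter-persistence-landscape} is equivalent to $\rk_M(t-\epsilon,t+\epsilon)\ge k$. This is because $\I_M^k(t)$ is closed under decreasing coordinates, by Proposition~\ref{prop:landscape-interval-structure}. Hence $\UL_M(k,t)=\sup\I_M^k(t)=\lambda_M(k,t)$. The same holds for $N$, so the two supremum distances $\lVert\UL_M-\UL_N\rVert_\infty$ and $\lVert\lambda_M-\lambda_N\rVert_\infty$ coincide, both taken in the extended sense with $d_{\mathrm{ext}}$ over the same index set $\N_{>0}\times\R$.

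\emph{The cube family.} Next I identify $\mathcal{C}_1$. We have $C(t,r)=[t-r,t+r]$ with $t\in\R$ and $r\ge0$. Conversely, every $[a,b]$ with $a\le b$ arises by taking $t=(a+b)/2$ and $r=(b-a)/2$. So $\mathcal{C}_1=\{[a,b]:a\le b\}$, including the degenerate intervals, which matches the description in the statement. The erosion distance $d_E^{\mathcal{C}_1}$ is therefore exactly the one appearing in Theorem~\ref{thm:uniform-erosion-distance} with $n=1$.

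\emph{Conclusion.} Applying Theorem~\ref{thm:uniform-erosion-distance} with $n=1$ then gives the identity. I expect no real obstacle; the only point needing care is the extended-value conventions. The theorem was proved under those same conventions (extended rank and $d_{\mathrm{ext}}$) with no finiteness hypotheses, so the corollary inherits them for arbitrary $M,N\in\Vect^{\R}$. I will remark briefly that no finite persistence diagram is needed.
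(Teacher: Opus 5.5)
Your proposal is correct and follows essentially the same route as the paper, which also derives the corollary by specializing Theorem~\ref{thm:uniform-erosion-distance} to $n=1$ after noting that $\UL_M=\lambda_M$ and that $\mathcal{C}_1$ is the family of closed bounded intervals. You give a bit more detail, namely the downward-closure argument identifying $\UL_M(k,t)=\sup\I_M^k(t)=\lambda_M(k,t)$ and the explicit parametrization $[a,b]=C\bigl(\tfrac{a+b}{2},\tfrac{b-a}{2}\bigr)$, and both checks are accurate.
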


\begin{proof}
    When $n = 1$, the uniform landscape coincides with the one-parameter persistence landscape, and $\mathcal{C}_1$ is the family of nonempty closed bounded intervals in $\R$. The result follows from Theorem~\ref{thm:uniform-erosion-distance}.
\end{proof}

We next compare the restricted directional distance with the cube- and rectangle-restricted erosion distances. Recall from Subsection~\ref{subsec:relations-among-landscapes} that, for $0 < \alpha \le 1$, the set $S_{\infty,+}^{n-1}(\alpha)$ consists of the directions whose coordinates are bounded below by $\alpha$, and that $\lVert \DL_M - \DL_N \rVert_{\infty,\alpha}$ denotes the corresponding restricted distance.

\begin{theorem}[Restricted directional erosion comparison]
\label{thm:directional-erosion-bound}
    Let $M, N \in \Vect^{\R^n}$. For every $0 < \alpha \le 1$,
    \[
        d_E^{\mathcal{C}_n}(M, N)
        \le
        \lVert \DL_M - \DL_N \rVert_{\infty,\alpha}
        \le
        \frac{1}{\alpha}
        d_E^{\mathcal{R}_n}(M, N).
    \]
\end{theorem}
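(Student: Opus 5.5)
The two inequalities will be proved separately, each by combining earlier results.

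\emph{Lower bound.} Since $\mathbf{1}_n \in S_{\infty,+}^{n-1}(\alpha)$ for every $0<\alpha\le 1$, and $\DL_X(\mathbf{1}_n,k,t)=\UL_X(k,t)$ by Proposition~\ref{prop:uniform-recovered-from-integrated}, the restricted directional distance dominates $\lVert \UL_M-\UL_N\rVert_\infty$. Theorem~\ref{thm:uniform-erosion-distance} identifies the latter with $d_E^{\mathcal{C}_n}(M,N)$. Alternatively, this also follows from the first two inequalities of Corollary~\ref{cor:directional-weighted-comparison}.

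\emph{Upper bound.} I will argue directly with landscape intervals via Theorem~\ref{thm:landscape-interval-rectangle-erosion}, rather than go through weighted landscapes. Let $\varepsilon$ be such that the landscape-interval families admit an $\varepsilon$-erosion; such $\varepsilon$ exist with infimum $d_E^{\mathcal{R}_n}(M,N)$, and if there are none the bound is trivial. Fix $v\in S_{\infty,+}^{n-1}(\alpha)$, $k$, and $t$, and set $a=\DL_M(v,k,t)$ and $b=\DL_N(v,k,t)$.

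The claim is $b\ge a-\varepsilon/\alpha$, and symmetrically $a \ge b - \varepsilon/\alpha$. Take any $s<a$ with $s>\varepsilon/\alpha$ (if $a\le\varepsilon/\alpha$ the claim is trivial, since $b\ge0$). Because $\I_M^k(t)$ is downward closed, $sv\in\I_M^k(t)$. Set $s'=s-\varepsilon/\alpha$. Since $v_i\ge\alpha$, we have
\[
s'v+\varepsilon\mathbf{1}_n \preceq s'v+(\varepsilon/\alpha)v = sv .
\]
Downward closure then gives $s'v+\varepsilon\mathbf{1}_n\in\I_M^k(t)$, that is, $s'v\in\mathsf{E}_\varepsilon(\I_M^k(t))\subseteq\I_N^k(t)$. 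Hence $b\ge s'$, and letting $s\uparrow a$ gives $b\ge a-\varepsilon/\alpha$. The case $a=\infty$ is included: $b\ge s'$ for arbitrarily large $s$ forces $b=\infty$, so $d_{\mathrm{ext}}(a,b)=0$ whenever either value is infinite, as both then are. Therefore
\[
d_{\mathrm{ext}}(a,b)\le\varepsilon/\alpha .
\]
Taking the supremum over $v$, $k$, and $t$, then the infimum over admissible $\varepsilon$, and applying Theorem~\ref{thm:landscape-interval-rectangle-erosion}, yields the upper bound.

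The only delicate point is the comparison $\varepsilon\mathbf{1}_n\preceq(\varepsilon/\alpha)v$. This is exactly where the lower bound $\alpha$ on the direction coordinates is used, and it is why the estimate degenerates as $\alpha\to0$. The remaining work is bookkeeping for infinite values and for empty intervals. If $\I_M^k(t)=\varnothing$ then $a=0$ under the sup-of-empty convention, and the bound holds trivially.
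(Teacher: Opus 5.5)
Your proposal is correct and follows essentially the paper's argument. The lower bound comes from the diagonal direction together with Theorem~\ref{thm:uniform-erosion-distance}. The upper bound rests on the key comparison $\varepsilon\mathbf{1}_n \preceq (\varepsilon/\alpha)v$, combined with Theorem~\ref{thm:landscape-interval-rectangle-erosion} and downward closure. The only difference is an immaterial reordering: you apply downward closure in $\I_M^k(t)$ before eroding, whereas the paper erodes $sv$ first and then uses downward closure in $\I_N^k(t)$.
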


\begin{proof}
    The lower bound follows from Corollary~\ref{cor:directional-weighted-comparison} and Theorem~\ref{thm:uniform-erosion-distance}. It remains to prove the upper bound.
    Suppose that there is an $\varepsilon$-erosion between $\rk_M^{\mathcal{R}_n}$ and $\rk_N^{\mathcal{R}_n}$. Fix $v \in S_{\infty,+}^{n-1}(\alpha)$, $k \in \N_{>0}$, and $t \in \R^n$. If $\DL_M(v,k,t)\le\varepsilon/\alpha$, then $\DL_M(v,k,t)\le\DL_N(v,k,t)+\varepsilon/\alpha$. Otherwise, choose $s$ such that $\varepsilon/\alpha<s<\DL_M(v,k,t)$. Then $sv \in \I_M^k(t)$, and $h \coloneqq sv-\varepsilon\mathbf{1}_n$ belongs to $\R_{\ge 0}^n$. By Theorem~\ref{thm:landscape-interval-rectangle-erosion}, $h \in \I_N^k(t)$. Since $v_i\ge\alpha$ for every $i$,
    \[
        \left(s-\frac{\varepsilon}{\alpha}\right)v
        \preceq
        sv-\varepsilon\mathbf{1}_n
        =
        h.
    \]
    Since $\I_N^k(t)$ is closed under decreasing coordinates, it follows that $(s-\varepsilon/\alpha)v \in \I_N^k(t)$. Letting $s$ increase to $\DL_M(v,k,t)$ gives
    \[
        \DL_M(v,k,t)
        \le
        \DL_N(v,k,t)
        +
        \frac{\varepsilon}{\alpha}.
    \]
    Interchanging $M$ and $N$ gives the reverse inequality, and hence
    \[
        d_{\mathrm{ext}}\bigl(\DL_M(v,k,t),\DL_N(v,k,t)\bigr)
        \le
        \frac{\varepsilon}{\alpha}.
    \]
    Taking the supremum over $v$, $k$, and $t$, and then the infimum over all rectangle-erosion parameters $\varepsilon$, proves the result.
\end{proof}

The dependence on $\alpha$ cannot in general be improved, and a uniform erosion bound fails when directions arbitrarily close to the coordinate axes are allowed; see Example~\ref{ex:directional-interleaving-instability} below. In contrast, the weighted landscape admits an erosion bound that is uniform over all admissible weights.

\begin{theorem}[Weighted erosion comparison]
\label{thm:weighted-erosion-comparison}
    For $M, N \in \Vect^{\R^n}$,
    \[
        d_E^{\mathcal{C}_n}(M, N)
        \le
        \lVert \WL_M - \WL_N \rVert_\infty
        \le
        d_E^{\mathcal{R}_n}(M, N).
    \]
\end{theorem}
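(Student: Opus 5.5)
The lower bound is immediate from results already in hand. The weight $\mathbf{1}_n$ lies in $\mathcal{W}_n$ and $\WL_M(\mathbf{1}_n,k,t)=\UL_M(k,t)$, so
\[
\lVert \UL_M-\UL_N\rVert_\infty\le\lVert \WL_M-\WL_N\rVert_\infty ,
\]
and Theorem~\ref{thm:uniform-erosion-distance} identifies the left side with $d_E^{\mathcal{C}_n}(M,N)$. Equivalently, one may take $\alpha=\min_i w_i$ in Corollary~\ref{cor:directional-weighted-comparison} and use the first inequality there.

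For the upper bound, I would work with the description obtained in the proof of Proposition~\ref{prop:directional-weighted-relation}:
\[
\WL_M(w,k,t)=\sup\{\varepsilon\ge 0:\varepsilon w^{-1}\in\I_M^k(t)\}.
\]
Suppose there is an $\varepsilon$-erosion between $\rk_M^{\mathcal{R}_n}$ and $\rk_N^{\mathcal{R}_n}$. By Theorem~\ref{thm:landscape-interval-rectangle-erosion}, this gives $\mathsf{E}_\varepsilon(\I_M^k(t))\subseteq\I_N^k(t)$ and the symmetric inclusion.

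Fix $w\in\mathcal{W}_n$, $k$ and $t$. If $\WL_M(w,k,t)\le\varepsilon$, the inequality $\WL_M\le\WL_N+\varepsilon$ is trivial. Otherwise, pick $s$ with $\varepsilon<s<\WL_M(w,k,t)$, so that $sw^{-1}\in\I_M^k(t)$. The key observation is that $w_i\le 1$ for every $i$, since $\lVert w\rVert_\infty=1$. Hence $w_i^{-1}\ge 1$, and therefore
\[
(s-\varepsilon)w^{-1}+\varepsilon\mathbf{1}_n\preceq (s-\varepsilon)w^{-1}+\varepsilon w^{-1}=sw^{-1}.
\]
By downward closure (Proposition~\ref{prop:landscape-interval-structure}), the left-hand vector lies in $\I_M^k(t)$. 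It follows that $(s-\varepsilon)w^{-1}\in\mathsf{E}_\varepsilon(\I_M^k(t))\subseteq\I_N^k(t)$, so $\WL_N(w,k,t)\ge s-\varepsilon$. Letting $s$ increase to $\WL_M(w,k,t)$ gives $\WL_M\le\WL_N+\varepsilon$.

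The case of infinite values also works. If $\WL_M(w,k,t)=\infty$, then $s$ may be taken arbitrarily large, which forces $\WL_N(w,k,t)=\infty$. This is consistent with the convention for $d_{\mathrm{ext}}$.

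Interchanging $M$ and $N$ gives $d_{\mathrm{ext}}(\WL_M(w,k,t),\WL_N(w,k,t))\le\varepsilon$. Taking the supremum over $w,k,t$ and then the infimum over erosion parameters $\varepsilon$ yields $\lVert\WL_M-\WL_N\rVert_\infty\le d_E^{\mathcal{R}_n}(M,N)$.

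The only real point is the one that distinguishes this from Theorem~\ref{thm:directional-erosion-bound}. There, the normalization $\lVert v\rVert_\infty=1$ gives $v_i\le 1$, so absorbing $\varepsilon\mathbf{1}_n$ costs a factor $1/\alpha$. In the weighted normalization, $\varepsilon w^{-1}\succeq\varepsilon\mathbf{1}_n$ automatically, so no loss occurs and the bound is uniform over all admissible weights. I expect no further obstacle beyond this careful bookkeeping with infinite values.
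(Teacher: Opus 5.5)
Your proposal is correct and follows essentially the same route as the paper: the lower bound comes from the diagonal weight $\mathbf{1}_n$ together with the cube-erosion identity, and the upper bound comes from the landscape-interval form of rectangle erosion combined with $w_i \le 1$. The only difference is the order of two steps: you apply downward closure in $\I_M^k(t)$ before eroding, whereas the paper erodes $sw^{-1}-\varepsilon\mathbf{1}_n$ first and then applies downward closure in $\I_N^k(t)$.
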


\begin{proof}
    The lower bound follows from Corollary~\ref{cor:directional-weighted-comparison} and Theorem~\ref{thm:uniform-erosion-distance}. It remains to prove the upper bound.
    Let $\varepsilon$ be an erosion parameter between $\rk_M^{\mathcal{R}_n}$ and $\rk_N^{\mathcal{R}_n}$. Fix $w \in \mathcal{W}_n$, $k \in \N_{>0}$, and $t \in \R^n$. If $\WL_M(w, k, t) \le \varepsilon$, then $\WL_M(w, k, t) \le \WL_N(w, k, t) + \varepsilon$. Otherwise, choose $\varepsilon < s < \WL_M(w, k, t)$. Then $sw^{-1} \in \I_M^k(t)$. Since $0 < w_i \le 1$ for every $i$, the vector $h \coloneqq sw^{-1} - \varepsilon\mathbf{1}_n$ belongs to $\R_{\ge 0}^n$. Theorem~\ref{thm:landscape-interval-rectangle-erosion} gives $h \in \I_N^k(t)$.

    Moreover, $(s - \varepsilon)w^{-1} \preceq sw^{-1} - \varepsilon\mathbf{1}_n = h$. Since $\I_N^k(t)$ is closed under decreasing coordinates, $(s - \varepsilon)w^{-1} \in \I_N^k(t)$, and hence $\WL_N(w, k, t) \ge s - \varepsilon$. Letting $s$ increase to $\WL_M(w, k, t)$ gives
    \[
        \WL_M(w, k, t)
        \le
        \WL_N(w, k, t) + \varepsilon.
    \]
    If $\WL_M(w, k, t) = \infty$, taking arbitrarily large finite $s$ gives $\WL_N(w, k, t) = \infty$. Interchanging $M$ and $N$ therefore gives
    \[
        d_{\mathrm{ext}}\bigl(
            \WL_M(w, k, t),
            \WL_N(w, k, t)
        \bigr)
        \le
        \varepsilon.
    \]
    Taking the supremum over $w$, $k$, and $t$, and then the infimum over erosion parameters, proves the result.
\end{proof}

Finally, the rectangle-restricted erosion distance also controls the integrated landscape.

\begin{theorem}[Integrated erosion bound]
\label{thm:integrated-erosion-bound}
    For $M, N \in \Vect^{\R^n}$,
    \[
        \lVert \IL_M - \IL_N \rVert_\infty
        \le
        n\,d_E^{\mathcal{R}_n}(M, N).
    \]
\end{theorem}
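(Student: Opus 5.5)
The plan is to argue directly at the level of landscape intervals, using Theorem~\ref{thm:landscape-interval-rectangle-erosion} to convert a rectangle erosion into inclusions of diagonal erosions, and then to compare volumes by a measure-preserving translation. Going through the directional landscape via Theorem~\ref{thm:integrated-directional-comparison} or Corollary~\ref{cor:integrated-weighted-comparison} is not suitable. Directions near the coordinate axes are not controlled by erosion, and the weight factor $\lVert w^{-1}\rVert_\infty$ is unbounded. The volume argument avoids both issues.

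If $d_E^{\mathcal{R}_n}(M,N)=\infty$ there is nothing to prove. Otherwise fix an admissible rectangle-erosion parameter $\varepsilon$. By Theorem~\ref{thm:landscape-interval-rectangle-erosion}, the landscape-interval families admit an $\varepsilon$-erosion, so $\mathsf{E}_\varepsilon(\I_M^k(t))\subseteq\I_N^k(t)$ and symmetrically. Fix $r>0$, $k$, and $t$; the case $r=0$ is trivial.

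\emph{Case $\varepsilon\ge r$.} Part~(1) of Theorem~\ref{thm:integrated-landscape-properties} places both values in $[0,r]$. Hence their difference is at most $r\le\varepsilon\le n\varepsilon$.

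\emph{Case $\varepsilon<r$.} Set $A\coloneqq Q_r\cap\I_M^k(t)\cap[\varepsilon,\infty)^n$. For $h\in A$, the vector $h-\varepsilon\mathbf{1}_n$ lies in $\R_{\ge0}^n$ and in $\mathsf{E}_\varepsilon(\I_M^k(t))\subseteq\I_N^k(t)$. It also lies in $Q_{r-\varepsilon}\subseteq Q_r$. Thus the translation $h\mapsto h-\varepsilon\mathbf{1}_n$ maps $A$ injectively into $Q_r\cap\I_N^k(t)$, and it preserves Lebesgue measure. Measurability of all sets involved follows from Proposition~\ref{prop:interval-measurability}. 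Therefore
\[
\mathcal{L}^n\bigl(Q_r\cap\I_N^k(t)\bigr)\ge\mathcal{L}^n(A)\ge\mathcal{L}^n\bigl(Q_r\cap\I_M^k(t)\bigr)-\mathcal{L}^n\bigl(Q_r\setminus[\varepsilon,r]^n\bigr).
\]
The last term equals $r^n-(r-\varepsilon)^n$, which is at most $n r^{n-1}\varepsilon$ by the mean value theorem. Dividing by $r^{n-1}$ gives $\IL_M(r,k,t)-\IL_N(r,k,t)\le n\varepsilon$. Interchanging $M$ and $N$ gives the reverse inequality.

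Taking the supremum over $r,k,t$ and then the infimum over admissible $\varepsilon$ yields the theorem; the integrated landscape is always finite, so no extended-distance subtleties arise. The only step needing care is the observation that the erosion inclusion concerns the translate $h-\varepsilon\mathbf{1}_n$ rather than $h$ itself. This is why one translates the region instead of comparing the sets pointwise, and the boundary layer $Q_r\setminus[\varepsilon,r]^n$ is exactly the volume lost in doing so.
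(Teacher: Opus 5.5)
Your proposal is correct and follows essentially the same route as the paper's proof. Both convert the rectangle erosion into diagonal-erosion inclusions via Theorem~\ref{thm:landscape-interval-rectangle-erosion}, dispose of the case $r \le \varepsilon$ using the bound $0 \le \IL \le r$, translate $Q_r \cap \I_M^k(t) \cap [\varepsilon, r]^n$ by $-\varepsilon\mathbf{1}_n$ into $Q_r \cap \I_N^k(t)$, and control the boundary layer of measure $r^n - (r-\varepsilon)^n \le n r^{n-1}\varepsilon$ by the mean value theorem.
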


\begin{proof}
    Let $\varepsilon$ be an admissible parameter for $d_E^{\mathcal{R}_n}(M, N)$, and fix $k \in \N_{>0}$ and $t \in \R^n$. At $r = 0$, both integrated landscapes are zero. If $0 < r \le \varepsilon$, then $0 \le \IL_M(r, k, t), \IL_N(r, k, t) \le r$, and hence
    \[
        \left|
            \IL_M(r, k, t)
            -
            \IL_N(r, k, t)
        \right|
        \le
        r
        \le
        n\varepsilon.
    \]

    Now suppose that $r > \varepsilon$. If $h \in Q_r \cap \I_M^k(t) \cap [\varepsilon, r]^n$, then Theorem~\ref{thm:landscape-interval-rectangle-erosion} gives $h - \varepsilon\mathbf{1}_n \in \I_N^k(t)$, and this translated point also belongs to $Q_r$. Therefore, translation by $-\varepsilon\mathbf{1}_n$ maps $Q_r \cap \I_M^k(t) \cap [\varepsilon, r]^n$ into $Q_r \cap \I_N^k(t)$. The remaining portion of $Q_r \cap \I_M^k(t)$ is contained in $Q_r \setminus [\varepsilon, r]^n$, whose Lebesgue measure is $r^n - (r - \varepsilon)^n$. Since translation preserves Lebesgue measure,
    \[
        \mathcal{L}^n\bigl(Q_r \cap \I_M^k(t)\bigr)
        \le
        \mathcal{L}^n\bigl(Q_r \cap \I_N^k(t)\bigr)
        +
        r^n - (r - \varepsilon)^n.
    \]
    Because an admissible erosion parameter satisfies the corresponding inclusion with $M$ and $N$ interchanged, the same argument gives the reverse inequality. Applying the preceding inequality in both directions and dividing by $r^{n-1}$ gives
    \[
        \left|
            \IL_M(r, k, t)
            -
            \IL_N(r, k, t)
        \right|
        \le
        \frac{r^n - (r - \varepsilon)^n}{r^{n-1}}.
    \]
    For $\varepsilon > 0$, the mean value theorem gives a point $\xi\in(r-\varepsilon,r)$ such that
    \[
        r^n - (r - \varepsilon)^n
        =
        n\xi^{n-1}\varepsilon
        \le
        nr^{n-1}\varepsilon.
    \]
    Therefore, $|\IL_M(r, k, t) - \IL_N(r, k, t)| \le n\varepsilon$. Taking the supremum over $r$, $k$, and $t$, and then the infimum over admissible erosion parameters, proves the result.
\end{proof}

\subsection{Consequences for interleaving stability}
\label{subsec:interleaving-stability}

The equivalence between landscape-interval erosion and rectangle erosion, together with the erosion--interleaving stability theorem, immediately gives interleaving stability for complete families of landscape intervals. Combined with the comparisons established in the preceding subsection, it also yields interleaving-stability bounds for the associated landscapes.

For pointwise finite-dimensional persistence modules, Vipond proved that the uniform landscape and each fixed admissible weighted landscape are $1$-Lipschitz with respect to the interleaving distance \cite[Theorem~30 and Corollary~33]{JMLR:v21:Vipond}. Vipond also assembled the complete family over all admissible weights into the weighted landscape space and proved that, for finitely presented modules, the resulting map is injective and $1$-Lipschitz after modules with the same rank invariant are identified \cite[Definition~35 and Proposition~36]{JMLR:v21:Vipond}. Since the fixed-weight bound is independent of the weight vector, it also gives the same estimate for the combined weighted landscape $\WL_M$ on pointwise finite-dimensional modules. The following corollary recovers these bounds, extends them to arbitrary persistence modules under our extended-rank convention, and gives corresponding bounds for landscape-interval families and the restricted directional and integrated landscapes.

\begin{corollary}[Interleaving stability]
\label{cor:landscape-interleaving-stability}
    Let $M, N \in \Vect^{\R^n}$. Then
    \[
        d_{\LI}(M, N)
        =
        d_E^{\mathcal{R}_n}(M, N)
        \le
        d_I(M, N).
    \]
     Moreover, for every $0 < \alpha \le 1$,
    \[
    \begin{gathered}
        d_E^{\mathcal{C}_n}(M, N)
        =
        \lVert \UL_M - \UL_N \rVert_\infty
        \le
        \lVert \WL_M - \WL_N \rVert_\infty
        \le
        d_{\LI}(M, N)
        \le
        d_I(M, N),\\
        d_E^{\mathcal{C}_n}(M, N)
        \le
        \lVert \DL_M - \DL_N \rVert_{\infty,\alpha}
        \le
        \frac{1}{\alpha}d_{\LI}(M, N)
        \le
        \frac{1}{\alpha}d_I(M, N),\\
        \lVert \IL_M - \IL_N \rVert_\infty
        \le
        n\,d_{\LI}(M, N)
        \le
        n\,d_I(M, N).
    \end{gathered}
    \]
\end{corollary}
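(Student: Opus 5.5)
The plan is to assemble the corollary from the erosion comparisons of Subsection~\ref{subsec:erosion-landscape-distances} together with Theorem~\ref{thm:erosion-interleaving-stability}, applied to the thickening-closed families $\mathcal{R}_n$ and $\mathcal{C}_n$. No new construction is needed; the work is in ordering the known inequalities correctly and checking that each ingredient holds under the extended-rank and extended-distance conventions.

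First, Theorem~\ref{thm:landscape-interval-rectangle-erosion} gives the equality $d_{\LI}(M,N)=d_E^{\mathcal{R}_n}(M,N)$. Since $\mathcal{R}_n$ is closed under thickenings, because $R(t,h)^\varepsilon=R(t,h+\varepsilon\mathbf{1}_n)$, Theorem~\ref{thm:erosion-interleaving-stability} yields $d_E^{\mathcal{R}_n}(M,N)\le d_I(M,N)$. This proves the first display.

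For the chain of uniform and weighted landscapes, I would proceed in three steps.
\begin{enumerate}
\item Theorem~\ref{thm:uniform-erosion-distance} identifies $d_E^{\mathcal{C}_n}(M,N)$ with $\lVert\UL_M-\UL_N\rVert_\infty$.
\item Theorem~\ref{thm:weighted-erosion-comparison} sandwiches $\lVert\WL_M-\WL_N\rVert_\infty$ between $d_E^{\mathcal{C}_n}$ and $d_E^{\mathcal{R}_n}$. The upper end is then rewritten as $d_{\LI}$ using the first display.
\item The final bound by $d_I$ also comes from the first display.
\end{enumerate}
The directional chain follows from Theorem~\ref{thm:directional-erosion-bound}, again replacing $d_E^{\mathcal{R}_n}$ by $d_{\LI}$ and then bounding by $d_I$; multiplying an inequality by $1/\alpha>0$ preserves it, including when values are infinite. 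The integrated bound follows from Theorem~\ref{thm:integrated-erosion-bound} in the same way, multiplying by $n$.

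The main point needing care is that Theorem~\ref{thm:erosion-interleaving-stability} is applied to arbitrary modules, not just pointwise finite-dimensional ones, with ranks possibly infinite. The paper has already asserted this extension just before the theorem statement, so I would cite it explicitly. I would also note that every comparison theorem used above was proved for arbitrary $M,N\in\Vect^{\R^n}$ under the $d_{\mathrm{ext}}$ convention, and that the case $d_I=\infty$ is trivial. Finally, I would remark that this recovers Vipond's $1$-Lipschitz bounds for $\UL$ and $\WL$, because $d_E^{\mathcal{C}_n}\le d_{\LI}\le d_I$.
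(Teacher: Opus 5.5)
Your proposal is correct and matches the paper's proof essentially step for step. The first display comes from Theorems~\ref{thm:landscape-interval-rectangle-erosion} and~\ref{thm:erosion-interleaving-stability}, and the three chains come from Theorems~\ref{thm:uniform-erosion-distance}, \ref{thm:weighted-erosion-comparison}, \ref{thm:directional-erosion-bound}, and~\ref{thm:integrated-erosion-bound} after substituting $d_E^{\mathcal{R}_n}=d_{\LI}\le d_I$. The only cosmetic difference is that the paper takes $\lVert \UL_M-\UL_N\rVert_\infty\le\lVert \WL_M-\WL_N\rVert_\infty$ directly from Corollary~\ref{cor:directional-weighted-comparison}, which is exactly how the lower bound you cite from Theorem~\ref{thm:weighted-erosion-comparison} is proved.
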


\begin{proof}
    Theorems~\ref{thm:landscape-interval-rectangle-erosion} and \ref{thm:erosion-interleaving-stability} give
    \[
        d_{\LI}(M,N)
        =
        d_E^{\mathcal{R}_n}(M,N)
        \le
        d_I(M,N).
    \]
    By Theorem~\ref{thm:uniform-erosion-distance}, $d_E^{\mathcal{C}_n}(M, N) = \lVert \UL_M - \UL_N \rVert_\infty$. Corollary~\ref{cor:directional-weighted-comparison} gives $\lVert \UL_M - \UL_N \rVert_\infty \le \lVert \WL_M - \WL_N \rVert_\infty$, while Theorem~\ref{thm:weighted-erosion-comparison} gives $\lVert \WL_M - \WL_N \rVert_\infty \le d_E^{\mathcal{R}_n}(M, N) = d_{\LI}(M, N)$. This proves the first chain.

    The second chain follows from Theorem~\ref{thm:directional-erosion-bound} after substituting $d_E^{\mathcal{R}_n}(M, N) = d_{\LI}(M, N)$ and using $d_{\LI}(M, N) \le d_I(M, N)$. The third follows in the same way from Theorem~\ref{thm:integrated-erosion-bound}.
\end{proof}

The factor $1/\alpha$ in the restricted directional estimate is optimal, and no uniform bound holds when directions arbitrarily close to the coordinate axes are allowed, as the following example shows.

\begin{example}[Optimality of the directional bound and failure of unrestricted stability]
\label{ex:directional-interleaving-instability}
    Let $0 < b < L$, set $J_{L,b} \coloneqq (-L, L) \times (-b, b) \subseteq \R^2$, and let $M_{L,b} \coloneqq \mathds{1}_{J_{L,b}}$. Let $N_0$ denote the zero persistence module.

    An $\varepsilon$-interleaving between $M_{L,b}$ and $N_0$ exists if and only if the transition morphism $\varphi_{M_{L,b}}^{2\varepsilon\mathbf{1}_2}$ is zero. If $\varepsilon \ge b$, no point $x \in J_{L,b}$ satisfies $x + 2\varepsilon\mathbf{1}_2 \in J_{L,b}$, so this transition morphism is zero. If $0 \le \varepsilon < b$, both $(-\varepsilon, -\varepsilon)$ and $(\varepsilon, \varepsilon)$ belong to $J_{L,b}$, and the corresponding component of the transition morphism is the identity on $\F$. Therefore, $d_I(M_{L,b}, N_0) = b$.

    At $t = \mathbf{0}$ and $k = 1$, the landscape interval is $\I_{M_{L,b}}^1(\mathbf{0}) = [0, L) \times [0, b)$. In particular, $\UL_{M_{L,b}}(1, \mathbf{0}) = b$, whereas $\UL_{N_0}(1, \mathbf{0}) = 0$. Together with the preceding results, this shows that the relevant quantities satisfy the chain below.
    \[
        b
        \le
        \lVert \UL_{M_{L,b}} - \UL_{N_0} \rVert_\infty
        =
        d_E^{\mathcal{C}_2}(M_{L,b}, N_0)
        \le
        d_E^{\mathcal{R}_2}(M_{L,b}, N_0)
        =
        d_{\LI}(M_{L,b}, N_0)
        \le
        d_I(M_{L,b}, N_0)
        =
        b
    \]
    Thus, all quantities in this chain equal $b$. Now let $b/L \le \alpha \le 1$ and set $v_\alpha \coloneqq (1, \alpha)$. Then $\DL_{M_{L,b}}(v_\alpha, 1, \mathbf{0}) = \frac{b}{\alpha}$.
    Since $v_\alpha \in S_{\infty,+}^{1}(\alpha)$, Theorem~\ref{thm:directional-erosion-bound} gives
    \[
        \frac{b}{\alpha}
        \le
        \lVert \DL_{M_{L,b}} - \DL_{N_0} \rVert_{\infty,\alpha}
        \le
        \frac{1}{\alpha}
        d_E^{\mathcal{R}_2}(M_{L,b}, N_0)
        =
        \frac{b}{\alpha}.
    \]
    Therefore, equality holds throughout, and the factor $1/\alpha$ cannot be improved.

    Finally, take $\alpha = b/L$. Then $v_\alpha = (1, b/L)$ has strictly positive coordinates and $\DL_{M_{L,b}}(v_\alpha, 1, \mathbf{0}) = L$. Consequently, the ratio of the unrestricted directional distance to either $d_E^{\mathcal{R}_2}(M_{L,b}, N_0)$ or $d_I(M_{L,b}, N_0)$ is at least $L/b$. Since $L/b$ can be arbitrarily large, no uniform Lipschitz bound holds when all directions are allowed. Taking $L = 1$ and letting $b \to 0$ further shows that the unrestricted directional landscape is not continuous at the zero persistence module with respect to either distance.
\end{example}

%% file: sections/finite_characterization_of_landscape_intervals.tex
\section{Finite Characterization of Landscape Intervals}
\label{sec:finite-characterization}

This section studies when the closure of a landscape interval can be reconstructed from finitely many directional values. We first consider a nonempty bounded landscape interval and show that its closure is determined by its radial boundary. We then reduce this reconstruction to the maximal points of the closure, which may be infinite in number. Finally, for bounded-below, rectangular-constructible persistence modules, we prove that only finitely many maximal points, and hence finitely many directions, are required to determine the closure of each nonempty landscape interval. The next section considers approximations from prescribed finite sets of directions when the maximal directions are not known.

Throughout this section, unless stated otherwise, fix $M \in \Vect^{\R^n}$, $t \in \R^n$, and $k \in \N_{>0}$. For brevity, write $\I \coloneqq \I_M^k(t)$.

\subsection{Radial characterization of landscape intervals}
\label{subsec:radial-characterization}

Throughout this subsection, assume that $\I$ is nonempty and bounded. By Proposition~\ref{prop:landscape-interval-structure}, $\mathbf{0} \in \I$, and $\I$ is closed under decreasing coordinates: if $h \in \I$ and $\mathbf{0} \preceq h' \preceq h$, then $h' \in \I$. Consequently, the intersection of $\I$ with each ray from the origin is a segment beginning at $\mathbf{0}$, possibly without its outer endpoint. The following map records that endpoint.

\begin{definition}[Radial boundary map]
\label{def:radial-boundary-map}
    The \emph{radial boundary map} of $\I$ is the function
    \[
        \beta
        \colon
        S_{\infty,+}^{n-1}
        \longrightarrow
        \R_{\ge 0}^n
    \]
    defined by
    \[
        \beta(v)
        \coloneqq
        \left(
            \sup
            \left\{
                \epsilon \ge 0 :
                \epsilon v \in \I
            \right\}
        \right)v
        =
        \DL_M(v, k, t)\,v.
    \]
    We call $\beta(v)$ the \emph{radial boundary point} of $\I$ in the direction $v$.
\end{definition}

Lemma~\ref{lem:integrated-radial-representation} used the radial extents of $\I$ to express its measure. The following proposition shows that the corresponding boundary points determine $\overline{\I}$ exactly.

\begin{proposition}[Radial reconstruction]
\label{prop:radial-reconstruction}
    The closure of $\I$ satisfies
    \[
        \overline{\I}
        =
        \bigcup_{v \in S_{\infty,+}^{n-1}}
        \left\{
            h \in \R_{\ge 0}^n :
            h \preceq \beta(v)
        \right\}.
    \]
\end{proposition}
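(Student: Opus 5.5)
The plan is to prove the two inclusions separately. Throughout we use that $\I$ is nonempty, bounded, and closed under decreasing coordinates within $\R_{\ge 0}^n$ (Proposition~\ref{prop:landscape-interval-structure}). Write $U$ for the right-hand side.

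\emph{Inclusion $U \subseteq \overline{\I}$.} Fix $v$ and $h$ with $\mathbf{0} \preceq h \preceq \beta(v)$, and set $d \coloneqq \DL_M(v,k,t)$, which is finite because $\I$ is bounded. If $d = 0$, then $h = \mathbf{0} \in \I$. Otherwise, for every $0 \le \epsilon < d$ we have $\epsilon v \in \I$, by the definition of the supremum together with closure under decreasing coordinates along the ray. Consider the points $h_\epsilon \coloneqq (\epsilon/d)h$. They satisfy $h_\epsilon \preceq (\epsilon/d)\beta(v) = \epsilon v$, so $h_\epsilon \in \I$ by downward closure. Letting $\epsilon \uparrow d$ gives $h_\epsilon \to h$, hence $h \in \overline{\I}$. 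Note that $U \subseteq \R_{\ge 0}^n$ holds by construction.

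\emph{Inclusion $\overline{\I} \subseteq U$.} First take $h \in \I$. If $h = \mathbf{0}$, it lies below every $\beta(v)$. Otherwise set $v \coloneqq h/\lVert h\rVert_\infty \in S_{\infty,+}^{n-1}$. Then $\lVert h\rVert_\infty v = h \in \I$, so $\DL_M(v,k,t) \ge \lVert h\rVert_\infty$ and therefore $h \preceq \beta(v)$. This gives $\I \subseteq U$.

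It then suffices to show that $U$ is closed, and this is the main obstacle, since $U$ is a union over infinitely many directions. The approach is compactness. Let $h^{(j)} \in U$ converge to $h$, and choose directions $v_j$ with $h^{(j)} \preceq \beta(v_j)$. Since $\I$ is bounded, every $\beta(v_j)$ lies in the compact set $[0,D]^n$, where $D \coloneqq \sup_{g \in \I} \lVert g\rVert_\infty < \infty$. Passing to a subsequence, we may assume $v_j \to v$ and $\DL_M(v_j,k,t) \to d$, so $\beta(v_j) \to dv \eqqcolon p$, and then $h \preceq p$. The difficulty is that $\DL_M$ need not be continuous in $v$, so $p$ need not equal $\beta(v)$. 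We avoid this by never passing through $\beta(v)$ directly. Each $\beta(v_j)$ lies in $\overline{\I}$ by the first inclusion, so the limit $p$ lies in $\overline{\I}$. It therefore suffices to show that every $p \in \overline{\I}$ lies in $U$, and the ordering $h \preceq p$ then transfers to the appropriate $\beta(\cdot)$.

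To handle a general $p \in \overline{\I}$, the case $p = \mathbf{0}$ is immediate, so assume $p \neq \mathbf{0}$ and set $v \coloneqq p/\lVert p\rVert_\infty$. Take $g_m \in \I$ with $g_m \to p$. The key observation is that $\overline{\I}$ is itself closed under decreasing coordinates in $\R_{\ge 0}^n$. Indeed, if $\mathbf{0} \preceq q \preceq p$, then the coordinatewise minima $\min(q, g_m)$ lie in $\I$ and converge to $q$. For each $0 \le \lambda < 1$, the coordinates of $\lambda p$ are strictly less than those of $p$ wherever $p_i > 0$, and they equal $0$ wherever $p_i = 0$. Hence $\lambda p \preceq g_m$ for all large $m$, which gives $\lambda p \in \I$ by downward closure. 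Consequently $\DL_M(v,k,t) \ge \lambda \lVert p\rVert_\infty$ for every $\lambda < 1$, so $\DL_M(v,k,t) \ge \lVert p\rVert_\infty$ and $p \preceq \beta(v)$. This shows $\overline{\I} \subseteq U$, and applied to the limit point $p$ above it yields $h \preceq p \preceq \beta(v)$, so $U$ is closed as well. Combined with the first inclusion, this establishes the claimed equality.
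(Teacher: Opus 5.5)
Your proof is correct and essentially the paper's: for $\overline{\I}\subseteq U$ you use the same observation that, for $p\in\overline{\I}\setminus\{\mathbf{0}\}$, each $\lambda p$ with $\lambda<1$ is dominated by nearby points of $\I$, which forces $\DL_M(p/\lVert p\rVert_\infty,k,t)\ge\lVert p\rVert_\infty$; for the other inclusion, your rescaling $h_\epsilon=(\epsilon/d)h$ is a minor variant of the paper's appeal to downward closure of $\overline{\I}$. The compactness argument showing that $U$ is closed is redundant, because your final paragraph already proves $\overline{\I}\subseteq U$ directly.
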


\begin{proof}
    For each $v \in S_{\infty,+}^{n-1}$, the set $\I \cap \{\epsilon v : \epsilon \ge 0\}$ is a segment beginning at $\mathbf{0}$ whose closure has outer endpoint $\beta(v)$. Hence, $\beta(v) \in \overline{\I}$. Since $\overline{\I}$ is closed under decreasing coordinates, every $h \preceq \beta(v)$ also belongs to $\overline{\I}$. Conversely, let $h \in \overline{\I} \setminus \{\mathbf{0}\}$, and set $r \coloneqq \lVert h \rVert_\infty$ and $v \coloneqq h/r$. For every $0 \le s < r$, a point of $\I$ sufficiently close to $h$ lies coordinatewise above $s v$, so $s v \in \I$. Therefore, $r \le \DL_M(v, k, t)$ and $h = r v \preceq \beta(v)$. The case $h = \mathbf{0}$ is immediate.
\end{proof}

The reconstruction formula constrains the relative positions of radial boundary points. In particular, one radial boundary point cannot be strictly smaller than another in every coordinate.

\begin{lemma}[Coordinatewise comparison of radial boundary points]
\label{lem:no-strict-radial-separation}
    For any $v_1, v_2 \in S_{\infty,+}^{n-1}$, there exist $i, j \in \{1, \ldots, n\}$ such that
    \[
        \beta(v_1)_i \le \beta(v_2)_i
        \qquad\text{and}\qquad
        \beta(v_1)_j \ge \beta(v_2)_j.
    \]
    Equivalently, neither radial boundary point is strictly smaller than the other in every coordinate.
\end{lemma}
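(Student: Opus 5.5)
The plan is to argue by contradiction, combining the radial structure of $\I$ with its closure under decreasing coordinates. Write $\beta(v_\ell) = d_\ell v_\ell$ with $d_\ell \coloneqq \DL_M(v_\ell, k, t)$, which is finite because $\I$ is bounded. Suppose the conclusion fails. Then one of the two points is strictly smaller than the other in every coordinate. Indeed, if no index $i$ satisfies $\beta(v_1)_i \le \beta(v_2)_i$, then $\beta(v_2)_i < \beta(v_1)_i$ for all $i$. The case where no index $j$ satisfies $\beta(v_1)_j \ge \beta(v_2)_j$ is symmetric. So, after relabeling, it suffices to rule out $\beta(v_1)_i < \beta(v_2)_i$ for every $i$.

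Assume this strict inequality holds. Since $\beta(v_2)$ has some coordinate equal to $d_2$ (because $\lVert v_2\rVert_\infty = 1$) and $\beta(v_1)_i \ge 0$, we get $d_2 > 0$. By the proof of Proposition~\ref{prop:radial-reconstruction}, $\beta(v_2) \in \overline{\I}$. More concretely, $s v_2 \in \I$ for every $s < d_2$.

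Because the inequalities are strict and there are finitely many coordinates, we can choose $s < d_2$ close enough to $d_2$ that $\beta(v_1) \prec s v_2$ strictly in every coordinate. Next, pick $\lambda > 1$ small enough that $\lambda \beta(v_1) = \lambda d_1 v_1$ remains coordinatewise below $s v_2$. This is possible since strict inequalities survive small scaling, and the case $d_1 = 0$ is trivial. Since $s v_2 \in \I$ and $\I$ is closed under decreasing coordinates, $\lambda d_1 v_1 \in \I$. This gives $\DL_M(v_1,k,t) \ge \lambda d_1 > d_1$ when $d_1 > 0$, which is a contradiction. When $d_1 = 0$, we instead take any small $\epsilon > 0$ with $\epsilon v_1 \preceq s v_2$ to get the same contradiction.

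The only delicate step is the boundary behavior: the endpoint $\beta(v_2)$ need not itself lie in $\I$. I handle this by working with $s v_2$ for $s < d_2$ instead of $\beta(v_2)$, and the strict inequality gives exactly the room this requires. Having excluded both strict orderings, there exist indices $i$ and $j$ with $\beta(v_1)_i \le \beta(v_2)_i$ and $\beta(v_1)_j \ge \beta(v_2)_j$, as claimed.
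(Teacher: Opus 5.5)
Your proposal is correct and follows essentially the paper's argument: both assume a strict coordinatewise inequality $\beta(v_1)\prec\beta(v_2)$ and use closure under decreasing coordinates to push the radial extent in direction $v_1$ past $\DL_M(v_1,k,t)$. The only difference is that you work with $s v_2\in\I$ for $s<d_2$ rather than with $\beta(v_2)\in\overline{\I}$, so you never need downward closure of $\overline{\I}$ or the approximation step from Proposition~\ref{prop:radial-reconstruction}; this is a slightly more self-contained treatment of the boundary point.
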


\begin{proof}
    Write $\beta(v_\ell) = r_\ell v_\ell$ for $\ell = 1, 2$. Suppose, to the contrary, that $\beta(v_1)_i < \beta(v_2)_i$ for every $i \in \{1, \ldots, n\}$. We may then choose $\delta > 0$ sufficiently small that $(r_1 + \delta)v_1 \preceq \beta(v_2)$. Since $\beta(v_2) \in \overline{\I}$ and $\overline{\I}$ is closed under decreasing coordinates, we have $(r_1 + \delta)v_1 \in \overline{\I}$. The argument used in the proof of Proposition~\ref{prop:radial-reconstruction} then gives $s v_1 \in \I$ for every $0 \le s < r_1 + \delta$. Choosing $s$ such that $r_1 < s < r_1 + \delta$ contradicts the definition of $r_1 = \DL_M(v_1, k, t)$. Thus, $\beta(v_1)$ cannot be strictly smaller than $\beta(v_2)$ in every coordinate. Interchanging $v_1$ and $v_2$ proves the other inequality.
\end{proof}

The preceding lemma also controls how the radial boundary varies with the direction and yields continuity at every strictly positive direction.

\begin{proposition}[Continuity of the radial boundary map]
\label{prop:radial-boundary-continuity}
    The radial boundary map $\beta$ is continuous on $S_{\infty,+}^{n-1} \cap \R_{>0}^n$ with respect to the $\ell^\infty$-norm.
\end{proposition}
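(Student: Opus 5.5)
The plan is to show that the radial extent $\rho(v) \coloneqq \DL_M(v,k,t)$ is continuous at every strictly positive direction $v_0$. Since $\beta(v) = \rho(v)v$ and $v \mapsto v$ is continuous, continuity of $\beta$ then follows immediately. Because $\I$ is bounded, $\rho$ is finite. Set $r_0 \coloneqq \rho(v_0)$ and $m \coloneqq \min_i (v_0)_i > 0$.

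The case $r_0 = 0$ is handled first. For $v$ near $v_0$, every coordinate of $v$ exceeds $m/2$. Suppose $\rho(v) > c$ for some $c > 0$. Then $sv \in \I$ for $s$ slightly below $\rho(v)$. The point $s v$ dominates the point $(s m/2)\,v_0 / \lVert v_0\rVert_\infty$, and $\lVert v_0\rVert_\infty = 1$. Closure under decreasing coordinates therefore gives $\rho(v_0) \ge s m/2 > 0$, a contradiction. This comparison argument yields upper semicontinuity in general, so I set it up quantitatively.

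For the general case, take $v$ with $\lVert v - v_0\rVert_\infty \le \delta$, where $\delta < m$. Coordinatewise, $v \preceq (1 + \delta/m)\, v_0$ and $v_0 \preceq (1 + \delta/(m-\delta))\, v$, since $v_i \ge (v_0)_i - \delta \ge (v_0)_i(1 - \delta/m)$. Suppose $s < \rho(v)$, so $sv \in \I$. Then $s(1+\delta/m)^{-1}$ times a point dominated by $sv$ gives $s(1+\delta/m)^{-1}v_0 \in \I$ by downward closure. Hence $\rho(v_0) \ge \rho(v)/(1+\delta/m)$. Symmetrically, $\rho(v) \ge \rho(v_0)/(1+\delta/(m-\delta))$. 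These two inequalities combine to give $|\rho(v) - \rho(v_0)| \le C(\delta/m)\,\rho_{\max}$, where $\rho_{\max}$ is finite because $\I$ is bounded and $C(\cdot) \to 0$. Therefore $\rho$ is continuous at $v_0$. Finally,
\[
\lVert \beta(v)-\beta(v_0)\rVert_\infty \le |\rho(v)-\rho(v_0)| + \rho(v_0)\lVert v-v_0\rVert_\infty .
\]

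This is the direct downward-closure comparison. Lemma~\ref{lem:no-strict-radial-separation} expresses essentially the same fact, and I could cite it for the contradiction step instead. The main obstacle is obtaining the coordinatewise domination $v \preceq c\,v_0$ with $c \to 1$. This is exactly where strict positivity of $v_0$ is needed: near a boundary direction, the ratio $v_i/(v_0)_i$ is unbounded. Example~\ref{ex:directional-interleaving-instability} suggests that continuity can indeed fail there.
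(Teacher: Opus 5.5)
Your overall strategy is sound, but in the general case you apply the two dominations the wrong way round, and the first inequality you derive is false. To get a lower bound on $\rho(v_0)$ from $sv\in\I$, you need a multiple of $v_0$ lying below $sv$. The domination $v\preceq(1+\delta/m)v_0$ only yields $\frac{s}{1+\delta/m}\,v\preceq s v_0$, which compares in the opposite direction. Here is a concrete counterexample. Take the interval module supported on $[-10,10]\times[-1,1]$ with $t=\mathbf{0}$ and $k=1$, so that $\I=[0,10]\times[0,1]$. With $v_0=(1,\tfrac12)$, $m=\tfrac12$, $v=(1,\tfrac14)$ and $\delta=\tfrac14$, one gets $\rho(v_0)=2$ and $\rho(v)=4$. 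Your claimed bound $\rho(v_0)\ge\rho(v)/(1+\delta/m)$ would require $\rho(v_0)\ge\tfrac83$. Your second inequality, $\rho(v)\ge\rho(v_0)/(1+\delta/(m-\delta))$, happens to be true, but only because it is weaker than what the correct domination gives.

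The repair uses your own ingredients with the constants swapped. Your estimate $v_i\ge(v_0)_i(1-\delta/m)$ says $(1-\delta/m)v_0\preceq v$. So $sv\in\I$ gives $(1-\delta/m)s\,v_0\in\I$, and hence $\rho(v_0)\ge(1-\delta/m)\rho(v)$; this bound is attained in the example above. Likewise, $v\preceq(1+\delta/m)v_0$ turns $sv_0\in\I$ into $\frac{s}{1+\delta/m}\,v\in\I$, so $\rho(v)\ge\rho(v_0)/(1+\delta/m)$. Together,
\[
\frac{\rho(v_0)}{1+\delta/m}\le\rho(v)\le\frac{\rho(v_0)}{1-\delta/m},
\qquad\text{hence}\qquad
|\rho(v)-\rho(v_0)|\le\frac{\delta}{m-\delta}\,\rho(v_0).
\]
This also makes your separate treatment of $r_0=0$ and the constant $\rho_{\max}$ unnecessary. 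Once corrected, your argument is essentially the paper's. The paper applies Lemma~\ref{lem:no-strict-radial-separation} to obtain $r_0\min_\ell (u_0)_\ell/u_\ell\le r\le r_0\max_\ell (u_0)_\ell/u_\ell$. Bounding these ratios by $m/(m\pm\delta)$ reproduces exactly the two inequalities above. Your version works directly with downward closure of $\I$ along rays rather than through $\overline{\I}$, and it gives an explicit local modulus of continuity. The paper's version is shorter because the comparison is packaged in a lemma that it reuses for the finite-direction bounds.
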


\begin{proof}
    Fix $u_0 \in S_{\infty,+}^{n-1} \cap \R_{>0}^n$, and write $\beta(u_0) = r_0 u_0$ and $\beta(u) = r u$. For $u$ sufficiently close to $u_0$, every coordinate of $u$ is positive. By Lemma~\ref{lem:no-strict-radial-separation}, there exist $i, j \in \{1, \ldots, n\}$ such that $r_0(u_0)_i \le r u_i$ and $r_0(u_0)_j \ge r u_j$. Hence,
    \[
        r_0
        \min_{1 \le \ell \le n}
        \frac{(u_0)_\ell}{u_\ell}
        \le
        r
        \le
        r_0
        \max_{1 \le \ell \le n}
        \frac{(u_0)_\ell}{u_\ell}.
    \]
    As $u \to u_0$, both bounds converge to $r_0$, so $r \to r_0$. Since $\lVert u \rVert_\infty = 1$, it follows that
    \[
        \lVert \beta(u) - \beta(u_0) \rVert_\infty
        \le
        |r - r_0|
        +
        r_0 \lVert u - u_0 \rVert_\infty
        \longrightarrow
        0.
    \]
    Therefore, $\beta$ is continuous at $u_0$.
\end{proof}

Taking the $\ell^\infty$-norm of the radial boundary map gives the corresponding continuity statement for the directional landscape.

\begin{corollary}[Continuity of the directional landscape]
\label{cor:directional-landscape-continuity}
    The function $\DL_M(\bullet, k, t)$ is continuous on $S_{\infty,+}^{n-1} \cap \R_{>0}^n$.
\end{corollary}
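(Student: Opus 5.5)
The plan is to read the continuity of $\DL_M(\bullet,k,t)$ directly off Proposition~\ref{prop:radial-boundary-continuity}, using the standing assumption of this subsection that $\I$ is nonempty and bounded. Boundedness makes every directional value finite, so $\beta$ takes values in $\R_{\ge 0}^n$ and the norm manipulation below is legitimate.

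For every $v \in S_{\infty,+}^{n-1}$ we have $\beta(v)=\DL_M(v,k,t)\,v$. Since $\lVert v\rVert_\infty=1$ and $\DL_M(v,k,t)\ge 0$, this gives
\[
    \lVert \beta(v)\rVert_\infty = \DL_M(v,k,t).
\]
Thus $\DL_M(\bullet,k,t)$ is the composition of $\beta$ with the $\ell^\infty$-norm. The norm is $1$-Lipschitz by the reverse triangle inequality, and $\beta$ is continuous on $S_{\infty,+}^{n-1}\cap\R_{>0}^n$ by Proposition~\ref{prop:radial-boundary-continuity}. Hence, for $u,u_0$ in this set,
\[
    \bigl|\DL_M(u,k,t)-\DL_M(u_0,k,t)\bigr| \le \lVert\beta(u)-\beta(u_0)\rVert_\infty \longrightarrow 0,
\]
which proves the claim. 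Alternatively, the two-sided bound on $r$ obtained in the proof of Proposition~\ref{prop:radial-boundary-continuity} already shows $r\to r_0$ directly.

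There is essentially no obstacle. The only point requiring care is the hypothesis. If $\I$ were unbounded, some directional values could be infinite, and the statement would have to be read in the extended sense. I will state the corollary under the subsection's standing assumption that $\I$ is nonempty and bounded, matching Proposition~\ref{prop:radial-boundary-continuity}.
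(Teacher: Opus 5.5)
Your proposal is correct and matches the paper's proof. The paper also notes that $\DL_M(v,k,t)=\lVert\beta(v)\rVert_\infty$ for $\lVert v\rVert_\infty=1$ and deduces the result from Proposition~\ref{prop:radial-boundary-continuity}, under the same standing assumption that the landscape interval is nonempty and bounded.
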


\begin{proof}
    Since $\lVert v \rVert_\infty = 1$, we have $\DL_M(v, k, t) = \lVert \beta(v) \rVert_\infty$, so the result follows from Proposition~\ref{prop:radial-boundary-continuity}.
\end{proof}

\subsection{Reconstruction from maximal directions}
\label{subsec:maximal-directions}

We continue to assume that $\I$ is nonempty and bounded. Proposition~\ref{prop:radial-reconstruction} reconstructs $\overline{\I}$ using radial boundary points from every direction. However, a boundary point that is dominated coordinatewise by another contributes nothing new to this reconstruction. This motivates focusing on the maximal points of $\overline{\I}$ and their associated directions.

\begin{definition}[Maximal landscape points and directions]
\label{def:maximal-landscape-directions}
    A point $p \in \overline{\I}$ is \emph{maximal} if, whenever $q \in \overline{\I}$ satisfies $p \preceq q$, one has $q = p$. We denote the set of maximal points of $\overline{\I}$ by $\operatorname{Max}(\overline{\I})$. For each $p \in \operatorname{Max}(\overline{\I}) \setminus \{\mathbf{0}\}$, define $v_p \coloneqq p/\lVert p \rVert_\infty \in S_{\infty,+}^{n-1}$. We call $v_p$ the \emph{maximal landscape direction associated with $p$}, and write
    \[
        V_{\max}
        \coloneqq
        \left\{
            v_p :
            p \in
            \operatorname{Max}(\overline{\I})
            \setminus
            \{\mathbf{0}\}
        \right\}.
    \]
\end{definition}

For every
$p \in \operatorname{Max}(\overline{\I})
\setminus \{\mathbf{0}\}$, the radial boundary map recovers $p$ from
its associated direction: $\beta(v_p) = p$. Indeed,
Proposition~\ref{prop:radial-reconstruction} gives a direction
$v \in S_{\infty,+}^{n-1}$ such that $p \preceq \beta(v)$. Since
$\beta(v) \in \overline{\I}$ and $p$ is maximal, we must have
$\beta(v) = p$. Normalizing this equality gives $v = v_p$. Thus,
$p \mapsto v_p$ is a bijection from
$\operatorname{Max}(\overline{\I}) \setminus \{\mathbf{0}\}$ to
$V_{\max}$, with inverse $v \mapsto \beta(v)$.

Because $\I$ is bounded, its closure $\overline{\I}$ is compact. This ensures that every point of $\overline{\I}$ is dominated by a maximal point.

\begin{lemma}[Domination by a maximal point]
\label{lem:maximal-domination}
    For every $h \in \overline{\I}$, there exists $p \in \operatorname{Max}(\overline{\I})$ such that $h \preceq p$. In particular, $\operatorname{Max}(\overline{\I})$ is nonempty.
\end{lemma}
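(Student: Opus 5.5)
Fix $h \in \overline{\I}$ and consider the upper section
\[
    U_h \coloneqq \left\{ q \in \overline{\I} : h \preceq q \right\}.
\]
This set is nonempty, since it contains $h$. It is closed, being the intersection of the closed set $\overline{\I}$ with the closed cone $h + \R_{\ge 0}^n$. It is bounded, because $\I$ is bounded and therefore so is $\overline{\I}$. Hence $U_h$ is compact.

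The plan is to maximize a strictly monotone continuous function over $U_h$. Take the coordinate sum $\Sigma(q) \coloneqq \sum_{i=1}^n q_i$. It is continuous, so by compactness it attains a maximum on $U_h$ at some point $p$.

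We claim that $p \in \operatorname{Max}(\overline{\I})$. Suppose $q \in \overline{\I}$ satisfies $p \preceq q$. Then $h \preceq p \preceq q$, so $q \in U_h$, and maximality of $\Sigma(p)$ gives $\Sigma(q) \le \Sigma(p)$. On the other hand, $q - p \in \R_{\ge 0}^n$, so $\Sigma(q) - \Sigma(p) = \sum_i (q_i - p_i) \ge 0$. Combining the two, every coordinate difference vanishes and $q = p$. Thus $p$ is maximal and $h \preceq p$.

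For the final assertion, use that $\I$ is nonempty, so $\overline{\I}$ is nonempty (indeed $\mathbf{0} \in \overline{\I}$). Applying the first part to $h = \mathbf{0}$ produces a maximal point, so $\operatorname{Max}(\overline{\I})$ is nonempty.

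There is no real obstacle here. The only points to check are that $U_h$ is closed and that a $\Sigma$-maximizer cannot be strictly dominated. Both are immediate, so this approach avoids any appeal to Zorn's lemma.
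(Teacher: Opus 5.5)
Your proposal is correct and follows essentially the same route as the paper: maximize the coordinate sum over the compact set $\{q \in \overline{\I} : h \preceq q\}$ and observe that a maximizer cannot be strictly dominated. Your extra details, namely closedness via the cone $h + \R_{\ge 0}^n$ and nonemptiness via $\mathbf{0} \in \overline{\I}$, simply make explicit what the paper leaves implicit.
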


\begin{proof}
    Fix $h \in \overline{\I}$. The set $\{q \in \overline{\I} : h \preceq q\}$ is nonempty and compact, so the continuous function $q \mapsto \sum_{i = 1}^n q_i$ attains its maximum on this set at some point $p$. If $p \preceq q$ for some $q \in \overline{\I}$ with $q \neq p$, then $q$ belongs to the same set and $\sum_{i = 1}^n q_i > \sum_{i = 1}^n p_i$, contradicting the choice of $p$. Thus, $p$ is maximal and $h \preceq p$.
\end{proof}

For $p \in \R_{\ge 0}^n$, let
$[\mathbf{0}, p]
\coloneqq
\{h \in \R_{\ge 0}^n : h \preceq p\}
=
\prod_{i = 1}^n [0, p_i]$
denote the closed rectangle with opposite corners $\mathbf{0}$ and $p$. The maximal points now reduce the radial reconstruction to a canonical family of directions.

\begin{theorem}[Maximal-direction reconstruction]
\label{thm:maximal-direction-reconstruction}
    The closure of the landscape interval satisfies
    \[
        \overline{\I}
        =
        \bigcup_{p \in \operatorname{Max}(\overline{\I})}
        [\mathbf{0}, p].
    \]
    If $\overline{\I} \neq \{\mathbf{0}\}$, then $V_{\max}$ is nonempty and
    \[
        \overline{\I}
        =
        \bigcup_{v \in V_{\max}}
        [\mathbf{0}, \beta(v)].
    \]
    If $\overline{\I} = \{\mathbf{0}\}$, then
    $\beta(v) = \mathbf{0}$ for every
    $v \in S_{\infty,+}^{n-1}$, and any single direction gives the same reconstruction.
\end{theorem}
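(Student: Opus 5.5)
The plan is to prove the first identity by double inclusion and then rewrite it using the bijection between nonzero maximal points and $V_{\max}$ established just before Lemma~\ref{lem:maximal-domination}.

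For the inclusion $\supseteq$, each $p \in \operatorname{Max}(\overline{\I})$ lies in $\overline{\I}$. Since $\I$ is closed under decreasing coordinates, so is $\overline{\I}$: if $h_m \to p$ with $h_m \in \I$ and $h \preceq p$, then $\min(h,h_m) \in \I$ coordinatewise and $\min(h,h_m) \to h$. Hence $[\mathbf{0},p] \subseteq \overline{\I}$. For the inclusion $\subseteq$, given $h \in \overline{\I}$, Lemma~\ref{lem:maximal-domination} supplies a maximal $p$ with $h \preceq p$. Since $h \in \R_{\ge 0}^n$, this gives $h \in [\mathbf{0},p]$.

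For the second formula, assume $\overline{\I} \neq \{\mathbf{0}\}$. Pick a nonzero $h \in \overline{\I}$ and dominate it by a maximal $p$. Then $p \neq \mathbf{0}$, because $p \succeq h$ and $h \neq \mathbf{0}$, so $V_{\max}$ is nonempty. Moreover, $\mathbf{0}$ is not maximal, because $\mathbf{0} \preceq p$ with $p \neq \mathbf{0}$. Therefore $\operatorname{Max}(\overline{\I}) = \operatorname{Max}(\overline{\I}) \setminus \{\mathbf{0}\}$. The bijection $p \mapsto v_p$ with inverse $v \mapsto \beta(v)$ then rewrites the union over maximal points as $\bigcup_{v\in V_{\max}} [\mathbf{0},\beta(v)]$.

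In the degenerate case $\overline{\I} = \{\mathbf{0}\}$, each $\beta(v)$ lies in $\overline{\I}$ by the first step of the proof of Proposition~\ref{prop:radial-reconstruction}. Hence $\beta(v) = \mathbf{0}$ for every $v$, and $[\mathbf{0},\beta(v)] = \{\mathbf{0}\} = \overline{\I}$.

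No step is a real obstacle. The only points needing care are that $\overline{\I}$ inherits downward closure from $\I$, and that $\mathbf{0}$ must be excluded from the maximal set before applying the bijection, so that the union over $V_{\max}$ misses no maximal point.
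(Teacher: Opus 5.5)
Your proof is correct and follows the paper's argument essentially step for step: double inclusion via downward closure of $\overline{\I}$ and Lemma~\ref{lem:maximal-domination}, then the point--direction bijection once $\mathbf{0}$ is shown not to be maximal, and $\beta(v)\in\overline{\I}$ in the degenerate case. You also spell out why $\overline{\I}$ inherits downward closure and why $V_{\max}$ is nonempty, which the paper leaves implicit.
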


\begin{proof}
    Since $\overline{\I}$ is closed under decreasing coordinates,
    $[\mathbf{0}, p] \subseteq \overline{\I}$ for every
    $p \in \operatorname{Max}(\overline{\I})$. Conversely, Lemma~\ref{lem:maximal-domination} gives, for every $h \in \overline{\I}$, a maximal point $p$ such that $h \preceq p$, and hence $h \in [\mathbf{0}, p]$. This proves
    the first equality. If $\overline{\I} \neq \{\mathbf{0}\}$, then $\mathbf{0}$ is not
    maximal, and the point--direction correspondence above converts the first equality into the second and shows that $V_{\max}$ is nonempty. Finally, if
    $\overline{\I} = \{\mathbf{0}\}$, then
    $\beta(v) \in \overline{\I}$ implies
    $\beta(v) = \mathbf{0}$ for every
    $v \in S_{\infty,+}^{n-1}$.
\end{proof}

The preceding theorem reconstructs $\overline{\I}$ from
$V_{\max}$, but it does not imply that $V_{\max}$ is finite. The following example shows that a nonempty bounded landscape interval may have infinitely many maximal directions.

\begin{example}[Infinitely many maximal landscape directions]
\label{ex:infinitely-many-maximal-directions}
    Consider the interval module from
    Example~\ref{ex:triangular-landscape-interval}. At
    $t = \mathbf{0}$ and $k = 1$, its landscape interval is
    \[
        \I_M^1(\mathbf{0})
        =
        \left\{
            h \in \R_{\ge 0}^2 :
            h_1 + h_2 \le a
        \right\}.
    \]
    This landscape interval is closed, and its maximal points are
    \[
        \operatorname{Max}
        \bigl(\I_M^1(\mathbf{0})\bigr)
        =
        \left\{
            (s, a - s) :
            0 \le s \le a
        \right\}.
    \]
    Distinct points on this line segment have distinct normalized directions. Therefore, $V_{\max}$ is infinite even though the landscape interval is nonempty and compact.
\end{example}

\subsection{Rectangular constructibility and finite grid structure}
\label{subsec:constructibility-finite-grid}

The preceding example shows that compactness alone does not guarantee finitely many maximal landscape directions. To obtain finiteness, we impose constructibility with respect to a finite rectangular stratification. Constructibility conditions based on finite stratifications are standard in the theory of modules over posets and multiparameter persistence
\cite{Miller2025Homological,botnan2023introductionmultiparameterpersistence}. The condition used here does not require pointwise finite-dimensionality and will lead to a finite rectangular description of landscape intervals.

\begin{definition}[Finite rectangular stratification]
\label{def:rectangular-stratification}
    A \emph{finite rectangular stratification} of $\R^n$ is a finite family $\mathcal{S}$ of pairwise disjoint subsets of $\R^n$ satisfying the following conditions:
    \begin{enumerate}
        \item $\R^n = \bigsqcup_{S \in \mathcal{S}} S$.

        \item Every $S \in \mathcal{S}$ has the form
        $S = J_1 \times \cdots \times J_n$, where each $J_i$ is
        either a singleton or a nonempty open interval, possibly unbounded.

        \item If $S, T \in \mathcal{S}$ and
        $T \cap \overline{S} \neq \varnothing$, then
        $T \subseteq \overline{S}$.
    \end{enumerate}
    The \emph{dimension} of $S = J_1 \times \cdots \times J_n$ is the number of its nonsingleton factors.
\end{definition}

We now define constructibility relative to a finite rectangular stratification.

\begin{definition}[Rectangular-constructible persistence module]
\label{def:rectangular-constructible}
    Let $\mathcal{S}$ be a finite rectangular stratification of $\R^n$. A persistence module $M \in \Vect^{\R^n}$ is \emph{$\mathcal{S}$-rectangular-constructible} if, for every
    stratum $C \in \mathcal{S}$ and every $p, q \in C$ with $p \preceq q$, the structure map
    $M_p^q \colon M(p) \longrightarrow M(q)$ is an isomorphism.

    We call $M$ \emph{rectangular-constructible} if it is $\mathcal{S}$-rectangular-constructible for some finite rectangular stratification $\mathcal{S}$ of $\R^n$.
\end{definition}

Rectangular constructibility controls where a persistence module can change, but it does not guarantee that its landscape intervals are bounded. Although the finite-reconstruction argument below requires only that the fixed landscape interval $\I$ be bounded, we impose the following convenient module-level support condition, which guarantees this boundedness for every $t$ and $k$.

\begin{definition}[Bounded below]
\label{def:bounded-below}
    A persistence module $M \in \Vect^{\R^n}$ is
    \emph{bounded below} if there exists $a \in \R^n$ such that $M(p) = 0$ whenever $a \npreceq p$. Equivalently, $M(p)$ can be nonzero only if $a \preceq p$.
\end{definition}

\begin{proposition}[Boundedness of landscape intervals]
\label{prop:bounded-landscape-interval}
    Suppose that $M \in \Vect^{\R^n}$ is bounded below. Then, for every $t \in \R^n$ and $k \in \N_{>0}$, the landscape interval $\I_M^k(t)$ is either empty or bounded.
\end{proposition}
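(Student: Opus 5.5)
The plan is to bound every element of a nonempty landscape interval coordinatewise, using only the lower support point $a$ from Definition~\ref{def:bounded-below}. Fix $t\in\R^n$ and $k\in\N_{>0}$, and suppose $\I_M^k(t)\neq\varnothing$. Let $a\in\R^n$ be such that $M(p)=0$ whenever $a\npreceq p$.

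Take any $h\in\I_M^k(t)$. Then $\rk_M(t-h,t+h)\ge k\ge 1$, so the structure map $M_{t-h}^{t+h}$ is nonzero. In particular its domain $M(t-h)$ is nonzero. By the bounded-below hypothesis this forces $a\preceq t-h$, that is, $h_i\le t_i-a_i$ for every $i$.

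Since also $h\in\R_{\ge 0}^n$, we get
\[
    \I_M^k(t)\subseteq\prod_{i=1}^n\bigl[0,\max\{t_i-a_i,0\}\bigr],
\]
which is bounded. (In fact nonemptiness gives $\mathbf{0}\in\I_M^k(t)$ by Proposition~\ref{prop:landscape-interval-structure}, so $a\preceq t$ already holds and the maximum is unnecessary, but keeping it costs nothing.)

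The only point needing care is the extended-rank convention: a rank of $\infty$ is still positive, so the map is still nonzero and its domain is still nonzero, and the argument goes through unchanged. I expect no genuine obstacle here. The proof uses only the lower endpoint $t-h$ and never the upper endpoint $t+h$.
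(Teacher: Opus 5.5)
Your proof is correct and is essentially the paper's argument: a positive rank forces $M(t-h)\neq 0$, hence $a\preceq t-h$, giving $\I_M^k(t)\subseteq[\mathbf{0},t-a]$. Your $\max\{t_i-a_i,0\}$ safeguard is harmless but, as you note yourself, unnecessary.
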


\begin{proof}
    Let $a \in \R^n$ satisfy $M(p) = 0$ whenever
    $a \npreceq p$, and suppose that $\I_M^k(t)$ is nonempty. If $h \in \I_M^k(t)$, then
    $\rk_M(t - h, t + h) \ge k > 0$, so $M(t - h) \neq 0$. Therefore, $a \preceq t - h$, and hence
    $\mathbf{0} \preceq h \preceq t - a$. Thus,
    $\I_M^k(t) \subseteq [\mathbf{0}, t - a]$, proving that $\I_M^k(t)$ is bounded.
\end{proof}

We next define the grid structure used in the reconstruction.

\begin{definition}[Finite rectangular grid stratification]
\label{def:finite-rectangular-grid}
    For each $i \in \{1, \ldots, n\}$, let $A_i \subseteq \R$ be a finite set, possibly empty, and let $\mathcal{P}(A_i)$ be the partition of $\R$ consisting of the singleton sets $\{a\}$ for $a \in A_i$ and the connected components of $\R \setminus A_i$. The \emph{finite rectangular grid stratification} induced by $A_1, \ldots, A_n$ is
    \[
        \mathcal{G}(A_1, \ldots, A_n)
        \coloneqq
        \left\{
            J_1 \times \cdots \times J_n :
            J_i \in \mathcal{P}(A_i)
            \text{ for every } i
        \right\}.
    \]
    The elements of $A_i$ are called the \emph{breakpoints} in the $i$th coordinate.
\end{definition}

For example, let $K$ be a finite simplicial complex with two filtration functions $f_1,f_2 \colon K \to \R$, so that $f_i(\tau) \le f_i(\sigma)$ whenever $\tau$ is a face of $\sigma$. At $(a,b) \in \R^2$, include a simplex $\sigma$ when $f_1(\sigma) \le a$ and $f_2(\sigma) \le b$. This defines a bifiltration whose simplices can change only when a coordinate crosses one of the corresponding function values. Taking $A_i \coloneqq f_i(K)$ therefore gives a grid $\mathcal{G}(A_1,A_2)$ on whose strata the bifiltration is constant. Applying homology in any fixed degree gives a rectangular-constructible persistence module.

We say that a stratification $\mathcal{G}$ \emph{refines} $\mathcal{S}$ if every stratum of $\mathcal{G}$ is contained in a stratum of $\mathcal{S}$. Since landscape intervals use structure maps with endpoints $t - h$ and $t + h$, we need a grid compatible with both $\mathcal{S}$ and its reflection about $t$.

\begin{lemma}[Common grid refinement]
\label{lem:common-reflected-grid}
    Let $\mathcal{S}$ be a finite rectangular stratification of $\R^n$, and let $t \in \R^n$. Define the reflection $\rho_t \colon \R^n \longrightarrow \R^n$ by $\rho_t(x) = 2t - x$, and set $\rho_t(\mathcal{S}) \coloneqq \{\rho_t(S) : S \in \mathcal{S}\}$. There exists a finite rectangular grid stratification $\mathcal{G}_t$ that refines both $\mathcal{S}$ and $\rho_t(\mathcal{S})$ and has $t_i$ as a breakpoint in the $i$th coordinate for every $i \in \{1, \ldots, n\}$. Consequently, the translated grid
    \[
        \mathcal{T}_t
        \coloneqq
        \{G - t : G \in \mathcal{G}_t\}
    \]
    has $0$ as a breakpoint in every coordinate. Moreover, if $M$ is $\mathcal{S}$-rectangular-constructible, then it is also $\mathcal{G}_t$-rectangular-constructible.
\end{lemma}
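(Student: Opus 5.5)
The plan is to build $\mathcal{G}_t$ directly from the coordinate projections of the strata. For each $S = J_1^S \times \cdots \times J_n^S \in \mathcal{S}$ and each $i$, the factor $J_i^S$ is either a singleton or an open interval, so it has at most two finite endpoints. We collect these endpoints into
\[
    B_i \coloneqq \{\text{finite endpoints of } J_i^S : S \in \mathcal{S}\}.
\]
Each $B_i$ is finite because $\mathcal{S}$ is finite. Under the reflection $\rho_t$, the $i$th factor $J$ becomes $2t_i - J$, whose endpoints are $2t_i - b$. We therefore set
\[
    A_i \coloneqq B_i \cup (2t_i - B_i) \cup \{t_i\}
\]
and define $\mathcal{G}_t \coloneqq \mathcal{G}(A_1, \ldots, A_n)$. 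This is a finite rectangular grid stratification with $t_i$ as a breakpoint in coordinate $i$. Translating by $-t$ sends each $A_i$ to $A_i - t_i$, which contains $0$, so $\mathcal{T}_t$ has $0$ as a breakpoint in every coordinate.

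The key step is refinement. Fix a grid cell $G = J_1 \times \cdots \times J_n$ with $J_i \in \mathcal{P}(A_i)$, and pick a point $x \in G$. Since $\mathcal{S}$ partitions $\R^n$, $x$ lies in a unique $S \in \mathcal{S}$. I claim $G \subseteq S$, argued coordinatewise. Each $J_i$ is either a point of $A_i$ or a connected component of $\R \setminus A_i$. In the second case $J_i$ contains no element of $B_i$, hence no endpoint of $J_i^S$. Because $x_i \in J_i \cap J_i^S$ and $J_i$ is connected, $J_i \subseteq J_i^S$. In the first case $J_i = \{x_i\} \subseteq J_i^S$ trivially. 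Thus $G \subseteq S$. The same argument applied to $\rho_t(\mathcal{S})$, whose strata are again products with endpoint sets $2t_i - B_i \subseteq A_i$, shows that $G$ lies in a stratum of $\rho_t(\mathcal{S})$. Note that condition (3) of Definition~\ref{def:rectangular-stratification} is never used; only the partition and product structure matter.

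Finally, constructibility passes to refinements. If $p, q \in G \in \mathcal{G}_t$ with $p \preceq q$, then $p, q$ lie in a common stratum $C \in \mathcal{S}$ containing $G$, so $M_p^q$ is an isomorphism by $\mathcal{S}$-rectangular-constructibility. I would also verify that $\mathcal{G}_t$ satisfies the frontier condition (3) itself: the closure of a grid cell is a union of grid cells, since each factor's closure is a union of partition pieces of $\R$. I expect the main obstacle to be bookkeeping rather than substance, namely handling unbounded and singleton factors uniformly in the connectedness argument.
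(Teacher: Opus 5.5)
Your proposal is correct and follows the paper's proof: you use the same breakpoint sets $A_i$ (the singleton values and finite endpoints from $\mathcal{S}$, their reflections $2t_i - b$, and $t_i$) and the same coordinatewise containment argument for refinement, which you justify via connectedness where the paper simply asserts it, and you transfer constructibility in the same way. Your extra check that the grid itself satisfies condition~(3), which the paper leaves implicit, is a useful addition. Note that the connectedness step needs singleton values to count as elements of $B_i$; your phrase ``at most two finite endpoints'' covers this as intended.
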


\begin{proof}
    The reflected family $\rho_t(\mathcal{S})$ is again a finite rectangular stratification. For each $i \in \{1, \ldots, n\}$, let $A_i$ consist of $t_i$ together with all singleton values and finite endpoints appearing in the $i$th coordinate factors of $\mathcal{S}$ or $\rho_t(\mathcal{S})$, and set $\mathcal{G}_t \coloneqq \mathcal{G}(A_1, \ldots, A_n)$. If $G \in \mathcal{G}_t$ meets a stratum $C$ of either $\mathcal{S}$ or $\rho_t(\mathcal{S})$, then every coordinate factor of $G$ is contained in the corresponding coordinate factor of $C$. Hence, $G \subseteq C$, so $\mathcal{G}_t$ refines both stratifications.

    Translating by $-t$ sends each breakpoint $t_i$ to $0$, giving the asserted property of $\mathcal{T}_t$. Finally, because $\mathcal{G}_t$ refines $\mathcal{S}$, comparable points in the same stratum of $\mathcal{G}_t$ lie in the same stratum of $\mathcal{S}$. Thus, $\mathcal{S}$-rectangular-constructibility of $M$ implies its $\mathcal{G}_t$-rectangular-constructibility.
\end{proof}

For the remainder of this section, assume that $M$ is bounded below and $\mathcal{S}$-rectangular-constructible for a finite rectangular stratification $\mathcal{S}$, and assume that $\I$ is nonempty. By Proposition~\ref{prop:bounded-landscape-interval}, $\I$ is bounded. Let $\mathcal{G}_t$ and $\mathcal{T}_t$ be the grid stratifications from Lemma~\ref{lem:common-reflected-grid}. As $h$ varies within a stratum of $\mathcal{T}_t$, the points $t - h$ and $t + h$ remain in fixed constructibility strata. This gives the following rank-constancy result.

\begin{lemma}[Rank constancy on translated-grid strata]
\label{lem:centered-rank-grid-constant}
    If $h, h' \in \R_{\ge 0}^n$ belong to the same stratum of $\mathcal{T}_t$, then
    \[
        \rk_M(t - h, t + h)
        =
        \rk_M(t - h', t + h').
    \]
\end{lemma}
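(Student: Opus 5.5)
The idea is to compare both structure maps through a common rectangle of maps whose vertical arrows are isomorphisms by constructibility. Fix $h, h' \in \R_{\ge 0}^n$ in the same stratum $T = G - t$ of $\mathcal{T}_t$, where $G \in \mathcal{G}_t$. Then $t+h$ and $t+h'$ both lie in $G$. Since $\mathcal{G}_t$ refines $\rho_t(\mathcal{S})$, the points $t-h = \rho_t(t+h)$ and $t-h' = \rho_t(t+h')$ both lie in $\rho_t(G)$, and $\rho_t(G)$ is contained in a single stratum of $\mathcal{S}$.

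I will not try to show that $\rho_t(G)$ is itself a stratum of $\mathcal{G}_t$. The grid breakpoints are symmetric about $t$ only up to the added ones, so this may fail, and it is not needed. All that is used is that $t-h$ and $t-h'$ lie in a common stratum of $\mathcal{S}$, and that $t+h$ and $t+h'$ lie in a common stratum of $\mathcal{G}_t$, hence of $\mathcal{S}$.

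The main obstacle is that $h$ and $h'$ need not be comparable, while constructibility only gives isomorphisms for comparable pairs. To get around this, set $m \coloneqq h \wedge h'$ (coordinatewise minimum). Each stratum is a product $J_1\times\cdots\times J_n$ of convex factors, so $m$ lies in the same product stratum $T$.

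It therefore suffices to prove $\rk_M(t-h,t+h) = \rk_M(t-m,t+m)$ whenever $m \preceq h$ lie in the same stratum, and then apply this to both $h$ and $h'$. With $m \preceq h$, we have $t-h \preceq t-m$, both in one stratum of $\mathcal{S}$, and $t+m \preceq t+h$, both in one stratum of $\mathcal{S}$. Hence $M_{t-h}^{t-m}$ and $M_{t+m}^{t+h}$ are isomorphisms by $\mathcal{S}$-rectangular-constructibility. Functoriality gives
\[
    M_{t-h}^{t+h} = M_{t+m}^{t+h} \circ M_{t-m}^{t+m} \circ M_{t-h}^{t-m},
\]
and composing with isomorphisms on either side preserves rank, including infinite rank. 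This yields the equality.

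The one point to check carefully is that $t-h$ and $t-m$ really lie in one $\mathcal{S}$-stratum. Since $t+h$ and $t+m$ are in the same $G$, their reflections $t-h$ and $t-m$ are in $\rho_t(G)$. Because $G$ refines $\rho_t(\mathcal{S})$, we have $G \subseteq \rho_t(C)$ for some $C \in \mathcal{S}$, and therefore $\rho_t(G) \subseteq C$.
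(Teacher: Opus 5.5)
Your proof is correct and is essentially the paper's argument. The paper passes to the coordinatewise maximum $d$ of $h$ and $h'$ and factors $M_{t-d}^{t+d}$ through $M_{t-h}^{t+h}$, with constructibility isomorphisms on both ends. You use the coordinatewise minimum $m$ and factor $M_{t-h}^{t+h}$ through $M_{t-m}^{t+m}$, which is the same sandwich argument run in the other direction. Your caution that $\rho_t(G)$ might not be a stratum of $\mathcal{G}_t$ is unnecessary but harmless: in the grid the paper actually builds, each breakpoint set is symmetric about $t_i$, so $\rho_t(G)$ is in fact a stratum.
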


\begin{proof}
    Let $d$ be the coordinatewise maximum of $h$ and $h'$. Since each stratum of $\mathcal{T}_t$ is a product of intervals and singleton sets, $d$ belongs to the same stratum as $h$ and $h'$. Moreover, $\mathbf{0} \preceq h \preceq d$, so $t - d \preceq t - h \preceq t + h \preceq t + d$. The points $t + h$ and $t + d$ lie in the same stratum of $\mathcal{G}_t$, so $M_{t + h}^{t + d}$ is an isomorphism. Because $\mathcal{G}_t$ also refines $\rho_t(\mathcal{S})$, the reflected points $t - h$ and $t - d$ lie in the same stratum of $\mathcal{S}$, so $M_{t - d}^{t - h}$ is also an isomorphism. By functoriality,
    \[
        M_{t - d}^{t + d}
        =
        M_{t + h}^{t + d}
        \circ
        M_{t - h}^{t + h}
        \circ
        M_{t - d}^{t - h}.
    \]
    Hence, $\rk_M(t - d, t + d) = \rk_M(t - h, t + h)$. Applying the same argument to $h'$ proves the result.
\end{proof}

Rank constancy implies that $\I$ is a finite union of strata of $\mathcal{T}_t$.

\begin{proposition}[Finite grid decomposition]
\label{prop:finite-grid-decomposition}
    Let
    \[
        \mathcal{C}
        \coloneqq
        \left\{
            T \in \mathcal{T}_t :
            T \subseteq \R_{\ge 0}^n
            \text{ and }
            T \cap \I \neq \varnothing
        \right\}.
    \]
    Then $\mathcal{C}$ is a finite nonempty collection of bounded strata,
    \[
        \I
        =
        \bigsqcup_{T \in \mathcal{C}} T,
        \qquad
        \overline{\I}
        =
        \bigcup_{T \in \mathcal{C}} \overline{T},
    \]
    and each $\overline{T}$ is a compact, possibly degenerate, rectangle.
\end{proposition}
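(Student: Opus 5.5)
The plan is to derive everything from Lemma~\ref{lem:centered-rank-grid-constant} together with the structure of the translated grid $\mathcal{T}_t$. The first step is to check that each stratum of $\mathcal{T}_t$ is a product $J_1\times\cdots\times J_n$, where each $J_i$ is either a singleton or a component of $\R\setminus(A_i-t_i)$. Because $0$ is a breakpoint in every coordinate, each factor lies entirely in $[0,\infty)$ or entirely in $(-\infty,0)$. Hence every stratum either lies in $\R_{\ge 0}^n$ or is disjoint from it. It follows that the strata contained in $\R_{\ge 0}^n$ partition $\R_{\ge 0}^n$. Since $\mathcal{T}_t$ is finite, $\mathcal{C}$ is finite.

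The second step is to show that $\I$ is a union of strata. If $T\subseteq\R_{\ge 0}^n$ meets $\I$ at some point $h$, then Lemma~\ref{lem:centered-rank-grid-constant} gives $\rk_M(t-h',t+h')=\rk_M(t-h,t+h)\ge k$ for every $h'\in T$. Thus $T\subseteq\I$. Conversely, every $h\in\I$ lies in $\R_{\ge 0}^n$, so it lies in the unique stratum $T$ containing it, and $T\in\mathcal{C}$. Together these give the disjoint union $\I=\bigsqcup_{T\in\mathcal{C}}T$, where disjointness is inherited from the partition. Nonemptiness of $\mathcal{C}$ follows because $\I\neq\varnothing$ by the standing assumption.

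The third step handles boundedness and closures. Since $M$ is bounded below, Proposition~\ref{prop:bounded-landscape-interval} shows that $\I$ is bounded. Each $T\in\mathcal{C}$ is contained in $\I$, so each such $T$ is bounded. Consequently every factor $J_i$ of such a $T$ is a singleton or a bounded open interval. Its closure $\overline{T}=\prod_i\overline{J_i}$ is therefore a compact rectangle, which is degenerate when some factor is a singleton. Finally, the closure of a finite union equals the union of the closures, which gives $\overline{\I}=\bigcup_{T\in\mathcal{C}}\overline{T}$.

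The main point needing care is the first step: making precise that strata do not straddle the boundary of $\R_{\ge 0}^n$. I expect this to follow directly from the explicit form of $\mathcal{G}(A_1,\ldots,A_n)$ together with $t_i\in A_i$. The rest is bookkeeping.
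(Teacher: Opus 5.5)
Your proposal is correct and takes essentially the same route as the paper. Both arguments use the fact that $0$ is a breakpoint in every coordinate to see that the strata inside $\R_{\ge 0}^n$ partition it, apply Lemma~\ref{lem:centered-rank-grid-constant} to show each such stratum lies entirely inside or entirely outside $\I$, and then use boundedness of $\I$ and finiteness of $\mathcal{C}$ for the compactness and closure claims; your check that each translated factor lies in $[0,\infty)$ or in $(-\infty,0)$ just spells out a step the paper states in one line.
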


\begin{proof}
    Since $0$ is a breakpoint in every coordinate, each point of $\R_{\ge 0}^n$ belongs to a unique stratum of $\mathcal{T}_t$ contained in $\R_{\ge 0}^n$. By Lemma~\ref{lem:centered-rank-grid-constant}, the rank $\rk_M(t - h, t + h)$ is constant on each such stratum. Hence, every stratum contained in $\R_{\ge 0}^n$ either lies entirely in $\I$ or is disjoint from $\I$, proving the first equality. The collection $\mathcal{C}$ is finite because $\mathcal{T}_t$ is finite and nonempty because $\I$ is nonempty. Moreover, every $T \in \mathcal{C}$ is contained in the bounded set $\I$, so $\overline{T}$ is a compact, possibly degenerate, rectangle. Finally, since $\mathcal{C}$ is finite, taking closures in the first equality gives $\overline{\I} = \bigcup_{T \in \mathcal{C}} \overline{T}$.
\end{proof}

\subsection{Finite maximal-direction reconstruction}
\label{subsec:finite-maximal-direction-reconstruction}

The finite grid decomposition expresses $\overline{\I}$ as a finite union of compact rectangles. For each $T \in \mathcal{C}$, let $c_T$ denote the upper corner of $\overline{T}$. Set $\mathcal{U} \coloneqq \{c_T : T \in \mathcal{C}\}$, and let $P$ be the set of maximal elements of $\mathcal{U}$ with respect to $\preceq$.

\begin{proposition}[Finite corner reconstruction]
\label{prop:finite-corner-reconstruction}
    The set $P$ is precisely $\operatorname{Max}(\overline{\I})$. In particular, $P$ is finite and nonempty, and
    \[
        \overline{\I}
        =
        \bigcup_{p \in P}
        [\mathbf{0}, p].
    \]
\end{proposition}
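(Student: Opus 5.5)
The plan is to work entirely from the finite decomposition in Proposition~\ref{prop:finite-grid-decomposition}. Since $\overline{\I}=\bigcup_{T\in\mathcal{C}}\overline{T}$ and each $\overline{T}$ is a compact rectangle, the first step is to show that each $\overline{T}$ is contained in $[\mathbf{0},c_T]$. Every stratum $T\in\mathcal{C}$ lies in $\I$, which is closed under decreasing coordinates, so the whole box $[\mathbf{0},c_T]$ is contained in $\overline{\I}$. This holds because $c_T\in\overline{T}\subseteq\overline{\I}$ and $\overline{\I}$ is again closed under decreasing coordinates: it is the closure of a down-closed subset of $\R_{\ge 0}^n$. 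Conversely, $\overline{T}\subseteq[\mathbf{0},c_T]$ by definition of the upper corner. Hence
\[
\overline{\I}=\bigcup_{T\in\mathcal{C}}[\mathbf{0},c_T]=\bigcup_{u\in\mathcal{U}}[\mathbf{0},u].
\]
Since $\mathcal{U}$ is finite and nonempty, every $u\in\mathcal{U}$ is dominated by some maximal element of $\mathcal{U}$. (Take a $\preceq$-chain in a finite poset, or maximize the coordinate sum, as in Lemma~\ref{lem:maximal-domination}.) Therefore the union can be restricted to $u\in P$, which gives the displayed reconstruction formula. It also shows that $P$ is finite and nonempty.

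Next we identify $P$ with $\operatorname{Max}(\overline{\I})$, proving the two inclusions separately.

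\emph{First, $\operatorname{Max}(\overline{\I})\subseteq P$.} Let $p\in\operatorname{Max}(\overline{\I})$. By the reconstruction formula, $p\preceq u$ for some $u\in P\subseteq\overline{\I}$, and maximality forces $p=u\in P$.

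\emph{Second, $P\subseteq\operatorname{Max}(\overline{\I})$.} Let $u\in P$ and suppose $q\in\overline{\I}$ with $u\preceq q$. Again by the reconstruction formula, $q\preceq u'$ for some $u'\in P$. Then $u\preceq u'$ with both in $P$, and maximality of $u$ in $\mathcal{U}$ gives $u'=u$. Hence $q=u$, so $u$ is maximal in $\overline{\I}$.

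The only step I expect to need care is the claim that $\overline{\I}$ is closed under decreasing coordinates, which is what places $[\mathbf{0},c_T]$ inside $\overline{\I}$. It is used implicitly in Proposition~\ref{prop:radial-reconstruction}, but I would verify it directly. Take $h_m\to h$ with $h_m\in\I$, and let $\mathbf{0}\preceq h'\preceq h$. Then the points $\min(h',h_m)$, taken coordinatewise, lie in $\I$ and converge to $h'$. Everything else is finite-poset bookkeeping.
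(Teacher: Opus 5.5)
Your proposal is correct and follows the paper's proof essentially step for step. You rewrite $\overline{\I}$ as $\bigcup_{u\in\mathcal{U}}[\mathbf{0},u]$ using downward closure, restrict the union to the maximal corners in the finite set $\mathcal{U}$, and then identify $P$ with $\operatorname{Max}(\overline{\I})$ by domination in both directions. Your one addition is an explicit check, via the coordinatewise minima $\min(h',h_m)$, that $\overline{\I}$ is closed under decreasing coordinates; the paper uses this fact without proof.
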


\begin{proof}
    For every $T \in \mathcal{C}$, we have $c_T \in \overline{T} \subseteq \overline{\I}$, and every point of $\overline{T}$ is dominated by $c_T$. Since $\overline{\I}$ is closed under decreasing coordinates,
    \[
        \overline{\I}
        =
        \bigcup_{u \in \mathcal{U}}
        [\mathbf{0}, u]
        =
        \bigcup_{p \in P}
        [\mathbf{0}, p],
    \]
    where the second equality follows because every element of the finite set $\mathcal{U}$ is dominated by an element of $P$. If $p \in P$ and $p \preceq h \in \overline{\I}$, then $h \preceq u$ for some $u \in \mathcal{U}$. Hence $p \preceq u$, so maximality of $p$ in $\mathcal{U}$ gives $p = u = h$. Thus, every $p \in P$ is maximal in $\overline{\I}$. Conversely, every maximal $h \in \overline{\I}$ is dominated by some $p \in P$, and therefore $h = p$. Hence, $P = \operatorname{Max}(\overline{\I})$.
\end{proof}

Proposition~\ref{prop:finite-corner-reconstruction} identifies the maximal points of $\overline{\I}$ with finitely many grid corners. Combining this description with Theorem~\ref{thm:maximal-direction-reconstruction} gives the main result of the section.

\begin{theorem}[Finite maximal-direction reconstruction]
\label{thm:finite-maximal-direction-reconstruction}
    Suppose that $M$ is bounded below and rectangular-constructible and that $\I$ is nonempty. If $\overline{\I} \neq \{\mathbf{0}\}$, then $V_{\max}$ is finite and nonempty, and
    \[
        \overline{\I}
        =
        \bigcup_{v \in V_{\max}}
        [\mathbf{0}, \beta(v)].
    \]
    If $\overline{\I} = \{\mathbf{0}\}$, then
    $\overline{\I}$ can be reconstructed from any single direction.
\end{theorem}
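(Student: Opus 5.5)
The plan is to assemble the result from the finite grid machinery and the general maximal-direction reconstruction already in place. Since $M$ is bounded below and $\I$ is nonempty, Proposition~\ref{prop:bounded-landscape-interval} makes $\I$ bounded. This means everything in Subsections~\ref{subsec:radial-characterization} and~\ref{subsec:maximal-directions} applies: the radial boundary map $\beta$, the maximal set $\operatorname{Max}(\overline{\I})$, and the bijection $p \mapsto v_p$ from $\operatorname{Max}(\overline{\I})\setminus\{\mathbf{0}\}$ onto $V_{\max}$ with inverse $\beta$. Rectangular constructibility then lets us invoke Lemma~\ref{lem:common-reflected-grid}, Lemma~\ref{lem:centered-rank-grid-constant} and Proposition~\ref{prop:finite-grid-decomposition}. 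These give a finite collection $\mathcal{C}$ of bounded translated-grid strata whose closures cover $\overline{\I}$.

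For the case $\overline{\I}\neq\{\mathbf{0}\}$, I would proceed in three steps.

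\textbf{Finiteness.} Proposition~\ref{prop:finite-corner-reconstruction} identifies $\operatorname{Max}(\overline{\I})$ with the set $P$ of maximal upper corners. This set is finite because $\mathcal{C}$ is finite. Since $V_{\max}$ is the image of the finite set $\operatorname{Max}(\overline{\I})\setminus\{\mathbf{0}\}$ under $p\mapsto v_p$, it is finite as well.

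\textbf{Nonemptiness.} If $\overline{\I}\neq\{\mathbf{0}\}$, then $\mathbf{0}$ cannot be maximal. Indeed, pick any $h\neq\mathbf{0}$ in $\overline{\I}$. By Lemma~\ref{lem:maximal-domination}, $h$ is dominated by some maximal $p$, and $p\succeq h\neq\mathbf{0}$ forces $p\neq\mathbf{0}$. So $\operatorname{Max}(\overline{\I})\setminus\{\mathbf{0}\}$ is nonempty, and therefore $V_{\max}$ is nonempty.

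\textbf{Reconstruction formula.} This is exactly the second equality of Theorem~\ref{thm:maximal-direction-reconstruction}. Equivalently, it follows from Proposition~\ref{prop:finite-corner-reconstruction} by rewriting each $p\in P$ as $\beta(v_p)$.

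For the case $\overline{\I}=\{\mathbf{0}\}$, the last clause of Theorem~\ref{thm:maximal-direction-reconstruction} gives $\beta(v)=\mathbf{0}$ for every $v$. Hence $[\mathbf{0},\beta(v)]=\{\mathbf{0}\}=\overline{\I}$ for any single direction $v$.

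I do not expect a genuine obstacle, since the substantive work was done in Propositions~\ref{prop:finite-grid-decomposition} and~\ref{prop:finite-corner-reconstruction}. The one point to check carefully is the nonemptiness claim, because $V_{\max}$ excludes the origin. It is also worth noting that the hypothesis needed for Subsection~\ref{subsec:maximal-directions}, namely that $\I$ is bounded, comes from the bounded-below assumption rather than from constructibility.
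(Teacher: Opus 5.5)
Your proposal is correct and follows essentially the same route as the paper. Finiteness of $\operatorname{Max}(\overline{\I})$ comes from Proposition~\ref{prop:finite-corner-reconstruction} and passes to $V_{\max}$ via the point--direction bijection; the formula comes from Theorem~\ref{thm:maximal-direction-reconstruction}; and $\beta(v)=\mathbf{0}$ handles the degenerate case, with your use of Lemma~\ref{lem:maximal-domination} for nonemptiness simply spelling out the paper's remark that $\mathbf{0}$ is not maximal.
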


\begin{proof}
    By Proposition~\ref{prop:finite-corner-reconstruction}, $\operatorname{Max}(\overline{\I})$ is finite and nonempty. If $\overline{\I} \neq \{\mathbf{0}\}$, then $\mathbf{0}$ is not maximal. The point--direction correspondence following Definition~\ref{def:maximal-landscape-directions} therefore shows that $V_{\max}$ is finite and nonempty. The reconstruction formula follows from Theorem~\ref{thm:maximal-direction-reconstruction}. If $\overline{\I} = \{\mathbf{0}\}$, then $\beta(v) \in \overline{\I}$ forces $\beta(v) = \mathbf{0}$ for every
    $v \in S_{\infty,+}^{n-1}$, so any single direction reconstructs $\overline{\I}$.
\end{proof}

%% file: sections/finite_direction_approximation_and_curve_structure.tex
\section{Finite-Direction Approximation and Integrated Landscape Curves}
\label{sec:finite-directional-approximation}

This section develops the finite-direction approximations used in the numerical experiments of Section~\ref{sec:applications-and-experiments}. The reconstruction in Section~\ref{sec:finite-characterization} uses maximal directions, which need not be known in advance. We therefore construct inner and outer approximations of landscape-interval closures using finitely many directional evaluations. Their volumes give lower and upper bounds for the integrated landscape curves, and we establish convergence under dense sampling. We also derive explicit formulas for curves arising from finite unions of rectangles, making the approximating curves computable and describing their piecewise structure and shape in two parameters.

\subsection{Finite-direction approximation}
\label{subsec:finite-direction-approximations}

Fix a bounded-below persistence module $M \in \Vect^{\R^n}$, $t \in \R^n$, and $k \in \N_{>0}$, and, for brevity, write $\I \coloneqq \I_M^k(t)$. If $\I$ is empty, there is nothing to approximate. We therefore assume throughout this subsection that $\I$ is nonempty. By Proposition~\ref{prop:bounded-landscape-interval}, $\I$ is then bounded. We continue to denote its radial boundary map by $\beta$.

For each $i \in \{1, \ldots, n\}$, let $\mathbf{e}_i$ be the $i$th standard basis vector and set $a_i \coloneqq \DL_M(\mathbf{e}_i, k, t)$. If $h \in \overline{\I}$, then $h_i\mathbf{e}_i \preceq h$. Because $\overline{\I}$ is closed under decreasing coordinates, $h_i\mathbf{e}_i \in \overline{\I}$. Since $\beta(\mathbf{e}_i) = a_i\mathbf{e}_i$ is the radial boundary point on this axis, $h_i \le a_i$. Therefore,
\[
    \overline{\I}
    \subseteq
    B
    \coloneqq
    \prod_{i = 1}^n [0, a_i].
\]

Each sampled radial boundary point $\beta(v)$ gives a rectangle $[\mathbf{0}, \beta(v)]$ contained in $\overline{\I}$. The set $\beta(v) + \R_{>0}^n$ consists of all points that are strictly larger than $\beta(v)$ in every coordinate. Removing these sets from $B$ produces the upper approximation.

\begin{definition}[Finite-direction approximations]
\label{def:finite-direction-approximations}
    Let $V \subseteq S_{\infty,+}^{n-1}$ be finite and nonempty. The \emph{lower} and \emph{upper finite-direction approximations} of $\overline{\I}$ determined by $V$ are
    \[
        \I_V^-
        \coloneqq
        \bigcup_{v \in V}
        [\mathbf{0}, \beta(v)]
        \qquad\text{and}\qquad
        \I_V^+
        \coloneqq
        B
        \setminus
        \bigcup_{v \in V}
        \bigl(\beta(v) + \R_{>0}^n\bigr).
    \]
\end{definition}

\begin{proposition}[Finite-direction bounds]
\label{prop:finite-direction-bounds}
    For every finite nonempty set $V \subseteq S_{\infty,+}^{n-1}$,
    \[
        \I_V^-
        \subseteq
        \overline{\I}
        \subseteq
        \I_V^+.
    \]
    Moreover, if $V \subseteq V'$, then
    \[
        \I_V^-
        \subseteq
        \I_{V'}^-
        \subseteq
        \overline{\I}
        \subseteq
        \I_{V'}^+
        \subseteq
        \I_V^+.
    \]
\end{proposition}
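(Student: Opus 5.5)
The proof splits into the lower inclusion, the upper inclusion, and monotonicity in $V$; each follows from earlier results.

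\textbf{Lower inclusion.} For each $v\in V$, the radial boundary point $\beta(v)$ lies in $\overline{\I}$; this was shown in the proof of Proposition~\ref{prop:radial-reconstruction}. The set $\overline{\I}$ is closed under decreasing coordinates. This is immediate from the same property of $\I$, since limits of coordinatewise smaller points stay coordinatewise smaller. Hence $[\mathbf{0},\beta(v)]\subseteq\overline{\I}$ for each $v$, and taking the union over $V$ gives $\I_V^-\subseteq\overline{\I}$. Alternatively, this is the inclusion $\supseteq$ in Proposition~\ref{prop:radial-reconstruction} restricted to $V$.

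\textbf{Upper inclusion.} We already know $\overline{\I}\subseteq B$ from the axis argument preceding Definition~\ref{def:finite-direction-approximations}. It therefore suffices to show that $\overline{\I}$ is disjoint from $\beta(v)+\R_{>0}^n$ for every $v\in V$. Suppose instead that $h\in\overline{\I}$ satisfies $h_i>\beta(v)_i$ for all $i$.
\begin{itemize}
    \item If $h\neq\mathbf{0}$, Proposition~\ref{prop:radial-reconstruction} gives $u$ with $h\preceq\beta(u)$.
    \item Then $\beta(v)$ is strictly smaller than $\beta(u)$ in every coordinate, contradicting Lemma~\ref{lem:no-strict-radial-separation}.
    \item The case $h=\mathbf{0}$ cannot occur, since $\beta(v)\succeq\mathbf{0}$ forces $h_i>0$.
\end{itemize}
There is a subtlety here: Lemma~\ref{lem:no-strict-radial-separation} is stated for pairs of radial boundary points. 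I would instead argue directly, as in its proof. Write $\beta(v)=rv$. Choose $\delta>0$ with $(r+\delta)v\preceq h$; this is possible because the inequalities are strict and $v$ is bounded. Then $(r+\delta)v\in\overline{\I}$. The approximation argument from Proposition~\ref{prop:radial-reconstruction} then gives $sv\in\I$ for all $s<r+\delta$, which contradicts $r=\DL_M(v,k,t)$. This direct route avoids needing $u$ at all.

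\textbf{Monotonicity in $V$.} If $V\subseteq V'$, then $\I_V^-\subseteq\I_{V'}^-$ because the union defining $\I_{V'}^-$ ranges over more rectangles. Likewise $\I_{V'}^+\subseteq\I_V^+$ because $\I_{V'}^+$ removes more sets from the same box $B$. Combining these with the first part applied to $V'$ gives the full chain.

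\textbf{Main obstacle.} The only step needing care is the upper inclusion, specifically the closure-versus-interior issue. Strict domination of $\beta(v)$ is exactly what allows pushing past the radial endpoint. This is why the removed sets use $\R_{>0}^n$ rather than $\R_{\ge0}^n$.
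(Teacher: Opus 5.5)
Your proposal is correct and follows the paper's proof. The lower inclusion comes from Proposition~\ref{prop:radial-reconstruction}, the upper inclusion from combining $h\preceq\beta(u)$ with Lemma~\ref{lem:no-strict-radial-separation}, and the monotonicity in $V$ directly from the definitions. The ``subtlety'' you raise is not an obstacle, since $\beta(v)$ and $\beta(u)$ are both radial boundary points and the lemma applies verbatim; your direct route is valid as well, being just that lemma's proof inlined with $h$ in place of $\beta(u)$.
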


\begin{proof}
    Proposition~\ref{prop:radial-reconstruction} gives $[\mathbf{0}, \beta(v)] \subseteq \overline{\I}$ for every $v \in V$, and hence $\I_V^- \subseteq \overline{\I}$. Now let $h \in \overline{\I}$. Then $h \in B$. If $h \notin \I_V^+$, there exists $v \in V$ such that $\beta(v)_i < h_i$ for every $i$. By Proposition~\ref{prop:radial-reconstruction}, $h \preceq \beta(u)$ for some $u \in S_{\infty,+}^{n-1}$. It follows that $\beta(v)_i < \beta(u)_i$ for every $i$, contradicting Lemma~\ref{lem:no-strict-radial-separation}. Therefore, $\overline{\I} \subseteq \I_V^+$. Finally, if $V \subseteq V'$, the definitions give $\I_V^- \subseteq \I_{V'}^-$ and $\I_{V'}^+ \subseteq \I_V^+$, which proves the final chain.
\end{proof}

To define the corresponding integrated landscape curves and study their structure, we first record that both approximations are finite unions of rectangles.

\begin{lemma}[Finite rectangular form]
\label{lem:upper-approximation-rectangles}
    The sets $\I_V^-$ and $\I_V^+$ are finite unions of compact rectangles of the form $[\mathbf{0}, p]$.
\end{lemma}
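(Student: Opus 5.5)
For $\I_V^-$ the claim is immediate from Definition~\ref{def:finite-direction-approximations}: it is the union over the finite set $V$ of the compact rectangles $[\mathbf{0},\beta(v)]$. Each $\beta(v)$ is a finite point of $\R_{\ge 0}^n$ because $\I$ is bounded, so $\DL_M(v,k,t)<\infty$. The real content is therefore the upper approximation, and the plan is to describe $\I_V^+$ by finitely many coordinate choices and read off its rectangular form.

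For $\I_V^+$, I first rewrite the removed sets coordinatewise. A point $h\in B$ fails to lie in $\beta(v)+\R_{>0}^n$ exactly when there is some index $i$ with $h_i\le\beta(v)_i$. Hence
\[
    \I_V^+
    =
    B\cap\bigcap_{v\in V}\bigcup_{i=1}^n\{h\in\R^n : h_i\le\beta(v)_i\}.
\]
Distributing the intersection over the unions gives a union indexed by all choice functions $\phi\colon V\to\{1,\ldots,n\}$. There are finitely many, namely $n^{|V|}$. For each $\phi$, the corresponding set is
\[
    B\cap\bigcap_{v\in V}\{h : h_{\phi(v)}\le\beta(v)_{\phi(v)}\}
    =
    \prod_{i=1}^n\Bigl[0,\ \min\bigl(\{a_i\}\cup\{\beta(v)_i : \phi(v)=i\}\bigr)\Bigr].
\]
This is the compact rectangle $[\mathbf{0},p_\phi]$, where $(p_\phi)_i$ equals that minimum. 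Each such minimum is nonnegative because $a_i\ge 0$ and $\beta(v)\in\R_{\ge 0}^n$.

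The only points needing care are the following. Each $a_i$ must be finite, which holds since $\I$ is bounded and nonempty. The factor corresponding to an index $i$ not chosen by $\phi$ is simply $[0,a_i]$. Finally, the intersection with $B$ is what makes each piece a rectangle anchored at $\mathbf{0}$, because $B=[\mathbf{0},(a_1,\ldots,a_n)]$.

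I do not expect a genuine obstacle. The step most deserving attention is the distributive expansion together with the observation that the strict inequality in $\R_{>0}^n$ turns into the non-strict $h_i\le\beta(v)_i$ in the complement. That non-strict inequality is exactly what makes each resulting piece closed, hence compact.
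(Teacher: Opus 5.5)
Your proof is correct and is the same as the paper's. Both rewrite the complement of each $\beta(v)+\R_{>0}^n$ by De Morgan as the union of the half-spaces $\{h_i\le\beta(v)_i\}$, then distribute over the finitely many choice functions $V\to\{1,\ldots,n\}$ to get the rectangles $[\mathbf{0},p_\phi]$. Your explicit formula for $p_\phi$ simply spells out the paper's remark that the selected inequalities, together with $0\le h_i\le a_i$, determine such a rectangle.
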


\begin{proof}
    The assertion for $\I_V^-$ follows directly from its definition. For the upper approximation, its definition and De Morgan's law give
    \[
    \begin{aligned}
        \I_V^+
        &=
        B
        \setminus
        \bigcup_{v \in V}
        \bigcap_{i = 1}^n
        \left\{
            h \in \R_{\ge 0}^n :
            h_i > \beta(v)_i
        \right\}\\
        &=
        B
        \cap
        \bigcap_{v \in V}
        \bigcup_{i = 1}^n
        \left\{
            h \in \R_{\ge 0}^n :
            h_i \le \beta(v)_i
        \right\}.
    \end{aligned}
    \]
    Thus, for each $v \in V$, at least one of the inequalities $h_i \le \beta(v)_i$ must hold. There are only finitely many ways to choose one such inequality for each $v$. For any fixed choice, the selected inequalities, together with $0 \le h_i \le a_i$ for $i \in \{1,\ldots,n\}$, determine a compact
    rectangle of the form $[\mathbf{0},p]$. Taking the union over all possible choices proves that $\I_V^+$ is a finite union of compact rectangles.
\end{proof}

The integrated landscape is defined using $\I$, whereas the finite-direction approximations enclose $\overline{\I}$. This distinction does not affect their volumes. Indeed, on each ray from the origin, $\I$ and $\overline{\I}$ differ by at most the radial boundary point $\beta(v)$. Applying the $\ell^\infty$-polar-coordinate formula to the indicator function of $\overline{\I} \setminus \I$ therefore gives $\mathcal{L}^n(\overline{\I} \setminus \I) = 0$. Consequently,
\[
    \mathcal{L}^n\bigl(Q_r \cap \I\bigr)
    =
    \mathcal{L}^n\bigl(Q_r \cap \overline{\I}\bigr)
\]
for every $r \in \R_{\ge 0}$.

By Lemma~\ref{lem:upper-approximation-rectangles}, both finite-direction approximations are Lebesgue-measurable. For brevity, write $\gamma(r) \coloneqq \IL_M(r, k, t)$ for the exact integrated landscape curve.

\begin{definition}[Finite-direction approximation curves]
\label{def:finite-direction-approximation-curves}
    Let $V \subseteq S_{\infty,+}^{n-1}$ be finite and nonempty. The \emph{lower} and \emph{upper finite-direction approximation curves} determined by $V$ are the functions $\gamma_V^-, \gamma_V^+ \colon \R_{\ge 0} \longrightarrow [0,\infty)$ defined by
    \[
        \gamma_V^\pm(r)
        \coloneqq
        \begin{cases}
            \displaystyle
            \frac{
                \mathcal{L}^n\bigl(Q_r \cap \I_V^\pm\bigr)
            }{
                r^{n-1}
            },
            & r > 0,\\[8pt]
            0,
            & r = 0.
        \end{cases}
    \]
\end{definition}

\begin{corollary}[Finite-direction curve bounds]
\label{cor:finite-direction-curve-bounds}
    For every finite nonempty set $V \subseteq S_{\infty,+}^{n-1}$ and every $r \in \R_{\ge 0}$,
    \[
        \gamma_V^-(r)
        \le
        \gamma(r)
        \le
        \gamma_V^+(r).
    \]
    Moreover, if $V \subseteq V'$, then
    \[
        \gamma_V^-(r)
        \le
        \gamma_{V'}^-(r)
        \le
        \gamma(r)
        \le
        \gamma_{V'}^+(r)
        \le
        \gamma_V^+(r)
    \]
    for every $r \in \R_{\ge 0}$.
\end{corollary}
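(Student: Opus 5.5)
The argument rests on the set inclusions of Proposition~\ref{prop:finite-direction-bounds}, intersected with $Q_r$, together with the observation preceding Definition~\ref{def:finite-direction-approximation-curves} that $\I$ and $\overline{\I}$ have the same truncated volumes.

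\emph{Case $r = 0$.} All three curves vanish by definition, so there is nothing to prove.

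\emph{Case $r > 0$, first chain.} Intersect $\I_V^- \subseteq \overline{\I} \subseteq \I_V^+$ with $Q_r$. All sets involved are Lebesgue-measurable:
\begin{itemize}
\item $\I_V^\pm$ are finite unions of compact rectangles by Lemma~\ref{lem:upper-approximation-rectangles};
\item $\overline{\I}$ is closed;
\item $\I$ is measurable by Proposition~\ref{prop:interval-measurability}.
\end{itemize}
Monotonicity of $\mathcal{L}^n$ therefore gives
\[
    \mathcal{L}^n\bigl(Q_r \cap \I_V^-\bigr)
    \le
    \mathcal{L}^n\bigl(Q_r \cap \overline{\I}\bigr)
    \le
    \mathcal{L}^n\bigl(Q_r \cap \I_V^+\bigr).
\]
By $\mathcal{L}^n(Q_r \cap \I) = \mathcal{L}^n(Q_r \cap \overline{\I})$, the middle term equals the numerator defining $\gamma(r) = \IL_M(r,k,t)$. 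Dividing through by $r^{n-1} > 0$ yields $\gamma_V^-(r) \le \gamma(r) \le \gamma_V^+(r)$.

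\emph{Case $r > 0$, second chain.} Suppose $V \subseteq V'$. Proposition~\ref{prop:finite-direction-bounds} gives the nested inclusions
\[
    \I_V^- \subseteq \I_{V'}^- \subseteq \overline{\I} \subseteq \I_{V'}^+ \subseteq \I_V^+.
\]
Intersecting with $Q_r$, taking measures, and dividing by $r^{n-1}$ produces the full chain of curve inequalities in the same way.

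The only step that needs any care is replacing $\I$ by $\overline{\I}$ in the volume. That equality was justified just before Definition~\ref{def:finite-direction-approximation-curves}: applying the $\ell^\infty$-polar formula to $\overline{\I}\setminus\I$, each ray contributes at most the single point $\beta(v)$, so the difference has measure zero. We invoke that remark rather than reprove it; everything else is monotonicity of measure.
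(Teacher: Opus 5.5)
Your proposal is correct and follows essentially the same route as the paper: the $r=0$ case is trivial, and for $r>0$ you intersect the inclusions of Proposition~\ref{prop:finite-direction-bounds} with $Q_r$, use the equality of the truncated volumes of $\I$ and $\overline{\I}$, apply monotonicity of Lebesgue measure, and divide by $r^{n-1}$. The measurability checks you add are correct and simply make explicit what the paper leaves implicit.
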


\begin{proof}
    At $r = 0$, all three curves vanish. For $r > 0$, the result follows from Proposition~\ref{prop:finite-direction-bounds}, the equality of the truncated volumes of $\I$ and $\overline{\I}$ established above, and monotonicity of Lebesgue measure.
\end{proof}

The lower and upper approximations also provide midpoint estimates. For $\overline{\I}$, placing the estimated boundary halfway between their radial endpoints along each ray gives an error in radial extent of at most half the gap between those endpoints. Similarly, for the integrated landscape curve, the average $(\gamma_V^-(r)+\gamma_V^+(r))/2$ approximates $\gamma(r)$ with absolute error at most $(\gamma_V^+(r)-\gamma_V^-(r))/2$.

The finite-direction approximations are consistent under dense sampling. Suppose that
$V_1 \subseteq V_2 \subseteq \cdots$ are finite nonempty subsets of $S_{\infty,+}^{n-1} \cap \R_{>0}^n$ whose union is dense. Then
\[
    \mathcal{L}^n
    \bigl(
        \I_{V_m}^+
        \setminus
        \I_{V_m}^-
    \bigr)
    \longrightarrow
    0.
\]
Consequently, for every $r \in \R_{\ge 0}$,
\[
    \gamma_{V_m}^-(r)
    \nearrow
    \gamma(r)
    \qquad\text{and}\qquad
    \gamma_{V_m}^+(r)
    \searrow
    \gamma(r).
\]

Indeed, set $\I_\infty^- \coloneqq \bigcup_m \I_{V_m}^-$ and $\I_\infty^+ \coloneqq \bigcap_m \I_{V_m}^+$. The continuity of $\beta$ established in
Proposition~\ref{prop:radial-boundary-continuity}, together with the density of $\bigcup_m V_m$, shows that, along every strictly positive direction, $\I_\infty^-$, $\overline{\I}$, and $\I_\infty^+$ have the same radial extent. They may differ only at the radial boundary point. Directions with at least one zero coordinate have $\sigma$-measure zero, so $\I_\infty^-$, $\overline{\I}$, and $\I_\infty^+$ agree up to sets of Lebesgue measure zero. Since $(\I_{V_m}^-)_m$ is increasing and $(\I_{V_m}^+)_m$ is decreasing inside the bounded rectangle $B$, continuity of Lebesgue measure from below and from above gives $\mathcal{L}^n \bigl( \I_{V_m}^+ \setminus \I_{V_m}^- \bigr) \longrightarrow 0$. For $r > 0$, $0 \le \gamma_{V_m}^+(r)-\gamma_{V_m}^-(r) \le \frac{ \mathcal{L}^n \bigl( \I_{V_m}^+ \setminus \I_{V_m}^- \bigr) }{ r^{n-1} }$, so the curve gap also tends to zero. The monotonicity and convergence to $\gamma(r)$ now follow from Corollary~\ref{cor:finite-direction-curve-bounds}.

In practical terms, each sampled direction provides another point on the radial boundary of the landscape interval. Sampling directions more finely gives a more complete picture of its shape. The gap between the lower and upper approximations then narrows, and the curves computed from them approach the exact integrated landscape curve.

\subsection{Structure of integrated landscape curves}
\label{subsec:integrated-curve-structure}

Definition~\ref{def:finite-direction-approximations} and Lemma~\ref{lem:upper-approximation-rectangles} show that $\I_V^-$ and $\I_V^+$ are finite unions of rectangles with one corner at $\mathbf{0}$. If $M$ is also rectangular-constructible with respect to a finite rectangular stratification, Theorem~\ref{thm:finite-maximal-direction-reconstruction} gives the same form for $\overline{\I}$. Since replacing $\I$ by $\overline{\I}$ does not change its truncated volumes, the exact and approximate curves can all be studied through the following common construction.

Let $P \subseteq \R_{\ge 0}^n$ be finite and nonempty, and define
\[
    \mathcal{R}_P
    \coloneqq
    \bigcup_{p \in P} [\mathbf{0}, p],
    \qquad
    \Gamma_P(r)
    \coloneqq
    \frac{
        \mathcal{L}^n\bigl(Q_r \cap \mathcal{R}_P\bigr)
    }{
        r^{n-1}
    }
    \quad\text{for } r > 0.
\]
Set $\Gamma_P(0) \coloneqq 0$.

The coordinate values of the points in $P$ divide the scale parameter into finitely many ranges. On each range, $\Gamma_P$ has a fixed algebraic form.

\begin{theorem}[Piecewise form of integrated landscape curves]
\label{thm:piecewise-form-integrated-curves}
    The function $\Gamma_P$ is continuous on $[0,\infty)$ and, on $(0,\infty)$, is piecewise of the form
    \[
        \Gamma_P(r)
        =
        \sum_{\ell = -(n-1)}^1 c_\ell r^\ell,
    \]
    where the coefficients may depend on the piece. If $\mathcal{L}^n(\mathcal{R}_P) = 0$, then $\Gamma_P$ is identically zero. Otherwise, there exist $0 < \tau_P \le T_P$ such that
    \[
        \Gamma_P(r) = r
        \quad\text{for } 0 \le r \le \tau_P,
        \qquad
        \Gamma_P(r)
        =
        \frac{\mathcal{L}^n(\mathcal{R}_P)}{r^{n-1}}
        \quad\text{for } r \ge T_P.
    \]
    Moreover, $[0,\infty)$ admits a finite subdivision into intervals on each of which $\Gamma_P$ is strictly increasing, constant, or strictly decreasing.
\end{theorem}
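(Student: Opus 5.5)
We will use inclusion--exclusion to make $V(r)\coloneqq\mathcal{L}^n(Q_r\cap\mathcal{R}_P)$ explicit. For a nonempty subset $S\subseteq P$ write $m_S$ for the coordinatewise minimum of the points of $S$. Then $\bigcap_{p\in S}[\mathbf{0},p]=[\mathbf{0},m_S]$, and intersecting with $Q_r$ gives
\[
    V(r)=\sum_{\varnothing\ne S\subseteq P}(-1)^{|S|+1}\prod_{i=1}^n\min\{r,(m_S)_i\}.
\]
Let $0=b_0<b_1<\cdots<b_N$ be the distinct values among $0$ and the coordinates of the points of $P$. On each interval $[b_{j-1},b_j]$, and on $[b_N,\infty)$, every factor $\min\{r,(m_S)_i\}$ equals either $r$ or a constant. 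Hence $V$ is a polynomial of degree at most $n$ on each such interval, so $\Gamma_P=V/r^{n-1}$ has the form $\sum_{\ell=-(n-1)}^{1}c_\ell r^\ell$ there. Continuity on $(0,\infty)$ follows from the continuity of $V$, which we can take from part (2) of Theorem~\ref{thm:integrated-landscape-properties} by the same argument. At $0$, continuity follows from $0\le\Gamma_P\le r$.

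If $\mathcal{L}^n(\mathcal{R}_P)=0$, then $V\equiv0$ and $\Gamma_P\equiv 0$. Otherwise some $p\in P$ has all coordinates positive. Set $\tau_P\coloneqq\max_{p\in P}\min_i p_i>0$. For $r\le\tau_P$ we have $Q_r\subseteq[\mathbf{0},p]$ for a maximizing $p$, so $\Gamma_P(r)=r$. Set $T_P\coloneqq\max\{\tau_P,\max_{p,i}p_i\}$. For $r\ge T_P$ we have $\mathcal{R}_P\subseteq Q_r$, so $\Gamma_P(r)=\mathcal{L}^n(\mathcal{R}_P)/r^{n-1}$, and clearly $\tau_P\le T_P$.

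For the monotonicity statement, on each of the finitely many pieces $\Gamma_P$ is a Laurent polynomial $g(r)=\sum_{\ell=-(n-1)}^1 c_\ell r^\ell$ on a bounded interval in $(0,\infty)$, except for the last piece. Its derivative is $r^{-n}$ times a polynomial of degree at most $n$. So either $g'$ vanishes identically, in which case $g$ is constant, or $g'$ has finitely many zeros on the piece. Between consecutive zeros $g'$ has constant sign, so $g$ is strictly monotone there. On $[0,\tau_P]$ the curve is strictly increasing, and on $[T_P,\infty)$ it is strictly decreasing because the area is positive. Only finitely many pieces occur, and within each we get finitely many subintervals, so we obtain a finite subdivision of $[0,\infty)$. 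Adjacent subintervals of the same type may be merged if desired.

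The main point needing care is the assertion that the inclusion--exclusion terms are piecewise polynomial on a common finite subdivision. The subdivision by all coordinate values of points of $P$ suffices, because each $(m_S)_i$ is one of these values. The remaining steps are routine. The only degenerate case in the sign argument, $g'\equiv0$, is precisely the constant case allowed by the statement.
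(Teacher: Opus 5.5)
Your proof is correct and follows essentially the same route as the paper: inclusion--exclusion over the coordinatewise minima $m_S$, a subdivision of the scale axis at the coordinate values of $P$ on which each factor $\min\{r,(m_S)_i\}$ is either $r$ or a constant, and the observation that $r^n\Gamma_P'(r)$ is a polynomial of degree at most $n$, so each piece is constant or splits into finitely many strictly monotone subintervals. One small slip: when $n=1$ the tail $\mathcal{L}^1(\mathcal{R}_P)/r^{0}$ is constant rather than strictly decreasing, but constant is an allowed piece type (and your general derivative argument already covers the unbounded last piece), so nothing breaks.
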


The proof, based on inclusion--exclusion for finite unions of rectangles, is given in Appendix~\ref{app:piecewise-integrated-curve-proof}.

In arbitrary dimensions, Theorem~\ref{thm:piecewise-form-integrated-curves} permits several changes in monotonicity. In two dimensions, however, the staircase
geometry of a finite union of rectangles with one corner at $\mathbf{0}$ forces these pieces to occur in a single order.

\begin{theorem}[Two-parameter curve structure]
\label{thm:two-parameter-curve-shape}
    Let $P \subseteq \R_{\ge 0}^2$ be finite and nonempty. If $\mathcal{L}^2(\mathcal{R}_P) > 0$, then $\Gamma_P$ is unimodal: there exist $0 < r_- \le r_+ < \infty$ such that $\Gamma_P$ is nondecreasing on $[0,r_-]$, constant on $[r_-,r_+]$, and nonincreasing on $[r_+,\infty)$.

    More precisely, $\Gamma_P(r)=r$ on its initial linear segment. After this segment, every piece has the form $\alpha+b/r$, with coefficients depending on the piece. Its strictly increasing, constant, and strictly decreasing pieces have the respective forms
    \[
        \alpha-\frac{c}{r},
        \qquad
        \alpha,
        \qquad
        \alpha+\frac{d}{r},
    \]
    where $c,d > 0$, and these three types can occur only in this order.
\end{theorem}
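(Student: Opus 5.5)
The plan is to study the truncated area $A(r) \coloneqq \mathcal{L}^2\bigl(Q_r \cap \mathcal{R}_P\bigr)$ through its derivative, and to show that the sign of $\Gamma_P'$ can change only in the prescribed order. Since $\mathcal{R}_P$ is closed under decreasing coordinates and is a finite union of rectangles $[\mathbf{0},p]$, it is a staircase region. Define the nonincreasing right-continuous step functions
\[
    g(y) \coloneqq \max\{p_1 : p \in P,\ p_2 \ge y\},
    \qquad
    h(x) \coloneqq \max\{p_2 : p \in P,\ p_1 \ge x\},
\]
with the maximum of the empty set taken to be $0$. Away from the finitely many coordinate values of $P$, differentiating $A(r)$ amounts to measuring the top and right edges of $Q_r$ that lie inside $\mathcal{R}_P$. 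This gives
\[
    A'(r) = \min\{r, g(r)\} + \min\{r, h(r)\}.
\]
Set $\tau \coloneqq \sup\{r \ge 0 : (r,r) \in \mathcal{R}_P\}$. Then $\tau > 0$ because $\mathcal{L}^2(\mathcal{R}_P) > 0$, and by the argument of Proposition~\ref{prop:uniform-recovered-from-integrated}, $Q_r \subseteq \mathcal{R}_P$ and $\Gamma_P(r) = r$ for $r \le \tau$. This is the initial linear, increasing segment.

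For $r > \tau$, the point $(r,r)$ does not lie in $\mathcal{R}_P$, so $g(r) < r$ and $h(r) < r$. Hence $A'(r) = g(r) + h(r)$, which is a nonincreasing, piecewise constant function of $r$. On each piece between consecutive breakpoints, $A(r) = \alpha r + b$ with $\alpha = A'$ constant, and therefore $\Gamma_P(r) = A(r)/r = \alpha + b/r$. This is exactly the claimed form, and it agrees with Theorem~\ref{thm:piecewise-form-integrated-curves} for $n = 2$. The sign of the derivative of $\alpha + b/r$ is $-\operatorname{sign}(b)$. So increasing, constant, and decreasing pieces correspond to $b<0$, $b=0$, and $b>0$, which gives the forms $\alpha - c/r$, $\alpha$, and $\alpha + d/r$ with $c, d > 0$.

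The key step is to show that $b$ is nondecreasing in $r$. I would introduce $\varphi(r) \coloneqq rA'(r) - A(r)$, which equals $-b$ on each piece. Because $A$ is continuous (Theorem~\ref{thm:integrated-landscape-properties}(2)) and $A'$ only jumps downward at the breakpoints, $\varphi$ is constant on each piece and drops by $r$ times the size of the jump at each breakpoint. Thus $\varphi$ is nonincreasing on $(\tau,\infty)$, so the sign of $-b$ passes through negative, then zero, then positive at most once each, in that order. This gives the order "increasing, constant, decreasing" after $\tau$. Combined with $\Gamma_P(r) = r$ on $[0,\tau]$ and the continuity of $\Gamma_P$, it yields unimodality with $\tau \le r_- \le r_+$.

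To see that $r_+ < \infty$, note that beyond $\max_{p \in P} \lVert p \rVert_\infty$ we have $A' = 0$, so $b = \mathcal{L}^2(\mathcal{R}_P) > 0$ and the curve is strictly decreasing there. If no constant piece occurs, I would take $r_- = r_+$ to be the turning point. The main obstacle I expect is justifying the identity $A'(r) = g(r) + h(r)$ for $r > \tau$ carefully at the breakpoints and at $r = \tau$ itself, where $g$ and $h$ may jump and the relevant rectangles may be degenerate. There I would argue with one-sided derivatives and use the continuity of $A$ to glue the pieces together.
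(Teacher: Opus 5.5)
Your proposal is correct and follows essentially the same route as the paper. Your slope $g(r)+h(r)$ is exactly the paper's $x_m+y_j$, which the paper obtains from an explicit staircase area formula after discarding dominated points. Your $\varphi = rA'-A$ is the paper's $F$, shown nonincreasing across breakpoints by continuity of $A$.

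Two small slips, neither of which affects the argument. Since $\varphi=-b$ is nonincreasing, it is $b$ (not $-b$) whose sign runs negative, zero, positive. Also, $g$ and $h$ as you defined them are left-continuous rather than right-continuous.
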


The proof is given in Appendix~\ref{app:two-parameter-curve-proof}. Its main idea is that, after redundant rectangles are removed, their upper-right corners
form a decreasing staircase. After the initial linear segment of $\Gamma_P$, the truncated area is piecewise affine with nonincreasing slopes. Consequently, the increasing pieces of $\Gamma_P$ precede any constant pieces, which in turn precede the decreasing pieces.

\begin{example}[Exact and approximate two-parameter curves]
\label{ex:two-parameter-exact-approximate-curves}
    Let
    \[
        P
        \coloneqq
        \{(1, 8), (2, 7), (3, 5), (4, 3), (6, 2)\}.
    \]
    For each $p=(p_1,p_2)\in P$, let $M^p$ be the interval module supported on $[-p,p]\coloneqq[-p_1,p_1]\times[-p_2,p_2]$, and set $M\coloneqq\bigoplus_{p\in P}M^p$. For $h\in\R_{\ge 0}^2$, the structure map from $M^p(-h)$ to $M^p(h)$ has rank one if and only if $h\preceq p$. Therefore,
    \[
    \begin{aligned}
        \rk_M(-h,h)
        &=
        \#\{p\in P:h\preceq p\},\\
        \I_M^1(\mathbf{0})
        &=
        \bigcup_{p\in P}[\mathbf{0},p]
        =
        \mathcal{R}_P.
    \end{aligned}
    \]

    Let
    \[
        v_1
        \coloneqq
        \left(\frac{2}{7},1\right),
        \qquad
        v_2
        \coloneqq
        \left(1,\frac{3}{4}\right),
        \qquad
        V
        \coloneqq
        \{v_1,v_2\}.
    \]
    The corresponding radial boundary points are $\beta(v_1)=(2,7)$ and $\beta(v_2)=(4,3)$, while the axial bounding box is $B=[0,6]\times[0,8]$. Hence,
    \[
    \begin{aligned}
        \I_V^-
        &=
        [\mathbf{0},(2,7)]
        \cup
        [\mathbf{0},(4,3)],\\
        \I_V^+
        &=
        [\mathbf{0},(2,8)]
        \cup
        [\mathbf{0},(4,7)]
        \cup
        [\mathbf{0},(6,3)].
    \end{aligned}
    \]

    Write $\gamma(r)\coloneqq\IL_M(r,1,\mathbf{0})$. Computing the normalized truncated areas of $\I_V^-$, $\I_M^1(\mathbf{0})$, and $\I_V^+$ gives
    \[
    \begin{array}{c|c|c|c}
        r
        &
        \gamma_V^-(r)
        &
        \gamma(r)
        &
        \gamma_V^+(r)
        \\ \hline
        0 \le r \le 3
        &
        r
        &
        r
        &
        r
        \\[2pt]
        3 \le r \le 4
        &
        \displaystyle 5-\frac{6}{r}
        &
        \displaystyle 6-\frac{9}{r}
        &
        r
        \\[6pt]
        4 \le r \le 5
        &
        \displaystyle 2+\frac{6}{r}
        &
        \displaystyle 5-\frac{5}{r}
        &
        \displaystyle 7-\frac{12}{r}
        \\[6pt]
        5 \le r \le 6
        &
        \displaystyle 2+\frac{6}{r}
        &
        4
        &
        \displaystyle 7-\frac{12}{r}
        \\[6pt]
        6 \le r \le 7
        &
        \displaystyle 2+\frac{6}{r}
        &
        \displaystyle 2+\frac{12}{r}
        &
        \displaystyle 4+\frac{6}{r}
        \\[6pt]
        7 \le r \le 8
        &
        \displaystyle \frac{20}{r}
        &
        \displaystyle 1+\frac{19}{r}
        &
        \displaystyle 2+\frac{20}{r}
        \\[6pt]
        r \ge 8
        &
        \displaystyle \frac{20}{r}
        &
        \displaystyle \frac{27}{r}
        &
        \displaystyle \frac{36}{r}
    \end{array}
    \]

    The exact curve has two reciprocal-affine increasing pieces after its initial linear segment, followed by a constant piece and then two reciprocal-affine decreasing pieces before its final tail. The table also makes the bounds $\gamma_V^-(r)\le\gamma(r)\le\gamma_V^+(r)$ explicit.
\end{example}

%% file: sections/applications_and_experiments.tex
\section{Applications and Experiments}
\label{sec:applications-and-experiments}

This section illustrates landscape intervals and their associated summaries through synthetic examples and classification experiments. We first use synthetic point clouds to show how a second filtration parameter can help distinguish geometric structure from the effects of noise and incomplete sampling that may be obscured by one-parameter persistence. We then evaluate the uniform and integrated landscapes in two classification settings: shape classification on a subset of SHREC15 and medical-image classification on MedMNIST.

The bifiltrations used below share a common construction. Let $K$ be a finite simplicial or cubical complex, and let $\phi_1$ and $\phi_2$ be real-valued functions on its cells satisfying $\phi_j(\tau) \le \phi_j(\sigma)$ whenever $\tau$ is a face of $\sigma$. For $u=(u_1,u_2) \in \R^2$, define
\[
    K_u
    \coloneqq
    \left\{
        \sigma \in K :
        \phi_1(\sigma) \le u_1
        \text{ and }
        \phi_2(\sigma) \le u_2
    \right\}.
\]
The face condition ensures that $K_u$ is a subcomplex of $K$, and $K_u \subseteq K_v$ whenever $u \preceq v$. Thus, $\{K_u\}_{u \in \R^2}$ is a bifiltration. For each homological degree $p \in \Z_{\ge 0}$, applying homology with coefficients in $\F$ gives the $2$-parameter persistence module $M_p$ defined by $M_p(u) \coloneqq H_p(K_u;\F)$. The complex $K$ and the two filtration functions are specified in each experiment.

For a fixed homological degree $p$, the condition $\rk_{M_p}(t-h,t+h) \ge k$ means that at least $k$ independent $p$-dimensional homology classes survive from $t-h$ to $t+h$. The landscape interval $\I_{M_p}^k(t)$ collects all nonnegative half-width vectors $h$ for which this condition holds. Its geometry therefore records how much the filtration parameters can vary together while these features persist symmetrically about $t$.

In the synthetic examples, we visualize landscape intervals. In the classification experiments, we evaluate the uniform and integrated landscapes separately. The uniform landscape records the largest equal change in all filtration parameters under which at least $k$ independent homological features persist. The integrated landscape curve instead gives a measure-based profile of the admissible coordinate changes as the truncation scale varies.

Throughout this section, whenever a landscape interval or integrated landscape curve is computed numerically, we use the upper finite-direction approximations $\I_V^+$ and $\gamma_V^+$ from Section~\ref{subsec:finite-direction-approximations}. The synthetic examples use seven sampled directions, whereas the two classification experiments use eleven. For readability, we henceforth refer to these computed approximations simply as landscape intervals and integrated landscape curves. The term \emph{integrated representation} refers to a feature vector obtained by sampling the corresponding curve. Each sampled direction set includes the diagonal direction, whose directional value is exactly the uniform landscape.

We do not use the unrestricted directional landscape directly as a classification feature because it has no uniform interleaving-stability bound when directions approach the coordinate axes; see Example~\ref{ex:directional-interleaving-instability}. Directional values nevertheless provide the samples used to construct the landscape intervals and integrated landscape curves studied below.

Apart from the uniform case, we also do not use weighted landscapes. A nonuniform weight rescales the filtration coordinates, so the resulting landscape value is expressed in the chosen relative units rather than directly in the original filtration scales. None of the applications considered here provides a canonical choice of such a rescaling. We therefore restrict the classification experiments to the uniform and integrated representations.

\subsection{Synthetic point-cloud illustrations}
\label{subsec:synthetic-point-clouds}

A one-parameter Vietoris--Rips filtration records how the topology of a point cloud changes as the geometric scale increases, but its output can be strongly affected by how the underlying shape is sampled. Points scattered inside a sampled circle can cause its central loop to disappear at a small scale, whereas a sufficiently large scale can bridge a gap in an incompletely sampled circle and create a persistent loop. To examine these two effects, we augment the Rips filtration with a second parameter that measures local sparsity.

Let $X = \{x_1,\ldots,x_N\} \subseteq \R^d$ be a finite point cloud, and fix $q \in \{1,\ldots,N-1\}$. For each $x_i \in X$, let $x_i^{(1)},\ldots,x_i^{(q)}$ denote its $q$ nearest neighbors, excluding $x_i$ itself. Define the \emph{local sparsity} of $x_i$ by
\[
    \rho_q(x_i)
    \coloneqq
    \left(
        \frac{1}{q}
        \sum_{j=1}^q
        \lVert x_i-x_i^{(j)}\rVert_2^2
    \right)^{1/2}.
\]
We refer to $\rho_q$ as the \emph{$q$NN local-sparsity function}. Smaller values of $\rho_q$ correspond to points in locally denser regions of the point cloud. Throughout this subsection, we use $q=5$.

For $s \ge 0$, let $X_s \coloneqq \{x \in X : \rho_q(x) \le s\}$. For $(r,s) \in \R_{\ge 0}^2$, define
\[
    K_{r,s}
    \coloneqq
    \mathrm{VR}(X_s)_r.
\]
Equivalently, a simplex $\sigma \subseteq X$ belongs to $K_{r,s}$ if and only if
\[
    \max_{x,y \in \sigma}\lVert x-y\rVert_2 \le r
    \qquad\text{and}\qquad
    \max_{x \in \sigma}\rho_q(x) \le s.
\]
If $(r,s) \preceq (r',s')$, then $K_{r,s} \subseteq K_{r',s'}$, so $\{K_{r,s}\}_{(r,s) \in \R_{\ge 0}^2}$ is a bifiltration. This is an instance of a function--Rips bifiltration \cite{LesnickWright2015}. See also \cite{botnan2023introductionmultiparameterpersistence} for broader context. A related radius--codensity bifiltration was studied using multiparameter persistence landscapes in spatial tumor data \cite{VipondEtAl2021Tumors}. Its persistent homology in degree $p$ is the $2$-parameter persistence module $M_p(r,s) \coloneqq H_p(K_{r,s};\F)$. In the examples below, we take $p=1$ and study the persistence of loops.

The two filtration parameters play different roles. The Rips parameter $r$ controls the geometric scale at which points are connected, while the sparsity threshold $s$ controls when points from increasingly sparse regions enter the filtration. Thus, the landscape interval $\I_{M_1}^{1}(t)$ records the simultaneous changes in geometric scale and local sparsity through which a loop persists symmetrically about $t$.

\paragraph{Interior noise.}
We compare a point cloud sampled from a circle with a second point cloud obtained by adding points inside it. For each point cloud, we first compute the one-parameter persistence module $r \mapsto H_1(\mathrm{VR}(X)_r;\F)$. As shown in Figure~\ref{fig:loops-1d-land-noise}, the clean sample produces a prominent $k=1$ persistence-landscape layer. After the interior points are added, the central hole is filled at a substantially smaller Rips scale, and the corresponding landscape has a much smaller peak. The one-parameter landscape therefore assigns much shorter persistence to the principal loop, even though the circular arrangement remains visible in both point clouds.

\begin{figure}[htbp]
    \centering
    \includegraphics[width=\textwidth]{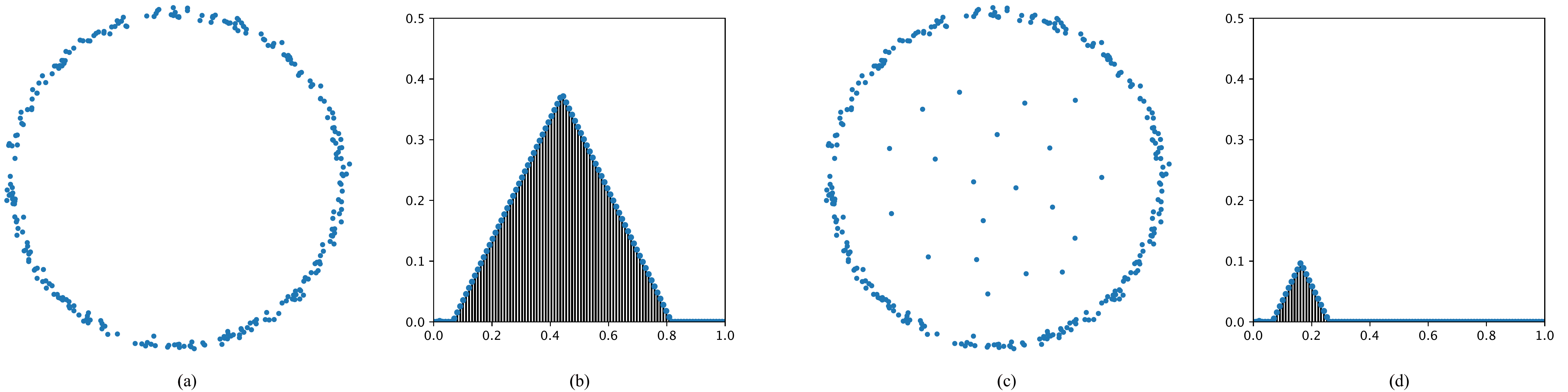}
    \caption{One-parameter persistence landscapes for a clean circle and a circle with interior noise. (a) A point cloud sampled from a circle. (b) Its $k=1$ persistence-landscape layer in homological degree $1$. (c) A point cloud with a similar circular arrangement and additional interior points. (d) Its corresponding persistence-landscape layer. The interior points cause the central hole to be filled at a smaller Rips scale, substantially reducing the height and support of the landscape.}
    \label{fig:loops-1d-land-noise}
\end{figure}

We next apply the Rips--sparsity bifiltration. At small values of the sparsity threshold, the relatively sparse interior points are excluded, allowing the circular structure to be examined before these points enter the filtration. Figure~\ref{fig:2d-land-noise} shows the landscape intervals at several parameter values for the clean and noisy point clouds. Although the shapes of the intervals differ, both point clouds produce substantial intervals at several of the displayed parameter values. The second filtration coordinate therefore records not only how the loop persists as the Rips scale changes, but also how its persistence depends on when the interior points enter the filtration.

\begin{figure}[htbp]
    \centering
    \includegraphics[width=\textwidth]
    {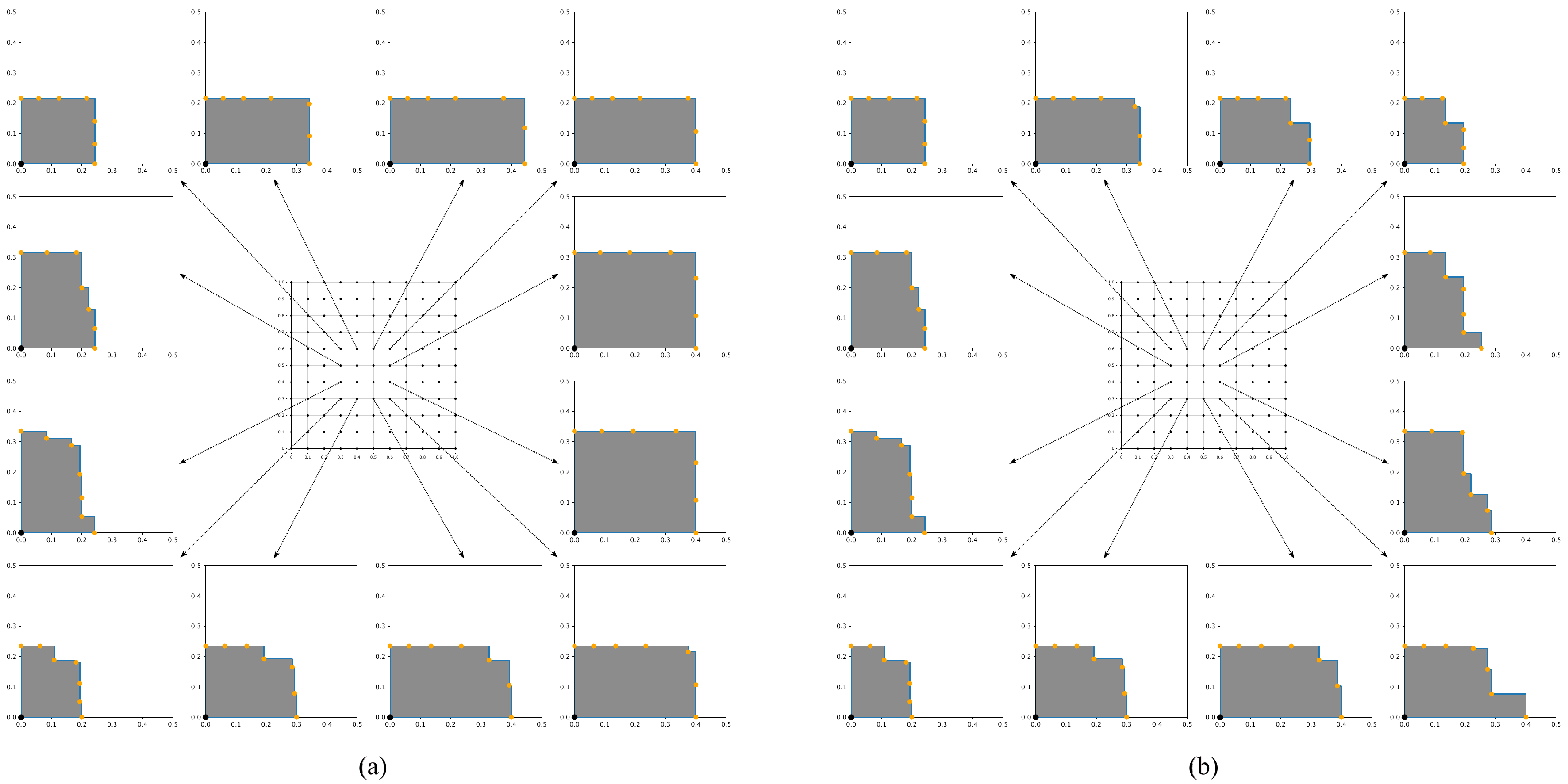}
    \caption{Landscape intervals for the clean circle and the circle with interior noise. The central grids indicate the selected parameter values, with the normalized sparsity threshold on the horizontal axis and the normalized Rips scale on the vertical axis. The surrounding panels show the corresponding landscape intervals for $k=1$ in homological degree $1$, with horizontal and vertical axes representing half-widths in the sparsity threshold and Rips scale, respectively. The gray regions show the computed intervals, the blue curves indicate their boundaries, and the orange points are the seven sampled radial boundary points, including those on the two coordinate axes. Corresponding panels in the two collections use the same coordinate scales. In both cases, substantial landscape intervals occur at several of the displayed parameter values.}
    \label{fig:2d-land-noise}
\end{figure}

\paragraph{Incomplete sampling.}
We compare a point cloud sampled around an entire circle with one in which two arcs are sampled much more sparsely. Once the Rips scale is large enough to bridge the larger separations in these sparsely sampled regions, the resulting edges complete a loop. In this example, the corresponding homology class persists over a range of scales comparable to that of the principal loop in the first point cloud. The resulting one-parameter persistence landscapes are therefore similar despite the different sampling patterns, as shown in Figure~\ref{fig:loops-1d-land-incomplete}.

\begin{figure}[htbp]
    \centering
    \includegraphics[width=\textwidth]{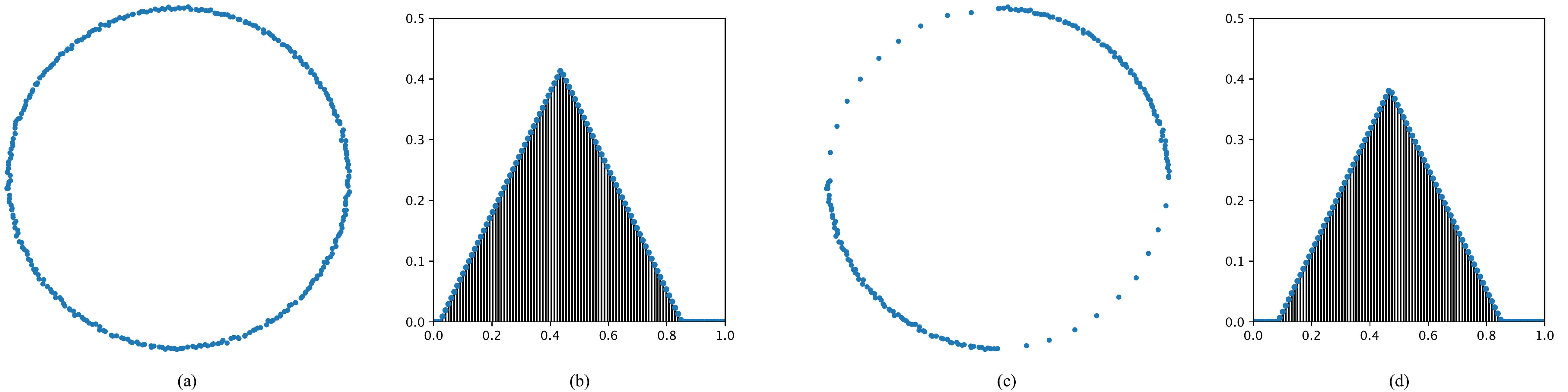}
    \caption{One-parameter persistence landscapes for two circular point clouds with different sampling patterns. (a) A point cloud sampled around the entire circle. (b) Its $k=1$ persistence-landscape layer in homological degree $1$. (c) A point cloud in which two arcs are sampled much more sparsely. (d) Its corresponding persistence-landscape layer. Despite the visible difference between the sampling patterns, the two landscape layers have similar heights and supports.}
    \label{fig:loops-1d-land-incomplete}
\end{figure}

The local-sparsity coordinate reveals the dependence of this loop on the sampling pattern. Points along the two sparsely sampled arcs tend to have larger values of $\rho_q$ because their nearest neighbors are farther away, so they enter the filtration only at larger sparsity thresholds. Forming the loop therefore requires both a sparsity threshold large enough to include these points and a Rips scale large enough to bridge the remaining separations. As shown in Figure~\ref{fig:2d-land-incomplete}, the landscape intervals for the sparsely sampled point cloud are mostly empty or concentrated near the origin, whereas the point cloud sampled around the entire circle produces substantially larger intervals at the displayed parameter values. The two-parameter landscape intervals therefore reveal how the loop depends on the sampling pattern, a distinction obscured by the one-parameter landscapes.

\begin{figure}[!ht]
    \centering
    \includegraphics[width=\textwidth]
    {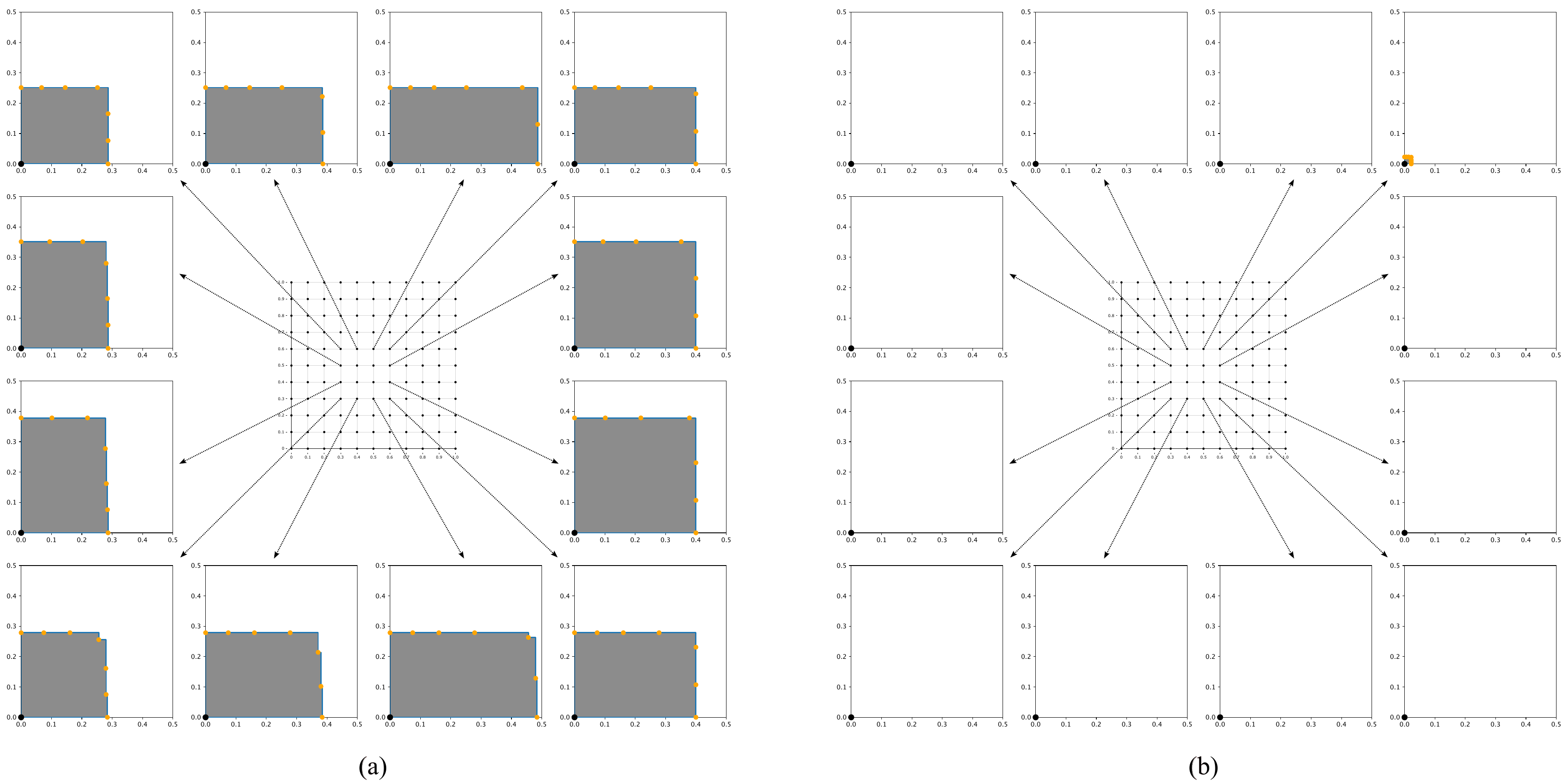}
    \caption{Landscape intervals for two circular point clouds with different sampling patterns. The left and right collections correspond to the point cloud sampled around the entire circle and the point cloud with two sparsely sampled arcs, respectively. The central grids indicate the selected parameter values, with the normalized sparsity threshold on the horizontal axis and the normalized Rips scale on the vertical axis. The surrounding panels show the corresponding landscape intervals for $k=1$ in homological degree $1$, with horizontal and vertical axes representing half-widths in the sparsity threshold and Rips scale, respectively. The gray regions show the computed intervals, the blue curves indicate their boundaries, and the orange points are the seven sampled radial boundary points, including those on the two coordinate axes. Corresponding panels in the two collections use the same coordinate scales. The landscape intervals for the sparsely sampled point cloud are mostly empty or concentrated near the origin, in contrast to the substantially larger intervals obtained from the point cloud sampled around the entire circle.}
    \label{fig:2d-land-incomplete}
\end{figure}

\subsection{Shape classification on a SHREC15 subset}
\label{subsec:shrec15}

We evaluate landscape-based feature representations on a $12$-class subset of the SHREC15 non-rigid shape dataset \cite{lian15nonrigid}. We compare one-parameter persistence landscapes constructed from individual filtration functions with uniform and integrated $2$-parameter landscapes constructed from pairs of filtration functions.

\subsubsection{Dataset and feature construction}

After removing meshes with more than one connected component, $279$ meshes remained. We used a fixed class-wise $80/20$ split consisting of $223$ training meshes and $56$ test meshes. Each mesh was rescaled coordinatewise to $[0,1]^3$ and then simplified while preserving connectedness. Representative meshes from the twelve classes are shown in Figure~\ref{fig:shrec15-samples}.

\begin{figure}[htbp]
    \centering
    \includegraphics[width=0.82\linewidth]
    {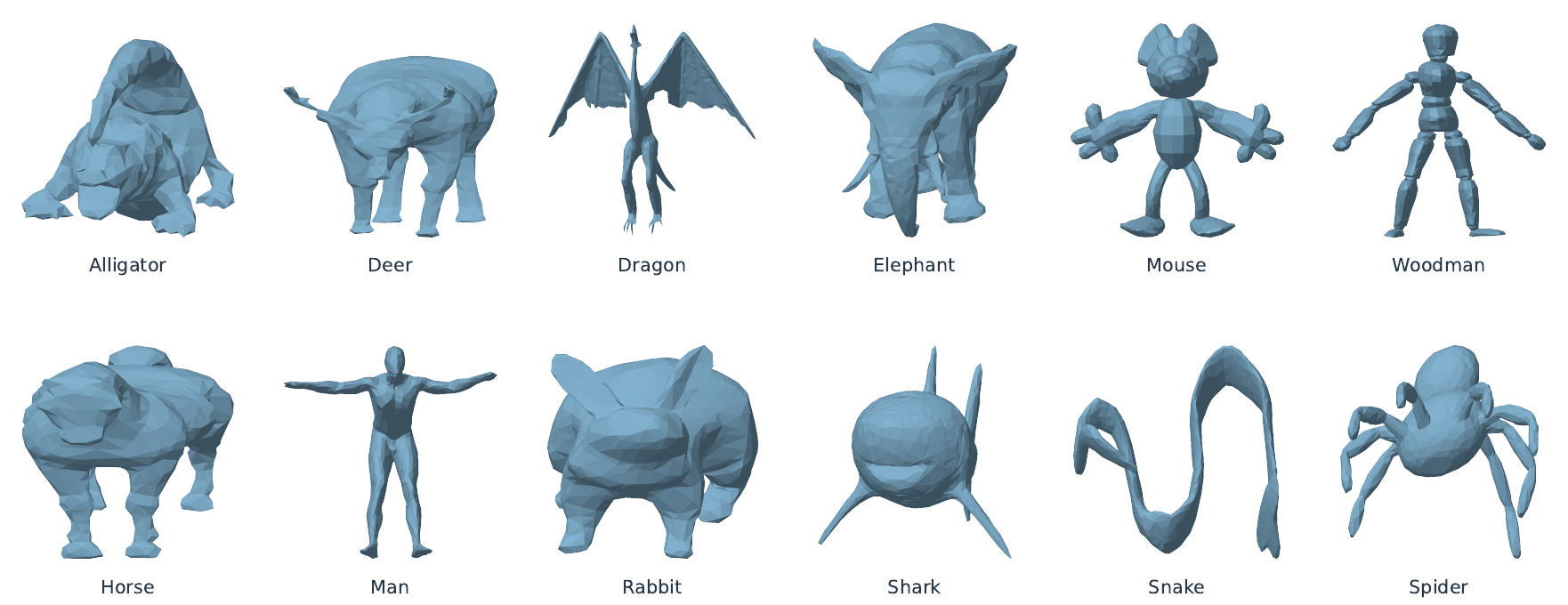}
    \caption{Representative meshes from the $12$ SHREC15 classes used in the shape-classification experiment. One mesh is shown from each class.}
    \label{fig:shrec15-samples}
\end{figure}

On each mesh, we considered four scalar functions on the vertices: the heat kernel signature (HKS) \cite{sun2009concise}, the wave kernel signature (WKS) \cite{aubry2011wave}, a reversed center-distance function, and the $q$NN local-sparsity function. We used the HKS sample nearest diffusion time $0.01$ and the WKS sample nearest energy $5$.

For a vertex $v$, define
\[
    f_{\mathrm{center}}(v)
    \coloneqq
    d_{\max}
    -
    \left\lVert
        v-(0.5,0.5,0.5)
    \right\rVert_2,
\]
where $d_{\max}$ is the largest distance from a vertex of the same mesh to $(0.5,0.5,0.5)$. This reversal represents the superlevel filtration of the center distance as a lower-star filtration. For the $q$NN
function, we use the local-sparsity function defined in the preceding subsection with $q=20$, setting $f_{q\mathrm{NN}}(v) \coloneqq \rho_{20}(v)$. Smaller values of $f_{q\mathrm{NN}}$ correspond to locally denser regions of the mesh.

Each of the four vertex functions was min--max normalized separately on each mesh to $[0,1]$. We then extended each normalized function $f$ to the simplices of the mesh by the lower-star rule
\[
    f(\sigma)
    \coloneqq
    \max_{v \in \sigma} f(v).
\]
For each resulting filtration, we computed persistent homology in degrees $0$ and $1$ and transformed the barcodes into one-parameter persistence landscapes. The landscapes were sampled to form finite-dimensional feature vectors.

For the $2$-parameter representations, we considered the six function pairs HKS--WKS, HKS--center, HKS--$q$NN, WKS--center, WKS--$q$NN, and $q$NN--center. Each pair of normalized lower-star functions defined a bifiltration using the construction described at the beginning of this section. For each bifiltration, we computed persistent homology in degrees $0$ and $1$ and constructed the corresponding uniform and integrated landscape features separately in each degree. Further preprocessing information, parameter grids, directional samples, candidate resolutions, and feature dimensions are specified in Appendix~\ref{app:shrec15-details}.

\subsubsection{Classification protocol}

We classified each feature representation using support vector machines with linear and radial basis function (RBF) kernels. Feature and model parameters were selected jointly by stratified five-fold cross-validation on the training set. The same fold assignments were used for every representation. Within each fold, every feature was standardized using the mean and standard deviation estimated from the training portion and then transformed using those same values in the validation portion.

Configurations were selected first by mean cross-validation accuracy and then by mean macro F1-score. The test set was not used for feature, model, or hyperparameter selection. The selected configuration was refitted on the complete training set and evaluated on the held-out test set. For each selected configuration, we report held-out test accuracy and macro F1-score together with the mean and standard
deviation of cross-validation accuracy and macro F1-score across the five folds. The complete parameter grids and remaining tie-breaking rules are given in Appendix~\ref{app:shrec15-details}, and the selected
feature and model configurations are reported in
Table~\ref{tab:shrec15-selected-configurations}.

\subsubsection{Results and discussion}

Table~\ref{tab:shrec15-landscape-classification} compares the individual one-parameter representations with the uniform and integrated $2$-parameter representations. The integrated HKS--WKS representation in homological degree $1$ had the highest mean cross-validation accuracy, $93.72\% \pm 3.61\%$, with a test accuracy of $92.86\%$. The highest observed test accuracy was $94.64\%$, obtained by the uniform HKS--center representation in degree $1$.

In five of the six degree-$0$ blocks and all six degree-$1$ blocks, at least one $2$-parameter representation achieved a higher mean cross-validation accuracy than either corresponding one-parameter representation. The exception was WKS--center in degree $0$. The two largest differences relative to the stronger one-parameter baseline occurred in degree $0$. For HKS--center, the highest mean accuracy among the two one-parameter representations was $52.93\%$, compared with $73.58\%$ for the uniform representation. For $q$NN--center, the corresponding values were $46.70\%$ and $66.87\%$.

The numerical performance also depended strongly on the filtration functions and homological degree. Among the one-parameter representations, WKS had the highest mean cross-validation accuracy in degree $0$, whereas HKS had the highest in degree $1$. When one constituent function already provided a strong one-parameter representation, the difference obtained by adding a second function was often smaller. For example, HKS achieved $88.77\%$ in degree $1$, while the best HKS--center and HKS--$q$NN representations achieved $89.70\%$ and $90.59\%$, respectively. By contrast, center distance and $q$NN were less effective individually, while their joint $2$-parameter representations had substantially higher mean accuracies in both degrees. These observations suggest that the effect of combining two filtration functions depends on both the information carried by the individual functions and the additional information captured by their joint variation.

Neither the uniform nor the integrated representation exhibited a consistent numerical advantage over the other. Their relative performance varied with the filtration pair and homological degree, and several differences were small relative to the variation across cross-validation folds. Because the same cross-validation results were used to choose each configuration, the reported cross-validation values are not an independent evaluation. The held-out test set provides a separate evaluation, but it contains only $56$ shapes, so one additional misclassification changes the test accuracy by approximately $1.79$ percentage points. We therefore interpret these comparisons descriptively and do not regard small numerical differences as evidence that one representation generally performs better than another.

In these experiments, two-parameter landscapes often improved classification compared with the corresponding one-parameter representations. Several degree-$0$ comparisons showed substantial gains, while improvements over the strong degree-$1$ HKS representation were more limited. Both uniform and integrated landscapes provided useful classification features, and their relative performance depended on the filtration pair and homological degree.

\begin{table*}[p]
\centering
\caption{Classification performance of one- and two-parameter persistence landscape representations on the SHREC15 subset. Results are grouped by homological degree. Within each four-row block, the two individual one-parameter representations are compared with the
integrated and uniform $2$-parameter representations constructed from the corresponding pair of filtration functions.}
\label{tab:shrec15-landscape-classification}
\footnotesize
\setlength{\tabcolsep}{2.5pt}
\renewcommand{\arraystretch}{0.90}
\begin{adjustbox}{max width=\textwidth}
\begin{tabular}{@{}llcccc@{}}
\toprule
Filtration pair
& Representation
& CV accuracy (\%)
& CV macro F1-score (\%)
& Test accuracy (\%)
& Test macro F1-score (\%) \\
\midrule
\multicolumn{6}{c}{\textit{Homological degree $0$}} \\
\addlinespace[1pt]

HKS--center ($H_0$)
& $1$-parameter: HKS
& $52.93 \pm 7.94$
& $51.93 \pm 7.93$
& $62.50$
& $62.16$ \\
& $1$-parameter: center
& $46.70 \pm 7.50$
& $46.31 \pm 6.52$
& $44.64$
& $44.21$ \\
& Integrated
& $73.12 \pm 3.56$
& $72.04 \pm 2.48$
& $69.64$
& $70.45$ \\
& Uniform
& $\mathbf{73.58 \pm 8.12}$
& $\mathbf{72.97 \pm 7.86}$
& $\mathbf{82.14}$
& $\mathbf{81.70}$ \\
\addlinespace[1pt]

HKS--$q$NN ($H_0$)
& $1$-parameter: HKS
& $52.93 \pm 7.94$
& $51.93 \pm 7.93$
& $\mathbf{62.50}$
& $\mathbf{62.16}$ \\
& $1$-parameter: $q$NN
& $45.76 \pm 4.05$
& $44.26 \pm 4.54$
& $44.64$
& $44.87$ \\
& Integrated
& $61.47 \pm 8.68$
& $60.72 \pm 7.93$
& $58.93$
& $59.16$ \\
& Uniform
& $\mathbf{62.80 \pm 3.44}$
& $\mathbf{61.47 \pm 3.01}$
& $\mathbf{62.50}$
& $61.71$ \\
\addlinespace[1pt]

HKS--WKS ($H_0$)
& $1$-parameter: HKS
& $52.93 \pm 7.94$
& $51.93 \pm 7.93$
& $62.50$
& $62.16$ \\
& $1$-parameter: WKS
& $80.27 \pm 5.77$
& $80.14 \pm 5.58$
& $67.86$
& $66.27$ \\
& Integrated
& $\mathbf{86.99 \pm 3.30}$
& $\mathbf{86.77 \pm 3.40}$
& $87.50$
& $87.72$ \\
& Uniform
& $86.12 \pm 2.49$
& $86.30 \pm 2.82$
& $\mathbf{91.07}$
& $\mathbf{90.38}$ \\
\addlinespace[1pt]

$q$NN--center ($H_0$)
& $1$-parameter: $q$NN
& $45.76 \pm 4.05$
& $44.26 \pm 4.54$
& $44.64$
& $44.87$ \\
& $1$-parameter: center
& $46.70 \pm 7.50$
& $46.31 \pm 6.52$
& $44.64$
& $44.21$ \\
& Integrated
& $64.59 \pm 3.43$
& $63.95 \pm 2.99$
& $\mathbf{60.71}$
& $\mathbf{59.98}$ \\
& Uniform
& $\mathbf{66.87 \pm 5.79}$
& $\mathbf{66.34 \pm 6.96}$
& $58.93$
& $58.74$ \\
\addlinespace[1pt]

WKS--center ($H_0$)
& $1$-parameter: WKS
& $\mathbf{80.27 \pm 5.77}$
& $\mathbf{80.14 \pm 5.58}$
& $67.86$
& $66.27$ \\
& $1$-parameter: center
& $46.70 \pm 7.50$
& $46.31 \pm 6.52$
& $44.64$
& $44.21$ \\
& Integrated
& $77.61 \pm 7.08$
& $76.52 \pm 7.81$
& $83.93$
& $84.35$ \\
& Uniform
& $78.48 \pm 5.81$
& $77.53 \pm 6.23$
& $\mathbf{89.29}$
& $\mathbf{88.95}$ \\
\addlinespace[1pt]

WKS--$q$NN ($H_0$)
& $1$-parameter: WKS
& $80.27 \pm 5.77$
& $80.14 \pm 5.58$
& $67.86$
& $66.27$ \\
& $1$-parameter: $q$NN
& $45.76 \pm 4.05$
& $44.26 \pm 4.54$
& $44.64$
& $44.87$ \\
& Integrated
& $\mathbf{84.73 \pm 3.40}$
& $\mathbf{83.86 \pm 3.07}$
& $\mathbf{83.93}$
& $\mathbf{83.54}$ \\
& Uniform
& $81.61 \pm 4.80$
& $80.59 \pm 4.40$
& $71.43$
& $71.71$ \\
\midrule
\multicolumn{6}{c}{\textit{Homological degree $1$}} \\
\addlinespace[1pt]

HKS--center ($H_1$)
& $1$-parameter: HKS
& $88.77 \pm 6.44$
& $88.95 \pm 6.42$
& $92.86$
& $92.59$ \\
& $1$-parameter: center
& $29.62 \pm 4.95$
& $24.73 \pm 5.11$
& $16.07$
& $11.88$ \\
& Integrated
& $\mathbf{89.70 \pm 5.05}$
& $\mathbf{89.69 \pm 4.94}$
& $92.86$
& $92.88$ \\
& Uniform
& $88.80 \pm 5.66$
& $88.98 \pm 5.72$
& $\mathbf{94.64}$
& $\mathbf{94.68}$ \\
\addlinespace[1pt]

HKS--$q$NN ($H_1$)
& $1$-parameter: HKS
& $88.77 \pm 6.44$
& $88.95 \pm 6.42$
& $\mathbf{92.86}$
& $92.59$ \\
& $1$-parameter: $q$NN
& $56.47 \pm 5.68$
& $54.80 \pm 5.80$
& $62.50$
& $61.89$ \\
& Integrated
& $89.70 \pm 4.80$
& $89.81 \pm 4.87$
& $91.07$
& $91.01$ \\
& Uniform
& $\mathbf{90.59 \pm 4.11}$
& $\mathbf{90.73 \pm 4.02}$
& $\mathbf{92.86}$
& $\mathbf{92.93}$ \\
\addlinespace[1pt]

HKS--WKS ($H_1$)
& $1$-parameter: HKS
& $88.77 \pm 6.44$
& $88.95 \pm 6.42$
& $\mathbf{92.86}$
& $92.59$ \\
& $1$-parameter: WKS
& $79.38 \pm 3.19$
& $78.70 \pm 2.83$
& $87.50$
& $87.24$ \\
& Integrated
& $\mathbf{93.72 \pm 3.61}$
& $\mathbf{93.63 \pm 3.71}$
& $\mathbf{92.86}$
& $\mathbf{93.09}$ \\
& Uniform
& $93.27 \pm 3.20$
& $93.15 \pm 3.26$
& $91.07$
& $90.57$ \\
\addlinespace[1pt]

$q$NN--center ($H_1$)
& $1$-parameter: $q$NN
& $56.47 \pm 5.68$
& $54.80 \pm 5.80$
& $62.50$
& $61.89$ \\
& $1$-parameter: center
& $29.62 \pm 4.95$
& $24.73 \pm 5.11$
& $16.07$
& $11.88$ \\
& Integrated
& $\mathbf{70.88 \pm 4.34}$
& $\mathbf{70.74 \pm 5.03}$
& $\mathbf{80.36}$
& $\mathbf{80.30}$ \\
& Uniform
& $68.62 \pm 3.07$
& $67.82 \pm 3.38$
& $73.21$
& $72.80$ \\
\addlinespace[1pt]

WKS--center ($H_1$)
& $1$-parameter: WKS
& $79.38 \pm 3.19$
& $78.70 \pm 2.83$
& $87.50$
& $87.24$ \\
& $1$-parameter: center
& $29.62 \pm 4.95$
& $24.73 \pm 5.11$
& $16.07$
& $11.88$ \\
& Integrated
& $\mathbf{84.32 \pm 4.82}$
& $\mathbf{83.03 \pm 6.30}$
& $\mathbf{91.07}$
& $\mathbf{90.58}$ \\
& Uniform
& $83.01 \pm 5.07$
& $81.76 \pm 6.61$
& $87.50$
& $86.80$ \\
\addlinespace[1pt]

WKS--$q$NN ($H_1$)
& $1$-parameter: WKS
& $79.38 \pm 3.19$
& $78.70 \pm 2.83$
& $87.50$
& $87.24$ \\
& $1$-parameter: $q$NN
& $56.47 \pm 5.68$
& $54.80 \pm 5.80$
& $62.50$
& $61.89$ \\
& Integrated
& $\mathbf{86.58 \pm 3.90}$
& $\mathbf{86.34 \pm 3.94}$
& $85.71$
& $85.76$ \\
& Uniform
& $85.71 \pm 6.15$
& $84.75 \pm 7.21$
& $\mathbf{91.07}$
& $\mathbf{91.03}$ \\
\bottomrule
\end{tabular}
\end{adjustbox}
\par\smallskip
\begin{minipage}{\textwidth}
\footnotesize
\textit{Notes.}
All values are percentages. Cross-validation entries are reported as mean $\pm$ standard deviation over five folds for the selected configurations. Boldface identifies the numerically largest displayed
value in each metric column within a four-row block. Repeated one-parameter rows refer to the same baseline experiment. Selected feature and SVM configurations are reported in Appendix~\ref{app:shrec15-selected-configurations}.
\end{minipage}
\end{table*}

\subsection{Medical image classification on MedMNIST}
\label{subsec:medmnist-classification}

To evaluate the landscape representations in a larger-scale image-classification setting, we use MedMNIST v2, an eighteen-dataset benchmark for two- and three-dimensional biomedical image classification \cite{medmnistv1,medmnistv2}. For each image or volume, we construct a $2$-parameter persistence module using the G--LoG bifiltration and evaluate the resulting uniform and integrated landscape features. For comparison using the same bifiltration, we include published results for the multiparameter persistence-image (MPI) representation introduced in \cite{CarriereBlumberg2020} and applied to the G--LoG bifiltration in \cite{wang2026glog}.

\subsubsection{Dataset and feature construction}

We used all twelve two-dimensional image datasets and all six three-dimensional volume datasets in MedMNIST v2. The collection includes binary, multiclass, and multilabel classification tasks. Each two-dimensional image has spatial resolution $28 \times 28$, and each three-dimensional volume has spatial resolution $28 \times 28 \times 28$. We used the official training,
validation, and test splits provided by MedMNIST. Representative samples are shown in Figure~\ref{fig:medmnist-overview}.

\begin{figure*}[!t]
    \centering
    \includegraphics[width=\textwidth]
    {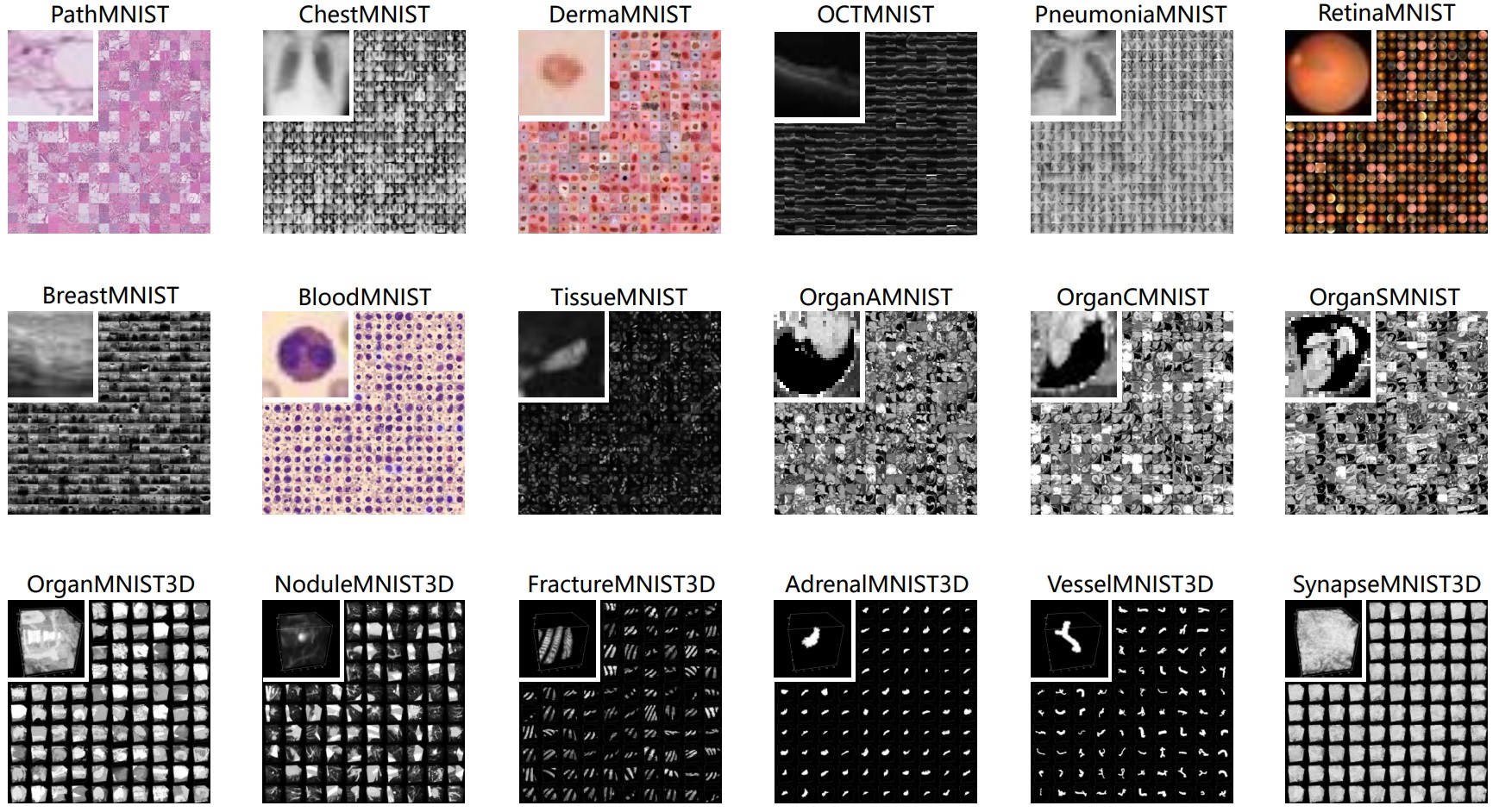}
    \caption{Representative samples from MedMNIST v2. The top two rows show samples from the twelve two-dimensional datasets, and the bottom row shows samples from the six three-dimensional datasets. Reproduced from \cite[Figure~1]{medmnistv2}, without modification other than scaling, under the
    \href{https://creativecommons.org/licenses/by/4.0/}
    {Creative Commons Attribution 4.0 International license}.}
    \label{fig:medmnist-overview}
\end{figure*}

We followed the G--LoG bifiltration for medical-image classification introduced in \cite{wang2026glog}. Let $I$ denote the resulting grayscale image or scalar-valued volume. The two filtration functions were
\[
    f_{\mathrm{G},\sigma}
    \coloneqq
    \mathcal{G}_{\sigma} * I,
    \qquad
    f_{\mathrm{LoG}}
    \coloneqq
    \Delta\bigl(\mathcal{G}_{1} * I\bigr),
\]
where $\mathcal{G}_{\sigma}$ denotes a Gaussian kernel with standard deviation $\sigma$ and $*$ denotes convolution. For the Gaussian-smoothed intensity, we considered $\sigma \in \{0,0.5,1,1.5\}$. When $\sigma=0$, no smoothing was applied. The Gaussian scale in the Laplacian-of-Gaussian coordinate was fixed at $1$, so the values of $\sigma$ reported below refer only to the Gaussian-smoothed intensity coordinate.

Both filtration coordinates were normalized using global extrema computed from the official training split. The resulting transformations were then applied to the validation and test splits. After normalization, each pixel or voxel was regarded as a top-dimensional cubical cell with grade $\bigl(f_{\mathrm{G},\sigma},f_{\mathrm{LoG}}\bigr)$. For $u \in \R^2$, we defined $K_u$ to be the cubical subcomplex generated by the top-dimensional cells whose grades are coordinatewise at most $u$. Applying homology produced a $2$-parameter persistence module.
For the two-dimensional datasets, we computed homology in degrees $0$ and $1$. For the three-dimensional datasets, we computed homology in degrees $0$, $1$, and $2$. Features from all degrees were included
in the feature vector for the corresponding image or volume.

For each resulting module $M_p$, we constructed uniform and integrated landscape features at finitely many parameter values and landscape layers. For each image or volume, the resulting values were concatenated over the parameter values, landscape layers, and homological degrees. The integrated landscape curves were sampled to form finite-dimensional feature vectors. The parameter grid,
direction set, curve-sampling resolution, and resulting feature dimensions are specified in
Appendix~\ref{app:medmnist-details}.

\subsubsection{Classification protocol}

We evaluated each landscape representation using the same multilayer-perceptron architecture and optimization settings. The official validation split was used for early stopping and selection of the training epoch, while the test split was used only for final evaluation.

Each combination of dataset, Gaussian scale, and landscape representation was trained five times using fixed random seeds. We report mean test AUC and accuracy in Table~\ref{tab:medmnist-results}, with the corresponding sample standard deviations in Table~\ref{tab:medmnist-landscape-variability}. The complete network architecture, training procedure, random seeds, and task-specific definitions of AUC and accuracy are given in Appendix~\ref{app:medmnist-details}.

\subsubsection{Results and discussion}

Panels~(a) and~(b) of Table~\ref{tab:medmnist-results} report the results for the twelve two-dimensional datasets, and panel~(c) reports the results for the six three-dimensional datasets. The published comparison values were obtained under the experimental protocols of their original sources, and comparable run-level variability was unavailable. We therefore restrict our discussion to the numerical patterns shown in the table and do not treat these comparisons as statistical evidence that one representation is generally more effective than another.

Our detailed discussion focuses on the G--LoG MPI representation and the two landscape representations because all three are constructed from the same G--LoG bifiltration. The convolutional-network and automated-machine-learning benchmarks use the original images or volumes, whereas Topo-Med uses a different topological representation. These benchmarks serve as broader reference points and are not discussed in detail. We report all four Gaussian scales rather than selecting one from the test results, and our discussion emphasizes patterns that persist across them.

In these experiments, the landscape representations often had higher mean AUC and accuracy than the published G--LoG MPI values on the two-dimensional datasets. This pattern held across all four Gaussian scales on several datasets, including OCTMNIST and OrganCMNIST. The improvements were not universal, however: PathMNIST consistently favored G--LoG MPI, while some other comparisons showed smaller differences or depended on the Gaussian scale.

In contrast, the published G--LoG MPI accuracy was generally higher on the three-dimensional datasets. This pattern held across all four Gaussian scales on OrganMNIST3D and FractureMNIST3D. Both landscape representations had higher mean AUC on VesselMNIST3D at every Gaussian scale, but the corresponding accuracy comparisons were mixed. Thus, the improvements observed on many two-dimensional datasets did not extend consistently to the three-dimensional datasets. Compared with most two-dimensional datasets, the three-dimensional datasets contain fewer training examples \cite{medmnistv2}, which may make learning from the landscape features more difficult. The present experiments do not separate this possible effect from differences in the data and classification tasks.

The uniform and integrated landscapes generally followed similar dataset-level patterns, but which one had the larger mean depended on the dataset and Gaussian scale and could differ between AUC and accuracy. Under the sampling scheme used here, the integrated feature vector was $50$ times longer than the uniform feature vector. Although both representations were evaluated using the same hidden-layer architecture and training settings, the larger integrated input produced more trainable parameters in the first network layer. The comparison therefore holds the architecture and training procedure fixed, but not the total number of trainable parameters.

\input{sections/medmnist_main_table}

%% file: sections/medmnist_main_table.tex
\begin{table*}[p]
\centering
\captionsetup{skip=4pt,font=footnotesize}
\caption{MedMNIST v2 classification results. Among the three G--LoG representations, boldface marks column maxima at each fixed $\sigma$. Underlining marks maxima across all four $\sigma$ values. Ties are included.}
\label{tab:medmnist-results}
\begingroup
\fontsize{8}{9}\selectfont
\setlength{\tabcolsep}{2.5pt}
\renewcommand{\arraystretch}{0.80}
\setlength{\arrayrulewidth}{0.3pt}
{\fontsize{9}{10}\selectfont\textbf{(a) First six two-dimensional datasets}\par}
\vspace{1pt}
\begin{adjustbox}{max width=\textwidth}
\begin{tabular}{@{}ll|*{5}{rr|}rr@{}}
\hline
\multicolumn{2}{l|}{\rule{0pt}{7.4pt}Method} & \multicolumn{2}{c|}{PathMNIST} & \multicolumn{2}{c|}{ChestMNIST} & \multicolumn{2}{c|}{DermaMNIST} & \multicolumn{2}{c|}{OCTMNIST} & \multicolumn{2}{c|}{PneumoniaMNIST} & \multicolumn{2}{c}{RetinaMNIST} \\
\cline{3-14}
\rule{0pt}{6.8pt} & $\sigma$ & AUC & ACC & AUC & ACC & AUC & ACC & AUC & ACC & AUC & ACC & AUC & ACC \\
\hline
\multicolumn{2}{l|}{\rule{0pt}{6.8pt}ResNet-18 (28)} & $0.983$ & $0.907$ & $0.768$ & $0.947$ & $0.917$ & $0.735$ & $0.943$ & $0.743$ & $0.944$ & $0.854$ & $0.717$ & $0.524$ \\
\multicolumn{2}{l|}{ResNet-18 (224)} & $0.989$ & $0.909$ & $0.773$ & $0.947$ & $0.920$ & $0.754$ & $0.958$ & $0.763$ & $0.956$ & $0.864$ & $0.710$ & $0.493$ \\
\multicolumn{2}{l|}{ResNet-50 (28)} & $0.990$ & $0.911$ & $0.769$ & $0.947$ & $0.913$ & $0.735$ & $0.952$ & $0.762$ & $0.948$ & $0.854$ & $0.726$ & $0.528$ \\
\multicolumn{2}{l|}{ResNet-50 (224)} & $0.989$ & $0.892$ & $0.773$ & $0.948$ & $0.912$ & $0.731$ & $0.958$ & $0.776$ & $0.962$ & $0.884$ & $0.716$ & $0.511$ \\
\multicolumn{2}{l|}{auto-sklearn} & $0.934$ & $0.716$ & $0.649$ & $0.779$ & $0.902$ & $0.719$ & $0.887$ & $0.601$ & $0.942$ & $0.855$ & $0.690$ & $0.515$ \\
\multicolumn{2}{l|}{AutoKeras} & $0.959$ & $0.834$ & $0.742$ & $0.937$ & $0.915$ & $0.749$ & $0.955$ & $0.763$ & $0.947$ & $0.878$ & $0.719$ & $0.503$ \\
\multicolumn{2}{l|}{Google AutoML Vision} & $0.944$ & $0.728$ & $0.778$ & $0.948$ & $0.914$ & $0.768$ & $0.963$ & $0.771$ & $0.991$ & $0.946$ & $0.750$ & $0.531$ \\
\multicolumn{2}{l|}{Topo-Med (MLP)} & $0.942$ & $0.683$ & $0.787$ & $0.530$ & $0.904$ & $0.669$ & $0.710$ & $0.450$ & $0.845$ & $0.762$ & $0.728$ & $0.458$ \\
\hline
\rule{0pt}{6.8pt}\raisebox{-1.2\normalbaselineskip}[0pt][0pt]{\shortstack[l]{G--LoG MPI\\(MLP)}} & $0$ & $\underline{\mathbf{0.955}}$ & $\underline{\mathbf{0.753}}$ & $0.606$ & $\underline{\mathbf{0.947}}$ & $0.809$ & $\mathbf{0.703}$ & $0.859$ & $0.522$ & $\mathbf{0.907}$ & $\mathbf{0.817}$ & $0.611$ & $\underline{\mathbf{0.518}}$ \\
 & $0.5$ & $\mathbf{0.954}$ & $\underline{\mathbf{0.753}}$ & $0.608$ & $\underline{\mathbf{0.947}}$ & $0.806$ & $\underline{\mathbf{0.708}}$ & $0.872$ & $0.567$ & $\underline{\mathbf{0.910}}$ & $\underline{\mathbf{0.825}}$ & $0.624$ & $\mathbf{0.500}$ \\
 & $1$ & $\mathbf{0.940}$ & $\mathbf{0.708}$ & $0.610$ & $\underline{\mathbf{0.947}}$ & $0.815$ & $0.700$ & $0.869$ & $0.548$ & $\mathbf{0.906}$ & $\underline{\mathbf{0.825}}$ & $\mathbf{0.643}$ & $\mathbf{0.498}$ \\
 & $1.5$ & $\mathbf{0.939}$ & $\mathbf{0.702}$ & $0.603$ & $\underline{\mathbf{0.947}}$ & $0.816$ & $0.698$ & $0.854$ & $0.529$ & $0.891$ & $0.795$ & $\mathbf{0.638}$ & $0.483$ \\
\hline
\rule{0pt}{6.8pt}\raisebox{-1.2\normalbaselineskip}[0pt][0pt]{\shortstack[l]{Uniform landscape\\(MLP)}} & $0$ & $0.947$ & $0.731$ & $\mathbf{0.665}$ & $\underline{\mathbf{0.947}}$ & $\underline{\mathbf{0.844}}$ & $0.701$ & $0.899$ & $0.626$ & $0.892$ & $0.777$ & $0.643$ & $0.510$ \\
 & $0.5$ & $0.941$ & $0.706$ & $\underline{\mathbf{0.666}}$ & $\underline{\mathbf{0.947}}$ & $\underline{\mathbf{0.844}}$ & $0.702$ & $0.908$ & $\underline{\mathbf{0.635}}$ & $0.895$ & $0.795$ & $0.631$ & $0.495$ \\
 & $1$ & $0.930$ & $0.682$ & $\mathbf{0.665}$ & $\underline{\mathbf{0.947}}$ & $\mathbf{0.840}$ & $\mathbf{0.703}$ & $0.908$ & $\mathbf{0.623}$ & $0.895$ & $0.813$ & $0.624$ & $0.495$ \\
 & $1.5$ & $0.925$ & $0.675$ & $\mathbf{0.661}$ & $\underline{\mathbf{0.947}}$ & $\mathbf{0.840}$ & $\mathbf{0.703}$ & $0.902$ & $0.606$ & $0.889$ & $0.797$ & $0.625$ & $\mathbf{0.488}$ \\
\hline
\rule{0pt}{6.8pt}\raisebox{-1.2\normalbaselineskip}[0pt][0pt]{\shortstack[l]{Integrated landscape\\(MLP)}} & $0$ & $0.947$ & $0.732$ & $0.655$ & $\underline{\mathbf{0.947}}$ & $0.841$ & $0.700$ & $\mathbf{0.902}$ & $\mathbf{0.628}$ & $0.899$ & $0.802$ & $\underline{\mathbf{0.645}}$ & $0.497$ \\
 & $0.5$ & $0.937$ & $0.710$ & $0.658$ & $\underline{\mathbf{0.947}}$ & $0.839$ & $0.698$ & $\mathbf{0.910}$ & $0.625$ & $0.902$ & $0.804$ & $\mathbf{0.634}$ & $0.495$ \\
 & $1$ & $0.927$ & $0.685$ & $0.655$ & $\underline{\mathbf{0.947}}$ & $0.835$ & $0.700$ & $\underline{\mathbf{0.912}}$ & $0.608$ & $\mathbf{0.906}$ & $0.805$ & $0.616$ & $0.489$ \\
 & $1.5$ & $0.925$ & $0.672$ & $0.652$ & $\underline{\mathbf{0.947}}$ & $0.838$ & $0.697$ & $\mathbf{0.905}$ & $\mathbf{0.620}$ & $\mathbf{0.898}$ & $\mathbf{0.814}$ & $0.621$ & $0.476$ \\
\hline
\end{tabular}
\end{adjustbox}
\par\vspace{2pt}
{\fontsize{9}{10}\selectfont\textbf{(b) Remaining six two-dimensional datasets}\par}
\vspace{1pt}
\begin{adjustbox}{max width=\textwidth}
\begin{tabular}{@{}ll|*{5}{rr|}rr@{}}
\hline
\multicolumn{2}{l|}{\rule{0pt}{7.4pt}Method} & \multicolumn{2}{c|}{BreastMNIST} & \multicolumn{2}{c|}{BloodMNIST} & \multicolumn{2}{c|}{TissueMNIST} & \multicolumn{2}{c|}{OrganAMNIST} & \multicolumn{2}{c|}{OrganCMNIST} & \multicolumn{2}{c}{OrganSMNIST} \\
\cline{3-14}
\rule{0pt}{6.8pt} & $\sigma$ & AUC & ACC & AUC & ACC & AUC & ACC & AUC & ACC & AUC & ACC & AUC & ACC \\
\hline
\multicolumn{2}{l|}{\rule{0pt}{6.8pt}ResNet-18 (28)} & $0.901$ & $0.863$ & $0.998$ & $0.958$ & $0.930$ & $0.676$ & $0.997$ & $0.935$ & $0.992$ & $0.900$ & $0.972$ & $0.782$ \\
\multicolumn{2}{l|}{ResNet-18 (224)} & $0.891$ & $0.833$ & $0.998$ & $0.963$ & $0.933$ & $0.681$ & $0.998$ & $0.951$ & $0.994$ & $0.920$ & $0.974$ & $0.778$ \\
\multicolumn{2}{l|}{ResNet-50 (28)} & $0.857$ & $0.812$ & $0.997$ & $0.956$ & $0.931$ & $0.680$ & $0.997$ & $0.935$ & $0.992$ & $0.905$ & $0.972$ & $0.770$ \\
\multicolumn{2}{l|}{ResNet-50 (224)} & $0.866$ & $0.842$ & $0.997$ & $0.950$ & $0.932$ & $0.680$ & $0.998$ & $0.947$ & $0.993$ & $0.911$ & $0.975$ & $0.785$ \\
\multicolumn{2}{l|}{auto-sklearn} & $0.836$ & $0.803$ & $0.984$ & $0.878$ & $0.828$ & $0.532$ & $0.963$ & $0.762$ & $0.976$ & $0.829$ & $0.945$ & $0.672$ \\
\multicolumn{2}{l|}{AutoKeras} & $0.871$ & $0.831$ & $0.998$ & $0.961$ & $0.941$ & $0.703$ & $0.994$ & $0.905$ & $0.990$ & $0.879$ & $0.974$ & $0.813$ \\
\multicolumn{2}{l|}{Google AutoML Vision} & $0.919$ & $0.861$ & $0.998$ & $0.966$ & $0.924$ & $0.673$ & $0.990$ & $0.886$ & $0.988$ & $0.877$ & $0.964$ & $0.749$ \\
\multicolumn{2}{l|}{Topo-Med (MLP)} & $0.821$ & $0.737$ & $0.973$ & $0.798$ & $0.837$ & $0.450$ & $0.921$ & $0.523$ & $0.894$ & $0.489$ & $0.910$ & $0.532$ \\
\hline
\rule{0pt}{6.8pt}\raisebox{-1.2\normalbaselineskip}[0pt][0pt]{\shortstack[l]{G--LoG MPI\\(MLP)}} & $0$ & $\mathbf{0.820}$ & $0.731$ & $0.977$ & $0.847$ & $0.849$ & $0.537$ & $0.934$ & $0.591$ & $0.935$ & $0.593$ & $0.927$ & $0.605$ \\
 & $0.5$ & $0.784$ & $\underline{\mathbf{0.814}}$ & $0.979$ & $0.844$ & $0.845$ & $0.533$ & $0.936$ & $0.584$ & $0.933$ & $0.601$ & $0.929$ & $0.588$ \\
 & $1$ & $0.752$ & $0.750$ & $0.977$ & $0.840$ & $0.840$ & $0.529$ & $0.931$ & $0.562$ & $0.931$ & $0.588$ & $0.918$ & $0.573$ \\
 & $1.5$ & $0.730$ & $0.763$ & $0.978$ & $0.846$ & $0.837$ & $0.528$ & $0.927$ & $0.559$ & $0.922$ & $0.565$ & $0.910$ & $0.565$ \\
\hline
\rule{0pt}{6.8pt}\raisebox{-1.2\normalbaselineskip}[0pt][0pt]{\shortstack[l]{Uniform landscape\\(MLP)}} & $0$ & $0.800$ & $\mathbf{0.796}$ & $\underline{\mathbf{0.990}}$ & $\underline{\mathbf{0.893}}$ & $\underline{\mathbf{0.871}}$ & $\underline{\mathbf{0.565}}$ & $\mathbf{0.955}$ & $\mathbf{0.648}$ & $\mathbf{0.952}$ & $\mathbf{0.645}$ & $\underline{\mathbf{0.945}}$ & $\mathbf{0.640}$ \\
 & $0.5$ & $\mathbf{0.832}$ & $0.783$ & $\mathbf{0.989}$ & $\mathbf{0.886}$ & $\mathbf{0.868}$ & $\mathbf{0.561}$ & $\underline{\mathbf{0.957}}$ & $\underline{\mathbf{0.649}}$ & $\underline{\mathbf{0.954}}$ & $\underline{\mathbf{0.668}}$ & $\underline{\mathbf{0.945}}$ & $\underline{\mathbf{0.646}}$ \\
 & $1$ & $\underline{\mathbf{0.851}}$ & $\mathbf{0.772}$ & $\mathbf{0.987}$ & $\mathbf{0.878}$ & $\mathbf{0.860}$ & $\mathbf{0.552}$ & $\mathbf{0.953}$ & $\mathbf{0.639}$ & $\mathbf{0.949}$ & $\mathbf{0.645}$ & $\mathbf{0.937}$ & $\mathbf{0.619}$ \\
 & $1.5$ & $\mathbf{0.837}$ & $\mathbf{0.782}$ & $\mathbf{0.988}$ & $\mathbf{0.879}$ & $\mathbf{0.857}$ & $\mathbf{0.546}$ & $\mathbf{0.950}$ & $\mathbf{0.630}$ & $\mathbf{0.947}$ & $\mathbf{0.632}$ & $\mathbf{0.934}$ & $\mathbf{0.611}$ \\
\hline
\rule{0pt}{6.8pt}\raisebox{-1.2\normalbaselineskip}[0pt][0pt]{\shortstack[l]{Integrated landscape\\(MLP)}} & $0$ & $0.801$ & $0.763$ & $0.989$ & $0.884$ & $0.867$ & $0.557$ & $0.952$ & $0.637$ & $0.951$ & $0.643$ & $0.940$ & $0.634$ \\
 & $0.5$ & $0.821$ & $0.768$ & $0.987$ & $0.880$ & $0.864$ & $0.555$ & $0.950$ & $0.637$ & $0.951$ & $0.650$ & $0.940$ & $0.625$ \\
 & $1$ & $0.839$ & $0.762$ & $0.986$ & $0.872$ & $0.856$ & $0.544$ & $0.948$ & $0.627$ & $0.944$ & $0.628$ & $0.932$ & $0.596$ \\
 & $1.5$ & $0.828$ & $0.754$ & $0.985$ & $0.867$ & $0.853$ & $0.540$ & $0.946$ & $0.612$ & $0.944$ & $\mathbf{0.632}$ & $0.929$ & $0.601$ \\
\hline
\end{tabular}
\end{adjustbox}
\par\vspace{2pt}
{\fontsize{9}{10}\selectfont\textbf{(c) Six three-dimensional datasets}\par}
\vspace{1pt}
\begin{adjustbox}{max width=\textwidth}
\begin{tabular}{@{}ll|*{5}{rr|}rr@{}}
\hline
\multicolumn{2}{l|}{\rule{0pt}{7.4pt}Method} & \multicolumn{2}{c|}{\shortstack{OrganMNIST\\3D}} & \multicolumn{2}{c|}{\shortstack{NoduleMNIST\\3D}} & \multicolumn{2}{c|}{\shortstack{FractureMNIST\\3D}} & \multicolumn{2}{c|}{\shortstack{AdrenalMNIST\\3D}} & \multicolumn{2}{c|}{\shortstack{VesselMNIST\\3D}} & \multicolumn{2}{c}{\shortstack{SynapseMNIST\\3D}} \\
\cline{3-14}
\rule{0pt}{6.8pt} & $\sigma$ & AUC & ACC & AUC & ACC & AUC & ACC & AUC & ACC & AUC & ACC & AUC & ACC \\
\hline
\multicolumn{2}{l|}{\rule{0pt}{6.8pt}ResNet-18+2.5D} & $0.977$ & $0.788$ & $0.838$ & $0.835$ & $0.587$ & $0.451$ & $0.718$ & $0.772$ & $0.748$ & $0.846$ & $0.634$ & $0.696$ \\
\multicolumn{2}{l|}{ResNet-18+3D} & $0.996$ & $0.907$ & $0.863$ & $0.844$ & $0.712$ & $0.508$ & $0.827$ & $0.721$ & $0.874$ & $0.877$ & $0.820$ & $0.745$ \\
\multicolumn{2}{l|}{ResNet-18+ACS} & $0.994$ & $0.900$ & $0.873$ & $0.847$ & $0.714$ & $0.497$ & $0.839$ & $0.754$ & $0.930$ & $0.928$ & $0.705$ & $0.722$ \\
\multicolumn{2}{l|}{ResNet-50+2.5D} & $0.974$ & $0.769$ & $0.835$ & $0.848$ & $0.552$ & $0.397$ & $0.732$ & $0.763$ & $0.751$ & $0.877$ & $0.669$ & $0.735$ \\
\multicolumn{2}{l|}{ResNet-50+3D} & $0.994$ & $0.883$ & $0.875$ & $0.847$ & $0.725$ & $0.494$ & $0.828$ & $0.745$ & $0.907$ & $0.918$ & $0.851$ & $0.795$ \\
\multicolumn{2}{l|}{ResNet-50+ACS} & $0.994$ & $0.889$ & $0.886$ & $0.841$ & $0.750$ & $0.517$ & $0.828$ & $0.758$ & $0.912$ & $0.858$ & $0.719$ & $0.709$ \\
\multicolumn{2}{l|}{auto-sklearn} & $0.977$ & $0.814$ & $0.914$ & $0.874$ & $0.628$ & $0.453$ & $0.828$ & $0.802$ & $0.910$ & $0.915$ & $0.631$ & $0.730$ \\
\multicolumn{2}{l|}{AutoKeras} & $0.979$ & $0.804$ & $0.844$ & $0.834$ & $0.642$ & $0.458$ & $0.804$ & $0.705$ & $0.773$ & $0.894$ & $0.538$ & $0.724$ \\
\multicolumn{2}{l|}{Topo-Med (MLP)} & $0.837$ & $0.554$ & $0.808$ & $0.736$ & $0.653$ & $0.480$ & $0.864$ & $0.769$ & $0.920$ & $0.887$ & $0.730$ & $0.730$ \\
\hline
\rule{0pt}{6.8pt}\raisebox{-1.2\normalbaselineskip}[0pt][0pt]{\shortstack[l]{G--LoG MPI\\(MLP)}} & $0$ & $\mathbf{0.958}$ & $\mathbf{0.684}$ & $0.564$ & $0.774$ & $\mathbf{0.697}$ & $\mathbf{0.554}$ & $0.742$ & $0.789$ & $0.813$ & $0.887$ & $0.783$ & $0.796$ \\
 & $0.5$ & $\underline{\mathbf{0.961}}$ & $\underline{\mathbf{0.702}}$ & $0.789$ & $\underline{\mathbf{0.852}}$ & $\mathbf{0.704}$ & $\mathbf{0.579}$ & $\underline{\mathbf{0.870}}$ & $\underline{\mathbf{0.847}}$ & $0.877$ & $0.898$ & $\underline{\mathbf{0.810}}$ & $\underline{\mathbf{0.827}}$ \\
 & $1$ & $0.953$ & $\mathbf{0.656}$ & $\underline{\mathbf{0.838}}$ & $\underline{\mathbf{0.852}}$ & $\mathbf{0.753}$ & $\mathbf{0.579}$ & $\mathbf{0.867}$ & $\mathbf{0.829}$ & $0.915$ & $0.908$ & $\mathbf{0.805}$ & $\mathbf{0.813}$ \\
 & $1.5$ & $0.941$ & $\mathbf{0.634}$ & $\mathbf{0.803}$ & $\mathbf{0.848}$ & $\underline{\mathbf{0.774}}$ & $\underline{\mathbf{0.588}}$ & $0.850$ & $0.829$ & $0.933$ & $\underline{\mathbf{0.937}}$ & $0.752$ & $\mathbf{0.776}$ \\
\hline
\rule{0pt}{6.8pt}\raisebox{-1.2\normalbaselineskip}[0pt][0pt]{\shortstack[l]{Uniform landscape\\(MLP)}} & $0$ & $0.956$ & $0.651$ & $\mathbf{0.815}$ & $\mathbf{0.806}$ & $0.683$ & $0.506$ & $\mathbf{0.833}$ & $\mathbf{0.805}$ & $0.892$ & $\mathbf{0.905}$ & $\mathbf{0.800}$ & $\mathbf{0.799}$ \\
 & $0.5$ & $0.960$ & $0.656$ & $\mathbf{0.802}$ & $0.831$ & $0.679$ & $0.528$ & $0.828$ & $0.798$ & $\mathbf{0.913}$ & $\mathbf{0.903}$ & $0.751$ & $0.756$ \\
 & $1$ & $\mathbf{0.957}$ & $0.636$ & $0.808$ & $0.838$ & $0.714$ & $0.551$ & $0.866$ & $0.824$ & $\mathbf{0.924}$ & $0.911$ & $0.780$ & $0.769$ \\
 & $1.5$ & $\mathbf{0.954}$ & $0.621$ & $0.790$ & $0.822$ & $0.729$ & $0.574$ & $\mathbf{0.864}$ & $0.830$ & $0.941$ & $0.914$ & $\mathbf{0.754}$ & $0.755$ \\
\hline
\rule{0pt}{6.8pt}\raisebox{-1.2\normalbaselineskip}[0pt][0pt]{\shortstack[l]{Integrated landscape\\(MLP)}} & $0$ & $0.956$ & $0.652$ & $0.783$ & $0.774$ & $0.670$ & $0.481$ & $0.818$ & $0.791$ & $\mathbf{0.895}$ & $0.883$ & $0.774$ & $0.781$ \\
 & $0.5$ & $0.960$ & $0.657$ & $0.801$ & $0.830$ & $0.684$ & $0.478$ & $0.839$ & $0.795$ & $0.906$ & $0.897$ & $0.741$ & $0.772$ \\
 & $1$ & $0.956$ & $0.649$ & $0.828$ & $0.832$ & $0.712$ & $0.541$ & $0.864$ & $0.815$ & $0.922$ & $\mathbf{0.918}$ & $0.782$ & $0.750$ \\
 & $1.5$ & $0.953$ & $0.620$ & $0.789$ & $0.824$ & $0.697$ & $0.508$ & $0.863$ & $\mathbf{0.840}$ & $\underline{\mathbf{0.943}}$ & $0.925$ & $0.751$ & $0.724$ \\
\hline
\end{tabular}
\end{adjustbox}
\par\vspace{2pt}
\begin{minipage}{\textwidth}
\fontsize{8}{9}\selectfont
\textit{Notes.} AUC and ACC denote test-set area under the ROC curve and accuracy. Landscape values are means over five runs, with sample standard deviations reported in Table~\ref{tab:medmnist-landscape-variability}. Published comparison values are reproduced as reported in \cite{wang2026glog} (G--LoG MPI), \cite{NuwagiraEtAl2025TopoMed} (Topo-Med), and \cite{medmnistv2} (other benchmarks). Comparable run-level variability was unavailable for these published results.
\end{minipage}
\endgroup
\end{table*}

%% file: sections/appendix.tex
\section{Categorical Background for Generalized Rank}
\label{app:categorical-background}

The generalized rank over a connected indexing subposet is defined as the rank of the canonical map from a limit to a colimit. We recall the categorical constructions needed for this map and prove the two
properties used in the main text.

\begin{definition}[Cone and Cocone]
Let $I$ be a small category and let $F\colon I\to\mathcal{C}$ be a functor. A \emph{cone} over $F$ consists of an object $L$ of $\mathcal{C}$ together with a morphism $\pi_x\colon L\to F(x)$ for each object $x$ of $I$. These morphisms satisfy $\pi_y=F(g)\circ\pi_x$ for every morphism $g\colon x\to y$ in $I$. We denote this cone by $\bigl(L,(\pi_x)_{x\in\operatorname{Ob}(I)}\bigr)$.

A \emph{cocone} over $F$ consists of an object $C$ of $\mathcal{C}$ together with a morphism $\iota_x\colon F(x)\to C$ for each object $x$ of $I$. These morphisms satisfy $\iota_x=\iota_y\circ F(g)$ for every morphism $g\colon x\to y$ in $I$. We denote this cocone by $\bigl(C,(\iota_x)_{x\in\operatorname{Ob}(I)}\bigr)$.

These conditions are expressed by the following commutative diagrams.
\[
\begin{tikzcd}[
    column sep=large,
    row sep=large,
    baseline=(current bounding box.center)
]
    & L \arrow[dl, "\pi_x"'] \arrow[dr, "\pi_y"] & \\
    F(x) \arrow[rr, "F(g)"'] && F(y)
\end{tikzcd}
\qquad\qquad
\begin{tikzcd}[
    column sep=large,
    row sep=large,
    baseline=(current bounding box.center)
]
    F(x)
    \arrow[rr, "F(g)"]
    \arrow[dr, "\iota_x"']
    &&
    F(y)
    \arrow[dl, "\iota_y"]
    \\
    & C &
\end{tikzcd}
\]
The objects $L$ and $C$ are called the \emph{apices} of the cone and cocone, respectively.
\end{definition}

\begin{definition}[Limit and Colimit]
Let $I$ be a small category and let $F\colon I\to\mathcal{C}$ be a functor. A \emph{limit} of $F$ is a cone $\bigl(L,(\pi_x)_{x\in\operatorname{Ob}(I)}\bigr)$ that is terminal among all cones over $F$. Explicitly, for every cone $\bigl(L',(\pi_x')_{x\in\operatorname{Ob}(I)}\bigr)$ over $F$, there exists a unique morphism $u\colon L'\to L$ such that $\pi_x'=\pi_x\circ u$ for every $x\in\operatorname{Ob}(I)$. We denote the limit cone by $\bigl(\varprojlim F,(\pi_x)_{x\in\operatorname{Ob}(I)}\bigr)$ and its apex simply by $\varprojlim F$.

A \emph{colimit} of $F$ is a cocone $\bigl(C,(\iota_x)_{x\in\operatorname{Ob}(I)}\bigr)$ that is initial among all cocones over $F$. Explicitly, for every cocone $\bigl(C',(\iota_x')_{x\in\operatorname{Ob}(I)}\bigr)$ over $F$, there exists a unique morphism $u\colon C\to C'$ such that $\iota_x'=u\circ\iota_x$ for every $x\in\operatorname{Ob}(I)$. We denote the colimit cocone by $\bigl(\varinjlim F,(\iota_x)_{x\in\operatorname{Ob}(I)}\bigr)$ and its apex simply by $\varinjlim F$.

These universal properties are illustrated by the following commutative triangles, one for each $x\in\operatorname{Ob}(I)$.
\[
\begin{tikzcd}[
    column sep=large,
    row sep=large,
    baseline=(current bounding box.center)
]
    L'
    \arrow[r, "u", dashed]
    \arrow[dr, "\pi_x'"']
    &
    L
    \arrow[d, "\pi_x"]
    \\
    &
    F(x)
\end{tikzcd}
\qquad\qquad
\begin{tikzcd}[
    column sep=large,
    row sep=large,
    baseline=(current bounding box.center)
]
    F(x)
    \arrow[r, "\iota_x"]
    \arrow[dr, "\iota_x'"']
    &
    C
    \arrow[d, "u", dashed]
    \\
    &
    C'
\end{tikzcd}
\]
In each diagram, the dashed arrow $u$ is unique.
\end{definition}

Suppose that $I$ is nonempty and connected and that $F$ has a limit and a colimit. For each $x\in\operatorname{Ob}(I)$, consider the composite
\[
    \varprojlim F
    \xrightarrow{\pi_x}
    F(x)
    \xrightarrow{\iota_x}
    \varinjlim F.
\]
If $g\colon x\to y$ is a morphism in $I$, the cone and cocone identities give
\[
    \iota_x\circ\pi_x
    =
    \iota_y\circ F(g)\circ\pi_x
    =
    \iota_y\circ\pi_y.
\]
Thus, these composites agree along every morphism in $I$. Since $I$ is connected, any two objects are joined by a zigzag of morphisms, so the composite is independent of $x$. This common composite is the canonical limit-to-colimit morphism associated with $F$ \cite{KimMemoli2021JACT}. We denote it by
\[
    \psi_F
    \colon
    \varprojlim F
    \longrightarrow
    \varinjlim F.
\]

When an interval has a minimum and a maximum, its generalized rank reduces to the rank of the structure map between these two endpoints.

\begin{lemma}
\label{lem:generalized-rank-extrema}
Let $P$ be a poset, let $M \in \Vect^P$, and let $I \subseteq P$ be an interval with minimum $a$ and maximum $b$. Then
\[
    \rk_M^{\mathrm{gen}}(I)
    =
    \rk_M(a,b).
\]
\end{lemma}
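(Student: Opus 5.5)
The plan is to build an explicit cone and an explicit cocone from the endpoints $a$ and $b$, and show they are the limit and colimit of $M|_I$. First, since $a$ is the minimum of $I$, there is a morphism $a \to x$ for every $x \in I$. I will show that $\bigl(M(a), (M_a^x)_{x \in I}\bigr)$ is a limit of $M|_I$. Functoriality gives $M_x^y \circ M_a^x = M_a^y$ for $x \preceq y$ in $I$, so this is a cone.

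For the universal property, take any cone $\bigl(L', (\pi'_x)_{x\in I}\bigr)$ and set $u \coloneqq \pi'_a$. Then the cone condition applied to the morphism $a \to x$ gives $\pi'_x = M_a^x \circ \pi'_a = M_a^x \circ u$, so $u$ factors the cone. It is unique, because any factorization $u'$ must satisfy $u' = M_a^a \circ u' = \pi'_a$. In other words, $a$ is an initial object of $I$, and evaluation at the initial object computes the limit.

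Dually, $b$ is terminal in $I$. The same argument, with arrows reversed, shows that $\bigl(M(b), (M_x^b)_{x\in I}\bigr)$ is a colimit of $M|_I$, with the unique mediating map to a cocone $\bigl(C', (\iota'_x)\bigr)$ given by $\iota'_b$.

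Next I identify the canonical map. The paper defines $\psi_{M|_I}$ as the common composite $\iota_x \circ \pi_x$, which is independent of $x$ since an interval is connected. Evaluating at $x = a$, and using the explicit limit and colimit above, gives $\psi_{M|_I} = M_a^b \circ \mathrm{id}_{M(a)} = M_a^b$.

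One point needs care: limits and colimits are only defined up to unique isomorphism, so I must check that the rank is well defined. Choosing a different limit or colimit changes $\psi$ by pre- and post-composition with isomorphisms. These are the comparison maps, and they are compatible with the projections and injections, so the rank is unchanged. Hence $\rk_M^{\mathrm{gen}}(I) = \operatorname{rank}(M_a^b) = \rk_M(a,b)$, and this also holds under the extended-rank convention when the image is infinite-dimensional. I do not expect a real obstacle; the only delicate step is this well-definedness bookkeeping with the canonical isomorphisms.
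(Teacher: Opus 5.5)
Your proof is correct and takes essentially the same approach as the paper: $M(a)$ with the maps $M_a^x$ is a limit because $a$ is initial in $I$, $M(b)$ with the maps $M_x^b$ is a colimit because $b$ is terminal, and evaluating the canonical map at $x=a$ gives $M_a^b$. Your extra check that the rank does not depend on the choice of limit and colimit is sound bookkeeping that the paper leaves implicit: changing the choice only composes $\psi$ with the comparison isomorphisms.
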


\begin{proof}
Since $a$ is the minimum of $I$, the object $M(a)$ with structure maps $M_a^x\colon M(a)\to M(x)$ forms a cone over $M|_I$. If $\bigl(L',(\pi_x')_{x\in I}\bigr)$ is any other cone, cone compatibility gives $\pi_x'=M_a^x\circ\pi_a'$ for every $x\in I$. Thus, $\pi_a'\colon L'\to M(a)$ is the required factorization. It is unique because $M_a^a$ is the identity map. Therefore, $\varprojlim_{x\in I}M(x)\cong M(a)$.

Similarly, since $b$ is the maximum of $I$, the object $M(b)$ with structure maps $M_x^b\colon M(x)\to M(b)$ is a colimit of $M|_I$. Hence, $\varinjlim_{x\in I}M(x)\cong M(b)$. Under these identifications, the canonical map from the limit to the colimit is $M_a^b$. Therefore,
\[
    \rk_M^{\mathrm{gen}}(I)
    =
    \operatorname{rank}(M_a^b)
    =
    \rk_M(a,b).
\]
\end{proof}

The next property shows that enlarging a connected indexing subposet cannot increase the generalized rank. This order-reversing property was proved for locally finite posets in \cite[Proposition~3.8]{KimMemoli2021JACT}. The argument below uses only the universal properties of limits and colimits and therefore also applies to the connected subposets of $\R^n$ considered here.

\begin{proposition}[Monotonicity of generalized rank]
\label{prop:generalized-rank-monotonicity}
Let $M \in \Vect^{\R^n}$, and let $P \subseteq P'$ be nonempty connected subposets of $\R^n$. Then
\[
    \rk_M^{\mathrm{gen}}(P')
    \le
    \rk_M^{\mathrm{gen}}(P).
\]
\end{proposition}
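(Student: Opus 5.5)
The plan is to construct a map between the limits and a map between the colimits, compatible with the canonical limit-to-colimit maps, and then factor $\psi_{M|_{P'}}$ through $\psi_{M|_P}$.

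\textbf{Step 1 (limits).} Restricting the limit cone $\bigl(\varprojlim_{P'} M,(\pi'_x)_{x\in P'}\bigr)$ to the indices $x\in P$ gives a cone over $M|_P$. The cone identities hold because every morphism of $P$ is a morphism of $P'$. By the universal property of $\varprojlim_P M$, there is a unique $u\colon \varprojlim_{P'}M\to\varprojlim_P M$ with $\pi_x\circ u=\pi'_x$ for all $x\in P$.

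\textbf{Step 2 (colimits).} Dually, restricting the colimit cocone of $M|_{P'}$ to $P$ gives a cocone over $M|_P$. This yields a unique $w\colon\varinjlim_P M\to\varinjlim_{P'}M$ with $w\circ\iota_x=\iota'_x$ for $x\in P$.

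\textbf{Step 3 (factorization).} Fix any $x\in P$, which is possible since $P\neq\varnothing$. Using that both $P$ and $P'$ are connected, the canonical maps are independent of the chosen index, so
\[
    \psi_{M|_{P'}}
    =\iota'_x\circ\pi'_x
    =w\circ\iota_x\circ\pi_x\circ u
    =w\circ\psi_{M|_P}\circ u .
\]
The rank of a composite is at most the rank of any factor, so $\operatorname{rank}\psi_{M|_{P'}}\le\operatorname{rank}\psi_{M|_P}$. Under the extended-rank convention this is the image-dimension inequality $\dim\operatorname{Im}(w\psi u)\le\dim\operatorname{Im}\psi$, and it holds in $\overline{\Z}_{\ge0}$ as well.

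\textbf{Main obstacle.} The only delicate point is existence: $P$ and $P'$ may be infinite subposets of $\R^n$, so we must know the limits and colimits exist. In $\Vect$ all small limits and colimits exist. The limit is the subspace of compatible families in $\prod_x M(x)$, and the colimit is the quotient of $\bigoplus_x M(x)$ by the compatibility relations. We state this explicitly, since it replaces the local finiteness hypothesis used in the cited reference. No pointwise finite-dimensionality is needed, because the rank inequality for composites holds for arbitrary linear maps.
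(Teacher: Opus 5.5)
Your proposal is correct and follows the paper's proof essentially verbatim: the restriction map between limits and the induced map between colimits give the factorization $\psi_{M|_{P'}}=w\circ\psi_{M|_P}\circ u$, and the rank of a composite is at most the rank of its middle factor. Your remark that all small limits and colimits exist in $\Vect$, so that neither local finiteness nor pointwise finite-dimensionality is needed, states explicitly what the paper leaves implicit when it says the argument uses only universal properties.
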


\begin{proof}
The inclusion $P\subseteq P'$ induces a restriction map
\[
    \rho\colon
    \varprojlim_{q\in P'}M(q)
    \longrightarrow
    \varprojlim_{p\in P}M(p)
\]
and a canonical map
\[
    \eta\colon
    \varinjlim_{p\in P}M(p)
    \longrightarrow
    \varinjlim_{q\in P'}M(q).
\]
Fix $p\in P$, and denote the corresponding limit projections by $\pi_p^P$ and $\pi_p^{P'}$ and the corresponding colimit maps by $\iota_p^P$ and $\iota_p^{P'}$. By the definitions of $\rho$ and $\eta$,
\[
    \pi_p^P\circ\rho=\pi_p^{P'},
    \qquad
    \eta\circ\iota_p^P=\iota_p^{P'}.
\]
Since $P$ and $P'$ are connected, their canonical limit-to-colimit maps satisfy
\[
    \psi_{M|_P}
    =
    \iota_p^P\circ\pi_p^P,
    \qquad
    \psi_{M|_{P'}}
    =
    \iota_p^{P'}\circ\pi_p^{P'}.
\]
Consequently,
\[
    \psi_{M|_{P'}}
    =
    \eta\circ\psi_{M|_P}\circ\rho.
\]
Thus, $\psi_{M|_{P'}}$ factors through $\psi_{M|_P}$. Since composition of linear maps cannot increase rank,
\[
    \operatorname{rank}\bigl(\psi_{M|_{P'}}\bigr)
    \le
    \operatorname{rank}\bigl(\psi_{M|_P}\bigr),
\]
which proves the result.
\end{proof}

\section{Measurability of Intervals in \texorpdfstring{$\R^n$}{R\textasciicircum n}}
\label{app:measurability-of-intervals}

We give an alternative proof of Proposition~\ref{prop:interval-measurability}. The argument first establishes the measurability of arbitrary lower and upper sets in the product order.

\begin{proof}[Alternative proof of Proposition~\ref{prop:interval-measurability}]
    Recall that $D \subseteq \R^n$ is a lower set if $y \in D$ and $x \preceq y$ imply $x \in D$. Set $\mathbf{u} \coloneqq \frac{1}{\sqrt{n}}(1, \ldots, 1)$. We first show that $\partial D$ intersects every line parallel to $\mathbf{u}$ in at most one point. Suppose, to the contrary, that $p, q \in \partial D$ satisfy $q = p + r\mathbf{u}$ for some $r > 0$. Since $q \in \overline{D}$, we may choose $y \in D$ such that $\lVert y - q\rVert_\infty < r/(2\sqrt{n})$. Then $y_i > p_i$ for every $i$. Set $\delta \coloneqq \min_i(y_i - p_i) > 0$. If $\lVert x - p\rVert_\infty < \delta$, then $x \preceq y$, and hence $x \in D$. Thus, $p \in \operatorname{int}(D)$, contradicting $p \in \partial D$.

    For $z \in \mathbf{u}^{\perp}$, let
    \[
        B_z
        \coloneqq
        \left\{
            s \in \R :
            z + s\mathbf{u} \in \partial D
        \right\}.
    \]
    Each $B_z$ contains at most one point and therefore has one-dimensional Lebesgue measure zero. The map $\Phi \colon \mathbf{u}^{\perp} \times \R \longrightarrow \R^n$ defined by $\Phi(z, s) = z + s\mathbf{u}$ is a linear isometry. Since $\partial D$ is closed, $\Phi^{-1}(\partial D)$ is measurable. Tonelli's theorem therefore gives
    \[
        \mathcal{L}^n(\partial D)
        =
        \int_{\mathbf{u}^{\perp}}
        \mathcal{L}^1(B_z)\,
        d\mathcal{L}^{n-1}(z)
        =
        0.
    \]
    Since $\operatorname{int}(D)$ is open and $D \setminus \operatorname{int}(D) \subseteq \partial D$, the completeness of Lebesgue measure implies that $D$ is Lebesgue-measurable. Applying this conclusion after reflection through the origin shows that every upper set is also Lebesgue-measurable.

    Now let $I \subseteq \R^n$ be an interval and define
    \[
        I^\downarrow
        \coloneqq
        \{x \in \R^n : x \preceq y \text{ for some } y \in I\},
        \qquad
        I^\uparrow
        \coloneqq
        \{x \in \R^n : y \preceq x \text{ for some } y \in I\}.
    \]
    Clearly, $I \subseteq I^\downarrow \cap I^\uparrow$. Conversely, if $x \in I^\downarrow \cap I^\uparrow$, then there exist $y, z \in I$ such that $z \preceq x \preceq y$. The first condition in the definition of an interval implies that $x \in I$. Hence, $I = I^\downarrow \cap I^\uparrow$. Since $I^\downarrow$ is a lower set and $I^\uparrow$ is an upper set, both are Lebesgue-measurable, and therefore so is $I$.
\end{proof}

\section{Further Properties of the Integrated Landscape}
\label{app:integrated-landscape-lipschitz}

We prove part~(4) of Theorem~\ref{thm:integrated-landscape-properties}. The argument first relates the landscape intervals at two comparable parameter values and then specializes to parameter values that differ in one coordinate.

\begin{lemma}
\label{lem:landscape-interval-translation}
    Let $M \in \Vect^{\R^n}$, $k \in \N_{>0}$, and $a, b \in \R^n$ satisfy $a \preceq b$, and set $\delta \coloneqq b - a$. If $h \in \I_M^k(a)$ and $\delta \preceq h$, then $h - \delta \in \I_M^k(b)$. Likewise, if $h \in \I_M^k(b)$ and $\delta \preceq h$, then $h - \delta \in \I_M^k(a)$.
\end{lemma}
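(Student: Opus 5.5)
The plan is to show that the two rectangles involved are nested, and then apply the same factorization-through-functoriality argument used in the proof of Proposition~\ref{prop:landscape-interval-structure}.

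For the first claim, take $h \in \I_M^k(a)$ with $\delta \preceq h$, and set $h' \coloneqq h - \delta \in \R_{\ge 0}^n$. First compare the endpoints. For the lower endpoint,
\[
    b - h' = b - h + \delta = a - h + 2\delta \succeq a - h,
\]
since $\delta \succeq \mathbf{0}$. For the upper endpoint,
\[
    b + h' = a + h.
\]
Next, $h' \succeq \mathbf{0}$ gives $b - h' \preceq b + h'$. This yields the chain
\[
    a - h \preceq b - h' \preceq b + h' = a + h.
\]

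By functoriality, $M_{a-h}^{a+h}$ factors as $M_{b-h'}^{a+h} \circ M_{a-h}^{b-h'}$. Hence
\[
    \rk_M(b - h', b + h') \ge \rk_M(a - h, a + h) \ge k,
\]
so $h' \in \I_M^k(b)$.

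The second claim follows the same pattern. Take $h \in \I_M^k(b)$ with $\delta \preceq h$, and set $h' \coloneqq h - \delta$. This time the lower endpoints coincide:
\[
    a - h' = a - h + \delta = b - h.
\]
For the upper endpoint,
\[
    a + h' = a + h - \delta = b + h - 2\delta \preceq b + h.
\]
So we get the chain $b - h = a - h' \preceq a + h' \preceq b + h$. Writing $M_{b-h}^{b+h} = M_{a+h'}^{b+h} \circ M_{a-h'}^{a+h'}$ then gives $\rk_M(a-h', a+h') \ge k$, that is, $h' \in \I_M^k(a)$.

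The only step that needs any care is checking these coordinatewise inequalities, in particular that $h' \succeq \mathbf{0}$ ensures the middle pair of endpoints is comparable. There is no real obstacle. The extended-rank convention causes no trouble either, because the inequality "rank of a composite is at most the rank of each factor" also holds when the ranks are infinite.
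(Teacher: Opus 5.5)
Your proposal is correct and is essentially the paper's proof. It inserts the same intermediate points $b-(h-\delta)=2b-a-h$ and $a+(h-\delta)=2a-b+h$, and it gets the rank bounds from the same two-factor functorial decompositions of $M_{a-h}^{a+h}$ and $M_{b-h}^{b+h}$.
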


\begin{proof}
    Suppose first that $h \in \I_M^k(a)$ and $\delta \preceq h$. Then
    \[
        a - h
        \preceq
        2b - a - h
        \preceq
        a + h.
    \]
    By functoriality,
    \[
        M_{a - h}^{a + h}
        =
        M_{2b - a - h}^{a + h}
        \circ
        M_{a - h}^{2b - a - h},
    \]
    and therefore $\rk_M(2b - a - h, a + h) \ge \rk_M(a - h, a + h) \ge k$. Since $2b - a - h = b - (h - \delta)$ and $a + h = b + (h - \delta)$, it follows that $h - \delta \in \I_M^k(b)$.

    Now suppose that $h \in \I_M^k(b)$ and $\delta \preceq h$. Then
    \[
        b - h
        \preceq
        2a - b + h
        \preceq
        b + h.
    \]
    By functoriality,
    \[
        M_{b - h}^{b + h}
        =
        M_{2a - b + h}^{b + h}
        \circ
        M_{b - h}^{2a - b + h},
    \]
    so $\rk_M(b - h, 2a - b + h) \ge \rk_M(b - h, b + h) \ge k$. Since $b - h = a - (h - \delta)$ and $2a - b + h = a + (h - \delta)$, we obtain $h - \delta \in \I_M^k(a)$.
\end{proof}

\begin{lemma}
\label{lem:one-coordinate}
    Let $M \in \Vect^{\R^n}$. Suppose that $a, b \in \R^n$ differ in at most one coordinate. Then, for every $r \in \R_{\ge 0}$ and $k \in \N_{>0}$,
    \[
        \left|
            \IL_M(r, k, a)
            -
            \IL_M(r, k, b)
        \right|
        \le
        \lVert a - b \rVert_\infty.
    \]
\end{lemma}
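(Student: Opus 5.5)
Assume without loss of generality that $a \preceq b$; interchanging the roles of $a$ and $b$ is harmless because the desired inequality is symmetric. Since $a$ and $b$ differ in at most one coordinate, say the $j$th, we have $\delta \coloneqq b - a = d\,\mathbf{e}_j$ with $d = \lVert a - b\rVert_\infty \ge 0$. The case $r = 0$ is trivial because both integrated landscapes vanish. So fix $r > 0$ and $k$. The plan is to compare the truncated volumes $\mathcal{L}^n\bigl(Q_r \cap \I_M^k(a)\bigr)$ and $\mathcal{L}^n\bigl(Q_r \cap \I_M^k(b)\bigr)$ by a translation argument, in the same way as in the proof of Theorem~\ref{thm:integrated-erosion-bound}. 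The translation input is Lemma~\ref{lem:landscape-interval-translation}.

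First split $Q_r \cap \I_M^k(a)$ into two parts:
\[
    A_1 \coloneqq \{h \in Q_r \cap \I_M^k(a) : h_j \ge d\},
    \qquad
    A_2 \coloneqq \{h \in Q_r \cap \I_M^k(a) : h_j < d\}.
\]
For $h \in A_1$ we have $\delta \preceq h$, so Lemma~\ref{lem:landscape-interval-translation} gives $h - \delta \in \I_M^k(b)$. Since we only decrease the $j$th coordinate and keep it nonnegative, $h - \delta$ still lies in $Q_r$. Thus translation by $-\delta$ maps $A_1$ injectively into $Q_r \cap \I_M^k(b)$. The part $A_2$ is contained in the slab $\{h \in Q_r : h_j < d\}$, whose measure is at most $\min\{d, r\}\, r^{n-1} \le d\, r^{n-1}$. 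Both pieces are measurable: $A_1$ and $A_2$ are intersections of the measurable set $Q_r \cap \I_M^k(a)$ (Proposition~\ref{prop:interval-measurability}) with half-spaces. Translation invariance of Lebesgue measure then yields
\[
    \mathcal{L}^n\bigl(Q_r \cap \I_M^k(a)\bigr)
    \le
    \mathcal{L}^n\bigl(Q_r \cap \I_M^k(b)\bigr) + d\, r^{n-1}.
\]

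The reverse inequality follows identically from the second assertion of Lemma~\ref{lem:landscape-interval-translation}. That assertion says that if $h \in \I_M^k(b)$ and $\delta \preceq h$, then $h - \delta \in \I_M^k(a)$, so we split $Q_r \cap \I_M^k(b)$ according to whether $h_j \ge d$ and repeat the argument.

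Dividing both inequalities by $r^{n-1}$ gives $\lvert \IL_M(r,k,a) - \IL_M(r,k,b)\rvert \le d = \lVert a - b\rVert_\infty$. I do not expect a serious obstacle. The only points needing care are checking that the translated set stays inside $Q_r$ and that the slab bound uses exactly one coordinate. The latter is why the estimate carries constant $1$ here; the $\ell^1$ bound in Theorem~\ref{thm:integrated-landscape-properties}(4) then follows by moving one coordinate at a time and applying the triangle inequality.
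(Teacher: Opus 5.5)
Your proposal is correct and follows essentially the same argument as the paper. The paper also translates the part $\{h_j \ge d\}$ of $Q_r \cap \I_M^k(a)$ by $-d\mathbf{e}_j$ into $Q_r \cap \I_M^k(b)$ via Lemma~\ref{lem:landscape-interval-translation}, bounds the leftover slab by $d\,r^{n-1}$, applies the symmetric argument, and divides by $r^{n-1}$.
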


\begin{proof}
    The result is immediate if $a = b$ or $r = 0$. Otherwise, after interchanging $a$ and $b$ if necessary, assume that $a \preceq b$. Since they differ in one coordinate, there is an index $j$ such that $b - a = d\mathbf{e}_j$, where $d \coloneqq \lVert a - b \rVert_\infty$ and $\mathbf{e}_j$ is the $j$th standard basis vector. Set $A \coloneqq Q_r \cap \I_M^k(a)$ and $B \coloneqq Q_r \cap \I_M^k(b)$.

    Lemma~\ref{lem:landscape-interval-translation} shows that the translation $h \mapsto h - d\mathbf{e}_j$ maps $A \cap \{h : h_j \ge d\}$ into $B$. Indeed, the translated point belongs to $\I_M^k(b)$, and subtracting $d$ from a coordinate lying in $[d, r]$ keeps it in $Q_r$. Since translation preserves Lebesgue measure,
    \[
    \begin{aligned}
        \mathcal{L}^n(A)
        &\le
        \mathcal{L}^n(B)
        +
        \mathcal{L}^n\bigl(Q_r \cap \{h : h_j < d\}\bigr)\\
        &\le
        \mathcal{L}^n(B)
        +
        d r^{n-1}.
    \end{aligned}
    \]
    The second assertion of Lemma~\ref{lem:landscape-interval-translation} similarly gives $\mathcal{L}^n(B) \le \mathcal{L}^n(A) + d r^{n-1}$. Hence,
    \[
        \left|
            \mathcal{L}^n(A)
            -
            \mathcal{L}^n(B)
        \right|
        \le
        d r^{n-1}.
    \]
    Dividing by $r^{n-1}$ yields $|\IL_M(r, k, a) - \IL_M(r, k, b)| \le d = \lVert a - b \rVert_\infty$.
\end{proof}

\begin{proof}[Proof of part~(4) of Theorem~\ref{thm:integrated-landscape-properties}]
    Fix $r \in \R_{\ge 0}$ and $k \in \N_{>0}$. For $a, b \in \R^n$, define
    \[
        c_i
        \coloneqq
        (b_1, \ldots, b_i, a_{i+1}, \ldots, a_n),
        \qquad
        0 \le i \le n.
    \]
    Then $c_0 = a$, $c_n = b$, and each consecutive pair differs in at most one coordinate. Lemma~\ref{lem:one-coordinate} gives
    \[
    \begin{aligned}
        \left|
            \IL_M(r, k, a)
            -
            \IL_M(r, k, b)
        \right|
        &\le
        \sum_{i = 1}^n
        \left|
            \IL_M(r, k, c_{i-1})
            -
            \IL_M(r, k, c_i)
        \right|\\
        &\le
        \sum_{i = 1}^n |a_i - b_i|
        =
        \lVert a - b \rVert_1
        \le
        n \lVert a - b\rVert_\infty.
    \end{aligned}
    \]
\end{proof}

\subsection{Proof of the general integrated-curve structure theorem}
\label{app:piecewise-integrated-curve-proof}

\begin{proof}[Proof of Theorem~\ref{thm:piecewise-form-integrated-curves}]
    Let $P_+ \coloneqq P \cap \R_{>0}^n$. If $P_+$ is empty, then every rectangle in $\mathcal{R}_P$ has $n$-dimensional Lebesgue measure zero. Conversely, if $P_+$ is nonempty, then $\mathcal{R}_P$ contains a rectangle of positive $n$-dimensional measure. Hence, $\mathcal{L}^n(\mathcal{R}_P) = 0$ if and only if $P_+$ is empty. In this case, $\Gamma_P$ is identically zero, and all the assertions follow. We therefore assume that $P_+$ is nonempty.

    Rectangles indexed by $P \setminus P_+$ have measure zero and do not affect $\Gamma_P$. We may therefore replace $P$ by $P_+$ and write $P = \{p_1, \ldots, p_K\} \subseteq \R_{>0}^n$. For every $r \ge 0$,
    \[
        Q_r \cap \mathcal{R}_P
        =
        \bigcup_{j = 1}^K
        \bigl(Q_r \cap [\mathbf{0},p_j]\bigr)
        =
        \bigcup_{j = 1}^K
        \prod_{s = 1}^n
        [0,\min\{r,(p_j)_s\}].
    \]
    For every nonempty $A \subseteq \{1, \ldots, K\}$, the intersection of the rectangles indexed by $A$ is
    \[
        \bigcap_{j \in A}
        \prod_{s = 1}^n
        [0,\min\{r,(p_j)_s\}]
        =
        \prod_{s = 1}^n
        \left[
            0,
            \min\left\{
                r,
                \min_{j \in A}(p_j)_s
            \right\}
        \right].
    \]
    Inclusion--exclusion therefore gives
    \[
        \mathcal{L}^n\bigl(Q_r \cap \mathcal{R}_P\bigr)
        =
        \sum_{\varnothing \neq A \subseteq \{1, \ldots, K\}}
        (-1)^{|A|+1}
        \prod_{s = 1}^n
        \min\left\{
            r,
            \min_{j \in A}(p_j)_s
        \right\}.
    \]
    
    Let $\Sigma_P$ be the set of distinct coordinates of the points in $P$, written as
    \[
        \Sigma_P
        \coloneqq
        \left\{
            (p_j)_s :
            1 \le j \le K,\ 
            1 \le s \le n
        \right\}
        =
        \{q_1 < \cdots < q_m\}.
    \]
    Since $P \subseteq \R_{>0}^n$, we have $q_1 > 0$. Define
    \[
    \begin{aligned}
        \tau_P
        &\coloneqq
        \sup
        \left\{
            q \ge 0 :
            q\mathbf{1}_n \in \mathcal{R}_P
        \right\}
        =
        \max_{1 \le j \le K}
        \min_{1 \le s \le n}(p_j)_s,\\
        T_P
        &\coloneqq
        q_m
        =
        \max_{\substack{1 \le j \le K\\1 \le s \le n}}
        (p_j)_s.
    \end{aligned}
    \]
    Then $0 < \tau_P \le T_P$.

    Fix a connected component $J$ of $(0,\infty)\setminus\Sigma_P$. For every nonempty $A \subseteq \{1, \ldots, K\}$ and every $s$, the factor $\min\left\{r,\min_{j \in A}(p_j)_s\right\}$ is either $r$ throughout $J$ or a fixed coordinate value throughout $J$. Consequently,
    \[
        \prod_{s = 1}^n
        \min\left\{
            r,
            \min_{j \in A}(p_j)_s
        \right\}
        =
        c_{A,J}r^{d_{A,J}}
    \]
    for some $c_{A,J} > 0$ and $d_{A,J} \in \{0, \ldots, n\}$. After grouping terms of the same degree, we obtain $\mathcal{L}^n\bigl(Q_r \cap \mathcal{R}_P\bigr) = \sum_{d = 0}^n a_{J,d}r^d$ for suitable coefficients $a_{J,d} \in \R$. Hence, for $r \in J$,
    \[
        \Gamma_P(r)
        =
        \sum_{d = 0}^n
        a_{J,d}r^{d-(n-1)}
        =
        \sum_{\ell = -(n-1)}^1
        c_{J,\ell}r^\ell.
    \]

    If $0 < r \le \tau_P$, then $r\mathbf{1}_n \in \mathcal{R}_P$. Since $\mathcal{R}_P$ is closed under decreasing coordinates, $Q_r \subseteq \mathcal{R}_P$. Therefore, $\mathcal{L}^n(Q_r \cap \mathcal{R}_P) = r^n$ and $\Gamma_P(r) = r$. If $r \ge T_P$, then $\mathcal{R}_P \subseteq Q_r$, so $\Gamma_P(r) = \mathcal{L}^n(\mathcal{R}_P)/r^{n-1}$. The inclusion--exclusion formula is a finite sum of continuous functions, so $\Gamma_P$ is continuous on $(0,\infty)$. Since $0 \le \Gamma_P(r) \le r$, it is also continuous at $r = 0$.

    Finally, on every component $J$,
    \[
        r^n\Gamma_P'(r)
        =
        \sum_{\ell = -(n-1)}^1
        \ell c_{J,\ell}r^{\ell+n-1}.
    \]
    The right-hand side is a polynomial of degree at most $n$. If it is identically zero, then $\Gamma_P$ is constant on $J$. Otherwise, it has at most $n$ roots, and the sign of $\Gamma_P'$ is constant and nonzero between consecutive roots. Subdividing each component at these roots and adjoining $0$ and the finitely many points of $\Sigma_P$ gives a finite subdivision of $[0,\infty)$. Continuity of $\Gamma_P$ allows the resulting intervals to include their endpoints, and $\Gamma_P$ is strictly increasing, constant, or strictly decreasing on each such interval.
\end{proof}

\subsection{Proof of the two-parameter curve-structure theorem}
\label{app:two-parameter-curve-proof}

\begin{proof}[Proof of Theorem~\ref{thm:two-parameter-curve-shape}]
    Rectangles of zero area do not affect $\Gamma_P$, while a point of $P$ that is coordinatewise dominated by another point may be removed without changing $\mathcal{R}_P$. After making these reductions and relabeling the remaining points, write $P=\{(x_i,y_i):1 \le i \le K\}$, where
    \[
        0=x_0<x_1<\cdots<x_K<x_{K+1}=\infty,
        \qquad
        \infty=y_0>y_1>\cdots>y_K>y_{K+1}=0.
    \]
    Then $\mathcal{R}_P=\bigcup_{i=1}^K[0,x_i]\times[0,y_i]$. For this ordering, the quantities from Theorem~\ref{thm:piecewise-form-integrated-curves} are
    \[
        \tau_P
        =
        \max_{1 \le i \le K}\min\{x_i,y_i\},
        \qquad
        T_P
        =
        \max\{x_K,y_1\}.
    \]
    That theorem gives $\Gamma_P(r)=r$ for $0 \le r \le \tau_P$ and $\Gamma_P(r)=\mathcal{L}^2(\mathcal{R}_P)/r$ for $r \ge T_P$. If $\tau_P=T_P$, the result follows immediately, so suppose that $\tau_P<T_P$.

    Let $\Sigma_P\coloneqq\{x_i,y_i:1 \le i \le K\}$, and fix a connected component $U$ of $(\tau_P,T_P)\setminus\Sigma_P$. For $r \in U$, there are unique indices $j$ and $m$ such that
    \[
        x_{j-1}<r<x_j,
        \qquad
        y_{m+1}<r<y_m.
    \]
    We must have $j>m$. Indeed, if $j \le m$, then $x_j>r$ and $y_j \ge y_m>r$, so $(r,r)\preceq(x_j,y_j)$. This would imply $(r,r)\in\mathcal{R}_P$, contradicting $r>\tau_P$.

    Set $A(r)\coloneqq\mathcal{L}^2\bigl(Q_r\cap\mathcal{R}_P\bigr)$. Over $[0,x_m]$, the vertical height of $Q_r\cap\mathcal{R}_P$ is $r$; over $(x_{i-1},x_i]$, it is $y_i$ for $m+1 \le i \le j-1$; and over $(x_{j-1},r]$, it is $y_j$. Therefore,
    \[
    \begin{aligned}
        A(r)
        &=
        x_mr
        +
        \sum_{i=m+1}^{j-1}
        (x_i-x_{i-1})y_i
        +
        (r-x_{j-1})y_j\\
        &=
        (x_m+y_j)r
        +
        \sum_{i=m+1}^{j-1}
        (x_i-x_{i-1})y_i
        -
        x_{j-1}y_j,
    \end{aligned}
    \]
    where an empty sum is understood to be zero. Thus, $A$ is affine on $U$, and $\Gamma_P(r)=A(r)/r$ has the form $\alpha_U+\beta_U/r$.

    As $r$ increases through $(\tau_P,T_P)$, the index $j$ is nondecreasing and $m$ is nonincreasing. Hence, both $x_m$ and $y_j$ are nonincreasing, so the slope $A'(r)=x_m+y_j$ is nonincreasing from one component to the next. The preceding theorem shows that $\Gamma_P$, and hence $A$, is continuous. On each component, set $F(r)\coloneqq rA'(r)-A(r)$. Since $A$ is affine there, $F$ is constant and $\Gamma_P'(r)=F(r)/r^2$. Consider two consecutive components separated by a breakpoint $q$, and let $s_-$ and $s_+$ be the slopes of the affine pieces ending and beginning at $q$, respectively. Since the slopes of $A$ are nonincreasing, $s_+ \le s_-$. By continuity of $A$, the constant values of $F$ on these components are $q s_- - A(q)$ and $q s_+ - A(q)$, respectively. Thus, the constant values of $F$ on successive components are nonincreasing.

    Since $r^2>0$, the sign of $\Gamma_P'$ is the sign of $F$, so positive, zero, and negative signs can occur only in this order, with some cases possibly absent. Together with the initial and final formulas for $\Gamma_P$, this proves the asserted unimodality. Finally, on a component where $\Gamma_P(r)=\alpha_U+\beta_U/r$, we have $\Gamma_P'(r)=-\beta_U/r^2$. Hence, $\beta_U<0$ on a strictly increasing piece, $\beta_U=0$ on a constant piece, and $\beta_U>0$ on a strictly decreasing piece. This gives the stated forms and completes the proof.
\end{proof}

\section{Experimental Details}
\label{app:experimental-details}

This appendix records the preprocessing, feature-sampling, and model-selection details for the classification experiments in Section~\ref{sec:applications-and-experiments}. The code used for these experiments, together with the fixed SHREC15 training--test split manifest, is available on \href{https://github.com/pfanghaha/landscape-interval}{GitHub}. Persistent homology was computed over $\mathbb{F}_{11}$ using GUDHI's default coefficient field.

\subsection{SHREC15 shape classification}
\label{app:shrec15-details}

\paragraph{Mesh preprocessing and scalar functions.}
The mesh simplification described in Subsection~\ref{subsec:shrec15} was performed using PyMeshLab's quadric edge-collapse decimation \cite{CignoniEtAl2008MeshLab}. Each
mesh was simplified to approximately $1000$ vertices, and connectedness was checked after simplification. The resulting meshes contained between $807$ and $1236$ vertices, with an average of approximately $1025$ vertices.

The HKS and WKS were computed using $100$ Laplace--Beltrami eigenpairs. From $64$ HKS time samples and $128$ WKS energy samples, we selected the samples nearest diffusion time $0.01$ and energy $5$, respectively. The $q$NN local-sparsity function used $q=20$ nearest neighbors. Each scalar function was min--max normalized separately on each mesh.

\paragraph{One-parameter features.}
Persistent homology in degrees $0$ and $1$ was computed using GUDHI \cite{MariaEtAl2014GUDHI}. Intervals with infinite death time were truncated at $1$, the upper endpoint of the normalized filtration range. For each homological degree, we retained the first $L$ landscape layers and sampled each layer at $R$ equally spaced values in $[0,1]$. The resulting feature vector had dimension $LR$. We considered
\[
    L \in \{1, \ldots, 30\},
    \qquad
    R \in \{50,100,150,200,300\}.
\]

\paragraph{Two-parameter features.}
The primary parameter set consisted of the $900$ points in a $30 \times 30$ grid on $[0.2,1]^2$. We placed these points in a nested farthest-point order so that each initial segment remained distributed across the square, and considered
\[
    P \in \{100,300,500,700,900\}
\]
retained grid points. Every $2$-parameter representation also included $200$ fixed parameter values: $100$ points of the form $(j/100,1.5)$ and $100$ points of the form $(1.5,j/100)$, where $j \in \{0,\ldots,99\}$. Because the normalized filtration values lie in $[0,1]$, these additional points sample regions in which one filtration coordinate has passed the end of its range. Thus, every representation used $P+200$ parameter values.

At each parameter value and landscape layer, we used the direction set
\[
    V
    \coloneqq
    \left\{
        \frac{
            \bigl(\cos(j\pi/20),\sin(j\pi/20)\bigr)
        }{
            \max\{\cos(j\pi/20),\sin(j\pi/20)\}
        } :
        j=0,\ldots,10
    \right\}.
\]
Hence, $V$ consists of $11$ $\ell^\infty$-unit directions equally spaced by angle in the first quadrant and includes both coordinate axes and the diagonal. For each parameter value $t$ and sampled
direction $v$, we restricted the bifiltration to the line through $t$ in direction $v$ and computed the resulting one-parameter persistence landscape using GUDHI \cite{MariaEtAl2014GUDHI}. The diagonal value gave the uniform landscape, while the complete direction set was used to construct the upper finite-direction approximation of the integrated landscape curve.

For each homological degree, we considered the first $L$ landscape layers, with $L \in \{1,\ldots,30\}$. A uniform feature vector contained one value for each of the $P+200$ parameter values and each layer, and therefore had dimension $(P+200)L$. Integrated landscape curves were first evaluated at $300$ equally
spaced scales in $[0,1]$. During model selection, these curves were linearly resampled at $R \in \{50,100,150,200,300\}$ equally spaced scales, producing feature vectors of dimension $(P+200)LR$. The uniform and integrated representations were evaluated separately, as were homological degrees $0$ and $1$.

\paragraph{Classification and model selection.}
We used linear and radial basis function support vector machines implemented in scikit-learn \cite{PedregosaEtAl2011ScikitLearn}. Each feature was standardized to have zero mean and unit variance. Within each cross-validation fold, the standardization parameters were estimated from the training portion and then applied to the validation portion.

Feature and model parameters were selected jointly by stratified five-fold cross-validation on the training set, using the same shuffled folds with random state $0$ for every representation. For the one-parameter representations, we selected $L$ and $R$; for the uniform representations, we selected $P$ and $L$; and for the integrated representations, we selected $P$, $L$, and $R$. We considered
\[
    C \in \{0.001,0.01,0.1,1,10,100\}.
\]
For the RBF kernel, we considered
\[
    \gamma
    \in
    \left\{
        0.1\gamma_{\mathrm{scale}},
        \gamma_{\mathrm{scale}},
        10\gamma_{\mathrm{scale}}
    \right\},
\]
where $\gamma_{\mathrm{scale}}$ is the standard scikit-learn \texttt{scale} value computed from the training portion of the corresponding fold after standardization.

Configurations were selected first by mean cross-validation accuracy and then by mean macro F1-score. Any remaining exact ties were resolved in favor of fewer features, the linear kernel, and the smaller value of $C$, in that order. The test set was not used for feature, model, or hyperparameter selection. After selection, the chosen configuration was fitted to the complete training set and evaluated on the held-out test set.

\subsubsection{Selected configurations}
\label{app:shrec15-selected-configurations}

Table~\ref{tab:shrec15-selected-configurations} reports the feature and SVM configurations selected by cross-validation. For a one-parameter representation, the feature parameters are $(L,R)$. For a uniform representation, they are $(P,L)$. For an integrated representation, they are $(P,L,R)$. Here, $P$ denotes the number of retained points from the primary parameter grid. Every $2$-parameter representation also includes the $200$ fixed parameter values described above.

\begin{table*}[p]
\centering
\caption{Feature and SVM configurations selected for the SHREC15 representations in homological degrees $0$ and $1$.}
\label{tab:shrec15-selected-configurations}
\scriptsize
\setlength{\tabcolsep}{3pt}
\renewcommand{\arraystretch}{1.02}
\begin{adjustbox}{max width=\textwidth}
\begin{tabular}{@{}lllccc@{}}
\toprule
Filtration function or pair
& Representation
& Feature parameters
& Kernel
& $C$
& $\gamma$ \\
\midrule
\multicolumn{6}{c}{\textit{Homological degree $0$}} \\
\addlinespace[2pt]
HKS
& One-parameter
& $(4,300)$
& RBF
& $100$
& $\gamma_{\mathrm{scale}}$ \\
WKS
& One-parameter
& $(9,200)$
& Linear
& $10^{-2}$
& -- \\
Center
& One-parameter
& $(8,200)$
& Linear
& $10^{-2}$
& -- \\
$q$NN
& One-parameter
& $(14,50)$
& RBF
& $100$
& $0.1\gamma_{\mathrm{scale}}$ \\
\addlinespace[2pt]
HKS--center
& Integrated
& $(700,5,300)$
& Linear
& $10^{-3}$
& -- \\
HKS--center
& Uniform
& $(100,7)$
& Linear
& $10^{-2}$
& -- \\
\addlinespace[1pt]
HKS--$q$NN
& Integrated
& $(100,12,150)$
& RBF
& $10$
& $\gamma_{\mathrm{scale}}$ \\
HKS--$q$NN
& Uniform
& $(100,11)$
& Linear
& $10^{-1}$
& -- \\
\addlinespace[1pt]
HKS--WKS
& Integrated
& $(100,9,50)$
& Linear
& $10^{-3}$
& -- \\
HKS--WKS
& Uniform
& $(100,4)$
& Linear
& $10^{-1}$
& -- \\
\addlinespace[1pt]
$q$NN--center
& Integrated
& $(900,8,300)$
& Linear
& $10^{-3}$
& -- \\
$q$NN--center
& Uniform
& $(700,10)$
& Linear
& $10^{-3}$
& -- \\
\addlinespace[1pt]
WKS--center
& Integrated
& $(100,8,50)$
& Linear
& $10^{-3}$
& -- \\
WKS--center
& Uniform
& $(300,10)$
& Linear
& $10^{-2}$
& -- \\
\addlinespace[1pt]
WKS--$q$NN
& Integrated
& $(100,11,50)$
& Linear
& $10^{-3}$
& -- \\
WKS--$q$NN
& Uniform
& $(100,13)$
& Linear
& $10^{-3}$
& -- \\
\midrule
\multicolumn{6}{c}{\textit{Homological degree $1$}} \\
\addlinespace[2pt]
HKS
& One-parameter
& $(11,150)$
& RBF
& $10$
& $\gamma_{\mathrm{scale}}$ \\
WKS
& One-parameter
& $(23,200)$
& Linear
& $10^{-2}$
& -- \\
Center
& One-parameter
& $(1,150)$
& RBF
& $1$
& $10\gamma_{\mathrm{scale}}$ \\
$q$NN
& One-parameter
& $(25,50)$
& RBF
& $10$
& $10\gamma_{\mathrm{scale}}$ \\
\addlinespace[2pt]
HKS--center
& Integrated
& $(100,13,50)$
& Linear
& $10^{-3}$
& -- \\
HKS--center
& Uniform
& $(100,10)$
& Linear
& $10^{-2}$
& -- \\
\addlinespace[1pt]
HKS--$q$NN
& Integrated
& $(100,11,50)$
& Linear
& $10^{-3}$
& -- \\
HKS--$q$NN
& Uniform
& $(100,10)$
& RBF
& $100$
& $0.1\gamma_{\mathrm{scale}}$ \\
\addlinespace[1pt]
HKS--WKS
& Integrated
& $(100,14,50)$
& Linear
& $10^{-3}$
& -- \\
HKS--WKS
& Uniform
& $(100,11)$
& Linear
& $10^{-2}$
& -- \\
\addlinespace[1pt]
$q$NN--center
& Integrated
& $(300,17,300)$
& Linear
& $10^{-3}$
& -- \\
$q$NN--center
& Uniform
& $(500,9)$
& Linear
& $10^{-2}$
& -- \\
\addlinespace[1pt]
WKS--center
& Integrated
& $(100,20,200)$
& Linear
& $10^{-3}$
& -- \\
WKS--center
& Uniform
& $(100,14)$
& Linear
& $10^{-2}$
& -- \\
\addlinespace[1pt]
WKS--$q$NN
& Integrated
& $(700,22,150)$
& Linear
& $10^{-3}$
& -- \\
WKS--$q$NN
& Uniform
& $(700,25)$
& Linear
& $10^{-3}$
& -- \\
\bottomrule
\end{tabular}
\end{adjustbox}
\par\smallskip
\begin{minipage}{\textwidth}
\scriptsize
\textit{Notes.}
A dash indicates that $\gamma$ is not applicable because the linear kernel was selected. The quantity $\gamma_{\mathrm{scale}}$ is the standard scale value defined above.
\end{minipage}
\end{table*}

\subsection{MedMNIST classification}
\label{app:medmnist-details}

\paragraph{Image preprocessing and normalization.}
For the two-dimensional datasets, each image was converted to grayscale and its pixel values were divided by $255$. The three-dimensional datasets were already scalar-valued volumes, so their voxel values were converted to floating-point values without division by $255$. For the Gaussian-smoothed intensity coordinate, we used $\sigma \in \{0,0.5,1,1.5\}$. The Laplacian-of-Gaussian coordinate used scale $1$, and its response remained signed before normalization. The Gaussian and Laplacian-of-Gaussian filters used reflected boundary handling.

For each dataset, Gaussian scale, and filtration coordinate, we computed the global minimum and maximum over all pixels or voxels in the official training split. These values were used to min--max normalize the corresponding coordinate for the training, validation, and test samples. Thus, normalization of the validation and test splits used only values obtained from the training split.

\paragraph{Landscape features.}
All directional computations were carried out within the parameter box $[0,3]^2$. The primary parameter set contained the $100$ points of a $10 \times 10$ Cartesian grid on $[0.2,1]^2$. We added $50$ points of the form $(x,1.5)$ and $50$ points of the form $(1.5,y)$, where the varying coordinate was equally spaced in $[0,1)$. Thus, $P=200$ parameter values were used for every dataset, Gaussian scale, and homological degree.

At each parameter value and landscape layer, we used the direction set
\[
    V
    \coloneqq
    \left\{
        \frac{
            \bigl(\cos(j\pi/20),\sin(j\pi/20)\bigr)
        }{
            \max\{\cos(j\pi/20),\sin(j\pi/20)\}
        } :
        j=0,\ldots,10
    \right\}.
\]
Thus, $V$ consists of $11$ $\ell^\infty$-unit directions equally spaced by angle in the first quadrant and includes both coordinate axes and the diagonal. Using the same line-restriction procedure as
in the SHREC15 experiment, we computed the sampled directional values and corresponding radial boundary points. Essential intervals were truncated at the endpoint of the bounded slice in $[0,3]^2$.

We used the first $L=7$ landscape layers. The diagonal boundary point gave the uniform landscape value, while the complete collection of sampled boundary points was used to compute the integrated landscape curve. Each integrated landscape curve was sampled at $R=50$ equally spaced scales in $[0,1]$, including both endpoints. The uniform feature dimensions were $2PL=2{,}800$ for the two-dimensional datasets and $3PL=4{,}200$ for the three-dimensional datasets. The corresponding integrated dimensions were $2PLR=140{,}000$ and $3PLR=210{,}000$. The uniform and integrated representations were evaluated separately and were not concatenated.

\paragraph{Classification protocol.}
The feature vectors were evaluated without additional standardization. We used a multilayer perceptron with three fully connected hidden layers containing $256$, $128$, and $64$ neurons, respectively, each with the ReLU activation function. The same hidden-layer architecture was used for every dataset and both landscape representations. Only the input dimension and number of output units depended on the representation and classification task.

The network was trained using the Adam optimizer with learning rate $0.001$ and batch size $32$ for at most $100$ epochs. At the end of each epoch, AUC was computed on the official validation split. Training stopped if the validation AUC did not improve for $20$ consecutive epochs, and the weights from the epoch with the highest validation AUC were restored before evaluation on the test split. The test split was not used for model selection or early stopping.

For the single-label binary and multiclass datasets, the output layer used the softmax activation function, and the network was trained using categorical cross-entropy. Accuracy was computed from the class with the largest predicted probability. For binary tasks, AUC was computed using the positive-class probability. For multiclass tasks, AUC was the unweighted mean of the one-versus-rest class AUCs.

ChestMNIST was treated according to its official multilabel task. Its output layer contained $14$ logits, one for each binary label, and the network was trained using binary cross-entropy. Sigmoid probabilities were used for evaluation. AUC was averaged over the $14$ labels, and accuracy was computed by thresholding each probability at $0.5$ and averaging the resulting label-wise accuracies.

Because of memory requirements, PathMNIST, ChestMNIST, OCTMNIST, and TissueMNIST used disk-backed feature loading. Features were stored in blocks of $256$ samples and shuffled during training in pools of $8{,}192$ samples. The remaining datasets used global per-epoch shuffling. This difference affected data loading only. The network architecture, optimization settings, and evaluation procedure were unchanged.

Each combination of dataset, Gaussian scale, and landscape representation was trained five times using the fixed random seeds $0,1,2,3$, and $4$. The same seeds were used for the uniform and integrated representations. Across the eighteen datasets, four Gaussian scales, and two representations, this produced $144$ experimental configurations and $720$ network-training runs.

\paragraph{Results across repeated runs.}
Table~\ref{tab:medmnist-landscape-variability} reports the mean and sample standard deviation of test AUC and accuracy over the five runs for each dataset, Gaussian scale, and landscape representation.

\input{sections/medmnist_appendix_table}

%% file: sections/medmnist_appendix_table.tex
\begin{table*}[p]
\centering
\captionsetup{skip=4pt}
\caption{Uniform- and integrated-landscape results on MedMNIST v2.}
\label{tab:medmnist-landscape-variability}
\begingroup
\fontsize{9}{10}\selectfont
\setlength{\tabcolsep}{3pt}
\renewcommand{\arraystretch}{0.64}
\setlength{\arrayrulewidth}{0.25pt}
\begin{tabular*}{\textwidth}{@{\extracolsep{\fill}}lc|cc|cc@{}}
\hline
\rule{0pt}{7.5pt}Dataset & $\sigma$ & \multicolumn{2}{c|}{Uniform landscape} & \multicolumn{2}{c}{Integrated landscape} \\
\cline{3-6}
\rule{0pt}{7.2pt} & & AUC & ACC & AUC & ACC \\
\hline
\rule{0pt}{7.2pt}PathMNIST & $0$ & $0.947 \pm 0.004$ & $0.731 \pm 0.015$ & $0.947 \pm 0.004$ & $0.732 \pm 0.006$ \\
 & $0.5$ & $0.941 \pm 0.003$ & $0.706 \pm 0.011$ & $0.937 \pm 0.003$ & $0.710 \pm 0.012$ \\
 & $1$ & $0.930 \pm 0.003$ & $0.682 \pm 0.008$ & $0.927 \pm 0.002$ & $0.685 \pm 0.010$ \\
 & $1.5$ & $0.925 \pm 0.005$ & $0.675 \pm 0.016$ & $0.925 \pm 0.004$ & $0.672 \pm 0.008$ \\
\hline
\rule{0pt}{7.2pt}ChestMNIST & $0$ & $0.665 \pm 0.003$ & $0.947 \pm 0.000$ & $0.655 \pm 0.003$ & $0.947 \pm 0.000$ \\
 & $0.5$ & $0.666 \pm 0.004$ & $0.947 \pm 0.000$ & $0.658 \pm 0.004$ & $0.947 \pm 0.000$ \\
 & $1$ & $0.665 \pm 0.002$ & $0.947 \pm 0.000$ & $0.655 \pm 0.006$ & $0.947 \pm 0.000$ \\
 & $1.5$ & $0.661 \pm 0.004$ & $0.947 \pm 0.000$ & $0.652 \pm 0.002$ & $0.947 \pm 0.000$ \\
\hline
\rule{0pt}{7.2pt}DermaMNIST & $0$ & $0.844 \pm 0.004$ & $0.701 \pm 0.004$ & $0.841 \pm 0.004$ & $0.700 \pm 0.003$ \\
 & $0.5$ & $0.844 \pm 0.002$ & $0.702 \pm 0.004$ & $0.839 \pm 0.004$ & $0.698 \pm 0.005$ \\
 & $1$ & $0.840 \pm 0.003$ & $0.703 \pm 0.004$ & $0.835 \pm 0.003$ & $0.700 \pm 0.003$ \\
 & $1.5$ & $0.840 \pm 0.005$ & $0.703 \pm 0.004$ & $0.838 \pm 0.003$ & $0.697 \pm 0.008$ \\
\hline
\rule{0pt}{7.2pt}OCTMNIST & $0$ & $0.899 \pm 0.007$ & $0.626 \pm 0.016$ & $0.902 \pm 0.003$ & $0.628 \pm 0.010$ \\
 & $0.5$ & $0.908 \pm 0.003$ & $0.635 \pm 0.008$ & $0.910 \pm 0.003$ & $0.625 \pm 0.021$ \\
 & $1$ & $0.908 \pm 0.003$ & $0.623 \pm 0.014$ & $0.912 \pm 0.005$ & $0.608 \pm 0.004$ \\
 & $1.5$ & $0.902 \pm 0.007$ & $0.606 \pm 0.017$ & $0.905 \pm 0.003$ & $0.620 \pm 0.009$ \\
\hline
\rule{0pt}{7.2pt}PneumoniaMNIST & $0$ & $0.892 \pm 0.015$ & $0.777 \pm 0.024$ & $0.899 \pm 0.008$ & $0.802 \pm 0.025$ \\
 & $0.5$ & $0.895 \pm 0.010$ & $0.795 \pm 0.036$ & $0.902 \pm 0.002$ & $0.804 \pm 0.023$ \\
 & $1$ & $0.895 \pm 0.005$ & $0.813 \pm 0.018$ & $0.906 \pm 0.005$ & $0.805 \pm 0.045$ \\
 & $1.5$ & $0.889 \pm 0.013$ & $0.797 \pm 0.013$ & $0.898 \pm 0.006$ & $0.814 \pm 0.011$ \\
\hline
\rule{0pt}{7.2pt}RetinaMNIST & $0$ & $0.643 \pm 0.004$ & $0.510 \pm 0.013$ & $0.645 \pm 0.007$ & $0.497 \pm 0.023$ \\
 & $0.5$ & $0.631 \pm 0.006$ & $0.495 \pm 0.020$ & $0.634 \pm 0.006$ & $0.495 \pm 0.026$ \\
 & $1$ & $0.624 \pm 0.006$ & $0.495 \pm 0.014$ & $0.616 \pm 0.007$ & $0.489 \pm 0.014$ \\
 & $1.5$ & $0.625 \pm 0.003$ & $0.488 \pm 0.033$ & $0.621 \pm 0.010$ & $0.476 \pm 0.018$ \\
\hline
\rule{0pt}{7.2pt}BreastMNIST & $0$ & $0.800 \pm 0.004$ & $0.796 \pm 0.019$ & $0.801 \pm 0.006$ & $0.763 \pm 0.015$ \\
 & $0.5$ & $0.832 \pm 0.006$ & $0.783 \pm 0.036$ & $0.821 \pm 0.006$ & $0.768 \pm 0.019$ \\
 & $1$ & $0.851 \pm 0.010$ & $0.772 \pm 0.102$ & $0.839 \pm 0.017$ & $0.762 \pm 0.037$ \\
 & $1.5$ & $0.837 \pm 0.012$ & $0.782 \pm 0.032$ & $0.828 \pm 0.010$ & $0.754 \pm 0.026$ \\
\hline
\rule{0pt}{7.2pt}BloodMNIST & $0$ & $0.990 \pm 0.000$ & $0.893 \pm 0.003$ & $0.989 \pm 0.001$ & $0.884 \pm 0.007$ \\
 & $0.5$ & $0.989 \pm 0.000$ & $0.886 \pm 0.003$ & $0.987 \pm 0.001$ & $0.880 \pm 0.006$ \\
 & $1$ & $0.987 \pm 0.000$ & $0.878 \pm 0.002$ & $0.986 \pm 0.000$ & $0.872 \pm 0.003$ \\
 & $1.5$ & $0.988 \pm 0.001$ & $0.879 \pm 0.005$ & $0.985 \pm 0.001$ & $0.867 \pm 0.005$ \\
\hline
\rule{0pt}{7.2pt}TissueMNIST & $0$ & $0.871 \pm 0.000$ & $0.565 \pm 0.001$ & $0.867 \pm 0.001$ & $0.557 \pm 0.002$ \\
 & $0.5$ & $0.868 \pm 0.000$ & $0.561 \pm 0.003$ & $0.864 \pm 0.001$ & $0.555 \pm 0.003$ \\
 & $1$ & $0.860 \pm 0.001$ & $0.552 \pm 0.001$ & $0.856 \pm 0.000$ & $0.544 \pm 0.002$ \\
 & $1.5$ & $0.857 \pm 0.001$ & $0.546 \pm 0.003$ & $0.853 \pm 0.001$ & $0.540 \pm 0.004$ \\
\hline
\rule{0pt}{7.2pt}OrganAMNIST & $0$ & $0.955 \pm 0.001$ & $0.648 \pm 0.007$ & $0.952 \pm 0.001$ & $0.637 \pm 0.012$ \\
 & $0.5$ & $0.957 \pm 0.001$ & $0.649 \pm 0.016$ & $0.950 \pm 0.002$ & $0.637 \pm 0.007$ \\
 & $1$ & $0.953 \pm 0.001$ & $0.639 \pm 0.012$ & $0.948 \pm 0.001$ & $0.627 \pm 0.004$ \\
 & $1.5$ & $0.950 \pm 0.001$ & $0.630 \pm 0.008$ & $0.946 \pm 0.002$ & $0.612 \pm 0.012$ \\
\hline
\rule{0pt}{7.2pt}OrganCMNIST & $0$ & $0.952 \pm 0.001$ & $0.645 \pm 0.011$ & $0.951 \pm 0.001$ & $0.643 \pm 0.009$ \\
 & $0.5$ & $0.954 \pm 0.001$ & $0.668 \pm 0.003$ & $0.951 \pm 0.001$ & $0.650 \pm 0.006$ \\
 & $1$ & $0.949 \pm 0.001$ & $0.645 \pm 0.009$ & $0.944 \pm 0.001$ & $0.628 \pm 0.013$ \\
 & $1.5$ & $0.947 \pm 0.001$ & $0.632 \pm 0.007$ & $0.944 \pm 0.002$ & $0.632 \pm 0.008$ \\
\hline
\rule{0pt}{7.2pt}OrganSMNIST & $0$ & $0.945 \pm 0.001$ & $0.640 \pm 0.011$ & $0.940 \pm 0.001$ & $0.634 \pm 0.007$ \\
 & $0.5$ & $0.945 \pm 0.002$ & $0.646 \pm 0.010$ & $0.940 \pm 0.002$ & $0.625 \pm 0.005$ \\
 & $1$ & $0.937 \pm 0.002$ & $0.619 \pm 0.008$ & $0.932 \pm 0.004$ & $0.596 \pm 0.017$ \\
 & $1.5$ & $0.934 \pm 0.001$ & $0.611 \pm 0.013$ & $0.929 \pm 0.002$ & $0.601 \pm 0.005$ \\
\hline
\rule{0pt}{7.2pt}OrganMNIST3D & $0$ & $0.956 \pm 0.003$ & $0.651 \pm 0.016$ & $0.956 \pm 0.002$ & $0.652 \pm 0.018$ \\
 & $0.5$ & $0.960 \pm 0.001$ & $0.656 \pm 0.007$ & $0.960 \pm 0.002$ & $0.657 \pm 0.023$ \\
 & $1$ & $0.957 \pm 0.002$ & $0.636 \pm 0.012$ & $0.956 \pm 0.001$ & $0.649 \pm 0.011$ \\
 & $1.5$ & $0.954 \pm 0.001$ & $0.621 \pm 0.022$ & $0.953 \pm 0.001$ & $0.620 \pm 0.012$ \\
\hline
\rule{0pt}{7.2pt}NoduleMNIST3D & $0$ & $0.815 \pm 0.014$ & $0.806 \pm 0.029$ & $0.783 \pm 0.028$ & $0.774 \pm 0.075$ \\
 & $0.5$ & $0.802 \pm 0.024$ & $0.831 \pm 0.008$ & $0.801 \pm 0.012$ & $0.830 \pm 0.011$ \\
 & $1$ & $0.808 \pm 0.014$ & $0.838 \pm 0.010$ & $0.828 \pm 0.015$ & $0.832 \pm 0.008$ \\
 & $1.5$ & $0.790 \pm 0.014$ & $0.822 \pm 0.014$ & $0.789 \pm 0.013$ & $0.824 \pm 0.006$ \\
\hline
\rule{0pt}{7.2pt}FractureMNIST3D & $0$ & $0.683 \pm 0.006$ & $0.506 \pm 0.024$ & $0.670 \pm 0.011$ & $0.481 \pm 0.030$ \\
 & $0.5$ & $0.679 \pm 0.019$ & $0.528 \pm 0.024$ & $0.684 \pm 0.012$ & $0.478 \pm 0.062$ \\
 & $1$ & $0.714 \pm 0.008$ & $0.551 \pm 0.051$ & $0.712 \pm 0.010$ & $0.541 \pm 0.026$ \\
 & $1.5$ & $0.729 \pm 0.013$ & $0.574 \pm 0.063$ & $0.697 \pm 0.029$ & $0.508 \pm 0.063$ \\
\hline
\rule{0pt}{7.2pt}AdrenalMNIST3D & $0$ & $0.833 \pm 0.006$ & $0.805 \pm 0.012$ & $0.818 \pm 0.026$ & $0.791 \pm 0.027$ \\
 & $0.5$ & $0.828 \pm 0.009$ & $0.798 \pm 0.016$ & $0.839 \pm 0.010$ & $0.795 \pm 0.029$ \\
 & $1$ & $0.866 \pm 0.002$ & $0.824 \pm 0.027$ & $0.864 \pm 0.002$ & $0.815 \pm 0.016$ \\
 & $1.5$ & $0.864 \pm 0.002$ & $0.830 \pm 0.019$ & $0.863 \pm 0.005$ & $0.840 \pm 0.015$ \\
\hline
\rule{0pt}{7.2pt}VesselMNIST3D & $0$ & $0.892 \pm 0.004$ & $0.905 \pm 0.004$ & $0.895 \pm 0.013$ & $0.883 \pm 0.029$ \\
 & $0.5$ & $0.913 \pm 0.005$ & $0.903 \pm 0.017$ & $0.906 \pm 0.008$ & $0.897 \pm 0.012$ \\
 & $1$ & $0.924 \pm 0.005$ & $0.911 \pm 0.010$ & $0.922 \pm 0.011$ & $0.918 \pm 0.010$ \\
 & $1.5$ & $0.941 \pm 0.012$ & $0.914 \pm 0.023$ & $0.943 \pm 0.009$ & $0.925 \pm 0.010$ \\
\hline
\rule{0pt}{7.2pt}SynapseMNIST3D & $0$ & $0.800 \pm 0.004$ & $0.799 \pm 0.022$ & $0.774 \pm 0.017$ & $0.781 \pm 0.030$ \\
 & $0.5$ & $0.751 \pm 0.008$ & $0.756 \pm 0.025$ & $0.741 \pm 0.009$ & $0.772 \pm 0.012$ \\
 & $1$ & $0.780 \pm 0.003$ & $0.769 \pm 0.019$ & $0.782 \pm 0.008$ & $0.750 \pm 0.067$ \\
 & $1.5$ & $0.754 \pm 0.003$ & $0.755 \pm 0.028$ & $0.751 \pm 0.004$ & $0.724 \pm 0.093$ \\
\hline
\end{tabular*}
\par\vspace{3pt}
\begin{minipage}{\textwidth}
\fontsize{9}{10}\selectfont
\textit{Notes.} Values are mean $\pm$ sample standard deviation over five runs (seeds $0,\ldots,4$). AUC and ACC denote test-set area under the ROC curve and accuracy. The means match Table~\ref{tab:medmnist-results}.
\end{minipage}
\endgroup
\end{table*}

%% file: references.bib
@article{carriere2020multiparameter,
  title={Multiparameter persistence image for topological machine learning},
  author={Carri{\`e}re, Mathieu and Blumberg, Andrew},
  journal={Advances in Neural Information Processing Systems},
  volume={33},
  pages={22432--22444},
  year={2020}
}

@article{ali2023survey,
  title={A survey of vectorization methods in topological data analysis},
  author={Ali, Dashti and Asaad, Aras and Jimenez, Maria-Jose and Nanda, Vidit and Paluzo-Hidalgo, Eduardo and Soriano-Trigueros, Manuel},
  journal={IEEE Transactions on Pattern Analysis and Machine Intelligence},
  volume={45},
  number={12},
  pages={14069--14080},
  year={2023},
  publisher={IEEE}
}

@article{adams2017persistence,
  title={Persistence images: A stable vector representation of persistent homology},
  author={Adams, Henry and Emerson, Tegan and Kirby, Michael and Neville, Rachel and Peterson, Chris and Shipman, Patrick and Chepushtanova, Sofya and Hanson, Eric and Motta, Francis and Ziegelmeier, Lori},
  journal={Journal of Machine Learning Research},
  volume={18},
  number={8},
  pages={1--35},
  year={2017}
}

@article{JMLR:v16:bubenik15a,
  author  = {Peter Bubenik},
  title   = {Statistical Topological Data Analysis using Persistence Landscapes},
  journal = {Journal of Machine Learning Research},
  year    = {2015},
  volume  = {16},
  number  = {3},
  pages   = {77--102},
  url     = {http://jmlr.org/papers/v16/bubenik15a.html}
}

@article{JMLR:v21:Vipond,
  author  = {Oliver Vipond},
  title   = {Multiparameter Persistence Landscapes},
  journal = {Journal of Machine Learning Research},
  year    = {2020},
  volume  = {21},
  number  = {61},
  pages   = {1--38},
  url     = {http://jmlr.org/papers/v21/19-054.html}
}

@article{Botnan2024SignedBarcodes,
  author  = {Botnan, Magnus Bakke and Oppermann, Steffen
             and Oudot, Steve},
  title   = {Signed Barcodes for Multi-Parameter Persistence via
             Rank Decompositions and Rank-Exact Resolutions},
  journal = {Foundations of Computational Mathematics},
  volume  = {25},
  number  = {5},
  pages   = {1815--1874},
  year    = {2025},
  doi     = {10.1007/s10208-024-09672-9},
  url     = {https://doi.org/10.1007/s10208-024-09672-9}
}

@incollection{botnan2023introductionmultiparameterpersistence,
  author    = {Botnan, Magnus Bakke and Lesnick, Michael},
  title     = {An Introduction to Multiparameter Persistence},
  booktitle = {Representations of Algebras and Related Structures},
  editor    = {Buan, Aslak Bakke and Krause, Henning and
               Solberg, {\O}yvind},
  series    = {EMS Series of Congress Reports},
  volume    = {19},
  pages     = {77--150},
  publisher = {EMS Press},
  year      = {2023},
  doi       = {10.4171/ECR/19/4},
  url       = {https://doi.org/10.4171/ECR/19/4}
}

@article{KimMemoli2021JACT,
  author  = {Woojin Kim and Facundo M{\'e}moli},
  title   = {Generalized persistence diagrams for persistence modules over posets},
  journal = {Journal of Applied and Computational Topology},
  year    = {2021},
  volume  = {5},
  number  = {4},
  pages   = {533--581},
  doi     = {10.1007/s41468-021-00075-1},
  url     = {https://doi.org/10.1007/s41468-021-00075-1}
}

@article{Patel2018GPD,
  author  = {Amit Patel},
  title   = {Generalized persistence diagrams},
  journal = {Journal of Applied and Computational Topology},
  year    = {2018},
  volume  = {1},
  number  = {3-4},
  pages   = {397--419},
  month   = jun,
  doi     = {10.1007/s41468-018-0012-6},
  url     = {https://doi.org/10.1007/s41468-018-0012-6}
}

@article{CarlssonZomorodian2009DCG,
  author  = {Gunnar Carlsson and Afra Zomorodian},
  title   = {The Theory of Multidimensional Persistence},
  journal = {Discrete \& Computational Geometry},
  year    = {2009},
  volume  = {42},
  number  = {1},
  pages   = {71--93},
  doi     = {10.1007/s00454-009-9176-0},
  url     = {https://doi.org/10.1007/s00454-009-9176-0}
}

@article{BotnanOppermannOudotScoccola2022Bottleneck,
  author  = {Botnan, Magnus Bakke and Oppermann, Steffen and
             Oudot, Steve and Scoccola, Luis},
  title   = {On the Bottleneck Stability of Rank Decompositions of
             Multi-Parameter Persistence Modules},
  journal = {Advances in Mathematics},
  volume  = {451},
  pages   = {109780},
  year    = {2024},
  doi     = {10.1016/j.aim.2024.109780},
  url     = {https://doi.org/10.1016/j.aim.2024.109780}
}

@article{lesnick2015interleaving,
  author  = {Lesnick, Michael},
  title   = {The theory of the interleaving distance on multidimensional persistence modules},
  journal = {Foundations of Computational Mathematics},
  volume  = {15},
  number  = {3},
  pages   = {613--650},
  year    = {2015}
}

@misc{ClauseKimMemoli2024GRI,
  title         = {The Generalized Rank Invariant: {M{\"o}bius} invertibility, Discriminating Power, and Connection to Other Invariants},
  author        = {Clause, Nathaniel and Kim, Woojin and M{\'e}moli, Facundo},
  year          = {2024},
  eprint        = {2207.11591},
  archivePrefix = {arXiv},
  primaryClass  = {math.AT},
  doi           = {10.48550/arXiv.2207.11591},
  url           = {https://arxiv.org/abs/2207.11591},
  note          = {arXiv:2207.11591 [math.AT], v5 (Jul 9, 2024), 62 pp.}
}

@inproceedings {lian15nonrigid,
booktitle = {Eurographics Workshop on 3D Object Retrieval},
editor = {I. Pratikakis and M. Spagnuolo and T. Theoharis and L. Van Gool and R. Veltkamp},
title = {{Non-rigid 3D Shape Retrieval}},
author = {Lian, Z. and Zhang, J. and Choi, S. and ElNaghy, H. and El-Sana, J. and Furuya, T. and Giachetti, A. and Guler, R. A. and Lai, L. and Li, C. and Li, H. and Limberger, F. A. and Martin, R. and Nakanishi, R. U. and Neto, A. P. and Nonato, L. G. and Ohbuchi, R. and Pevzner, K. and Pickup, D. and Rosin, P. and Sharf, A. and Sun, L. and Sun, X. and Tari, S. and Unal, G. and Wilson, R. C.},
year = {2015},
publisher = {The Eurographics Association},
DOI = {10.2312/3dor.20151064}
}

@article{medmnistv2,
    title={MedMNIST v2-A large-scale lightweight benchmark for 2D and 3D biomedical image classification},
    author={Yang, Jiancheng and Shi, Rui and Wei, Donglai and Liu, Zequan and Zhao, Lin and Ke, Bilian and Pfister, Hanspeter and Ni, Bingbing},
    journal={Scientific Data},
    volume={10},
    number={1},
    pages={41},
    year={2023},
    publisher={Nature Publishing Group UK London}
}

@inproceedings{medmnistv1,
    title={MedMNIST Classification Decathlon: A Lightweight AutoML Benchmark for Medical Image Analysis},
    author={Yang, Jiancheng and Shi, Rui and Ni, Bingbing},
    booktitle={IEEE 18th International Symposium on Biomedical Imaging (ISBI)},
    pages={191--195},
    year={2021}
}

@misc{wang2026glog,
  title         = {{G-LoG} Bi-filtration for Medical Image Classification},
  author        = {Wang, Qingsong and He, Jiaxing and Hou, Bingzhe and
                   Wu, Tieru and Cao, Yang and Yao, Cailing},
  year          = {2026},
  eprint        = {2602.18329},
  archivePrefix = {arXiv},
  primaryClass  = {cs.CV},
  doi           = {10.48550/arXiv.2602.18329},
  url           = {https://arxiv.org/abs/2602.18329},
  note          = {arXiv:2602.18329}
}

@article{sun2009concise,
  author  = {Sun, Jian and Ovsjanikov, Maks and Guibas, Leonidas},
  title   = {A Concise and Provably Informative Multi-Scale
             Signature Based on Heat Diffusion},
  journal = {Computer Graphics Forum},
  volume  = {28},
  number  = {5},
  pages   = {1383--1392},
  year    = {2009},
  doi     = {10.1111/j.1467-8659.2009.01515.x}
}

@inproceedings{aubry2011wave,
  author    = {Aubry, Mathieu and Schlickewei, Ulrich and Cremers, Daniel},
  title     = {The Wave Kernel Signature: A Quantum Mechanical
               Approach to Shape Analysis},
  booktitle = {2011 IEEE International Conference on Computer Vision Workshops},
  pages     = {1626--1633},
  year      = {2011},
  doi       = {10.1109/ICCVW.2011.6130444}
}

@article{CrawleyBoevey2015,
  author  = {Crawley-Boevey, William},
  title   = {Decomposition of Pointwise Finite-Dimensional Persistence Modules},
  journal = {Journal of Algebra and Its Applications},
  volume  = {14},
  number  = {5},
  pages   = {1550066},
  year    = {2015},
  doi     = {10.1142/S0219498815500668},
  url     = {https://doi.org/10.1142/S0219498815500668}
}

@misc{LesnickWright2015,
  author        = {Lesnick, Michael and Wright, Matthew},
  title         = {Interactive Visualization of 2-D Persistence Modules},
  year          = {2015},
  eprint        = {1512.00180},
  archivePrefix = {arXiv},
  primaryClass  = {math.AT},
  url           = {https://arxiv.org/abs/1512.00180},
  note          = {arXiv:1512.00180}
}

@inproceedings{XinMukherjeeSamagaDey2023GRIL,
  author    = {Xin, Cheng and Mukherjee, Soham and
               Samaga, Shreyas N. and Dey, Tamal K.},
  title     = {{GRIL}: A {$2$}-Parameter Persistence Based
               Vectorization for Machine Learning},
  booktitle = {Proceedings of the 2nd Annual Workshop on Topology,
               Algebra, and Geometry in Machine Learning},
  series    = {Proceedings of Machine Learning Research},
  volume    = {221},
  pages     = {313--333},
  year      = {2023},
  publisher = {PMLR},
  url       = {https://proceedings.mlr.press/v221/xin23a.html}
}

@article{EdelsbrunnerLetscherZomorodian2002,
    author  = {Edelsbrunner, Herbert and Letscher, David and
               Zomorodian, Afra},
    title   = {Topological Persistence and Simplification},
    journal = {Discrete \& Computational Geometry},
    volume  = {28},
    number  = {4},
    pages   = {511--533},
    year    = {2002},
    doi     = {10.1007/s00454-002-2885-2},
    url     = {https://doi.org/10.1007/s00454-002-2885-2}
}

@article{ZomorodianCarlsson2005,
    author  = {Zomorodian, Afra and Carlsson, Gunnar},
    title   = {Computing Persistent Homology},
    journal = {Discrete \& Computational Geometry},
    volume  = {33},
    number  = {2},
    pages   = {249--274},
    year    = {2005},
    doi     = {10.1007/s00454-004-1146-y},
    url     = {https://doi.org/10.1007/s00454-004-1146-y}
}

@article{OtterEtAl2017,
    author  = {Otter, Nina and Porter, Mason A. and Tillmann, Ulrike
               and Grindrod, Peter and Harrington, Heather A.},
    title   = {A Roadmap for the Computation of Persistent Homology},
    journal = {EPJ Data Science},
    volume  = {6},
    number  = {1},
    pages   = {17},
    year    = {2017},
    doi     = {10.1140/epjds/s13688-017-0109-5},
    url     = {https://doi.org/10.1140/epjds/s13688-017-0109-5}
}

@article{mcfaddin2026interleaving,
  title={Interleaving distances, monoidal actions and 2-categories},
  author={McFaddin, Patrick K and Needham, Tom},
  journal={Algebraic \& Geometric Topology},
  volume={26},
  number={1},
  pages={227--281},
  year={2026},
  publisher={Mathematical Sciences Publishers}
}

@article{ayhan2027equivalence,
  title={Equivalence of landscape and erosion distances for persistence diagrams},
  author={Ayhan, Cagatay and Needham, Tom},
  journal={Advances in Applied Mathematics},
  volume={182},
  pages={103161},
  year={2027},
  publisher={Elsevier}
}

@inproceedings{ChazalEtAl2009Proximity,
  author    = {Chazal, Fr{\'e}d{\'e}ric and Cohen-Steiner, David
               and Glisse, Marc and Guibas, Leonidas J.
               and Oudot, Steve Y.},
  title     = {Proximity of Persistence Modules and Their Diagrams},
  booktitle = {Proceedings of the Twenty-Fifth Annual Symposium
               on Computational Geometry},
  pages     = {237--246},
  year      = {2009},
  publisher = {Association for Computing Machinery},
  doi       = {10.1145/1542362.1542407},
  url       = {https://doi.org/10.1145/1542362.1542407}
}

@article{BubenikScott2014,
  author  = {Bubenik, Peter and Scott, Jonathan A.},
  title   = {Categorification of Persistent Homology},
  journal = {Discrete \& Computational Geometry},
  volume  = {51},
  number  = {3},
  pages   = {600--627},
  year    = {2014},
  doi     = {10.1007/s00454-014-9573-x},
  url     = {https://doi.org/10.1007/s00454-014-9573-x}
}

@article{BubenikDeSilvaScott2015,
  author  = {Bubenik, Peter and {de Silva}, Vin and Scott, Jonathan A.},
  title   = {Metrics for Generalized Persistence Modules},
  journal = {Foundations of Computational Mathematics},
  volume  = {15},
  number  = {6},
  pages   = {1501--1531},
  year    = {2015},
  doi     = {10.1007/s10208-014-9229-5},
  url     = {https://doi.org/10.1007/s10208-014-9229-5}
}

@article{BauerLesnick2015,
  author  = {Bauer, Ulrich and Lesnick, Michael},
  title   = {Induced Matchings and the Algebraic Stability of
             Persistence Barcodes},
  journal = {Journal of Computational Geometry},
  volume  = {6},
  number  = {2},
  pages   = {162--191},
  year    = {2015},
  doi     = {10.20382/jocg.v6i2a9},
  url     = {https://doi.org/10.20382/jocg.v6i2a9}
}

@article{BotnanLesnick2018,
  author  = {Botnan, Magnus Bakke and Lesnick, Michael},
  title   = {Algebraic Stability of Zigzag Persistence Modules},
  journal = {Algebraic \& Geometric Topology},
  volume  = {18},
  number  = {6},
  pages   = {3133--3204},
  year    = {2018},
  doi     = {10.2140/agt.2018.18.3133},
  url     = {https://doi.org/10.2140/agt.2018.18.3133}
}

@article{Bjerkevik2021Stability,
  author  = {Bjerkevik, H{\aa}vard Bakke},
  title   = {On the Stability of Interval Decomposable Persistence
             Modules},
  journal = {Discrete \& Computational Geometry},
  volume  = {66},
  number  = {1},
  pages   = {92--121},
  year    = {2021},
  doi     = {10.1007/s00454-021-00298-0},
  url     = {https://doi.org/10.1007/s00454-021-00298-0}
}

@article{BlumbergLesnick2024Stability,
  author  = {Blumberg, Andrew J. and Lesnick, Michael},
  title   = {Stability of 2-Parameter Persistent Homology},
  journal = {Foundations of Computational Mathematics},
  volume  = {24},
  number  = {2},
  pages   = {385--427},
  year    = {2024},
  doi     = {10.1007/s10208-022-09576-6},
  url     = {https://doi.org/10.1007/s10208-022-09576-6}
}

@article{BlumbergLesnick2023Universality,
  author  = {Blumberg, Andrew J. and Lesnick, Michael},
  title   = {Universality of the Homotopy Interleaving Distance},
  journal = {Transactions of the American Mathematical Society},
  volume  = {376},
  number  = {12},
  pages   = {8269--8307},
  year    = {2023},
  doi     = {10.1090/tran/8738},
  url     = {https://doi.org/10.1090/tran/8738}
}

@article{Frosini2013StableComparison,
  author  = {Frosini, Patrizio},
  title   = {Stable Comparison of Multidimensional Persistent
             Homology Groups with Torsion},
  journal = {Acta Applicandae Mathematicae},
  volume  = {124},
  number  = {1},
  pages   = {43--54},
  year    = {2013},
  doi     = {10.1007/s10440-012-9769-0},
  url     = {https://doi.org/10.1007/s10440-012-9769-0}
}

@article{Puuska2020Erosion,
  author  = {Puuska, Ville},
  title   = {Erosion Distance for Generalized Persistence Modules},
  journal = {Homology, Homotopy and Applications},
  volume  = {22},
  number  = {1},
  pages   = {233--254},
  year    = {2020},
  doi     = {10.4310/HHA.2020.v22.n1.a14},
  url     = {https://doi.org/10.4310/HHA.2020.v22.n1.a14}
}

@article{KimMemoli2021Spatiotemporal,
  author  = {Kim, Woojin and M{\'e}moli, Facundo},
  title   = {Spatiotemporal Persistent Homology for Dynamic
             Metric Spaces},
  journal = {Discrete \& Computational Geometry},
  volume  = {66},
  number  = {3},
  pages   = {831--875},
  year    = {2021},
  doi     = {10.1007/s00454-019-00168-w},
  url     = {https://doi.org/10.1007/s00454-019-00168-w}
}

@article{MemoliStefanouZhou2024CupProducts,
  author  = {M{\'e}moli, Facundo and Stefanou, Anastasios
             and Zhou, Ling},
  title   = {Persistent Cup Product Structures and Related Invariants},
  journal = {Journal of Applied and Computational Topology},
  volume  = {8},
  pages   = {93--148},
  year    = {2024},
  doi     = {10.1007/s41468-023-00138-5},
  url     = {https://doi.org/10.1007/s41468-023-00138-5}
}

@inproceedings{ClauseDeyMemoliWang2023MetaDiagrams,
  author    = {Clause, Nate and Dey, Tamal K. and
               M{\'e}moli, Facundo and Wang, Bei},
  title     = {Meta-Diagrams for 2-Parameter Persistence},
  booktitle = {39th International Symposium on Computational
               Geometry (SoCG 2023)},
  series    = {Leibniz International Proceedings in Informatics
               (LIPIcs)},
  volume    = {258},
  pages     = {25:1--25:16},
  year      = {2023},
  publisher = {Schloss Dagstuhl--Leibniz-Zentrum f{\"u}r Informatik},
  doi       = {10.4230/LIPIcs.SoCG.2023.25},
  url       = {https://doi.org/10.4230/LIPIcs.SoCG.2023.25}
}

@inproceedings{NuwagiraEtAl2025TopoMed,
title     = {Topological Machine Learning for Low Data
Medical Imaging},
author    = {Nuwagira, Brighton and Korkmaz, Caner and
Koung, Philmore and Coskunuzer, Baris},
booktitle = {Proceedings of the 4th Machine Learning for
Health Symposium},
series    = {Proceedings of Machine Learning Research},
volume    = {259},
pages     = {824--838},
year      = {2025},
publisher = {PMLR},
url       = {https://proceedings.mlr.press/v259/nuwagira25a.html}
}

@inproceedings{CarriereBlumberg2020,
  author    = {Carri{\`e}re, Mathieu and Blumberg, Andrew J.},
  title     = {Multiparameter Persistence Image for Topological
               Machine Learning},
  booktitle = {Advances in Neural Information Processing Systems},
  volume    = {33},
  pages     = {22432--22444},
  year      = {2020},
  url       = {https://proceedings.neurips.cc/paper/2020/hash/fdff71fcab656abfbefaabecab1a7f6d-Abstract.html}
}

@inproceedings{CignoniEtAl2008MeshLab,
  author    = {Cignoni, Paolo and Callieri, Marco and
               Corsini, Massimiliano and Dellepiane, Matteo and
               Ganovelli, Fabio and Ranzuglia, Guido},
  title     = {{MeshLab}: An Open-Source Mesh Processing Tool},
  booktitle = {Eurographics Italian Chapter Conference},
  editor    = {Scarano, Vittorio and De Chiara, Rosario and Erra, Ugo},
  publisher = {The Eurographics Association},
  year      = {2008},
  pages     = {129--136},
  doi       = {10.2312/LocalChapterEvents/ItalChap/ItalianChapConf2008/129-136}
}

@article{Miller2025Homological,
  author  = {Miller, Ezra},
  title   = {Homological Algebra of Modules over Posets},
  journal = {SIAM Journal on Applied Algebra and Geometry},
  volume  = {9},
  number  = {3},
  pages   = {483--524},
  year    = {2025},
  doi     = {10.1137/22M1516361},
  url     = {https://doi.org/10.1137/22M1516361}
}

@article{KimMemoliStefanou2024,
  author  = {Kim, Woojin and M{\'e}moli, Facundo and Stefanou, Anastasios},
  title   = {Interleaving by Parts: Join Decompositions of
             Interleavings and Join-Assemblage of Geodesics},
  journal = {Order},
  volume  = {41},
  number  = {2},
  pages   = {497--537},
  year    = {2024},
  doi     = {10.1007/s11083-023-09643-9},
  url     = {https://doi.org/10.1007/s11083-023-09643-9}
}

@article{CerriEtAl2013,
  author  = {Cerri, Andrea and Di Fabio, Barbara and Ferri, Massimo
             and Frosini, Patrizio and Landi, Claudia},
  title   = {Betti Numbers in Multidimensional Persistent Homology
             Are Stable Functions},
  journal = {Mathematical Methods in the Applied Sciences},
  volume  = {36},
  number  = {12},
  pages   = {1543--1557},
  year    = {2013},
  doi     = {10.1002/mma.2704},
  url     = {https://doi.org/10.1002/mma.2704}
}

@article{BetthauserBubenikEdwards2022,
  author  = {Betthauser, Leo and Bubenik, Peter and Edwards, Parker B.},
  title   = {Graded Persistence Diagrams and Persistence Landscapes},
  journal = {Discrete \& Computational Geometry},
  volume  = {67},
  number  = {1},
  pages   = {203--230},
  year    = {2022},
  doi     = {10.1007/s00454-021-00316-1},
  url     = {https://doi.org/10.1007/s00454-021-00316-1}
}

@article{DeyKimMemoli2024,
  author  = {Dey, Tamal K. and Kim, Woojin and M{\'e}moli, Facundo},
  title   = {Computing Generalized Rank Invariant for
             {2-Parameter} Persistence Modules via Zigzag
             Persistence and Its Applications},
  journal = {Discrete \& Computational Geometry},
  volume  = {71},
  number  = {1},
  pages   = {67--94},
  year    = {2024},
  doi     = {10.1007/s00454-023-00584-z},
  url     = {https://doi.org/10.1007/s00454-023-00584-z}
}

@article{CorbetEtAl2019Kernel,
  author  = {Corbet, Ren{\'e} and Fugacci, Ulderico and Kerber,
             Michael and Landi, Claudia and Wang, Bei},
  title   = {A Kernel for Multi-Parameter Persistent Homology},
  journal = {Computers \& Graphics: X},
  volume  = {2},
  pages   = {100005},
  year    = {2019},
  doi     = {10.1016/j.cagx.2019.100005},
  url     = {https://doi.org/10.1016/j.cagx.2019.100005}
}

@inproceedings{LoiseauxCarriereBlumberg2023,
  author    = {Loiseaux, David and Carri{\`e}re, Mathieu and
               Blumberg, Andrew J.},
  title     = {A Framework for Fast and Stable Representations of
               Multiparameter Persistent Homology Decompositions},
  booktitle = {Advances in Neural Information Processing Systems},
  volume    = {36},
  pages     = {35774--35798},
  year      = {2023},
  doi       = {10.52202/075280-1553},
  url       = {https://papers.neurips.cc/paper_files/paper/2023/hash/702b67152ec4435795f681865b67999c-Abstract-Conference.html}
}

@inproceedings{LoiseauxEtAl2023SignedMeasures,
  author    = {Loiseaux, David and Scoccola, Luis and
               Carri{\`e}re, Mathieu and Botnan, Magnus Bakke
               and Oudot, Steve},
  title     = {Stable Vectorization of Multiparameter Persistent
               Homology Using Signed Barcodes as Measures},
  booktitle = {Advances in Neural Information Processing Systems},
  volume    = {36},
  pages     = {68316--68342},
  year      = {2023},
  doi       = {10.52202/075280-2987},
  url       = {https://papers.neurips.cc/paper_files/paper/2023/hash/d75c474bc01735929a1fab5d0de3b189-Abstract-Conference.html}
}

@inproceedings{MukherjeeEtAl2026DGRIL,
  author    = {Mukherjee, Soham and Samaga, Shreyas N. and Xin,
               Cheng and Oudot, Steve and Dey, Tamal K.},
  title     = {{D-GRIL}: End-To-End Topological Learning with
               {$2$}-Parameter Persistence},
  booktitle = {42nd International Symposium on Computational
               Geometry (SoCG 2026)},
  series    = {Leibniz International Proceedings in Informatics
               (LIPIcs)},
  volume    = {367},
  pages     = {79:1--79:17},
  editor    = {Ahn, Hee-Kap and Hoffmann, Michael and Nayyeri, Amir},
  publisher = {Schloss Dagstuhl--Leibniz-Zentrum f{\"u}r Informatik},
  address   = {Dagstuhl, Germany},
  year      = {2026},
  doi       = {10.4230/LIPIcs.SoCG.2026.79},
  url       = {https://doi.org/10.4230/LIPIcs.SoCG.2026.79}
}

@article{VipondEtAl2021Tumors,
  author  = {Vipond, Oliver and Bull, Joshua A. and Macklin,
             Philip S. and Tillmann, Ulrike and Pugh,
             Christopher W. and Byrne, Helen M. and Harrington,
             Heather A.},
  title   = {Multiparameter Persistent Homology Landscapes Identify
             Immune Cell Spatial Patterns in Tumors},
  journal = {Proceedings of the National Academy of Sciences},
  volume  = {118},
  number  = {41},
  pages   = {e2102166118},
  year    = {2021},
  doi     = {10.1073/pnas.2102166118},
  url     = {https://doi.org/10.1073/pnas.2102166118}
}

@inproceedings{GarciaRedondoMonodWang2026,
  author    = {Garc{\'i}a-Redondo, In{\'e}s and Monod, Anthea
               and Wang, Qiquan},
  title     = {Confidence Bands for Multiparameter Persistence
               Landscapes},
  booktitle = {Geometric Science of Information},
  editor    = {Nielsen, Frank and Barbaresco, Fr{\'e}d{\'e}ric},
  series    = {Lecture Notes in Computer Science},
  volume    = {16034},
  pages     = {22--31},
  publisher = {Springer},
  address   = {Cham},
  year      = {2026},
  doi       = {10.1007/978-3-032-03921-7_3},
  url       = {https://doi.org/10.1007/978-3-032-03921-7_3}
}

@article{FlammerHuper2026,
  author  = {Flammer, Martina and H{\"u}per, Knut},
  title   = {Spatiotemporal Persistence Landscapes},
  journal = {Journal of Applied and Computational Topology},
  volume  = {10},
  number  = {4},
  pages   = {24},
  year    = {2026},
  doi     = {10.1007/s41468-026-00251-1},
  url     = {https://doi.org/10.1007/s41468-026-00251-1}
}

@misc{FuEtAl2025GeneralizedLandscapes,
  author        = {Fu, Yaoying and Lagoda, Evgeniya and Li, Shiying
                   and Needham, Tom and Ver Hoef, Lander and
                   Weiler, Morgan},
  title         = {Persistent Homology for Labeled Datasets:
                   {Gromov--Hausdorff} Stability and Generalized
                   Landscapes},
  year          = {2025},
  eprint        = {2512.08794},
  archivePrefix = {arXiv},
  primaryClass  = {math.AT},
  url           = {https://arxiv.org/abs/2512.08794},
  note          = {arXiv:2512.08794}
}

@inproceedings{MariaEtAl2014GUDHI,
  author    = {Maria, Cl{\'e}ment and Boissonnat, Jean-Daniel and
               Glisse, Marc and Yvinec, Mariette},
  editor    = {Hong, Hoon and Yap, Chee},
  title     = {The {GUDHI} Library: Simplicial Complexes and
               Persistent Homology},
  booktitle = {Mathematical Software -- ICMS 2014},
  series    = {Lecture Notes in Computer Science},
  volume    = {8592},
  pages     = {167--174},
  publisher = {Springer Berlin Heidelberg},
  year      = {2014},
  doi       = {10.1007/978-3-662-44199-2_28},
  url       = {https://doi.org/10.1007/978-3-662-44199-2_28}
}

@article{PedregosaEtAl2011ScikitLearn,
  author  = {Pedregosa, Fabian and Varoquaux, Ga{\"e}l and
             Gramfort, Alexandre and Michel, Vincent and Thirion,
             Bertrand and Grisel, Olivier and Blondel, Mathieu and
             Prettenhofer, Peter and Weiss, Ron and Dubourg, Vincent
             and Vanderplas, Jake and Passos, Alexandre and
             Cournapeau, David and Brucher, Matthieu and Perrot,
             Matthieu and Duchesnay, {\'E}douard},
  title   = {Scikit-learn: Machine Learning in Python},
  journal = {Journal of Machine Learning Research},
  volume  = {12},
  number  = {85},
  pages   = {2825--2830},
  year    = {2011},
  url     = {https://jmlr.org/papers/v12/pedregosa11a.html}
}

@article{CurryMukherjeeTurner2022,
  author  = {Curry, Justin and Mukherjee, Sayan and Turner, Katharine},
  title   = {How Many Directions Determine a Shape and Other Sufficiency Results for Two Topological Transforms},
  journal = {Transactions of the American Mathematical Society, Series B},
  volume  = {9},
  pages   = {1006--1043},
  year    = {2022},
  doi     = {10.1090/btran/122}
}
